%
%
%
%
%
%
%
%
%
%
\documentclass[reqno,11pt]{amsart}
\usepackage{hyperref}

\usepackage{enumerate}


\setlength{\oddsidemargin}{0in} \setlength{\evensidemargin}{0in} \addtolength{\textwidth}{1.27in}
\setlength{\topmargin}{0in} \addtolength{\textheight}{0.63in}

\usepackage[all]{xy}
\usepackage[usenames,dvipsnames]{xcolor}
\usepackage{tikz}
\usepackage{tikz-cd}

\usepackage{pgfplots}
\usetikzlibrary{3d}
\usetikzlibrary{shapes,decorations.pathreplacing}
\usetikzlibrary{calc}
\usetikzlibrary{backgrounds}
\usetikzlibrary{automata}
\usetikzlibrary{positioning}
\usetikzlibrary{decorations.markings}
\usetikzlibrary{arrows}
\usetikzlibrary{scopes}
\usetikzlibrary{intersections}
\tikzstyle myBG=[line width=3pt,opacity=1]

\def\ThCon#1{\mathop{
\unitlength=1mm
\begin{picture}(6,4)
\put(0,0){$\leftrightarrow$} \put(1.1,1.2){$*$}\put(3.5,-1){{\tiny
$#1$}}
\end{picture}}}

\def\ssr#1{\mathop{
\unitlength=1mm
\begin{picture}(5,4)
\put(0,0){$\rightarrow$} 
\put(3,-0.6){{\tiny $ _#1$}}
\end{picture}}}

\def\RhCon#1{\mathop{
\unitlength=1mm
\begin{picture}(7,4)
\put(0,0){$\longrightarrow$} \put(2,1.2){$*$}\put(5.1,-1){{\tiny
$#1$}}
\end{picture}}}

\newcommand{\E}{\mathfrak{E}}
\newcommand{\C}{\mathfrak{C}}
\newcommand{\R}{\mathfrak{R}}
\newcommand{\B}{\mathfrak{B}}
\newcommand{\ii}{\iota}
\newcommand{\cc}{\kappa}

\newcommand{\EMr}{\overrightarrow{EM}}
\newcommand{\EMl}{\overleftarrow{EM}}
\newcommand{\EMb}{\overleftrightarrow{EM}}

\newcommand{\ZG}{{\mathbb{Z}G}}

\newcommand{\ZM}{{\mathbb{Z}M}}

\newcommand{\Z}{\mathbb{Z}}

%
%

\newcommand{\gu}{guarded }
\newcommand{\gud}{guarded}

\newcommand{\FPn}{{\rm FP}\sb n}
\newcommand{\Fn}{{\rm F}\sb n}
\newcommand{\F}{{\rm F}}

\newcommand{\FP}{{\rm FP}}
\newcommand{\FPinfty}{{\rm FP}\sb \infty}
\newcommand{\Finfty}{{\rm F}\sb \infty}


\definecolor{cof}{RGB}{219,144,71}
\definecolor{pur}{RGB}{186,146,162}
\definecolor{greeo}{RGB}{91,173,69}
\definecolor{greet}{RGB}{52,111,72}


\usepackage{amssymb}
\usepackage{amsgen}
\usepackage{amsmath}
\usepackage{amsthm}
\usepackage{mathrsfs}
\usepackage{cite}
\usepackage{amsfonts}

\hyphenation{mon-oid mon-oids}



\newcommand{\p}{\varphi}

\newcommand{\ov}[1]{\ensuremath{\overline {#1}}}
\newcommand{\til}[1]{\ensuremath{\widetilde {#1}}}

\newcommand{\wh}{\widehat}

\newcommand{\Hom}{\mathop{\mathrm{Hom}}\nolimits}




\newtheorem{Thm}{Theorem}[section]
\newtheorem{Prop}[Thm]{Proposition}
\newtheorem{theorem}[Thm]{Theorem}

\newtheorem{Lemma}[Thm]{Lemma}
{\theoremstyle{definition}
}
{\theoremstyle{remark}
\newtheorem{Rmk}[Thm]{Remark}}
{\theoremstyle{remark}
\newtheorem{remark}[Thm]{Remark}}
\newtheorem{Cor}[Thm]{Corollary}
\newtheorem{corollary}[Thm]{Corollary}
{\theoremstyle{remark}
\newtheorem{Example}[Thm]{Example}}
{\theoremstyle{remark}
}

\newtheorem{proposition}[Thm]{Proposition}

{\theoremstyle{definition}
\newtheorem{definition}[Thm]{Definition}}
{\theoremstyle{remark}
}

{\theoremstyle{remark}
}
{\theoremstyle{remark}
}
{\theoremstyle{remark}
}
{\theoremstyle{remark}
\newtheorem{example}[Thm]{Example}}
{\theoremstyle{remark}
}
\numberwithin{equation}{section}

\usepackage{graphicx}


\title[Topological finiteness properties]{Topological finiteness
  properties of monoids \\  Part 1: Foundations}
  \date{\today }

\subjclass[2010]{20M50, 20M05, 20J05, 57M07, 20F10, 20F65}
%
%
%
%
%
%
%
%
%
%
%
%

  \keywords{ CW complex,
  monoid, equivariant homotopy theory, homological finitenss property
  $\FPn$, cohomological dimension, Hoschild cohomology, rewriting
  system, collapsing scheme, discrete Morse theory.}
  \thanks{This work was supported by the
EPSRC grant EP/N033353/1 `Special inverse monoids: subgroups, structure, geometry, rewriting systems and the word problem'.  The second author was supiported in part by
United States-Israel Binational Science Foundation \#2012080 and NSA MSP \#H98230-16-1-0047.
}

\author{ROBERT D. GRAY\and BENJAMIN STEINBERG}
\address{School of Mathematics,\\ University of East Anglia,\\ Norwich NR4 7TJ,\\ England
\and
Department of Mathematics,\\ City College of New York,\\ Convent Avenue at 138th Street,\\ New York, New York 10031,\\ USA}
\email{Robert.D.Gray@uea.ac.uk\and bsteinberg@ccny.cuny.edu}
\begin{document}

\begin{abstract}
  We initiate the study of higher dimensional topological finiteness
  properties of monoids. This is done by developing the theory of
  monoids acting on CW complexes. For this we establish the
  foundations of $M$-equivariant homotopy theory where $M$ is a
  discrete monoid. For projective $M$-CW complexes we prove several
  fundamental results such as the homotopy extension and lifting
  property, which we use to prove the $M$-equivariant Whitehead
  theorems. We define a left equivariant classifying space as a
  contractible projective $M$-CW complex. We prove that such a space
  is unique up to $M$-homotopy equivalence and give a canonical model
  for such a space via the nerve of the right Cayley graph category of
  the monoid. The topological finiteness conditions left-$\Fn$ and
  left geometric dimension are then defined for monoids in terms of
  existence of a left equivariant classifying space satisfying
  appropriate finiteness properties. We also introduce the bilateral
  notion of $M$-equivariant classifying space, proving uniqueness and
  giving a canonical model via the nerve of the two-sided Cayley graph
  category, and we define the associated finiteness properties
  bi-$\Fn$ and geometric dimension. We explore the connections between
  all of the these topological finiteness properties and several
  well-studied homological finiteness properties of monoids which are
  important in the theory of string rewriting systems, including
  $\FPn$, cohomological dimension, and Hochschild cohomological
  dimension. We also develop the corresponding theory of
  $M$-equivariant collapsing schemes (that is, $M$-equivariant
  discrete Morse theory), and among other things apply it to give
  topological proofs of results of Anick, Squier and Kobayashi that monoids
  which admit presentations by complete rewriting systems are left-
  right- and bi-$\FPinfty$.
\end{abstract}
\maketitle

\section{Introduction}


The study of the higher dimensional finiteness properties of groups was
initiated fifty years ago by C. T. C. Wall \cite{Wall1965}  and Serre
\cite{Serre1971}.
An Eilenberg--MacLane complex $K(G,1)$ for a discrete group $G$, also
called a classifying space, is an aspherical CW complex with
fundamental group $G$. Such a space can always be constructed for any
group $G$ (e.g. via the bar construction) and it is unique up to
homotopy equivalence. While useful for theoretical purposes, this
canonical $K(G,1)$-complex is very big and is often not useful for
practical purposes, specifically if one wants to compute the homology
of the group. It is therefore natural to seek a `small' $K(G,1)$ for a
given group by imposing various finiteness conditions on the
space. Two of the most natural and well-studied such conditions are
the topological finiteness property $\Fn$ and the geometric dimension
$\mathrm{gd}(G)$ of the group.

Property $\Fn$ was introduced by C. T. C. Wall in \cite{Wall1965}.  A
group $G$ is said to be of type $\Fn$ if it has an Eilenberg--MacLane
complex $K(G,1)$ with finite $n$-skeleton. It is easily
verified that a group is finitely generated if and only if it is of
type $\F_1$ and is finitely presented if and only if it is of type
$\F_2$. Thus property $\Fn$ generalises the two fundamental finiteness
properties of being finitely generated, or finitely presented, to higher
dimensions.
The geometric dimension of $G$, denoted $\mathrm{gd}(G)$, is the smallest
non-negative integer $n$ such that there exists an $n$-dimensional $K(G,
1)$ complex. If no such $n$ exists, then we set $\mathrm{gd}(G) =
\infty$.
For more general background on higher
dimensional finiteness properties of groups we refer the reader to the
books \cite[Chapter~8]{BrownCohomologyBook}, \cite[Chapters~6-9]{GeogheganBook}, or the
survey article \cite{Brown2010}.


Each of these topological finiteness properties has a natural
counterpart in homological algebra given in terms of the existence of
projective resolutions of $\ZG$-modules. The analogue of $\Fn$ in this
context is the homological finiteness property $\FPn$, while geometric
dimension corresponds to the cohomological dimension of the
group. Recall that a group $G$ is said to be of type $\FPn$ (for a
positive integer $n$) if there is a projective resolution $P$ of $\Z$
over $\ZG$ such that $P_i$ is finitely generated for $i \leq n$. We
say that $G$ is of type $\FPinfty$ if there is a projective resolution
$P$ of $\Z$ over $\ZG$ with $P_i$ finitely generated for all $i$.  The
property $\FPn$ was introduced for groups by Bieri in \cite{Bieri1976}
and since then has received a great deal of attention in the
literature; see
\cite{Bestvina1997,Bieri2001,Brady1999,Bux2007,Leary2006}. For groups,
$\Fn$ and $\FPn$ are equivalent for $n=0,1$, while important results
of Bestvina and Brady \cite{Bestvina1997} show that $\FP_2$ is
definitely weaker than $\F_2$. For higher $n$ there are no further
differences, in that a group $G$ is of type $\Fn$
$(2 \leq n \leq \infty)$ if and only if it is finitely presented and
of type $\FPn$. One natural line of investigation has been the study
of the closure properties of $\FPn$. Examples include results about
the behaviour of $\FPn$ under taking: finite index subgroups or
extensions, direct (and semidirect) products, wreath products, HNN
extensions, amalgamated free products, and quasi-isometry invariance;
see for example \cite{Alonso1994, Baumslag1998, Bieri1976}.
In \cite{Bridson2002} it is shown that if $G$ is a subgroup of
a direct product of $n$ surface groups, then if $G$ is of type $\FPn$
then $G$ has a subgroup of finite index which is the direct product of
at most $n$ finitely generated surface groups. Thompson's groups, and several interesting generalisations of these groups, have all be shown to be of type $\FPinfty$; see for example \cite{Brown1987, Fluch2013, Stein1992, GubaSapirDirected}.

The cohomological dimension of a group $G$, denoted $\mathrm{cd}(G)$,
is the smallest non-negative integer $n$ such that there exists a
projective resolution $P = (P_i)_{i \geq 0}$ of $\Z$ over $\ZG$ of
length $\leq n$, i.e., satisfying $P_i = 0$ for $i>n$. (Or, if no such
$n$ exists, then we set $\mathrm{cd}(G) = \infty$.) The geometric
dimension of a group provides an upper bound for the cohomological
dimension. It is easily seen that $\mathrm{gd}(G) = \mathrm{cd}(G) =
0$ if and only if $G$ is trivial. It follows from important results of
Stallings \cite{Stallings1968} and Swan \cite{Swan1969} that $\mathrm{gd}(G) = \mathrm{cd}(G) =
1$ if and only if $G$ is non-trivial free group. Eilenberg and Ganea \cite{Eilenberg1957} proved that
for $n \geq 3$ the cohomological and the geometric dimension of a group are the same. The
famous Eilenberg--Ganea problem asks whether this also holds in dimension two.


Working in the more general context of monoids, and projective resolutions of left $\ZM$-modules, gives the notion of left-$\FPn$, and left cohomological dimension, of a monoid $M$. There is an obvious dual notion of monoids of type right-$\FPn$, working with right $\ZM$-modules. Working instead with bimodules resolutions of the $(\ZM, \ZM)$-bimodule $\ZM$ one obtains the notion \emph{bi-$\FPn$} introduced and studied in \cite{KobayashiOtto2001}. Property bi-$\FP_n$  is of interest from the point of view of Hochschild cohomology, which is the standard notion of cohomology for rings; \cite{HochschildCoh},\cite[Chapter~9]{Weibel1994}, or~\cite{Mitchell1972}. For monoids all these notions of $\FPn$ are known to be different, while for groups they are all equivalent; see \cite{Cohen1992, Pride2006}. Similarly there is a dual notion of the right cohomological dimension of a monoid which again is in general not equal to the left cohomological dimension; see \cite{Guba1998}.  The two-sided notion is the Hochschild cohomological dimension~\cite{Mitchell1972}.

In monoid and semigroup theory the property $\FPn$ arises naturally in the
study of string rewriting systems (i.e. semigroup presentations).
The history of rewriting systems in monoids and groups is long and distinguished, and has roots in fundamental work of Dehn and Thue.
A central topic in this area is the study of complete rewriting systems and in methods for computing normal forms. A finite complete rewriting system is a finite presentation for a monoid of a particular form (both confluent and Noetherian) which in particular gives a solution of the word problem for the monoid; see \cite{BookAndOtto}.
It is therefore of considerable interest to develop an understanding of which monoids are presentable by such rewriting systems.
Many important classes of groups are known to be presentable by finite complete rewriting systems, including surface groups, Coxeter groups, and  many closed three-manifold groups.
Rewriting systems continue to receive a lot of attention in the literature; see \cite{Silva2009_2,Silva2009_1,Chouraqui2006,Miasnikov2009,Goodman2008,Hermiller1999,Pride2005}.  The connection between complete rewriting systems and homological finiteness properties is given by a result of
Anick \cite{Anick1986} (see also \cite{Brown1989}) which shows that a monoid that admits such a presentation must be of type left- and right-$\FPinfty$; the special case of $\FP_3$ was also handled by Squier~\cite{Squier1987}. More generally Kobayashi \cite{Kobayashi2005} proved that any such monoid is of type bi-$\FPinfty$. A range of other interesting homotopical and homological finiteness properties have been studied in relation to monoids defined by compete rewriitng systems including finite homological type, finite derivation type, and higher dimensional analogues $\mathrm{FDT}_n$; see \cite{Squier1994, Pride2004, Pride2005, Guiraud2012}.
More background on the importance the property $\FP_n$ (and other related finiteness conditions) in semigroup theory, and the connections with the theory of string rewriting systems may be found in the survey articles \cite{Cohen1997,Otto1997}.
Results on cohomology, and cohomological dimension, of monoids include \cite{Adams1967, Guba1996, Nico1969, Nico1972, Cheng1980, Nunes1995, Guba1998} and \cite{Novikov1998}. The cohomological dimension of left regular bands was recently considered in \cite{Margolis2015} and~\cite{MSS2015} where connections with the Leray number of simplicial complexes~\cite{Leray0} and the homology of cell complexes was obtained.

It is often easier to establish the topological finiteness properties
$\Fn$ for a group than the homological finiteness properties $\FPn$,
especially if there is a suitable geometry or topological space available
on which the group acts cocompactly. The desired homological
finiteness properties can then be derived by the above-mentioned
result for groups, that $\Fn$ (for $n \geq 2$) is equivalent to being  
finitely presented and 
of type 
$\FPn$.
In contrast, no
corresponding theory of $\Fn$ for monoids currently exists. Similarly,
there is currently no analogue of geometric dimension of monoids in
the literature. The study of homological  finiteness properties of
monoids should greatly profit from the development of a corresponding
theory of topological finiteness properties of monoids. The central
aim of the present article is to lay the foundations of such a
theory.

For such a theory to be useful in the study of homological finiteness
properties of monoids there are certain properties that any definition
of left-$\Fn$, and left geometric dimension, should certainly
satisfy. Specifically left-$\Fn$ should imply left-$\FPn$, and the
left geometric dimension should provide an upper bound for the left
cohomological dimension. The fundamental question that needs to be
addressed when developing this theory is to determine the correct 
analogue of the $K(G,1)$-complex in the theory for monoids?  There is a natural
notion of classifying space $|BM|$ of a monoid $M$. This is obtained
by viewing $M$ as a one-point category, letting $BM$ denote the nerve
of this category, and setting $|BM|$ as the geometric realisation of
the nerve; see Section~\ref{sec_cspaces} for full details of this
construction. For a group $G$ this space $|BG|$ is a $K(G,1)$-complex,
it is the canonical complex for $G$ mentioned earlier in the
introduction. Since $K(G,1)$s are unique up to homotopy equivalence
the finiteness conditions $\Fn$ and cohomological dimension can all be
defined in terms of existence of CW complexes homotopy equivalent to
$|BG|$ satisfying the appropriate finiteness property. Indeed in group
theory it is a common approach in the study of these topological
finiteness properties to begin with the space $|BG|$ and then seek
transformations on the space which preserve the homotopy equivalence
class, but make the space smaller. This is, for example, the basic
idea behind the theory of collapsing schemes (which will be discussed
in more detail in Section~\ref{sec_collapse}).  It could be regarded
as natural therefore to try define and study topological finiteness
properties of a monoid $M$ in terms of the space $|BM|$. We note that
there is an extensive literature on the study of classifying spaces
$|BM|$ of monoids and related topics; see for instance \cite{
  Fiedorowicz1984, Segal1978, Nunes1995, McCord1969, McDuff1979,
  McDuffSegal1975, LearyNucinkis2001, Puppe1958, Puppe1959,
  KanThurston1976, Hurwitz1989}.

It turns out, however, that using $|BM|$ to define topological finiteness properties of monoids is not the right approach in the sense that it would lead to a definition of $\Fn$ for monoids which does not imply left- or right-$\FPn$, and there are similar issues for the corresponding definition of geometric dimension. This essentially comes does to the fact that the space $|BM|$ does not contain enough information about the monoid $M$ to recover the corresponding homological finiteness properties.

 In more detail, by applying results of MacDuff \cite{McDuff1979} it is possible to show that there are examples of monoids which are not of type left-$\FP_1$ even though $|BM|$ is contractible. For example, if $M$ is an infinite left zero semigroup (a semigroup with multiplication $xy=x$ for all elements $x$ and $y$) with an identity adjoined then by \cite[Lemma~5]{McDuff1979} the space $|BM|$ is contractible while it is straightforward to show that $M$ does not even satisfy the property left-$\FP_1$ (this also follows from Theorem~\ref{t:f1} below). This shows that one should not define property $\Fn$ for monoids using the space $|BM|$. Similar comments apply to attempts to define geometric dimension--if one tries to define geometric dimension using $|BM|$ then if $M$ is any monoid with a left zero element but no right zero element, the left $\mathrm{cd}(M)$ would not equal zero (by Proposition~\ref{p:coh.dim.zero}) while the geometric dimension would be zero.

This issue in fact arose in work of Brown \cite{Brown1989} when he introduced the theory of collapsing schemes. In that paper Brown shows that if a monoid $M$ admits a presentation by a finite complete rewriting system, then $|BM|$ has the homotopy type of a CW complex with only finitely many cells in each dimension. When $M$ is a group this automatically implies that the group is of type $\FPinfty$. Brown goes on to comment

\begin{quote}
\textit{``We would like, more generally, to construct a `small' resolution of this type for any monoid $M$ with a good set of normal forms, not just for groups. I do not know any way to formally deduce such a resolution from the existence of the homotopy equivalence for $|BM|$ above''.}
\end{quote}

As the comments above show, just knowing about the homotopy equivalence class of $|BM|$ will never suffice in order to deduce that the monoid is left- (or right-) $\FPinfty$. It turns out that the correct framework for studying topological finiteness properties of monoids is to pass to the universal bundle $|\EMr|$ over $|BM|$, which has a concrete description as the geometric realisation of the right Cayley graph category of the monoid (this will be explained in detail in Section~\ref{sec_cspaces}). The space $|\EMr|$ is contractable and the monoid has a natural action by left multiplication on this space. This action is free and sends $n$-cells to $n$-cells. It turns out that this space is the correct canonical model of what we shall call a left equivariant classifying space for the monoid. In fact we are able to work in the more general context of projective $M$-sets, and shall define a left equivariant classifying space as a contractable projective $M$-CW complex (see Section~\ref{sec_projM} for the definition of projective $M$-CW complex). The corresponding finiteness conditions left-$\Fn$ and left geometric dimension are then defined in the obvious natural way in terms of the existence of a left equivariant classifying space satisfying appropriate finiteness properties. Consequently, in order to define and study the topological finiteness properties of monoids we are interested in, it will first be necessary for us to develop some of the foundations of $M$-equivariant homotopy theory. Equivariant homotopy theory, and cohomology theory, for groups is an important and well-established area; see for example \cite{May1996} for an introduction. In this way, we are interested in studying homological finiteness properties of monoids by investigating their actions on CW complexes.

The paper is structured as follows.
%
%
The notions of free and projective $M$-CW complexes are introduced in Section~\ref{sec_projM}. For projective $M$-CW complexes we then go on to prove an $M$-equivariant version of HELP (homotopy extension and lifting property), from which we deduce the $M$-equivariant Whitehead theorems. We also apply HELP to prove that every $M$-equivariant continuous mapping of projective $M$-CW complexes is $M$-homotopy equivalent to a cellular one (the cellular approximation theorem). In Section~\ref{sec_base} some useful base change theorems are established. Section~\ref{sec_simplicial} is concerned with monoids acting on simplicial sets. In particular we show how (rigid) $M$-CW complexes arise from (rigid) $M$-simplicial sets by taking the geometric realisation. In Section~\ref{sec_cspaces} we recall the definition of the nerve of a category. By considering three natural categories associated with a monoid $M$, namely the one-point category, the right Cayley graph category, and the two-sided Cayley graph category, we associate three CW complexes with the monoid, denoted $|BM|$, $|\EMr|$ and $|\EMb|$. We go through these constructions in detail in that section. In particular the spaces $|\EMr|$ and $|\EMb|$ are the canonical models of $M$-equivariant (respectively two-sided $M$-equivariant) classifying spaces of the monoid.
In Section~\ref{sec_onesided} we will define left-equivariant, and dually right-equivariant, classifying spaces for a monoid $M$. We prove that such spaces always exist, and that they are unique up to $M$-homotopy equivalence. Using the notion of left-equivariant classifying space we define property left-$\Fn$ for monoids. We prove several fundamental results about this property, including results showing its relationship with property left-$\FPn$, and its connection with the properties of being finitely generated and finitely presented. We prove some results about the closure properties of property left-$\Fn$, in particular results relating the property holding in $M$ to it holding in certain maximal subgroups of $M$.  We also introduce the notion of left geometric dimension of a monoid in this section. We show that it is possible to find an $M$-finite equivariant classifying space for $M$ which is projective when no $M$-finite free equivariant classifying space exists, justifying our choice to work with projective $M$-CW complexes. The geometric dimension is proved to provide an upper bound for the cohomological dimension, and we characterize monoids with left geometric dimension equal to zero.
In Section~\ref{subsec_bi-equi_CS} we introduce the bilateral notion of a classifying space in order to introduce the stronger property of bi-$\F_n$.
We prove results for bi-$\Fn$ analogous to those previously established for left- and right-$\Fn$. In particular we show that bi-$\F_n$ implies bi-$\FP_n$ which is of interest from the point of view of Hochschild cohomology. We also define the geometric dimension as the minimum dimension of a bi-equivariant classifying space and show how it relates to the Hochschild dimension.
In Sections~\ref{sec_collapse} and \ref{sec_MeqCS}  we develop the theory of $M$-equivariant collapsing schemes \cite{Brown1989}. Equivalently, this may be viewed as the development of $M$-equivariant discrete Morse theory in the sense of Forman \cite{Forman2002}.
We show in Section~\ref{sec_guarded} that this theory can be applied to monoids which admit a, so-called, guarded collapsing scheme. Then, in Section~\ref{sec_admitting}, we identify some classes of monoids which admit guarded collapsing schemes, and in particular recover a topological proof of the Anick's theorem, and more generally Kobayashi's theorem, that monoids defined by finite complete rewriting systems are of type bi-$\FPinfty$ (by proving that they are all of type bi-$\Finfty$).  Brown proved Anick's theorem by developing a theory of collapsing schemes (or discrete Morse theory) for chain complexes, which has been rediscovered by other authors~\cite{MorseChain} later on.  Our approach obviates the need to develop a chain complex analogue in this setting.

Applications of the topological approach set out in this paper will be
given in a future article by the current authors
\cite{GraySteinbergInPrep}. Among other things in that paper we shall
show how our topological approach can be used to prove results about
the closure properties of left-$\Fn$ and bi-$\Fn$ for (i) amalgamated
free products of monoids (simplifying and vastly improving on some results of
Cremanns and Otto \cite{Cremanns1998}) (ii) HNN-like extensions in the
sense of Otto and Pride \cite{Pride2004} (in particular establishing
results which generalise \cite[Theorem 1]{Pride2004} to higher
dimensions), and (iii) HNN extensions of the sort considered by Howie
\cite{Howie1963}. For example, we prove that if $A,B$ are monoids containing a common submonoid $C$ such that $A$ and $B$ are free right $C$-set, then if $A$ and $B$ are of type left-$\Fn$ and $C$ is of type left-$\F_{n-1}$, then $A\ast_C B$ is of type left-$\Fn$.  An analogous result is proved for the homological finiteness property $\FPn$, under the weaker hypothesis that $\mathbb ZA$ and $\mathbb ZB$ are flat as $\mathbb ZC$-modules.
Monoid amalgamated products are much more complicated than group ones. For instance, the amalgamated free product of finite monoids can have an undecidable word problem, and the factors do not have to embed or intersect in the base monoid. 

Additionally, we shall give a topological proof that
a free inverse monoid on one or more generators is neither of type
left-$\FP_2$ nor right-$\FP_2$ generalising a classical result of Schein
\cite{Schein1975} that such monoids are not finitely
presented.

Finally, in \cite{GraySteinbergInPrep} we shall apply our methods to prove results about
the homological finiteness properties of special monoids, that is,
monoids defined by finite presentations of the form
$\langle A\mid w_1=1,\ldots,w_k=1\rangle$. We prove that if $M$ is a
special monoid with group of units $G$ then if $G$ is of type $\FPn$
with $1\leq n\leq \infty$, then $M$ is of type left-  and right-$\FPn$; and moreover that
$\mathop{\mathrm{cd}} G\leq \mathop{\mathrm{cd}} M\leq
\max\{2,\mathop{\mathrm{cd}} G\}$.
As a corollary we obtain that all special one-relation monoids are of
type left- and right-$\FPinfty$, answering a special case of a question of Kobayashi
\cite{Kobayashi2000}, and we recover Kobayashi's result that if the
relator is not a proper power then the cohomological dimension is at
most 2.  Specifically we show that if $M$ is a special one-relation
monoid then $M$ is of type left- and right-$\FP_{\infty}$, and if
$M=\langle A\mid w=1\rangle$ with $w$ not a proper power, then
$\mathop{\mathrm{cd}} M\leq 2$; otherwise
$\mathop{\mathrm{cd}} M=\infty$.

\section{Projective $M$-CW complexes and $M$-homotopy theory}
\label{sec_projM}

\subsection{CW complexes and topological finiteness properties of groups}
\label{sec_tfpOfgroups}

For background on CW complexes, homotopy theory, and algebraic topology for group theory, we refer the reader to \cite{GeogheganBook} and \cite{May1999}. Throughout $B^n$ will denote the closed unit ball in $\mathbb{R}^n$, $S^{n-1}$ the $(n-1)$-sphere which is the boundary $\partial B^n$ of the $n$-ball, and $I = [0,1]$ the unit interval. We use $e^n$ to denote an open $n$-cell, homeomorphic to the open $n$ ball $\mathring{B}^n = B^n - \partial B^n$, $\partial e$ denotes the boundary of $e$ and $\bar{e} = cl(e)$ the closure of $e$, respectively.
We identify $I^r = I^r \times 0 \subset I^{r+1}$. 

A \emph{CW complex} is a space $X$ which is a union of subspaces $X_n$ such that, inductively, $X_0$ is a discrete set of points, and $X_{n}$ is obtained from $X_{n-1}$ by attaching balls $B^{n}$ along \emph{attaching maps} $j\colon  S^{n-1} \rightarrow X_{n-1}$.
The resulting maps $B^{n} \rightarrow X_{n}$ are called the \emph{characteristic maps}.
So $X_{n}$ is the quotient space obtained from $X_{n-1} \cup (J_{n} \times B_{n})$ by identifying $(j,x)$ with $j(x)$ for $x \in S^n$, where $J_{n}$ is the discrete set of such attaching maps. Thus $X_n$ is obtained as a pushout of spaces:
\[
\begin{tikzcd}[ampersand replacement=\&]
J_n\times S^{n-1} \ar{r}\ar[d,hook] \& X_{n-1}\ar[d,hook]\\
J_n\times B^n\ar{r} \& X_n.
\end{tikzcd}
\]
The topology of $X$ should be that of the inductive limit $X=\varinjlim X_n$

A CW complex $X$ is then equal as a set to the disjoint union of (open) cells $X = \bigcup_{\alpha}{e_{\alpha}}$ where the $e_\alpha$ are the images of $\mathring{B}^n$ under the characteristic maps. Indeed, an alternative way of defining CW complex, which shall be useful for us to use later on, is as follows.
A CW complex is a Hausdorff space $K$ along with a family $\{e_\alpha\}$ of open cells of various dimensions such that, letting
\[K^j = \bigcup \{ e_{\alpha}\colon\dim{e_{\alpha}} \leq j \},\]
the following conditions are satisfied
\begin{enumerate}
\item[(CW1)] $\displaystyle K = \bigcup_{\alpha}{e_{\alpha}}$ and $e_\alpha \cap e_\beta = \varnothing$ for $\alpha \neq \beta$.
\item[(CW2)] For each cell $e_\alpha$ there is a map $\varphi_\alpha\colon B^n \rightarrow K$ (called the characteristic map) where $B^n$ is a topological ball of dimension $n = \dim{e_\alpha}$, such that

\

	\begin{enumerate}
	
	\item[(a)] $\varphi_\alpha|_{\mathring{B}^n}$ is a homeomorphism onto $e_\alpha$;
	
	\
	
	\item[(b)] $\varphi_\alpha(\partial B^n) \subset K^{n-1}$.
	
	\end{enumerate}
	
	\
	
\item[(CW3)] Each $\overline{e_{\alpha_0}}$ is contained in a union of finitely many $e_\alpha$.

\

\item[(CW4)] A set $A \subset K$ is closed in $K$ if and only if $A \cap \overline{e}_\alpha$ is closed in $\overline{e}_\alpha$ for all $e_\alpha$.
\end{enumerate}

Note that each characteristic map $\varphi\colon B^n \rightarrow K$ gives rise to a characteristic map $\varphi'\colon I^n \rightarrow K$ be setting $\varphi' = \varphi h$ for some homeomorphism $h\colon  I^n \rightarrow B^n$. So we can restrict our attention to characteristic maps with domain $I^n$ when convenient. If $\varphi\colon  B^n \rightarrow K$ is a characteristic map for a cell $e$ then $\varphi|_{\partial B^n}$ is called an attaching map for $e$. A \emph{subcomplex} is a subset $L \subset K$ with a subfamily $\{e_\beta\}$ of cells such that $\displaystyle L = \bigcup e_\beta$ and every $\overline{e_\beta}$ is contained in $L$. If $L$ is a subcomplex of $K$ we write $L < K$ and call $(K,L)$ a CW pair. If $e$ is a cell of $K$ which does not lie in (and hence does not meet) $L$ we write $e \in K - L$. An isomorphism between CW complexes is a homeomorphism that maps cells to cells.

Let $M$ be a monoid.  We shall define notions of free and projective $M$-CW complexes and then use these to study topological finiteness properties of $M$.  The notion of a free $M$-CW complex is a special case of a free $C$-CW complex for a category $C$ considered by Davis and L\"uck in \cite{DavisLuck1998} and so the cellular approximation theorem, HELP Theorem and Whitehead Theorem in this case can be deduced from their results.  The HELP Theorem and Whitehead Theorem for for projective $M$-CW complexes can be extracted with some work from the more general results of Farjoun \cite{DrorFarjounZabrodsky1986} on diagrams of spaces but to keep things elementary and self-contained we present them here	.

\subsection{The category of $M$-sets}

A left $M$-set consists of a set $X$ and a mapping
$M \times X \rightarrow X$ written $(m,x) \mapsto mx$ called a left
action, such that $1x=x$ and $m(nx) = (mn)x$ for all $m,n \in M$ and
$x \in X$. Right $M$-sets are defined dually, they are the same thing
as left $M^{op}$-sets. A \emph{bi-$M$-set} is an $M\times M^{op}$-set.
There is a category of $M$-sets and $M$-equivariant mappings, where
$f\colon X \rightarrow Y$ is $M$-equivariant if $f(mx) = mf(x)$ for all
$x \in X$, $m \in M$.

A (left) $M$-set $X$ is said to be \emph{free} on a set $A$ if there is a mapping $\iota\colon A\to X$ such that for any mapping $f\colon A\to
Y$ with $Y$ an $M$-set, there is a unique $M$-equivariant map $F\colon X\to Y$ such that
\[\begin{tikzcd}
A\ar{r}{\iota}\ar{dr}[swap]{f} & X\ar[dashed]{d}{F}\\
 & Y	
\end{tikzcd}\]
commutes.  The mapping $\iota$ is necessarily injective.  If $X$ is an $M$-set and $A\subseteq X$, then $A$ is a free basis for $X$ if and only if each element of $X$ can be uniquely expressed as $ma$ with $m\in M$ and $a\in A$.

The free left $M$-set on $A$ exists and can be realised as the set
$M \times A$ with action $m(m',a) = (mm',a)$ and $\iota$ is the map
$a \mapsto (1,a)$. Note that if $G$ is a  group, then a left $G$-set $X$ is free if and only if $G$ acts freely on $X$, that is, each element of $X$ has trivial stabilizer.  
In this case, any set of orbit representatives is a basis.

An  $M$-set $P$ is \emph{projective} if any $M$-equivariant surjective mapping $f\colon X\to P$ has an $M$-equivariant section $s\colon P\to X$ with $f\circ s=1_P$.  Free $M$-sets are projective and an $M$-set is projective if and only if it is a retract of a free one.

Each projective $M$-set $P$ is isomorphic to an
$M$-set of the form $\coprod_{a\in A} Me_a$ (disjoint union, which is the coproduct in the category of $M$-sets) with $e_a\in E(M)$.  Here
$E(M)$ denotes the set of idempotents of the monoid $M$.  In
particular, projective $G$-sets are the same thing as free $G$-sets
for a group $G$. (See \cite{Knauer1971} for more details.)

\subsection{Equivariant CW complexes}
A \emph{left $M$-space} is a topological space $X$ with a continuous left action $M\times X\to X$ where $M$ has the discrete topology.  A right $M$-space is the same thing as an $M^{op}$-space and a \emph{bi-$M$-space} is an $M\times M^{op}$-space.  Each $M$-set can be viewed as a discrete $M$-space.
%
%
Note that colimits in the category of $M$-spaces are formed by taking colimits in the category of spaces and observing that the result has a natural $M$-action.

Let us define a (projective) \emph{$M$-cell} of dimension $n$ to be an $M$-space of the form $Me\times B^n$ where $e\in E(M)$ and $B^n$ has the trivial action; if $e=1$, we call it a \emph{free $M$-cell}.  We will define a projective $M$-CW complex in an inductive fashion by imitating the usual definition of a CW complex but by attaching $M$-cells $Me\times B^n$ via $M$-equivariant maps from $Me\times S^{n-1}$ to the $(n-1)$-skeleton.

Formally, a \emph{projective (left) relative $M$-CW complex} is a pair $(X,A)$ of $M$-spaces such that $X=\varinjlim X_n$ with $i_n\colon X_n\to X_{n+1}$ inclusions, $X_{-1}=A$, $X_0 = P_0\cup A$ with $P_0$ a projective $M$-set and where $X_n$ is obtained as a pushout of $M$-spaces
\begin{equation}\label{eq:pushout}
\begin{tikzcd}P_n\times S^{n-1}\ar{r}\ar[d,hook] & X_{n-1}\ar[d,hook]\\ P_n\times B^n\ar{r} & X_n \end{tikzcd}
\end{equation}
with $P_n$ a projective $M$-set and $B^n$ having a trivial $M$-action for $n\geq 1$.  As usual, $X_n$ is called the \emph{$n$-skeleton} of $X$ and if $X_n=X$ and $P_n\neq \emptyset$, then $X$ is said to have \emph{dimension} $n$.
Notice that since $P_n$ is isomorphic to a coproduct of $M$-sets of the form $Me$ with $e\in E(M)$, we are indeed attaching $M$-cells at each step. If $A=\emptyset$, we call $X$ a \emph{projective $M$-CW complex}. Note that a projective $M$-CW complex is a CW complex and the $M$-action is cellular (in fact, takes $n$-cells to $n$-cells).  We can define projective right $M$-CW complexes and projective bi-$M$-CW complexes by replacing $M$ with $M^{op}$ and $M\times M^{op}$, respectively. We say that $X$ is a \emph{free $M$-CW complex} if each $P_n$ is a free $M$-set. If $G$ is a group, a CW complex with a $G$-action is a free $G$-CW complex if and only if $G$ acts freely, cellularly, taking cells to cells, and the setwise stabilizer of each cell is trivial~\cite[Appendix of Section~4.1]{GeogheganBook}.

More generally we define an $M$-CW complex in the same way as above
except that the $P_i$ are allowed to be arbitrary $M$-sets. Most of
the theory developed below is only valid in the projective setting,
but there will be a few occasions (e.g. when we discuss $M$-simplicial
sets) where it will be useful for us to be able to refer to $M$-CW
complexes in general. For future reference we should note here that,
just as for the theory of $G$-CW complexes, there is alternative way
of defining $M$-CW complex in terms of monoids acting on CW
complexes. This follows the same lines as that of groups, see for
example \cite[Section~3.2 and page 110]{GeogheganBook} or
\cite{May1996}. Let $Y$ be a left $M$-space where $M$ is a monoid and
$Y = \bigcup_{\alpha}{e_{\alpha}}$ is a CW complex with characteristic
maps $\varphi_\alpha\colon B^n \rightarrow Y$. We say that $Y$ is a
\emph{rigid left $M$-CW complex} if it is:

\

\noindent $\bullet$ Cellular and dimension preserving: For every $e_\alpha$ and $m \in M$ there exists an
$e_\beta$ such that $m e_\alpha = e_\beta$ and $\mathrm{dim}(e_\beta) = \mathrm{dim}(e_\alpha)$; and

\

\noindent $\bullet$ Rigid on cells: If $me_\alpha = e_\beta$ then $m \varphi_\alpha(k') = \varphi_\beta(k')$ for all $k' \in B^n - \partial B^n$.

\

%
%
%
%
\noindent If the action of $M$ on the set of $n$-cells is free
(respectively projective) then we call $Y$ a free (respectively
projective) rigid left $M$-CW complex. The inductive process described
above for building (projective, free) left $M$-CW complexes is easily
seen to give rise to rigid (projective, free) left $M$-CW complexes,
in the above sense. Conversely every rigid (projective, free) left
$M$-CW complex arises in this way. In other words, the two definitions
are equivalent. For an explanation of this in the case of $G$-CW
complexes see, for example, \cite[page 110]{GeogheganBook}. The proof for
monoids is analogous and is omitted. Similar comments apply for
rigid right $M$-CW complexes and rigid bi-$M$-CW complexes.

%
%
%
%

We say that a projective $M$-CW complex $X$ is of \emph{$M$-finite type} if $P_n$ is a finitely generated projective $M$-set for each $n$ and we say that $X$ is \emph{$M$-finite} if it is finite dimensional and of $M$-finite type (i.e., $X$ is constructed from finitely many $M$-cells).

Notice that if $m\in M$, then $mX$ is a subcomplex of $X$ for all $m\in M$ with $n$-skeleton $mX_n$.  Indeed, $mX_0=mP_0$ is a discrete set of points and $mX_n$ is obtained from $mX_{n-1}$ via the pushout diagam
\[\begin{tikzcd}mP_n\times S^{n-1}\ar{r}\ar[d,hook] & mX_{n-1}\ar[d,hook]\\ mP_n\times B^n\ar{r} & mX_n.\end{tikzcd}\]

A \emph{projective $M$-CW subcomplex} of $X$ is an $M$-invariant subcomplex $A\subseteq X$ which is a union of $M$-cells of $X$.  In other words, each $P_n$ (as above) can be written $P_n=P_n'\coprod P_n''$ with the images of the $P_n'\times B^n$ giving the cells of $A$.  Notice that if $A$ is a projective $M$-CW subcomplex of $X$, then $(X,A)$ can be viewed as a projective relative $M$-CW complex in a natural way. Also note that a cell of $X$ belongs to $A$ if and only if each of its translates do.

A projective $\{1\}$-CW complex is the same thing as a CW complex and $\{1\}$-finite type ($\{1\}$-finite) is the same thing as finite type (finite).

If $e\in E(M)$ is an idempotent and $m\in Me$, then left multiplication by $m$ induces an isomorphism $H_n(\{e\}\times B^n,\{e\}\times S^{n-1})\to H_n(\{m\}\times B^n, \{m\}\times S^{n-1})$ (since it induces a homeomorphism $\{e\}\times B^n/\{e\}\times S^{n-1}\to \{m\}\times B^n/\{m\}\times S^{n-1}$) and so if we choose an orientation for the $n$-cell $\{e\}\times B^n$, then we can give $\{m\}\times B^n$ the orientation induced by this isomorphism.  If $m\in M$ and $m'\in Me$, then the  isomorphism  \[H_n(\{e\}\times B^n,\{e\}\times S^{n-1})\to H_n(\{mm'\}\times B^n, \{mm'\}\times S^{n-1})\] induced by $mm'$ is the composition of the isomorphism \[H_n(\{e\}\times B^n,\{e\}\times S^{n-1})\to H_n(\{m'\}\times B^n, \{m'\}\times S^{n-1})\] induced by $m'$ and the isomorphism \[H_n(\{m'\}\times B^n, \{m'\}\times S^{n-1})\to H_n(\{mm'\}\times B^n, \{mm'\}\times S^{n-1})\] induced by $m$ and so  the action of $m$ preserves orientation.  We conclude that the degree $n$ component of the cellular chain complex for $X$ is isomorphic to $\mathbb ZP_n$ as a $\mathbb ZM$-module and hence is projective (since $\mathbb Z\left[\coprod_{a\in A} Me_a\right]\cong \bigoplus_{a\in A}\mathbb ZMe_a$ and $\mathbb ZM\cong \mathbb ZMe\oplus \mathbb ZM(1-e)$ for any idempotent $e\in E(M)$).

If $X$ is a projective $M$-CW complex then so is $Y=M\times I$ where $I$ is given the trivial action.  If we retain the above notation, then $Y_0=X_0\times \partial I\cong X_0\coprod X_0$.  The $n$-cells for $n\geq 1$ are obtained from attaching $P_n\times B^n\times \partial I\cong (P_n\coprod P_n)\times B^n$ and $P_{n-1}\times B^{n-1}\times I$.  Notice that $X\times \partial I$ is a projective $M$-CW subcomplex of $X\times I$.

If $X,Y$ are $M$-spaces, then an \emph{$M$-homotopy} between $M$-equivariant continuous maps $f,g\colon X\to Y$ is an $M$-equivariant mapping $H\colon X\times I\to Y$ with $H(x,0)=f(x)$ and $H(x,1)=g(x)$ for $x\in X$ where $I$ is viewed as having the trivial $M$-action.  We write $f\simeq_M g$ in this case.  We say that $X,Y$ are \emph{$M$-homotopy equivalent}, written $X\simeq_M Y$, if there are $M$-equivariant continuous mappings (called \emph{$M$-homotopy equivalences}) $f\colon X\to Y$ and $g\colon Y\to X$ with $gf\simeq_M 1_X$ and $fg\simeq_M 1_Y$.  We write $[X,Y]_M$ for the set of $M$-homotopy classes of $M$-equivariant continuous mappings $X\to Y$.

\begin{Lemma}\label{l:adjunction}
Let $X,Y$ be projective $M$-CW complexes and $A$ a projective $M$-CW subcomplex of $X$. Let  $f\colon A\to Y$ be a continuous $M$-equivariant cellular map.  Then the pushout $X\coprod_A Y$ is a projective $M$-CW complex.
\end{Lemma}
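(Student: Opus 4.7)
The plan is to construct a projective $M$-CW structure on the pushout $Z = X\coprod_A Y$ by defining its $n$-skeletons as $Z_n := X_n \coprod_{A_n} Y_n$ (the pushout of $M$-spaces along the inclusion $A_n\hookrightarrow X_n$ and the restriction $f|_{A_n}\colon A_n\to Y_n$, which lands in $Y_n$ because $f$ is cellular). Since $A$ is a projective $M$-CW subcomplex of $X$, for each $n\geq 0$ the projective $M$-set $P_n^X$ indexing the $n$-cells of $X$ splits as $P_n^X = P_n^A \coprod Q_n$, with $Q_n$ indexing the $n$-cells of $X$ lying outside $A$; being a retract of $P_n^X$, the $M$-set $Q_n$ is itself projective.

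The heart of the argument is the pushout identity
$$Z_n \;=\; Z_{n-1} \cup_{(Q_n\coprod P_n^Y)\times S^{n-1}} \bigl((Q_n\coprod P_n^Y)\times B^n\bigr),$$
where the attaching maps on $Q_n\times S^{n-1}$ and $P_n^Y\times S^{n-1}$ are obtained by composing the $X$- and $Y$-attaching maps with the natural maps $X_{n-1}\to Z_{n-1}$ and $Y_{n-1}\to Z_{n-1}$, respectively. To establish this, I would plug the decompositions $X_n = (X_{n-1}\cup_{A_{n-1}} A_n)\cup_{Q_n\times S^{n-1}}(Q_n\times B^n)$ and $Y_n = Y_{n-1}\cup_{P_n^Y\times S^{n-1}}(P_n^Y\times B^n)$ into $X_n\coprod_{A_n} Y_n$, and then apply two formal pushout identities: the pasting lemma for iterated pushouts, giving $(X_{n-1}\cup_{A_{n-1}} A_n)\coprod_{A_n} Y_n = X_{n-1}\coprod_{A_{n-1}} Y_n$, together with the fact that pushouts commute with each other (both being colimits).

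Finally, since pushouts commute with sequential colimits in $M$-spaces, $Z = \varinjlim Z_n$, so the filtration $\emptyset = Z_{-1}\subseteq Z_0\subseteq Z_1\subseteq\cdots$ exhibits $Z$ as a projective $M$-CW complex whose projective $M$-set of $n$-cells is $Q_n\coprod P_n^Y$. The main obstacle is simply bookkeeping the pushout algebra correctly; no genuine topological difficulty arises, since colimits in the category of $M$-spaces are computed by taking the corresponding colimits of underlying topological spaces and observing that the natural $M$-action is continuous.
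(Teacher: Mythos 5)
Your proposal is correct and follows essentially the same route as the paper: both identify the $n$-cells of $X\coprod_A Y$ as the $n$-cells of $Y$ together with the $n$-cells of $X$ not in $A$, indexed by the projective $M$-set $Q_n\coprod P_n^Y$, with attaching maps obtained by composing the original ones with the canonical maps into the adjunction space. The only difference is that the paper invokes this cell structure as a standard fact about CW adjunction spaces and just checks $M$-equivariance (using that the cells of $X$ outside $A$ form an $M$-subset, which is built into the definition of projective $M$-CW subcomplex), whereas you derive the skeletal pushout identity explicitly from the colimit algebra.
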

\begin{proof}
It is a standard result that $X\coprod_A Y$ is a CW complex whose $n$-cells are the $n$-cells of $Y$ together with the $n$-cells of $X$ not belonging to $A$.  In more detail, let $q\colon X\to X\coprod_A Y$ be the canonical mapping and view $Y$ as a subspace of the adjunction space.
Then the attaching map of a cell  coming from $Y$ is the original attaching map, whereas the attaching map of a cell of $X$ not belonging to $A$ is the composition of $q$ with its original attaching mapping.  It follows from the definition of a projective $M$-CW subcomplex and the construction that $X\coprod_A Y$ is a projective $M$-CW complex. Here it is important that a translate by $M$ of a cell from $X\setminus A$ is a cell of $X\setminus A$.
\end{proof}

A \emph{free $M$-CW subcomplex} of a free $M$-CW complex $X$ is an $M$-invariant subcomplex $A\subseteq X$ which is a union of $M$-cells of $X$.

The proof of Lemma~\ref{l:adjunction} yields the following.

\begin{Lemma}\label{l:adjunction_free_pushout}
Let $X,Y$ be free $M$-CW complexes and $A$ a free $M$-CW subcomplex of $X$. Let  $f\colon A\to Y$ be a continuous $M$-equivariant cellular map.  Then the pushout $X\coprod_A Y$ is a free $M$-CW complex.
\end{Lemma}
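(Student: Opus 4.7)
The plan is to observe that the proof of Lemma~\ref{l:adjunction} goes through verbatim once one replaces the word ``projective'' with ``free'' throughout, so the main task is to check that at each step where projectivity is invoked, freeness is in fact preserved. I will not redo the topological pushout analysis of the preceding lemma; instead I will simply track the indexing $M$-sets.

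More precisely, I would argue as follows. By Lemma~\ref{l:adjunction}, the adjunction space $X\coprod_A Y$ is a CW complex whose cells are the cells of $Y$ together with the cells of $X\setminus A$, with attaching maps either inherited from $Y$ or obtained by post-composing the original attaching map with the quotient $q\colon X\to X\coprod_A Y$. Writing $X_n$ and $Y_n$ for the skeleta built from free $M$-sets $P_n$ and $Q_n$ respectively, and writing $P_n=P_n'\amalg P_n''$ where $P_n''$ indexes the $n$-cells of $A$, the $n$-cells of $X\coprod_A Y$ are indexed by the $M$-set $Q_n\amalg P_n'$. Since $A$ is a free $M$-CW subcomplex, the cells of $A$ are closed under translation by $M$, so $P_n'$ is an $M$-invariant subset of the free $M$-set $P_n$, hence itself a free $M$-set (being a disjoint union of orbits of basis elements). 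The coproduct $Q_n\amalg P_n'$ of free $M$-sets is free, so the resulting skeletal filtration of $X\coprod_A Y$ is built from free $M$-cells, and $X\coprod_A Y$ is a free $M$-CW complex.

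The only real point requiring attention is the claim that $P_n'$ is itself free; this is where freeness of the ambient CW complex, together with the $M$-invariance of the subcomplex $A$, is used. Since no orbit of a basis element of $P_n$ can be split between $P_n'$ and $P_n''$ (by $M$-invariance of the cells of $A$), $P_n'$ is a sub-$M$-set that is a union of free orbits, hence free. I expect no other obstacles, since the cellular structure, attaching maps, and pushout topology are handled exactly as in the projective case.
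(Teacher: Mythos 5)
Your proposal is correct and matches the paper, which simply notes that the proof of Lemma~\ref{l:adjunction} carries over; the one point you isolate (that $P_n'$ is again free) is exactly what needs checking. Just note that the reason is really the definitional clause that a free $M$-CW subcomplex is a \emph{union of $M$-cells} $M\times\{b\}\times B^n$ — so $P_n''$, and hence its complement $P_n'$, is a union of full free orbits of basis elements — rather than mere $M$-invariance, which for a monoid would not by itself rule out sub-$M$-sets like $I\times\{b\}$ for a proper left ideal $I$.
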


A continuous mapping $f\colon X\to Y$ of spaces is an \emph{$n$-equivalence} if
\[f_\ast\colon \pi_q(X,x)\to \pi_q(Y,f(x))\] is a bijection for $0\leq q<n$ and a surjection for $q=n$ where $\pi_0(Z,z)=\pi_0(Z)$ (viewed as a pointed set with base point the component of $z$).  It is a \emph{weak equivalence} if it is an $n$-equivalence for all $n$, i.e., $f_\ast$ is a bijection for all $q\geq 0$.  We will consider a weak equivalence as an $\infty$-equivalence.  We shall see later that an $M$-equivariant weak equivalence of projective $M$-CW complexes is an $M$-homotopy equivalence.

Let $\mathrm{Top}(X,Y)$ denote the set of continuous maps $X\to Y$ for spaces $X,Y$ and $\mathrm{Top}_M(X,Y)$ denote the set of continuous $M$-equivariant maps $X\to Y$ between $M$-spaces $X,Y$.

\begin{Prop}\label{p:cell-maps}
Let $X$ be a space with a trivial $M$-action, $e\in E(M)$ and $Y$ an $M$-space.  Then there is a bijection between $\mathrm{Top}_M(Me\times X,Y)$ and $\mathrm{Top}(X,eY)$. The bijection sends $f\colon Me\times X\to Y$ to $\ov f\colon X\to eY$ given by $\ov f(x) = f(e,x)$ and $g\colon X\to eY$ to $\wh g\colon Me\times X\to Y$ given by $\wh g(m,x) = mg(x)$.
\end{Prop}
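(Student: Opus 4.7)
The plan is to verify directly that the two constructions $f\mapsto \ov f$ and $g\mapsto \wh g$ are well-defined maps between the two sets and that they are mutually inverse. There is no deep obstacle here; the whole content is a careful bookkeeping of the idempotent identity $e^2=e$ together with the fact that elements of the principal left ideal $Me$ are fixed on the right by $e$ (indeed, if $m=m'e\in Me$ then $me=m'e^2=m'e=m$).

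First I would verify that $\ov f(x)=f(e,x)$ actually lands in $eY$ and is continuous. For $M$-equivariance of $f$ combined with $e\cdot(e,x)=(e,x)$ gives $f(e,x)=f(e\cdot(e,x))=e\cdot f(e,x)\in eY$. Continuity of $\ov f$ follows since it is the composition of the continuous inclusion $X\to Me\times X$, $x\mapsto (e,x)$ (using that $M$ has the discrete topology, so $\{e\}\times X$ is open and closed in $Me\times X$), with $f$. Next I would check that $\wh g(m,x)=mg(x)$ defines a continuous $M$-equivariant map $Me\times X\to Y$. Equivariance is immediate: $\wh g(m'm,x)=m'mg(x)=m'\wh g(m,x)$, and $m'm\in Me$ whenever $m\in Me$. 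For continuity, observe that since $M$ is discrete we have the homeomorphism $Me\times X\cong \coprod_{m\in Me}\{m\}\times X$, and on each summand $\wh g$ is the composition of $g\colon X\to eY\subseteq Y$ with the continuous action of $m$ on $Y$.

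Finally I would verify that the two constructions are mutually inverse. For $g\colon X\to eY$, we have $\ov{\wh g}(x)=\wh g(e,x)=eg(x)$; and since $g(x)\in eY$ means $g(x)=ey$ for some $y\in Y$, idempotence gives $eg(x)=e^2y=ey=g(x)$, so $\ov{\wh g}=g$. Conversely, for $f\colon Me\times X\to Y$ and any $(m,x)\in Me\times X$, using the identity $me=m$ noted above together with $M$-equivariance of $f$,
\[
\wh{\ov f}(m,x)=m\ov f(x)=mf(e,x)=f(me,x)=f(m,x),
\]
so $\wh{\ov f}=f$. This gives the desired bijection, completing the proof. The only potential pitfall is forgetting to exploit $me=m$ on $Me$ in this last calculation; once that is in hand everything else is formal.
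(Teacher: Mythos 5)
Your proposal is correct and follows essentially the same route as the paper: the same direct verification that $\ov f$ lands in $eY$, that $\wh g$ is continuous and equivariant, and the same two computations (using $me=m$ for $m\in Me$ and $eg(x)=g(x)$ for $g(x)\in eY$) showing the constructions are mutually inverse. The extra detail you supply on continuity via discreteness of $M$ is a harmless elaboration of the paper's one-line justifications.
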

\begin{proof}
If $x\in X$, then $\ov f(x)=f(e,x) = f(e(e,x))=ef(e,x)\in eY$.  Clearly, $\ov f$ is continuous.  As $\wh g$ is the composition of $1_{Me}\times g$ with the action map, it follows that $\wh g$ is continuous. We show that the two constructions are mutually inverse.  First we check that
\[\wh{\ov f}(m,x) = m\ov f(x)=mf(e,x) = f(m(e,x))=f(me,x)=f(m,x)\] for $m\in Me$ and $x\in X$.  Next we compute that
\[\ov{\wh{g}}(x) = \wh g(e,x) = eg(x)=g(x)\] since $g(x)\in eY$.  This completes the proof.
\end{proof}

Proposition~\ref{p:cell-maps} is the key tool to transform statements about projective $M$-CW complexes into statement about CW complexes.  We shall also need the following lemma relating equivariant $n$-equivalences and $n$-equivalences.

\begin{Lemma}\label{l:relate.equiv}
Let $Y,Z$ be $M$-spaces and let $k\colon Y\to Z$ be an $M$-equivariant $n$-equivalence with $0\leq n\leq \infty$.  Let $e\in E(M)$ and $k'=k|_{eY}\colon eY\to eZ$.  Then $k'$ is an $n$-equivalence.
\end{Lemma}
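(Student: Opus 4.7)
The plan is to exhibit $eY$ and $eZ$ as topological retracts of $Y$ and $Z$ in a way that is compatible with $k$, so that $k'_\ast$ becomes a retract of $k_\ast$ on homotopy groups, and the $n$-equivalence property then transfers from $k$ to $k'$.

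First I would observe that, since $M$ acts continuously on $Y$, the map $\pi_Y\colon Y\to Y$ defined by $\pi_Y(y)=ey$ is continuous with image in $eY$. Because $e^2=e$, every element of the form $ey'\in eY$ satisfies $\pi_Y(ey')=e^2y'=ey'$, so $\pi_Y$ is a continuous retraction of $Y$ onto $eY$ with section the inclusion $i_Y\colon eY\hookrightarrow Y$; analogously define $\pi_Z$ and $i_Z\colon eZ\hookrightarrow Z$. The $M$-equivariance of $k$ gives $k\circ\pi_Y=\pi_Z\circ k$, so $k$ restricts to $k'$ and commutes with both retractions.

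Now fix a basepoint $y_0\in eY$ (so $ey_0=y_0$) and set $z_0=k(y_0)=k'(y_0)\in eZ$. Applying $\pi_q$ produces a commutative square comparing $k_\ast\colon \pi_q(Y,y_0)\to\pi_q(Z,z_0)$ with $k'_\ast\colon \pi_q(eY,y_0)\to\pi_q(eZ,z_0)$, in which $\pi_{Y\ast}\circ i_{Y\ast}=\mathrm{id}$ and $\pi_{Z\ast}\circ i_{Z\ast}=\mathrm{id}$. For surjectivity of $k'_\ast$ when $q\leq n$: given $[\alpha]\in\pi_q(eZ,z_0)$, surjectivity of $k_\ast$ yields $[\beta]\in\pi_q(Y,y_0)$ with $k_\ast[\beta]=[i_Z\circ\alpha]$; applying $\pi_{Z\ast}$ and invoking commutativity gives $k'_\ast[\pi_Y\circ\beta]=[\alpha]$. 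For injectivity when $q<n$: if $k'_\ast[\beta_0]=k'_\ast[\beta_1]$, then applying $i_{Z\ast}$ and commutativity yields $k_\ast[i_Y\circ\beta_0]=k_\ast[i_Y\circ\beta_1]$, and injectivity of $k_\ast$ together with $\pi_{Y\ast}\circ i_{Y\ast}=\mathrm{id}$ gives $[\beta_0]=[\beta_1]$. The $q=0$ case, in which $\pi_0$ is interpreted as a pointed set, is handled by the same argument applied to path components, using that the retraction and inclusion are basepoint-preserving.

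I do not anticipate any real obstacle: the content of the argument is the categorical fact that a retract (in the arrow category) of an $n$-equivalence is an $n$-equivalence, and the compatible retractions come for free from multiplication by the idempotent $e$.
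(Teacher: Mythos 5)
Your proof is correct and is essentially the paper's own argument: the paper likewise uses the inclusions $eY\hookrightarrow Y$, $eZ\hookrightarrow Z$ together with the retractions given by the action of $e$, forms the resulting commutative square on $\pi_q$, and transfers surjectivity and injectivity of $k_*$ to $k'_*$ via the retract identities. No differences worth noting.
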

\begin{proof}
First note that $k(ey) =ek(y)$ and so $k|_{eY}$ does indeed have image contained in $eZ$.  Let $y\in eY$ and $q\geq 0$. Let $\alpha\colon eY\to Y$ and $\beta\colon eZ\to Z$ be the inclusions.  Then note that the action of $e$ gives retractions $Y\to eY$ and $Z\to eZ$.  Hence we have a commutative diagram
\[\begin{tikzcd}\pi_q(Y,y)\ar{r}{k_*}\ar[transform canvas={xshift=0.7ex}]{d}{e_*} & \pi_q(Z,k(y))\ar[transform canvas={xshift=0.7ex}]{d}{e_*}\\ \pi_q(eY,y)\ar[transform canvas={xshift=-0.7ex}]{u}{\alpha_*}\ar{r}{k'_*} &\pi_q(eZ,k(y))\ar[transform canvas={xshift=-0.7ex}]{u}{\beta_*} \end{tikzcd}\] with $e_*\alpha_*$ and $e_*\beta_*$ identities.  Therefore, if $k_*$ is surjective, then $k'_*$ is surjective and if $k_*$ is injective, then $k'_*$ is injective.  The lemma follows.
\end{proof}

\subsection{Whitehead's theorem}

With Lemma~\ref{l:relate.equiv} in hand, we can prove an $M$-equivariant version of HELP (homotopy extension and lifting property)~\cite[Page~75]{May1999}, which underlies most of the usual homotopy theoretic results about CW complexes. If $X$ is a space, then $i_j\colon X\to X\times I$, for $j=0,1$, is defined by $i_j(x)=(x,j)$.

\begin{Thm}[HELP]\label{t:HELP}
Let $(X,A)$ be a projective relative $M$-CW complex of dimension at most $n\in \mathbb N\cup \{\infty\}$ and $k\colon Y\to Z$ an $M$-equivariant $n$-equivalence of $M$-spaces.  Then given $M$-equivariant continuous mappings $f\colon X\to Z$, $g\colon A\to Y$ and $h\colon A\times I\to Z$ such that $kg=hi_1$ and $fi=hi_0$ (where $i\colon A\to X$ is the inclusion), there exist $M$-equivariant continuous mappings $\til g$ and $\til h$ making the diagram
\[\begin{tikzcd}A\ar{rr}{i_0}\ar[hook]{dd}[swap]{i} & & A\times I\ar{dl}[swap]{\ov h}\ar[hook]{dd} & & A\ar{ll}[swap]{i_1}\ar[hook]{dd}{i}\ar{dl}{\ov g}\\
                              &Z&           &Y\ar[crossing over]{ll}[swap]{k}&   \\
                X\ar{rr}{i_0}\ar{ur}{\ov f}             & & X\times I\ar[dashrightarrow]{ul}[swap]{\til h} & & X\ar{ll}[swap]{i_1}\ar[dashrightarrow]{ul}[swap]{\til g}\end{tikzcd}\]
commute.
\end{Thm}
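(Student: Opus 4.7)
The plan is to argue by induction on the cellular dimension of $(X,A)$, attaching one projective $M$-cell at a time and using Proposition~\ref{p:cell-maps} together with Lemma~\ref{l:relate.equiv} to reduce the equivariant extension problem at each step to the classical non-equivariant HELP theorem on CW pairs of the form $(B^m, S^{m-1})$.

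First I would set up the induction. Write $X_{-1}=A$ and let $X_m$ denote the $m$-skeleton of the relative $M$-CW structure on $(X,A)$, so that $X_m$ is obtained from $X_{m-1}$ via a pushout as in~\eqref{eq:pushout} with attaching $M$-set $P_m$ projective. Suppose inductively that we have already constructed $M$-equivariant continuous maps $\til g_m\colon X_m\to Y$ and $\til h_m\colon X_m\times I\to Z$ satisfying $\til g_m|_A=g$, $\til h_m|_{A\times I}=h$, $\til h_m\circ i_0=f|_{X_m}$, and $k\circ \til g_m=\til h_m\circ i_1$. The base case $m=-1$ is the given data.

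For the inductive step, to pass from $X_{m-1}$ to $X_m$ (with $m\le n$) it suffices, by the universal property of the pushout defining $X_m$, to extend $\til g_{m-1}$ and $\til h_{m-1}$ over $P_m\times B^m$ and $P_m\times B^m\times I$ respectively. Decomposing $P_m\cong \coprod_{a\in A_m}Me_a$, we may handle one summand $Me_a$ at a time. By Proposition~\ref{p:cell-maps}, $M$-equivariant maps out of $Me_a\times B^m$ (resp.\ $Me_a\times B^m\times I$) are in bijection with ordinary continuous maps out of $B^m$ (resp.\ $B^m\times I$) into $e_aY$ or $e_aZ$. The data restricted to a single orbit becomes precisely a classical HELP configuration for the CW pair $(B^m,S^{m-1})$ with respect to the map $k|_{e_aY}\colon e_aY\to e_aZ$. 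Since $k$ is an $n$-equivalence and $m\le n$, Lemma~\ref{l:relate.equiv} shows that $k|_{e_aY}$ is also an $n$-equivalence, so the classical HELP theorem (see, e.g., \cite[p.~75]{May1999}) supplies the required extensions on $B^m$ and $B^m\times I$. Translating back through Proposition~\ref{p:cell-maps} and assembling over the disjoint summands yields the $M$-equivariant extensions on $Me_a\times B^m$; the universal property of the pushout then gives $\til g_m$ and $\til h_m$.

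Finally, if $X$ is infinite dimensional, I would take the colimit of the $\til g_m$ and $\til h_m$: since $X=\varinjlim X_m$ and $I$ is compact Hausdorff, $X\times I=\varinjlim X_m\times I$ as $M$-spaces, so the assembled maps are automatically continuous and $M$-equivariant. The main technical obstacle is the orbit-at-a-time bookkeeping: one must check that the equivariant compatibility conditions imposed by the theorem correspond exactly, under Proposition~\ref{p:cell-maps}, to the input needed for ordinary HELP, and that when we reassemble the extensions over all orbits we again obtain $M$-equivariant maps with image falling consistently in $Y$ and $Z$ (rather than only in $e_aY$, $e_aZ$). Once this translation is pinned down, the inductive step is nothing more than the classical HELP theorem applied to a ball pair, and the rest of the proof is standard.
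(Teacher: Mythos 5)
Your proposal is correct and follows essentially the same route as the paper: induction over the skeleta reducing to a single projective $M$-cell $(Me\times B^q, Me\times S^{q-1})$, then Proposition~\ref{p:cell-maps} to translate to an ordinary HELP configuration for $(B^q,S^{q-1})$ with target $k|_{eY}\colon eY\to eZ$, which Lemma~\ref{l:relate.equiv} certifies as an $n$-equivalence so that the classical HELP theorem of May applies. The extra bookkeeping you flag (one orbit summand at a time, and the colimit over skeleta in the infinite-dimensional case) is exactly what the paper's terser write-up leaves implicit.
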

\begin{proof}
Proceeding by induction on the skeleta and adjoining an $M$-cell at a time, it suffices to handle the case that \[(X,A)=(Me\times B^q,Me\times S^{q-1})\] with $0\leq q\leq n$.  By Proposition~\ref{p:cell-maps} it suffices to find continuous mappings   $\til g$ and $\til h$ making the diagram
\[\begin{tikzcd}S^{q-1}\ar{rr}{i_0}\ar[hook]{dd}[swap]{i} & & S^{q-1}\times I\ar{dl}[swap]{h}\ar[hook]{dd} & & S^{q-1}\ar{ll}[swap]{i_1}\ar[hook]{dd}{i}\ar{dl}{g}\\
                              &eZ&           &eY\ar[crossing over]{ll}[swap]{k}&   \\
                B^q\ar{rr}{i_0}\ar{ur}{f}             & & B^q\times I\ar[dashrightarrow]{ul}[swap]{\til h} & & B^q\ar{ll}[swap]{i_1}\ar[dashrightarrow]{ul}[swap]{\til g}\end{tikzcd}\]
commute where we have retained the notation of Proposition~\ref{p:cell-maps}.  The mapping $k\colon eY\to eZ$ is an $n$-equivalence by Lemma~\ref{l:relate.equiv} and so we can apply the usual HELP theorem~\cite[Page~75]{May1999} for CW complexes to deduce the existence of $\til g$ and $\til h$.  This completes the proof.
\end{proof}

As a consequence we may deduce the $M$-equivariant Whitehead theorems.

\begin{Thm}[Whitehead]\label{t:whitehead.vers1}
If $X$ is a projective $M$-CW complex and $k\colon Y\to Z$ is an $M$-equivariant $n$-equivalence of $M$-spaces, then the induced mapping $k_*\colon [X,Y]_M\to [X,Z]_M$ is a bijection if $\dim X<n$ or $n=\infty$ and a surjection if $\dim X=n<\infty$.
\end{Thm}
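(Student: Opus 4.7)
The plan is to deduce both parts (surjectivity and injectivity of $k_*$) directly from the $M$-equivariant HELP theorem already established as Theorem~\ref{t:HELP}, by choosing the right projective relative $M$-CW pair in each case. This is the standard route by which Whitehead-type theorems are extracted from HELP, and the adaptation to the $M$-equivariant setting is formal once HELP is in place.

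For surjectivity, given any $M$-equivariant continuous mapping $f\colon X\to Z$, I would apply HELP to the pair $(X,\emptyset)$ with the given $f$, and with $g\colon\emptyset\to Y$ and $h\colon\emptyset\times I\to Z$ the (vacuous) empty maps. The compatibility conditions $kg=hi_1$ and $fi=hi_0$ are trivial. Since $(X,\emptyset)$ is a projective relative $M$-CW complex of dimension $\dim X\leq n$ (this holds both when $\dim X<n$ and when $\dim X=n<\infty$, as well as $n=\infty$), HELP produces an $M$-equivariant mapping $\widetilde g\colon X\to Y$ and an $M$-equivariant homotopy $\widetilde h\colon X\times I\to Z$ with $\widetilde h i_0=f$ and $\widetilde h i_1=k\widetilde g$. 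Thus $k\widetilde g\simeq_M f$, so $k_*[\widetilde g]=[f]$.

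For injectivity (under the hypothesis $\dim X<n$ or $n=\infty$), suppose $f_0,f_1\colon X\to Y$ are $M$-equivariant with $kf_0\simeq_M kf_1$ via an $M$-equivariant homotopy $H\colon X\times I\to Z$. The key idea is to apply HELP to the projective relative $M$-CW pair $(X\times I,\,X\times \partial I)$, which, as remarked in the paragraph preceding the statement of HELP, is indeed a projective relative $M$-CW complex, and which has dimension $\dim X+1\leq n$ under the stated hypothesis. Take $f=H$, take $g\colon X\times\partial I\to Y$ to be $f_0\sqcup f_1$ (i.e.\ $f_0$ on $X\times\{0\}$ and $f_1$ on $X\times\{1\}$), and take $h\colon (X\times\partial I)\times I\to Z$ to be the constant homotopy $h((x,j),t)=H(x,j)$. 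One checks immediately that $kg=hi_1$ and $fi=hi_0$, so HELP produces an $M$-equivariant extension $\widetilde g\colon X\times I\to Y$ of $g$, which is precisely an $M$-homotopy from $f_0$ to $f_1$. Thus $[f_0]=[f_1]$ in $[X,Y]_M$.

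The main subtlety is simply bookkeeping the dimension of the pair used in each application of HELP: the pair $(X,\emptyset)$ has dimension $\dim X$ (giving surjectivity whenever $\dim X\leq n$), while the pair $(X\times I, X\times\partial I)$ has dimension $\dim X+1$ (giving injectivity only when $\dim X<n$, or $n=\infty$). This accounts exactly for the dichotomy between bijection and mere surjection in the statement. No further ingredients are needed beyond HELP and the fact, noted in the discussion preceding Lemma~\ref{l:adjunction}, that $X\times I$ is a projective $M$-CW complex containing $X\times\partial I$ as a projective $M$-CW subcomplex.
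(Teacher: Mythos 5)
Your proposal is correct and follows essentially the same route as the paper: surjectivity via HELP applied to $(X,\emptyset)$, and injectivity via HELP applied to $(X\times I, X\times\partial I)$ with the constant homotopy $h((x,j),t)=H(x,j)$, together with the same dimension bookkeeping explaining the bijection/surjection dichotomy. No gaps.
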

\begin{proof}
For surjectivity we apply Theorem~\ref{t:HELP} to the pair $(X,\emptyset)$.  If $f\colon X\to Z$, then $\til g\colon X\to Y$ satisfies $kg\simeq_M f$.  For injectivity, we apply Theorem~\ref{t:HELP} to the pair $(X\times I,X\times \partial I)$ and note that $X\times I$ has dimension one larger than $X$.  Suppose that $p,q\colon X\to Y$ are such that $kp\simeq_M kq$ via a homotopy $f\colon X\times I\to Z$. Put $g=p\coprod q\colon X\times \partial I\to Y$ and define $h\colon X\times \partial I\times I\to Z$ by $h(x,s,t)=f(x,s)$.  Then $\til g\colon X\times I\to Y$ is a homotopy between $p$ and $q$.
\end{proof}

\begin{Cor}[Whitehead]\label{c:whitehead}
If $k\colon Y\to Z$ an $M$-equivariant weak equivalence ($n$-equivalence) between projective $M$-CW complexes (of dimension less than $n$), then $k$ is an $M$-homotopy equivalence.
\end{Cor}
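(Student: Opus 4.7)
The plan is to deduce this corollary from the previous theorem (Theorem~\ref{t:whitehead.vers1}) via the standard two-sided inverse trick that works for ordinary Whitehead. The key observation is that Theorem~\ref{t:whitehead.vers1} already supplies bijections (or surjections) on $M$-homotopy classes $[X,-]_M$ whenever $X$ is a projective $M$-CW complex of the appropriate dimension, and here both $Y$ and $Z$ are such complexes, so we may plug each of them in as the source $X$.

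First I would construct a candidate homotopy inverse. Taking $X = Z$ in Theorem~\ref{t:whitehead.vers1}, the map $k_*\colon [Z,Y]_M \to [Z,Z]_M$ is a bijection (in the weak equivalence case, or because $\dim Z < n$ in the $n$-equivalence case), hence in particular surjective. Choose an $M$-equivariant map $g\colon Z\to Y$ with $k_*[g] = [1_Z]$, i.e., $kg \simeq_M 1_Z$.

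Next I would verify that $g$ is a homotopy inverse on the other side. Taking $X = Y$ in Theorem~\ref{t:whitehead.vers1}, the map $k_*\colon [Y,Y]_M \to [Y,Z]_M$ is a bijection (again because $\dim Y < n$ in the $n$-equivalence case, or unconditionally in the weak case). Now compute
\[
k_*[gk] = [kgk] = [(kg)k] = [1_Z\cdot k] = [k] = k_*[1_Y],
\]
using that $kg \simeq_M 1_Z$ together with the fact that pre-composition with $k$ respects $M$-homotopy. Injectivity of $k_*$ on $[Y,Y]_M$ then forces $gk \simeq_M 1_Y$. Combined with $kg\simeq_M 1_Z$ from the previous step, this shows that $k$ is an $M$-homotopy equivalence.

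There is no real obstacle here: the only ingredient beyond formal manipulation is that Theorem~\ref{t:whitehead.vers1} gives bijectivity (not merely surjectivity) under the stated dimension hypothesis, which is precisely what makes the backward cancellation step go through. I would just have to be mindful to state the dimension bookkeeping correctly in the $n$-equivalence case: the hypothesis ``projective $M$-CW complexes of dimension less than $n$'' ensures that both applications of Theorem~\ref{t:whitehead.vers1} land in the bijective range, whereas in the weak ($n=\infty$) case no dimension restriction is needed at all.
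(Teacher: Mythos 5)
Your proposal is correct and follows essentially the same route as the paper: apply Theorem~\ref{t:whitehead.vers1} with source $Z$ to produce $g$ with $kg\simeq_M 1_Z$, then with source $Y$ use injectivity of $k_*$ on $[Y,Y]_M$ together with $kgk\simeq_M k$ to conclude $gk\simeq_M 1_Y$. The dimension bookkeeping you note is exactly what the hypothesis ``dimension less than $n$'' is there for.
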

\begin{proof}
Under either hypothesis, $k_*\colon [Z,Y]_M\to [Z,Z]_M$ is a bijection by Theorem~\ref{t:whitehead.vers1} and so $kg\simeq_M 1_Z$ for some $M$-equivariant $g\colon Z\to Y$.  Then $kgk\simeq_M k$ and hence, since $k_*\colon [Y,Y]\to [Y,Z]$ is a bijection by Theorem~\ref{t:whitehead.vers1}, we have that $gk\simeq_M 1_Y$.  This completes the proof.
\end{proof}

\subsection{Cellular approximation}
Our next goal is to show that every $M$-equivariant continuous mapping of projective $M$-CW complexes is $M$-ho\-mo\-to\-py equivalent to a cellular one. We shall need the well-known fact that if $Y$ is a CW complex, then the inclusion $Y_n\hookrightarrow Y$ is an $n$-equivalence for all $n\geq 0$~\cite[Page~76]{May1999}.

\begin{Thm}[Cellular approximation]\label{t:cell.approx}
Let $f\colon X\to Y$ be a continuous  $M$-equivariant mapping with $X$ a projective $M$-CW complex and $Y$ a CW complex with a continuous action of $M$ by cellular mappings.  Then $f$ is $M$-homotopic to a continuous $M$-equivariant cellular mapping. Any two cellular approximations are homotopy equivalent via a cellular $M$-homotopy.
\end{Thm}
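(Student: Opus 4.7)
The plan is to induct on the skeleta of $X$, applying the $M$-equivariant HELP theorem (Theorem~\ref{t:HELP}) at each stage with the inclusion $j_n\colon Y_n\hookrightarrow Y$, which is an $n$-equivalence of $M$-spaces since the $M$-action on $Y$ is cellular (so each $Y_n$ is $M$-invariant). Rather than prove existence and uniqueness separately, I would prove the stronger \emph{relative} statement: if $A$ is a projective $M$-CW subcomplex of $X$ on which $f$ is already cellular, then $f$ is $M$-homotopic rel $A$ to a continuous $M$-equivariant cellular map. The existence in the theorem is then the case $A=\emptyset$. For uniqueness, given two cellular approximations $g_0,g_1$ with $M$-homotopies $F_0,F_1\colon X\times I\to Y$ from $f$, concatenate to obtain an $M$-homotopy $F\colon X\times I\to Y$ between $g_0$ and $g_1$; since $X\times I$ is a projective $M$-CW complex with $X\times\partial I$ as a projective $M$-CW subcomplex, and $F|_{X\times\partial I}=g_0\coprod g_1$ is cellular, the relative version applied with $A=X\times\partial I$ delivers the required cellular $M$-homotopy between $g_0$ and $g_1$.

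For the relative version, I would construct inductively cellular $M$-equivariant maps $g_n\colon X_n\cup A\to Y_n$ and $M$-equivariant homotopies $H_n\colon (X_n\cup A)\times I\to Y$ from $f|_{X_n\cup A}$ to $j_n\circ g_n$, starting with $g_{-1}=f|_A$ and the stationary homotopy. In the base case, $X_0\cup A$ differs from $A$ by a discrete projective $M$-set $P_0\cong\coprod_\alpha Me_\alpha$; $M$-equivariance forces $f(e_\alpha)\in e_\alpha Y$, and by Lemma~\ref{l:relate.equiv} the inclusion $e_\alpha Y_0\hookrightarrow e_\alpha Y$ is a $0$-equivalence, so there exist $y_\alpha\in e_\alpha Y_0$ and paths in $e_\alpha Y$ from $f(e_\alpha)$ to $y_\alpha$; Proposition~\ref{p:cell-maps} extends these $M$-equivariantly to define $g_0$ and $H_0$ on $P_0$. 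At the inductive step, the pair $(X_n\cup A,X_{n-1}\cup A)$ is a projective relative $M$-CW complex of dimension at most $n$ whose new $M$-cells are exactly the $n$-cells of $X\setminus A$; applying HELP with the $n$-equivalence $j_n$ and the data $(f|_{X_n\cup A},g_{n-1},H_{n-1})$ produces the required $g_n$ and $H_n$. Taking colimits then yields the cellular $M$-equivariant map $g\colon X\to Y$ and the $M$-homotopy $H\colon X\times I\to Y$ from $f$ to $g$, using that $X\times I=\varinjlim (X_n\cup A)\times I$ by compactness of $I$.

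The main obstacle, such as it is, is purely bookkeeping: organising the relative inductive framework so that HELP applies cleanly at each stage and the resulting colimits are genuinely continuous. All the nontrivial equivariant input is already packaged in Proposition~\ref{p:cell-maps} and Lemma~\ref{l:relate.equiv}, which reduce statements about $M$-equivariant maps out of $Me\times B^n$ to ordinary statements about maps $B^n\to eY$, together with HELP itself; beyond these, the argument closely mirrors the classical cellular approximation theorem for ordinary CW complexes.
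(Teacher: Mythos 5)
Your proof follows essentially the same route as the paper's: induction over the skeleta, the path argument via Proposition~\ref{p:cell-maps} in degree zero, and Theorem~\ref{t:HELP} applied to the $M$-equivariant $n$-equivalence $Y_n\hookrightarrow Y$ at each stage; the paper proves only the absolute statement and explicitly omits the relative version, which you formulate and then use to deduce the uniqueness clause, so your write-up is if anything more complete. One point needs care in the relative inductive step: you cannot apply Theorem~\ref{t:HELP} verbatim to the pair $(X_n\cup A,\,X_{n-1}\cup A)$ with $k=j_n\colon Y_n\to Y$, because $g_{n-1}$ restricted to $A$ is $f|_A$, which need not land in $Y_n$ when $A$ has cells of dimension greater than $n$. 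The standard fix is that the relative $M$-cells of $(X_n\cup A,\,X_{n-1}\cup A)$ are attached along maps into $X_{n-1}$, where $g_{n-1}$ is cellular and hence lands in $Y_{n-1}\subseteq Y_n$, so the cell-by-cell extension underlying the proof of HELP (via Proposition~\ref{p:cell-maps} and Lemma~\ref{l:relate.equiv}) goes through unchanged and the rest of your argument is fine.
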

\begin{proof}
We prove only the first statement.  The second is proved using a relative version of the first whose statement and proof we omit.  Note that $Y_n$ is $M$-equivariant for all $n\geq 0$ because $M$ acts by cellular mappings.  We construct by induction $M$-equivariant continuous mappings $f_n\colon X_n\to Y_n$ such that $f|_{X_n}\simeq_M f_n|_{X_n}$ via an $M$-homotopy $h_n$ and $f_n,h_n$ extend $f_{n-1}$, $h_{n-1}$, respectively (where we take $X_{-1}=\emptyset$).  We have, without loss of generality, $X_0=\coprod_{a\in A} Me_a$. Since $e_aY$ is a subcomplex of $Y$ with $0$-skeleton $e_aY_0$ and $f(e_a)\in e_aY$, we can find a path $p_a$ in $e_aY$ from $f(e_a)$ to an element $y_a\in e_aY_0$.  Define $f_0(me_a)=my_a$ and $h_0(me_a,t) = mp_a(t)$, cf.~Proposition~\ref{p:cell-maps}.

Assume now that $f_n,h_n$ have been defined.  Since the inclusion $Y_{n+1}\to Y$ is an $M$-equivariant $(n+1)$-equivalence, Theorem~\ref{t:HELP} gives a commutative diagram
\[\begin{tikzcd}X_n\ar{rr}{i_0}\ar[hook]{dd}[swap]{i} & & X_n\times I\ar{dl}[swap]{h_n}\ar[hook]{dd} & & X_n\ar{ll}[swap]{i_1}\ar[hook]{dd}{i}\ar{dl}{f_n}\\
                              &Y&           &Y_{n+1}\ar[crossing over]{ll}&   \\
                X_{n+1}\ar{rr}{i_0}\ar{ur}{f}             & & X_{n+1}\times I\ar[dashrightarrow]{ul}[swap]{h_{n+1}} & & X_{n+1}\ar{ll}[swap]{i_1}\ar[dashrightarrow]{ul}[swap]{f_{n+1}}\end{tikzcd}\]
 thereby establishing the inductive step.
We obtain our desired cellular mapping and $M$-homotopy by taking the colimit of the $f_n$ and $h_n$.
\end{proof}

\section{Base change}
\label{sec_base}
If $A$ is a right $M$-set and $B$ is a left $M$-set, then $A\otimes_M B$ is the quotient of $A\times B$ by the least equivalence relation $\sim$ such that $(am,b)\sim (a,mb)$ for all  $a\in A$, $b\in B$ and $m\in M$.  We write $a\otimes b$ for the class of $(a,b)$ and note that the mapping $(a,b)\mapsto a\otimes b$ is universal for mappings $f\colon A\times B\to X$ with $X$ a set and $f(am,b)=f(a,mb)$.  If $M$ happens to be a group, then $M$ acts on $A\times B$ via $m(a,b)=(am^{-1},mb)$ and $A\otimes_M B$ is just the set of orbits of this action. The tensor product $A\otimes_M()$ preserves all colimits because it is a left adjoint to the functor $X\mapsto X^A$.

If $B$ is a left $M$-set there is a natural preorder relation $\leq$ on $B$ where $x \leq y$ if and only if $Mx \subseteq My$. Let $\approx$ denote the symmetric-transitive closure of $\leq$. That is, $x \approx y$ if there is a sequence $z_1, z_2, \ldots, z_n$ of elements of $B$ such that for each $0 \leq i \leq n-1$ either $z_i \leq z_{i+1}$ or $z_i \geq z_{i+1}$. This is clearly an equivalence relation and we call the $\approx$-classes of $B$ the \emph{weak orbits} of the $M$-set. This corresponds to the notion of the weakly connected components of a directed graph. If $B$ is a right $M$-set then we use $B/M$ to denote the set of weak orbits of the $M$-set. Dually, if $B$ is a left $M$-set we use $M\backslash B$ to denote the set of weak orbits. Note that if $1$ denotes the trivial right $M$-set and $B$ is  a left $M$-set, then we have $M\backslash B=1\otimes_M B$.

Let $M,N$ be monoids.  An \emph{$M$-$N$-biset} is an $M\times N^{op}$-set.
If $A$ is an $M$-$N$-biset and $B$ is a left $N$-set, then the equivalence relation defining $A\otimes_N B$ is left $M$-invariant and so $A\otimes_N B$ is a left $M$-set with action $m(a\otimes b) = ma\otimes b$.

\begin{Prop}\label{p:base.change.proj}
Let $A$ be an $M$-$N$-biset that is (finitely generated) projective as an $M$-set and let $B$ be a (finitely generated) projective $N$-set.  Then $A\otimes_N B$ is a (finitely generated) projective $M$-set.
\end{Prop}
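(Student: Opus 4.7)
The plan is to reduce the statement to a decomposition argument, using the standard structure theorem for projective $M$-sets (every projective $M$-set is a coproduct of sets of the form $Me$ with $e\in E(M)$, as recalled earlier in the excerpt) together with the fact that the $M$- and $N$-actions on $A$ commute, so that right multiplication by an idempotent in $N$ is $M$-equivariant.

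First I would write $B\cong \coprod_{a\in \Lambda} Nf_a$ with $f_a\in E(N)$, where $\Lambda$ is finite if $B$ is finitely generated. Since $A\otimes_N ()$ commutes with coproducts (it is a left adjoint, as noted in the excerpt), I get an $M$-equivariant isomorphism
\[A\otimes_N B\;\cong\; \coprod_{a\in \Lambda} A\otimes_N Nf_a.\]
The key local computation is the natural $M$-equivariant isomorphism $A\otimes_N Nf\cong Af$ for $f\in E(N)$. The map in one direction sends $x\otimes nf\mapsto xnf$; its inverse sends $y\in Af$ to $y\otimes f$. One checks well-definedness and that the compositions are identities using $f^2=f$ and the defining relation $(xn,f)\sim (x,nf)$, giving $xnf\otimes f=xn\otimes f=x\otimes nf$.

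Next I would show that $Af$ is a projective $M$-set whenever $A$ is. The $M$- and $N$-actions commute, so right multiplication by $f$ is $M$-equivariant; call it $\rho_f\colon A\to A$. Since $f^2=f$, the map $\rho_f$ fixes $Af$ pointwise, so the inclusion $Af\hookrightarrow A$ is split by $\rho_f$. Thus $Af$ is a retract of $A$ in the category of $M$-sets, and hence is projective whenever $A$ is. Moreover, if $A$ is generated as an $M$-set by $\{x_1,\dots,x_k\}$, then $\{x_1f,\dots,x_kf\}$ generates $Af$, so finite generation is preserved.

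Combining these observations, $A\otimes_N B$ is a coproduct of projective $M$-sets indexed by $\Lambda$, hence projective, and is finitely generated whenever both $A$ and $B$ are. There is no real obstacle here — everything reduces to the elementary fact that multiplication by an idempotent is an $M$-equivariant retraction because the two actions commute; the only point to be careful about is the identification $A\otimes_N Nf\cong Af$, which must be checked as an isomorphism of $M$-sets rather than just of sets.
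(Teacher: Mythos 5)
Your proof is correct and follows essentially the same route as the paper: decompose $B$ as a coproduct of $N$-sets $Nf$ with $f\in E(N)$, identify $A\otimes_N Nf$ with $Af$, and observe that right multiplication by $f$ is an $M$-equivariant retraction of $A$ onto $Af$, so $Af$ inherits (finitely generated) projectivity from $A$. The only cosmetic difference is that you verify finite generation of $Af$ directly via images of generators, whereas the paper simply invokes that a retract of a finitely generated projective is again one; both are fine.
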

\begin{proof}
As $B$ is a (finite) coproduct of $N$-sets $Ne$ with $e\in E(N)$, it suffices to handle the case $B=Ne$.  Then $A\otimes_N Ne\cong Ae$ via $a\otimes n\mapsto an$ with inverse $a\mapsto a\otimes e$ for $a\in Ae$.  Now define $r\colon A\to Ae$ by $r(a)=ae$.  Then $r$ is an $M$-equivariant retraction.  So $Ae$ is a retract of a (finitely generated) projective and hence is a (finitely generated) projective.
\end{proof}

If $X$ is a left $M$-space and $A$ is a right $M$-set, then $A\otimes_M X$ is a topological space with the quotient topology.  Again the functor $A\otimes_M ()$ preserves all colimits.  In fact, $A\otimes_M X$ is the coequalizer in the diagram
\[\coprod_{A\times M}X\rightrightarrows \coprod_{A} X\rightarrow A\otimes_M X\]
where the top map sends $x$ in the $(a,m)$-component to $mx$ in the $a$-component and the bottom map sends $x$ in the $(a,m)$-component to $x$ in the $am$-component.

\begin{Cor}\label{c:base.change.cw}
If $A$ is an $M$-$N$-biset that is projective as an $M$-set and $X$ is a projective $N$-CW complex, then $A\otimes_N X$ is a projective $M$-CW complex.  If $A$ is in addition finitely generated as an $M$-set and $X$ is of $N$-finite type, then $A\otimes_N X$ is of $M$-finite type.  Moreover, $\dim A\otimes_N X=\dim X$.
\end{Cor}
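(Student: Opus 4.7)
The plan is to exploit the fact that $A\otimes_N(-)$ is a left adjoint (to the functor $X\mapsto X^A$) and hence preserves all colimits, together with Proposition~\ref{p:base.change.proj}, in order to transport the inductive projective $N$-CW structure of $X$ to a projective $M$-CW structure on $A\otimes_N X$.

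Working by induction on the skeleta, I would first note that $A\otimes_N X=\varinjlim A\otimes_N X_n$ by colimit preservation, and that the base of the induction is immediate: $A\otimes_N X_0=A\otimes_N P_0$ is a projective $M$-set by Proposition~\ref{p:base.change.proj} (finitely generated when $A$ and $P_0$ are). For the inductive step, I apply $A\otimes_N(-)$ to the defining pushout \eqref{eq:pushout}; since the functor preserves pushouts, the diagram
\[\begin{tikzcd} A\otimes_N(P_n\times S^{n-1})\ar{r}\ar[d,hook] & A\otimes_N X_{n-1}\ar[d,hook]\\ A\otimes_N(P_n\times B^n)\ar{r} & A\otimes_N X_n\end{tikzcd}\]
is again a pushout of $M$-spaces.

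The next step is to identify each $A\otimes_N(P_n\times Y)$, with $Y=B^n$ or $Y=S^{n-1}$ carrying a trivial $N$-action, with the projective $M$-cell $(A\otimes_N P_n)\times Y$. The set-theoretic map $a\otimes(p,y)\mapsto (a\otimes p,y)$ is clearly a well-defined $M$-equivariant bijection. Because $A$ and $P_n$ are discrete, $A\otimes_N P_n$ is discrete and both sides are disjoint unions of copies of $Y$ indexed by $A\otimes_N P_n$, so the map is in fact a homeomorphism. Applying Proposition~\ref{p:base.change.proj} then shows that $A\otimes_N P_n$ is a projective $M$-set (finitely generated when both $A$ and $P_n$ are), so $A\otimes_N X_n$ is obtained from $A\otimes_N X_{n-1}$ by attaching projective $M$-cells of dimension $n$. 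Combining this with the inductive hypothesis exhibits $A\otimes_N X$ as a projective $M$-CW complex whose $n$-skeleton is $A\otimes_N X_n$.

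The $M$-finite type assertion is immediate from the above, since only finitely many cells are attached in each dimension. For the dimension statement, observe that $A\otimes_N P_n=\emptyset$ precisely when $A=\emptyset$ or $P_n=\emptyset$, so (assuming $A\otimes_N X$ is nonempty) the set of dimensions in which $M$-cells are attached coincides with the set of dimensions in which $N$-cells are attached in $X$, giving $\dim A\otimes_N X=\dim X$. The only mildly delicate point is the topological identification $A\otimes_N(P_n\times Y)\cong(A\otimes_N P_n)\times Y$, but as noted, the discreteness of the $M$- and $N$-sets involved makes this routine, so I do not anticipate any genuine obstacle.
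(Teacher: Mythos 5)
Your proof is correct and follows essentially the same route as the paper: apply colimit preservation of $A\otimes_N(-)$ to the defining pushouts, use the identification $A\otimes_N(P_n\times C)\cong (A\otimes_N P_n)\times C$ for $C$ with trivial action, and invoke Proposition~\ref{p:base.change.proj} to see that the attached cells are projective $M$-cells. The extra care you take over the topology of that identification and over the dimension count is sound but not a departure from the paper's argument.
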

\begin{proof}
Since $A\otimes_N()$ preserves colimits, $A\otimes_N X=\varinjlim A\otimes_N X_n$.  Moreover, putting $X_{-1}=\emptyset$, we have that if $X_n$ is obtained as per the pushout square \eqref{eq:pushout}, then $A\otimes_N X_n$ is obtained from the pushout square
\[\begin{tikzcd}(A\otimes_N P_n)\times S^{n-1}\ar{r}\ar[d,hook] & A\otimes_N X_{n-1}\ar[d,hook]\\ (A\otimes_N P_n)\times B^n\ar{r} & A\otimes_N X_n \end{tikzcd}\] by preservation of colimits and the observation that if $C$ is a trivial left $N$-set and $B$ is a left $N$-set, then $A\otimes_N (B\times C)\cong (A\otimes_N B)\times C$ via $a\otimes (b,c)\mapsto (a\otimes b,c)$.  The result now follows from Proposition~\ref{p:base.change.proj}
\end{proof}

By considering the special case where $M$ is trivial and $A$ is a singleton, and observing that a projective $M$-set $P$ is finitely generated if and only if $M\backslash P$ is finite, we obtain the following corollary.

\begin{Cor}\label{c:quotient}
Let $X$ be a projective $M$-CW complex.  Then $M\backslash X$ is a CW complex.  Moreover, $X$ is $M$-finite (of $M$-finite type) if and only if $M\backslash X$ is finite (of finite type).
\end{Cor}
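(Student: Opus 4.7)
The plan is to derive this corollary directly from Corollary~\ref{c:base.change.cw} by specializing $N=M$ (the monoid acting on $X$) and letting $A=1$ be the singleton $\{1\}$-$M$-biset, i.e.\ a one-point set with trivial left and right actions. Since any set is a projective $\{1\}$-set, and the one-point set is trivially finitely generated as a $\{1\}$-set, Corollary~\ref{c:base.change.cw} applies and yields that $1\otimes_M X$ is a projective $\{1\}$-CW complex of the same dimension as $X$; that is to say, an ordinary CW complex. As was observed at the start of Section~\ref{sec_base}, $1\otimes_M X$ is precisely $M\backslash X$, proving the first assertion.

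For the finite-type equivalences I would first establish the intermediate fact that a projective left $M$-set $P$ is finitely generated if and only if $M\backslash P$ is finite. Writing $P\cong \coprod_{a\in A} Me_a$ with $e_a\in E(M)$, this reduces to showing that each summand $Me_a$ forms a single weak orbit and that distinct summands lie in distinct weak orbits. The first point is immediate: every $x=me_a\in Me_a$ satisfies $Mx\subseteq Me_a$, so $x\leq e_a$ in the preorder of Section~\ref{sec_base}, hence all elements of $Me_a$ are $\approx$-equivalent to $e_a$. For the second, if $x\in Me_a$ then $Mx\subseteq Me_a$, and the summands $Me_a$, $Me_b$ are disjoint whenever $a\neq b$, so no $\approx$-chain can cross between them. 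Consequently $M\backslash P$ is in bijection with the index set $A$, which is finite iff $P$ is finitely generated.

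To finish, the functor $1\otimes_M(-)$ preserves colimits, so applying it to the skeletal pushout squares~\eqref{eq:pushout} that build $X$ yields analogous pushout squares that build $M\backslash X$, with each $P_n$ replaced by $M\backslash P_n$. Thus $M\backslash X$ has exactly $|M\backslash P_n|$ cells in dimension $n$, so by the intermediate lemma each $P_n$ is finitely generated iff $M\backslash P_n$ is finite iff $M\backslash X$ has finitely many $n$-cells; this gives the $M$-finite type equivalence. Combining with the dimension identity $\dim(M\backslash X)=\dim X$ from Corollary~\ref{c:base.change.cw} gives the $M$-finite equivalence. The only substantive step is the intermediate lemma on weak orbits of projective $M$-sets, and even this is essentially an unwinding of definitions once one uses the standard decomposition $P\cong\coprod_a Me_a$; everything else is a formal specialization of the base-change corollary.
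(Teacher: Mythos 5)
Your proposal is correct and follows essentially the same route as the paper, which obtains this corollary precisely by specializing Corollary~\ref{c:base.change.cw} to the case where the target monoid is trivial and the biset is a singleton, together with the observation that a projective $M$-set $P$ is finitely generated if and only if $M\backslash P$ is finite. Your explicit verification of that last observation via the decomposition $P\cong\coprod_{a\in A}Me_a$ is a correct unwinding of what the paper leaves as a remark.
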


The following observation will be used many times.

\begin{Prop}\label{p:components}
Let $X$ be a locally path connected $N$-space and $A$ an $M$-$N$-biset.  Then $\pi_0(X)$ is an $N$-set and $\pi_0(A\otimes_N X)\cong A\otimes_N\pi_0(X)$.
\end{Prop}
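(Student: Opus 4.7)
The plan is to build the comparison map directly and then deduce it is a bijection from the preservation of suitable colimits by $\pi_0$. First I would check that $\pi_0(X)$ is a left $N$-set: each $n\in N$ acts on $X$ by a continuous map, which permutes path components and so induces a self-map of $\pi_0(X)$; the monoid axioms transfer by functoriality. I then define $\Phi\colon A\otimes_N \pi_0(X)\to \pi_0(A\otimes_N X)$ on representatives by $a\otimes[x]\mapsto [a\otimes x]$. Well-definedness on $A\times\pi_0(X)$ is clear because a path $\gamma$ in $X$ from $x_0$ to $x_1$ pushes forward to a path $a\otimes\gamma$ in $A\otimes_N X$; compatibility with the tensor relation holds since $(an)\otimes x=a\otimes nx$ already in $A\otimes_N X$. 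Finally $\Phi$ is $M$-equivariant since $\Phi(ma\otimes[x])=[ma\otimes x]=m[a\otimes x]$.

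To show $\Phi$ is a bijection, I would apply $\pi_0$ to the coequalizer presentation
\[\coprod_{A\times N}X\rightrightarrows \coprod_A X\to A\otimes_N X\]
recalled earlier in the excerpt. Since $\pi_0$ preserves arbitrary coproducts, this produces a diagram
\[\coprod_{A\times N}\pi_0(X)\rightrightarrows \coprod_A \pi_0(X)\to \pi_0(A\otimes_N X),\]
whose parallel arrows unwind to exactly the maps defining $A\otimes_N\pi_0(X)$ as a set-theoretic coequalizer (one sends $[x]$ in the $(a,n)$-component to $[nx]$ in the $a$-component, the other to $[x]$ in the $an$-component). Thus, provided the displayed diagram is a coequalizer in $\mathbf{Set}$, we obtain $\pi_0(A\otimes_N X)\cong A\otimes_N\pi_0(X)$, and inspection shows the induced isomorphism is precisely $\Phi$.

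The one nontrivial step, and the main obstacle, is justifying that $\pi_0$ sends this topological coequalizer to a set-theoretic one. My plan is to invoke the fact that on the full subcategory of locally path connected spaces, $\pi_0$ is left adjoint to the discrete-space functor $\mathbf{Set}\to\mathbf{Top}$ and hence preserves all colimits existing there. Coproducts of locally path connected spaces are locally path connected, so it remains only to check that $A\otimes_N X$ is. For this I would prove a general lemma: any topological quotient $q\colon Y\to Z$ of a locally path connected space $Y$ is locally path connected. Indeed, given $z\in Z$ and open $U\ni z$, the path component $C$ of $z$ in $U$ has open preimage, because any $y\in q^{-1}(C)\subseteq q^{-1}(U)$ admits a path connected open neighborhood $V\subseteq q^{-1}(U)$, and the path connected set $q(V)\subseteq U$ meets $C$ at $q(y)$, forcing $q(V)\subseteq C$ and hence $V\subseteq q^{-1}(C)$. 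Applying this lemma to the quotient $\coprod_A X\to A\otimes_N X$ supplies the missing local path connectivity and concludes the argument.
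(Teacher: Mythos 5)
Your proof is correct and follows essentially the same route as the paper: the paper's argument is precisely that $\pi_0$ is left adjoint to the discrete-space functor on locally path connected spaces, hence preserves colimits, combined with the coequalizer presentation of $A\otimes_N X$. Your write-up is in fact more careful than the paper's one-line proof, since you explicitly verify the point the paper leaves implicit — that $A\otimes_N X$ is again locally path connected (via the standard lemma that quotients of locally path connected spaces are locally path connected), so the topological coequalizer really is the coequalizer in the subcategory where the adjunction lives.
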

\begin{proof}
Note that the functor $X\mapsto \pi_0(X)$ is left adjoint to the inclusion of the category of $N$-sets into the category of locally path connected $M$-spaces and hence it preserves all colimits.  The result now follows from the description of tensor products as coequalizers of coproducts.
\end{proof}

The advantage of working with $M$-homotopies is that they behave well under base change.

\begin{Prop}\label{p:preservation.hom.equiv}
Let $A$ be an $M$-$N$-biset and let $X,X'$ be $N$-homotopy equivalent $N$-spaces. Then $A\otimes_N X'$ is $M$-homotopy equivalent to $A\otimes_N X'$.  In particular, if $Y,Z$ are $M$-spaces and $Y\simeq_M Z$, then $M\backslash Y\simeq M\backslash Z$.
\end{Prop}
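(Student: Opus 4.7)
The plan is to show that the functor $A\otimes_N(-)$ from $N$-spaces to $M$-spaces preserves $M$-homotopies, and hence $M$-homotopy equivalences. The crux is a natural $M$-equivariant homeomorphism $A\otimes_N (X\times I)\cong (A\otimes_N X)\times I$, where $I$ carries the trivial $N$-action on the left and the resulting trivial $M$-action on the right. Once this is in hand, the rest is essentially formal manipulation with the coequalizer/colimit description of the tensor product already exploited in Corollary~\ref{c:base.change.cw}.

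First I would establish the homeomorphism. The obvious map $a\otimes (x,t)\mapsto (a\otimes x,t)$ is a well-defined $M$-equivariant bijection because both sides arise as quotients of the topological space $A\times X\times I$ by the same equivalence relation (identifying $(an,x,t)$ with $(a,nx,t)$). To promote this bijection to a homeomorphism, I would invoke the standard fact that the product of a quotient map with the identity of a locally compact Hausdorff space is again a quotient map. Applying this with the quotient $A\times X\to A\otimes_N X$ and the factor $I$, one sees that both $(A\otimes_N X)\times I$ and $A\otimes_N(X\times I)$ carry the quotient topology from $A\times X\times I$ with respect to the same equivalence relation, so they are canonically homeomorphic. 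Naturality in $X$ is immediate.

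Now suppose $f\colon X\to X'$ and $g\colon X'\to X$ are mutually inverse $N$-homotopy equivalences, and let $H\colon X\times I\to X$, $K\colon X'\times I\to X'$ be $N$-homotopies realising $gf\simeq_N 1_X$ and $fg\simeq_N 1_{X'}$. Applying the functor $A\otimes_N(-)$ yields $M$-equivariant continuous maps $1_A\otimes f$ and $1_A\otimes g$, and by functoriality one has $(1_A\otimes g)(1_A\otimes f)=1_A\otimes(gf)$ and $1_A\otimes 1_X=1_{A\otimes_N X}$. Precomposing $1_A\otimes H$ with the homeomorphism $(A\otimes_N X)\times I\cong A\otimes_N(X\times I)$ of the previous paragraph produces an $M$-homotopy from $(1_A\otimes g)(1_A\otimes f)$ to $1_{A\otimes_N X}$, and analogously for $K$. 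This proves the first assertion. For the second, viewing the singleton $1$ as a $1$-$M$-biset gives $1\otimes_M Y\cong M\backslash Y$ (as noted in the paragraph preceding Proposition~\ref{p:base.change.proj}, and clearly a homeomorphism since both are the same quotient of $Y$), so the special case $A=1$ with trivial $M$ on the left yields $M\backslash Y\simeq M\backslash Z$. The only nonformal point in the whole argument is the topological identification in the first step; everything else is diagram chasing.
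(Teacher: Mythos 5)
Your proposal is correct and follows essentially the same route as the paper: the paper's proof also reduces everything to the identification $A\otimes_N(X\times I)\cong (A\otimes_N X)\times I$ and then applies $A\otimes_N(-)$ to the given $N$-homotopies. You simply spell out in more detail why that identification is a homeomorphism (via the fact that a quotient map times the identity of the locally compact Hausdorff space $I$ is a quotient map), a point the paper takes as immediate.
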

\begin{proof}
It suffices to prove that if $Y,Z$ are $N$-spaces and $f,g\colon Y\to Z$ are $N$-homotopic $N$-equivariant maps, then
\[A\otimes_N f,A\otimes_N g\colon  A\otimes_N Y\to A\otimes_NZ\] are $M$-homotopic.  This follows immediately from the identification of $A\otimes_N(Y\times I)$ with  $(A\otimes_N Y)\times I$.  For if $H\colon Y\times I\to Z$ is an $N$-homotopy between $f$ and $g$, then $A\otimes_N H$ provides the $M$-homotopy between $A\otimes_N f$ and $A\otimes_N g$.
\end{proof}

The following base change lemma, and its dual, is convenient for dealing with bisets.

\begin{Lemma}\label{l:factor.out}
Let $A$ be an $M\times M^{op}$-set and consider the right $M\times M^{op}$-set $M$ with the right action $m(m_L,m_R)=mm_L$.  Then $A/M$ is a left $M$-set and there is an $M$-equivariant isomorphism  $A/M\to M\otimes_{M\times M^{op}} A$.
\end{Lemma}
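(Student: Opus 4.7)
The plan is to exhibit mutually inverse $M$-equivariant maps between $A/M$ and $M\otimes_{M\times M^{op}}A$. I first need to pin down the $M$-actions on both sides. On $A/M$, the left $M$-action on $A$ commutes with the right $M$-action (since $A$ is an $M\times M^{op}$-set), so it descends to a well-defined left $M$-action on weak orbits by $m\cdot[a]=[ma]$. On the other side, the monoid $M$ carries a left $M$-action by left multiplication that commutes with the prescribed right action $m(m_L,m_R)=mm_L$ by associativity, making $M$ into an $M$-$(M\times M^{op})$-biset; consequently $M\otimes_{M\times M^{op}}A$ inherits a left $M$-action via $m'\cdot(m\otimes a)=(m'm)\otimes a$.

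Next I would define the forward map $\bar\phi\colon A/M\to M\otimes_{M\times M^{op}}A$ by $[a]\mapsto 1\otimes a$. Well-definedness follows by specialising the tensor relation with $m_L=1$: since $1\cdot(1,m_R)=1$ and $(1,m_R)\cdot a=am_R$, we obtain $1\otimes am_R=1\otimes a$, so $a$ and $am_R$ have the same image; the weak-orbit relation on $A$, being the transitive closure of these one-step identifications, is therefore respected. Equivariance is immediate from $1\otimes ma=1\cdot(m,1)\otimes a=m\otimes a=m\cdot\bar\phi([a])$.

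For the inverse I would define $\psi\colon M\otimes_{M\times M^{op}}A\to A/M$ on representatives by $(m,a)\mapsto[ma]$, using the left $M$-action of $M$ on $A$. The only piece of real content is verifying that the defining relation of the tensor product is honoured: the two sides of $(mm_L,a)\sim(m,m_La m_R)$ map to $[mm_La]$ and $[mm_La m_R]$ respectively, and these agree in $A/M$ because they differ by a right action. Equivariance is transparent from the formula.

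Finally I would confirm the two maps are mutually inverse by direct computation: $\psi\bar\phi([a])=[1\cdot a]=[a]$, and $\bar\phi\psi(m\otimes a)=1\otimes ma=1\cdot(m,1)\otimes a=m\otimes a$. No step presents a real obstacle; the only place calling for care is checking well-definedness of $\psi$ on the tensor relation, which the commutativity of the left and right $M$-actions on $A$ resolves at once.
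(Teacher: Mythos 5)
Your proof is correct and follows essentially the same route as the paper's: the forward map $[a]\mapsto 1\otimes a$ (well-defined because $1\otimes am_R=1\otimes(1,m_R)a=1(1,m_R)\otimes a=1\otimes a$), the inverse induced by $(m,a)\mapsto[ma]$ after checking compatibility with the tensor relation, and the two direct computations showing the maps are mutually inverse. Nothing is missing.
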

\begin{proof}
Clearly, $A/M=A\otimes_{M} 1$ is a left $M$-set.  Write $[a]$ for the class of $a$ in $A/M$.  Define $f\colon A/M\to M\otimes_{M\times M^{op}} A$ by $f([a]) = 1\otimes a$.  This is well defined and $M$-equivariant because if $a\in A$ and $m\in M$, then $1\otimes am=1\otimes (1,m)a=1(1,m)\otimes a=1\otimes a$ and $1\otimes ma = 1\otimes (m,1)a=1(m,1)\otimes a=m\otimes a$.  Define $G\colon M\times A\to A/M$ by $G(m,a)  = [ma]$.  If $m,m_L,m_R\in M$, then $G(m(m_L,m_R),a) = G(mm_L,a) = [mm_La]$ and $G(m,(m_L,m_R)a) = [mm_Lam_R]=[mm_La]$.  Therefore, $G$ induces a well defined mapping $g\colon M\otimes_{M\times M^{op}}A\to A/M$.  Then we check that $gf([a])=g(1\otimes a)=[a]$ and $fg(m\otimes a) = f([ma])=1\otimes ma=1\otimes (m,1)a=1(m,1)\otimes a=m\otimes a$.  Thus $f$ and $g$ are inverse isomorphisms.
\end{proof}

The following basic result will be used later.

\begin{Prop}\label{p:easy.tensor}
Let $G$ be a group. Then
$G\times G$ is a $(G\times G^{op})$-$G$-biset that is free as right $G$-set on cardinality of $G$ generators under the right action $(g,g')h=(gh,h^{-1}g')$.
\end{Prop}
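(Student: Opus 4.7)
The plan is to verify three things directly from the definitions: that the stated right $G$-action is compatible with the standard $(G\times G^{op})$-action so that we genuinely have a biset, that the right action is free, and finally to exhibit an explicit basis of cardinality $|G|$.

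First I would record the left action as $(m_L,m_R)(g,g')=(m_Lg,g'm_R)$, since this is the standard way to view $G\times G$ as a $(G\times G^{op})$-set. A one-line computation checks compatibility with the right action: both $\bigl((m_L,m_R)(g,g')\bigr)h$ and $(m_L,m_R)\bigl((g,g')h\bigr)$ reduce to $(m_Lgh,h^{-1}g'm_R)$, confirming that $G\times G$ is a $(G\times G^{op})$-$G$-biset.

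For freeness as a right $G$-set, I would note that the stabilizer of a point $(g,g')$ consists of those $h\in G$ with $gh=g$ and $h^{-1}g'=g'$; cancelling in the group forces $h=1$, so the action is free. To produce a basis I would take $B=\{(1,g):g\in G\}$, which clearly has cardinality $|G|$. Any pair $(g_1,g_2)\in G\times G$ can be written as $(1,g_1g_2)\cdot g_1=(g_1,g_1^{-1}g_1g_2)=(g_1,g_2)$, so every element lies in the $G$-orbit of some element of $B$. Since different elements of $B$ lie in different orbits (because their product-coordinates $1\cdot g$ are distinct, and the map $(g,g')\mapsto gg'$ is constant on right $G$-orbits) and the action is free, each element of $G\times G$ has a \emph{unique} expression as $b\cdot h$ with $b\in B$ and $h\in G$. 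This is exactly the condition that $B$ be a free basis.

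There is essentially no obstacle here: the result is a bookkeeping check on the given formula. The only subtlety worth flagging is to be careful with the $\mathrm{op}$: one must use $(m_L,m_R)(g,g')=(m_Lg,g'm_R)$ (and not $m_Rg'$) in order for the biset compatibility to hold with the prescribed right action $(g,g')h=(gh,h^{-1}g')$.
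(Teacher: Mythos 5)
Your proof is correct and follows essentially the same route as the paper's: verify the two actions commute, check that the right $G$-action has trivial stabilizers, and observe that the orbits are classified by the product map $(g,g')\mapsto gg'$, giving $|G|$ orbits and hence a free basis of that cardinality. Your version merely spells out the explicit basis $\{(1,g):g\in G\}$ where the paper leaves the orbit representatives implicit.
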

\begin{proof}
It is easy to check that right action of $G$ is indeed an action commuting with the left action of $G\times G^{op}$.  Moreover, the right action of $G$ is free and two elements $(g_1,g_2)$ and $(g_1',g_2')$ are in the same right $G$-orbit if and only if $g_1g_2=g_1'g_2'$.  This completes the proof.
\end{proof}

\begin{Cor}\label{c:one-sided}
Let $M$ be a monoid and $X$  a projective $M\times M^{op}$-CW complex.
\begin{enumerate}
\item $X/M$ is a projective $M$-CW complex and $M\backslash X$ is a projective $M^{op}$-CW complex.
\item   If $X$ is of $M\times M^{op}$-finite type, then $X/M$ is of $M$-finite type and dually for $M\backslash X$.
\item  $\dim X/M=\dim X=\dim M\backslash X$.
\item If $X,Y$ are $M\times M^{op}$-homotopic projective $M\times M^{op}$-CW complexes, then $X/M$ and $Y/M$  (respectively, $M\backslash X$ and $M\backslash Y$) are $M$-homotopic projective $M$-CW complexes (respectively, $M^{op}$-homotopic projective $M^{op}$-CW complexes).
\end{enumerate}
\end{Cor}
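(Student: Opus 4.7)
The plan is to derive this corollary as a direct consequence of the base change machinery already established in the previous subsection. By Lemma~\ref{l:factor.out}, the quotient $X/M$ is naturally isomorphic as a left $M$-set to $M\otimes_{M\times M^{op}} X$, where $M$ is regarded as an $M$-$(M\times M^{op})$-biset via $n\cdot m=nm$ on the left and $m\cdot(m_L,m_R)=mm_L$ on the right. The key observation is that, as a left $M$-set, this $M$ is \emph{free of rank one}, generated by $1\in M$, and hence is finitely generated and projective. Dually, $M\backslash X$ is isomorphic to $M\otimes_{M^{op}\times M} X$ (applying Lemma~\ref{l:factor.out} with $M^{op}$ in place of $M$ and noting that an $M\times M^{op}$-set is the same as an $M^{op}\times (M^{op})^{op}$-set), where now $M$ is regarded as an $M^{op}$-$(M^{op}\times M)$-biset that is free of rank one as a left $M^{op}$-set.

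With these identifications in place, parts (1), (2), and (3) follow directly from Corollary~\ref{c:base.change.cw}: because the biset $M$ is finitely generated projective on the relevant one-sided side, $M\otimes_{M\times M^{op}} X$ is a projective $M$-CW complex of the same dimension as $X$, and is of $M$-finite type whenever $X$ is of $M\times M^{op}$-finite type. The analogous conclusions for $M\backslash X$ follow by symmetry from the dual identification. For part (4), Proposition~\ref{p:preservation.hom.equiv} applied with $A=M$ immediately yields that an $M\times M^{op}$-homotopy equivalence $X\simeq_{M\times M^{op}} Y$ induces an $M$-homotopy equivalence
\[X/M\cong M\otimes_{M\times M^{op}} X\simeq_M M\otimes_{M\times M^{op}} Y\cong Y/M,\]
and dually for $M\backslash X$ and $M\backslash Y$.

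I do not anticipate any serious obstacle, since the argument is essentially a bookkeeping exercise once the correct biset structures have been identified. The only subtle point worth flagging is that one must verify the dual version of Lemma~\ref{l:factor.out} carefully, so that the right $M$-set $M\backslash X$ acquires its left $M^{op}$-action compatibly with the tensor product description. This is transparent once one substitutes $M^{op}$ for $M$ throughout the proof of Lemma~\ref{l:factor.out}, swapping the roles of left and right actions accordingly.
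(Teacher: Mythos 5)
Your proposal is correct and follows exactly the paper's own route: the paper's proof of this corollary consists precisely of citing Lemma~\ref{l:factor.out} to identify $X/M$ with $M\otimes_{M\times M^{op}}X$ (and dually), then invoking Corollary~\ref{c:base.change.cw} for items (1)--(3) and Proposition~\ref{p:preservation.hom.equiv} for item (4). The observation that the relevant biset $M$ is free of rank one as a one-sided $M$-set is the same point the paper relies on implicitly.
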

\begin{proof}
The first three items  follow from  Corollary~\ref{c:base.change.cw} and Lemma~\ref{l:factor.out} (and their duals).  The final statement follows from Lemma~\ref{l:factor.out} and Proposition~\ref{p:preservation.hom.equiv}.
\end{proof}

We shall frequently use without comment that if $A$ is an $M$-$N$-biset and $B$ is an $N$-set, then $\mathbb Z[A\otimes_N B]\cong \mathbb ZA\otimes_{\mathbb ZN}\mathbb ZB$ as left $\mathbb ZM$-modules.  Indeed, there are natural isomorphisms of abelian groups
\begin{align*}
\Hom_{\mathbb ZM}(\mathbb ZA\otimes_{\mathbb ZN} \mathbb ZB,V)&\cong \Hom_{\mathbb ZN}(\mathbb ZB,\Hom_{ZM}(\mathbb ZA,V))\\ &\cong \Hom_N(B,\Hom_M(A,V))\\ &\cong \Hom_M(A\otimes_N B,V)
\\ &\cong \Hom_{\mathbb ZM}(\mathbb Z[A\otimes_N B],V)
\end{align*}
 for a $\mathbb ZM$-module $V$ and so we can apply Yoneda's Lemma.

\section{Simplicial sets}\label{sec_simplicial}
An important source of examples of rigid $M$-CW complexes will come
from simplicial sets which admit suitable monoid actions. In this
section we introduce the notion of a rigid $M$-simplicial set, and
we show how these give rise to rigid $M$-CW complexes via the geometric
realisation functor. For further background on simplicial sets we
refer the reader to \cite[Chapter~8]{Weibel1994} or \cite{MaySimplicialObjects}.


%
%
%
%
Let $\Delta$ denote the \emph{simplicial category}. It has objects all
the finite linearly ordered sets $[n] = \{0,1,\ldots, n-1\}$
($n \geq 0$) and morphisms given by (non-strictly) order-preserving
maps. A \emph{simplicial set} $X$ is then a functor
$X\colon \Delta^{op} \rightarrow {\emph{\textbf{Set}}}$ from $\Delta^{op}$
to the category of sets. For each $n$, the image of $[n]$ under $X$ is
denoted $X_n$ and is called the \emph{set of $n$-simplicies} of the
simplicial set. Any simplicial set $X$ may be defined combinatorially
as a collection of sets $X_n$ ($n \geq 0$) and functions $d_i\colon X_n \rightarrow X_{n-1}$ and
$s_i\colon X_n \rightarrow X_{n+1}$ $(0 \leq i \leq n)$
satisfying
\begin{align*}
& d_i d_j = d_{j-1} d_i \quad (i<j) \\
& s_i s_j = s_{j+1} s_i \quad (i \leq j) \\
& d_i s_j =
\begin{cases}
1 & i=j, \; j+1 \\
s_{j-1} d_i & i<j \\
s_j d_{i-1} & i>j+1.
\end{cases}
\end{align*}
Here the $d_i$ are called the \emph{face maps} and the $s_i$ are
called the \emph{degeneracy maps}.  We say that an $n$-simplex
$x \in X_n$ is \emph{degenerate} if it is the image of some degeneracy
map.

A \emph{simplicial morphism} $f\colon X \rightarrow Y$ between simplicial
sets is a natural transformation between the corresponding functors, i.e., a sequence of functions $f_n\colon X_n \rightarrow Y_n$ for each
$n \geq 0$ such that $f_{n-1} d_i = d_i f_n$ and
$f_{n+1} s_j = s_j f_n$.
%
%
%
%
There is a functor $| \cdot |\colon  \mathbf{SSet} \rightarrow \mathbf{\mathcal{CG}}$,
called the \emph{geometric realization functor}, from the category
$\mathbf{SSet}$ of simplicial sets and the category $\mathbf{\mathcal{CG}}$ of
compactly-generated Hausdorff topological spaces. Let
$K = \bigcup_{i \geq 0} K_i$ be a simplicial set with degeneracy and
face maps $d_i$, $s_i$. The \emph{geometric realisation} $|K|$ of $K$
is the CW complex constructed from $K$ in the following way. Let
\[
\Delta_n =
\left\{
(t_0,\ldots,t_n): 0 \leq t_i \leq 1, \sum t_i = 1
\right\}
\subseteq \mathbb{R}^{n+1}
\]
denote the standard topological $n$-simplex. Define
\[
\begin{array}{cccc}
  \delta_i\colon & \Delta_{n-1} & \rightarrow & \Delta_n \\
            & (t_0,\ldots,t_{n-1}) & \mapsto & (t_0,\ldots,t_{i-1},0,t_i,\ldots,t_{n-1}),
\end{array}
\]
and
\[
\begin{array}{cccc}
  \sigma_i\colon & \Delta_{n+1} & \rightarrow & \Delta_n \\
            & (t_0,\ldots,t_{n+1}) & \mapsto & (t_0,\ldots,t_i + t_{i+1},\ldots,t_{n-1}).
\end{array}
\]
Then
\[
|K| = \left( \bigsqcup_{n \geq 0} K_n \times \Delta_n \right) /{\sim}
\]
where $\sim$ is the equivalence relation generated by
\[
(x,\delta_i(u)) \sim (d_i(x),u), \quad
(x,\sigma_i(v)) \sim (s_i(x),v).
\]
We give
\[
\left( \bigsqcup_{0 \leq n \leq  q} K_n \times \Delta_n \right) /{\sim}
\]
the quotient topology for all $q$ and take the inductive limit of the
resulting topologies. The geometric realisation $|K|$ is a CW complex
whose cells are in natural bijective correspondence with the
non-degenerate simplicies of $K$. To see this, write
\[
\overline{K} = \bigsqcup_{n \geq 0} K_n \times \Delta_n.
\]
Then a point $(k,x) \in \overline{K}$ is called \emph{non-degenerate}
if $k$ is a non-degenerate simplex and $x$ is an interior point.  The following is~\cite[Lemma~3]{Milnor1957}.

\begin{Lemma}
  Each point $(k, x) \in \overline{K}$ is $\sim$-equivalent to a
  unique non-degenerate point.
\end{Lemma}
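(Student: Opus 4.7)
The plan rests on two unique-factorisation facts. The first is the Eilenberg--Zilber lemma for the simplicial set $K$: each simplex $k \in K_n$ admits a unique expression $k = \alpha^*(k_0)$, where $\alpha \colon [n] \twoheadrightarrow [m]$ is an order-preserving surjection in $\Delta$ (so $\alpha^*$ is the corresponding composite $s_{i_1}\cdots s_{i_r}$ of degeneracy operators) and $k_0 \in K_m$ is non-degenerate. The second is the elementary fact that each point $x \in \Delta_n$ has a unique expression $x = \beta_*(y)$, where $\beta \colon [p] \hookrightarrow [n]$ is an order-preserving injection, $\beta_*$ is the induced affine face inclusion (a composite of the maps $\delta_{j_1}\cdots\delta_{j_s}$), and $y \in \Delta_p$ is an interior point; here $p$ is the dimension of the carrier face of $x$.

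First I would establish existence. Given $(k,x) \in \overline{K}$, the plan is to iteratively apply the relation $(k', \delta_i(u)) \sim (d_i(k'), u)$ along the chain decomposing $\beta_*$ in order to reduce $(k, x)$ to $(\beta^* k,\, y)$. I would then factor $\beta^* k = \alpha^*(k_0)$ with $k_0$ non-degenerate and apply the relation $(k', \sigma_i(v)) \sim (s_i(k'), v)$ iteratively in the reverse direction to pass from $(\beta^* k, y)$ to $(k_0, \sigma_\alpha(y))$, where $\sigma_\alpha$ is the composition of $\sigma_{i_j}$'s matching $\alpha^* = s_{i_1}\cdots s_{i_r}$. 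Because each $\sigma_i$ sends interior points to interior points, $\sigma_\alpha(y)$ is interior to $\Delta_m$, and $k_0$ is non-degenerate by construction, so the resulting pair is non-degenerate. Termination is immediate since both reduction steps strictly decrease the $\Delta$-dimension until no further step applies.

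Uniqueness is the main obstacle. My plan is to define a function $N\colon \overline{K} \to \overline{K}$ by $N(k,x) = (k_0, \sigma_\alpha(y))$ using the canonical data above, and to prove that $N$ descends to $\overline{K}/{\sim}$. Since $\sim$ is generated by the two relations, it suffices to check that $N$ agrees on each generating pair; that is, $N(k, \delta_i(u)) = N(d_i(k), u)$ and $N(s_i(k'), v) = N(k', \sigma_i(v))$. Both identities reduce to compatibility statements between the Eilenberg--Zilber decomposition and the face/degeneracy operators, which follow from the simplicial identities together with the unique epi--mono factorisation of morphisms in $\Delta$: if two composites $\alpha_1\beta_1 = \alpha_2\beta_2$ coincide in $\Delta$ with $\alpha_i$ surjective and $\beta_i$ injective, then $\alpha_1 = \alpha_2$ and $\beta_1 = \beta_2$. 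Once $N$ is shown to be well-defined on the quotient, uniqueness follows at once, because $N$ fixes every non-degenerate pair, so two equivalent non-degenerate pairs must coincide with their common value under $N$.
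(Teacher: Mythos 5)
Your argument is correct and is essentially the classical proof of this lemma (which the paper does not prove itself but quotes as Lemma~3 of Milnor's paper): combine the Eilenberg--Zilber decomposition of simplices with the unique interior-point/carrier-face decomposition in $\Delta_n$, and check that the resulting normalization map $N$ respects the two generating relations, using that epimorphisms of $\Delta$ act on $K$ contravariantly and on $\Delta_\bullet$ covariantly by interior-preserving maps. The compatibility identities you defer do hold exactly as you describe, via the uniqueness of epi--mono factorisation in $\Delta$, so nothing essential is missing.
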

In each case, the point in question is determined by the maps
$\delta_i, d_i, \sigma_i$ and $s_i$ (see \cite{Milnor1957} for
details). This lemma is the key to proving that that $|K|$ is a
CW complex: we take as $n$-cells of $|K|$ the images of the
non-degenerate $n$-simplices of $\overline{K}$, and the above lemma
shows that the interiors of these cells partition $|K|$. The remaining
properties of a CW complex are then easily verified. The following
lemma shows that geometric realisation defines a functor from
$\mathbf{SSet}$ to $\mathbf{\mathcal{CG}}$.

The next result is~\cite[Lemma~4]{Milnor1957}.

\begin{Lemma}
\label{lem_simplicial}
If $K = \bigcup K_i$ and $L = \bigcup L_i$ are simplicial sets and
$f\colon K \rightarrow L$ is a simplicial morphism then $\overline{f}$ given
by
\[
\overline{f}_n\colon K_n \times \Delta_n \rightarrow L_n \times \Delta_n, \quad
(x,u) \mapsto (f(x),u)
\]
is continuous, and induces a well-defined continuous map
\[
|f|\colon |K| \rightarrow |L|, \quad
(x,u)/{\sim} \ \mapsto \ (f(x),u)/{\sim}
\]
of the corresponding geometric realizations, which is cellular.
\end{Lemma}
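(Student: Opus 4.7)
The plan is to prove this as a standard functoriality result for geometric realization, decomposing the argument into three parts: continuity of $\overline f$ on the disjoint union $\overline K$, compatibility with the equivalence relation $\sim$ so that the map descends to the quotient, and finally cellularity. Most of the work is bookkeeping with the simplicial identities; the only non-obvious point is cellularity, because $f$ may carry a non-degenerate simplex to a degenerate one, lowering the dimension of the corresponding cell.

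For continuity of $\overline f_n$, I would observe that $K_n$ and $L_n$ carry the discrete topology, so $K_n\times \Delta_n$ is a topological disjoint union $\coprod_{x\in K_n}\{x\}\times \Delta_n$; the map $\overline f_n$ sends the summand indexed by $x$ to the summand indexed by $f(x)$ by the identity on $\Delta_n$, which is trivially continuous. Taking the coproduct over $n$ yields a continuous map $\overline f\colon \overline K\to \overline L$, and restricting to the filtration by $q$ is immediate.

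Next, to pass to the quotient, I would verify that $\overline f$ sends $\sim$-related points to $\sim$-related points, using the defining identities $f d_i=d_if$ and $fs_j=s_jf$ of a simplicial morphism. Concretely, for a generator of $\sim$,
\[
\overline f(x,\delta_i(u)) = (f(x),\delta_i(u)) \sim (d_i f(x),u) = (f(d_i x),u) = \overline f(d_i x,u),
\]
and likewise for the degeneracy relation using $\sigma_i$ and $s_j$. By the universal property of the quotient topology, $\overline f$ then descends to a well-defined continuous map $|f|\colon |K|\to |L|$ with the stated formula on representatives; continuity with respect to the inductive limit topology follows because $\overline f$ respects the filtration by simplex dimension.

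For cellularity, recall that the $n$-cells of $|K|$ are in bijection with the non-degenerate $n$-simplices of $K$, with the cell of $x\in K_n$ non-degenerate equal to the image of $\{x\}\times \mathring\Delta_n$. The hard part is that $f(x)\in L_n$ need not be non-degenerate. To handle this, I would invoke the Eilenberg--Zilber-type uniqueness statement (the lemma cited just above this one): any simplex, in particular $f(x)$, can be written uniquely as $s_{i_1}\cdots s_{i_k}(y)$ for some non-degenerate $y\in L_{n-k}$. Using the $\sim$-relation from the degeneracy side, $(f(x),u)\sim (y,\sigma_{i_1}\cdots \sigma_{i_k}(u))$, so the image of the $n$-cell of $x$ lies in the cell of $y$, whose dimension is $n-k\leq n$. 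Hence $|f|$ carries the $n$-skeleton of $|K|$ into the $n$-skeleton of $|L|$, which is the definition of cellularity. The main subtlety, as noted, is exactly this collapsing of dimensions, and the key tool is unique non-degenerate representatives; everything else is formal.
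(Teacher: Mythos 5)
Your proof is correct and is exactly the standard argument: the paper itself gives no proof of this lemma, simply citing it as Lemma~4 of Milnor's paper on geometric realization, and Milnor's proof proceeds precisely as you do (continuity summand-by-summand on the discrete-times-simplex pieces, descent through $\sim$ via $fd_i=d_if$ and $fs_j=s_jf$, and cellularity via the unique iterated-degeneracy decomposition of $f(x)$, which only lowers cell dimension). Your identification of the dimension-collapsing issue as the one genuinely non-formal point, and its resolution by the non-degenerate representative lemma, is exactly right.
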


%
%
%
%
A left $M$-simplicial set is a simplicial set equipped with a left action of
$M$ by simplicial morphisms. In order to construct rigid $M$-CW
complexes we shall need the following special kind of $M$-simplicial
set.
\begin{definition}[Rigid $M$-simplicial set]
  Let $K = \bigcup_{i \geq 0} K_i$ be a simplicial set with degeneracy
  and face maps $d_i$, $s_i$, and let $M$ be a monoid. We call
  $K$ a \emph{rigid left $M$-simplicial set} if $K$ comes equipped
  with an action of $M \times K \rightarrow K$ such that
\begin{itemize}
\item $M$ is acting by simplicial morphisms, i.e.,  $M$ maps
  $n$-simplicies to $n$-simplicies, and commutes with $d_i$ and $s_i$;
\item $M$ preserves non-degeneracy, i.e., for every non-degenerate
  $n$-simplex $x$ and every $m \in M$ the $n$-simplex $mx$ is also
  non-degenerate.
\end{itemize}
\end{definition}
A \emph{rigid right $M$-simplicial set} is defined dually, and a
\emph{rigid bi-$M$-simplicial set} is simultaneously both a left and
a right $M$-simplicial set, with commuting actions. A
bi-$M$-simplicial set is the same thing as a left $M \times
M^{op}$-simplicial set.
Note that it follows from the condition that $M$ acts by simplicial
morphisms that, under the action of $M$,  degenerate $n$-simplicies are sent to
to degenerate $n$-simplicies.
%
%
%
%
%
%
The geometric realisation construction defines a functor from the
category of left $M$-simplicial sets (with $M$-equivariant simplicial
morphisms) to the category of left $M$-spaces. In particular, this
functor associates with each rigid left $M$-simplicial set a rigid
$M$-CW complex. Corresponding statements hold for both rigid right and
bi-$M$-simplicial sets.
\begin{Lemma}\label{lem_induced}
  For any rigid left $M$-simplicial set $K = \bigcup_{i \geq 0} K_i$
  the geometric realisation $|K|$ is a rigid left $M$-CW complex with
  respect to the induced action given by
\[
m \cdot [(x,u)/{\sim}] = (m \cdot x,u)/{\sim}.
\]
\end{Lemma}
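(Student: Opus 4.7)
The plan is to verify three things in turn: that the formula $m\cdot[(x,u)] = [(mx,u)]$ descends to a well-defined map on $|K|$, that it is continuous, and that the two axioms of a rigid left $M$-CW complex hold with respect to the cell structure whose cells correspond to the non-degenerate simplices of $K$.

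For well-definedness and continuity, fix $m\in M$ and consider the simplicial morphism $m\colon K\to K$ obtained from the action. By Lemma~\ref{lem_simplicial}, the assignment $\ov m_n(x,u)=(mx,u)$ on $K_n\times\Delta_n$ is continuous and respects the two generating relations $(x,\delta_i u)\sim(d_ix,u)$ and $(x,\sigma_i v)\sim(s_ix,v)$, since these are exactly the relations preserved by any simplicial morphism. Therefore $\ov m$ descends to a continuous cellular map $|m|\colon|K|\to|K|$, and this map is precisely $x\mapsto m\cdot x$ in the induced action. The identities $1\cdot x=x$ and $m\cdot(m'\cdot x)=(mm')\cdot x$ on $|K|$ follow from the corresponding identities on $K$, so we indeed obtain an action of the discrete monoid $M$, which is automatically continuous as a map $M\times|K|\to|K|$.

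For the rigid $M$-CW structure, recall that the cells of $|K|$ are in natural bijection with the non-degenerate simplices of $K$: to a non-degenerate $n$-simplex $x\in K_n$ we associate the open $n$-cell $e_x$ consisting of the images under the quotient map of points $(x,u)$ with $u$ in the interior of $\Delta_n$, and the characteristic map $\varphi_x\colon\Delta_n\to|K|$ defined by $\varphi_x(u)=[(x,u)]$. The rigidity hypothesis that $M$ preserves non-degeneracy means that if $x\in K_n$ is non-degenerate then so is $mx\in K_n$; combined with the formula $m\cdot[(x,u)]=[(mx,u)]$, this gives $m\cdot e_x=e_{mx}$ with $\dim e_{mx}=n=\dim e_x$, verifying the cellular and dimension-preserving condition. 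Rigidity on cells is immediate from the same formula: whenever $m\cdot e_x=e_{mx}$ we have $m\cdot\varphi_x(u)=[(mx,u)]=\varphi_{mx}(u)$ for every $u\in\Delta_n$, in particular for $u$ in the interior.

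There is no serious obstacle: the essential content of the lemma is that the two rigidity hypotheses on $M$-simplicial sets are precisely what is needed to transport cells to cells of the same dimension and to make the action respect the gluing data defining the cellular structure. All the topology is already in Lemma~\ref{lem_simplicial}, and the combinatorics is packaged in the correspondence between cells of $|K|$ and non-degenerate simplices of $K$.
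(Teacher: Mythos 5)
Your proof is correct and follows essentially the same route as the paper's: continuity via Lemma~\ref{lem_simplicial}, the identification of cells of $|K|$ with non-degenerate simplices, and the observation that preservation of non-degeneracy plus the formula $m\cdot[(x,u)]=[(mx,u)]$ gives both the dimension-preserving and rigidity conditions. You simply spell out the details (well-definedness, the action axioms, the explicit characteristic maps $\varphi_x$) that the paper leaves implicit.
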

\begin{proof}
  It follows from Lemma~\ref{lem_simplicial} that the action is
  continuous.  By the definition of rigid left $M$-simplicial set the
  $M$-action maps non-degenerate simplices to non-degenerate
  simplices, and the cells of $|K|$ are in natural bijective
  correspondence with the non-degenerate simplicies of $K$.  It
  follows that the action of $M$ on $|K|$ sends $n$-cells to
  $n$-cells. The action is rigid by definition.  Thus $|K|$ is a rigid
  $M$-CW complex.
\end{proof}
There are obvious right-  and bi-$M$-simplicial set analogues of
Lemma~\ref{lem_induced} obtained by replacing $M$ by $M^{op}$ and $M
\times M^{op}$, respectively.

\section{Standard constructions of projective
  $M$-CW complexes}\label{sec_cspaces}

In this section we shall give a fundamental method that, for any
monoid $M$, allows us to construct in a canonical way free left-, right- and
bi-$M$-CW complexes. These constructions will be important when we go on
to discuss $M$-equivariant classifying spaces later on in the
article. Each of the constructions in this section is a special case
of the general notion of the nerve of a category.

To any (small) category $C$ we can associate a simplicial
set $N(C)$ called the \emph{nerve} of the category. For each
$k \geq 0$ we let $N(C)_k$ denote the set of all sequences $(f_1,\ldots,f_k)$ composable arrows
\begin{equation}\label{eq_path}
A_0 \xrightarrow{f_1}
A_1 \xrightarrow{f_2}
\cdots
\xrightarrow{f_k} A_k
\end{equation}
where we allow objects to repeat in these sequences. The objects of $C$ are the $0$-simplices.  The face map
$d_i\colon N(C)_k \rightarrow N(C)_{k-1}$ omits $A_i$, so it carries the above sequence to
\[
A_0 \xrightarrow{f_1}
A_1 \xrightarrow{f_2}
\cdots
\xrightarrow{f_{i-1}} A_{i-1}
\xrightarrow{f_{i+1} \circ  f_{i}}
A_{i+1}
\xrightarrow{f_{i+2}}
\cdots
\xrightarrow{f_k} A_k
\]
while the degeneracy map $s_i\colon N(C)_k \rightarrow N(C)_{k+1}$ carries it to
\[
A_0 \xrightarrow{f_1}
A_1 \xrightarrow{f_2}
\cdots
\xrightarrow{f_{i}} A_{i}
\xrightarrow{\mathrm{id}_{A_i}} A_{i}
\xrightarrow{f_{i+1}}
A_{i+1}
\xrightarrow{f_{i+2}}
\cdots
\xrightarrow{f_k} A_k
\]
The \emph{classifying space} of a (small) category $C$ is the
geometric realisation $|N(C)|$ of the nerve $N(C)$ of $C$.

The nerve is a functor from $\mathrm{\mathbf{Cat}}$ (the category of small categories) to
$\mathrm{\mathbf{SSet}}$ (the category of simplicial sets, with
simplicial morphisms) given by applying the functor to the diagram \eqref{eq_path}.
From this it follows that a functor between
small categories $C$ and $D$ induces a map of simplicial sets
$N(C) \rightarrow N(D)$, which in turn induces a continous map between
the classifying spaces $|N(C)| \rightarrow |N(D)|$. Also, a natural
transformation between two functors between $C$ and $D$ induces a
homotopy between the induced maps on the classifying spaces. In
particular, equivalent categories have homotopy equivalent classifying
spaces. Any functor which is left or right adjoint induces a homotopy
equivalence of nerves. Consequently, $|N(C)|$ is contractible if $C$
admits an initial or final object.
(For a proof of this see \cite[Corollary~3.7]{SrinivasBook}.)

%
%

It is obvious from the nerve
construction that the nerve of a category which is not connected is
the disjoint union of the nerves of the connected components of the
category. Thus, if every component of $C$ admits an initial or final
object, then each of the components of $|N(C)|$ will be contractible.

It is well know that the geometric realisations of the nerve of a  category $C$ and its reversal $C^{op}$ are homeomorphic.

\subsection{The classifying space $|BM|$ of a monoid $M$.}
In the general context above, given a monoid $M$ we can construct a
category with a single object, one arrow for every $m \in M$, and
composition given by multiplication. The \emph{classifying space}
$|BM|$ of the monoid $M$ is then the geometric realisation of the nerve of
the category corresponding to $M^{op}$ (the reversal is for the technical reason of avoiding reversals in the face maps). In more detail, the nerve of this category is the
simplicial set $BM$ with $n$-simplices:
$\sigma = (m_1, m_2, ..., m_n)$, $n$-tuples of elements of $M$. The
face maps are given by
\[
d_i \sigma =
\begin{cases}
(m_2, \ldots, m_n)
& i=0 \\
(m_1, \ldots, m_{i-1}, m_im_{i+1}, m_{i+2}, \ldots, m_n)
& 0 < i < n \\
(m_1, \ldots, m_{n-1})
& i=n,
\end{cases}
\]
and the degeneracy maps are given by
\[
s_i \sigma = (m_1, \ldots, m_i, 1, m_{i+1}, \ldots, m_n)
\quad
(0 \leq i \leq n).
\]
The geometric realisation $|BM|$ is called the \emph{classifying
  space} of $M$.  Then $|BM|$ is a CW complex with one $n$-cell for
every non-degenerate $n$-simplex of $BM$, i.e., for every $n$-tuple all
of whose entries are different from $1$. As mentioned in the
introduction, classifying spaces of monoids have received some
attention in the literature.


\subsection{Right Cayley graph category}

Let $\Gamma_r(M)$ denote the right Cayley graph category for $M$,
which has
\begin{itemize}
\item Objects: $M$;
\item Arrows: $(x,m)\colon x \rightarrow xm$; and
\item Composition of arrows: $(xm,n) \circ (x,m)=(x,mn).$
\end{itemize}
The identity at $x$  is $(x,1)$.
This composition underlies our use of $M^{op}$ in defining $BM$.

Let $\EMr$ be the nerve of the category $\Gamma_r(M)$.  The
$n$-simplies of $\EMr$ may be written using the notation
$m(m_1, m_2, ...,m_n) = m\tau$ where $\tau=(m_1, m_2, ...,m_n)$ is an
$n$-simplex of $BM$.  Here $m(m_1, m_2, ...,m_n)$ denotes the
$n$-tuple of composable arrows in the category $\Gamma_r(M)$ where we
start at $m$ and the follow the path labelled by $m_1, m_2, ..., m_n$.

The face maps in $\EMr$ are given by
\[
d_i( m(m_1, m_2, ...,m_n) )
=
\begin{cases}
mm_1 (m_2, ...,m_n)                   &  i=0 \\
m(m_1, m_2, ...,m_i m_{i+1}, ..., m_n) &  0<i<n \\
m(m_1, m_2, ...,m_{n-1})              &  i=n
\end{cases}
\]
and the degeneracy maps are given by
\[
s_i \sigma = m(m_1, \ldots, m_i, 1, m_{i+1}, \ldots, m_n)
\quad
(0 \leq i \leq n).
\]
where $\sigma = m(m_1, ...,m_n)$.

Let $|\EMr|$ denote the geometric realisation of $\EMr$. So $|\EMr|$
is a CW complex with one $n$-cell for every non-degenerate $n$-simplex
of $\EMr$, that is, for every $m(m_1, m_2, \ldots, m_n)$ with
$m_j \neq 1$ for $1 \leq j \leq n$. As a consequence, by an $n$-cell
of $\EMr$ we shall mean a non-degenerate $n$-simplex.

Consider the right Cayley graph category $\Gamma_r(M)$. For each
$m \in M$ there is precisely one morphism $(1, m)$ from $1$ to
$m$. Since the category has an initial object we conclude that the
geometric realisation of its nerve
$|\EMr|$ is contractible.

Applying the nerve functor to the projection functor from the category
$\Gamma_r(M)$ to the one-point category $M^{op}$, which identifies all the
vertices of $\Gamma_r(M)$ to a point, gives a simplicial morphism
$\pi\colon \EMr \rightarrow BM$ between the corresponding nerves, which maps
\[
m(m_1, m_2, ..., m_n) \mapsto (m_1, m_2, ..., m_n).
\]
Observe that, for each $n$, the projection $\pi$ maps the set of
$n$-cells of $\EMr$ onto the set of $n$-cells $BM$. If we then apply
the geometric realisation functor we obtain a projection
$\pi\colon|\EMr| \rightarrow |BM|$ (we abuse notation slightly by using the
same notation $\pi$ to denote this map).

The monoid $M$ acts by left multiplication on the category $\Gamma_r(M)$. By functoriality
of the nerve, it follows that $M$ acts on the left of $\EMr_n$
by simplicial morphisms via
\[
s \cdot m(m_1, m_2, ...,m_n) = sm(m_1, m_2, ...,m_n).
\]
Under this action $\EMr_n$ is a free left $M$-set with basis $BM_n$. Also,
if we restrict to the $n$-cells (i.e., non-degenerate simplices), then we obtain a free left $M$-set
with basis the set of $n$-cells of $BM$. It is an easy consequence of
the definitions that this is an action by simplicial morphisms and
that it preserves non-degeneracy in the sense that $s \cdot m\sigma $ is
an $n$-cell if and only if $m\sigma$ is an $n$-cell for all $s \in M$
and $m\sigma \in \EMr$. Therefore $\EMr$ is a rigid
left $M$-simplicial set. Combining these observations with Lemma~\ref{lem_induced} we conclude
that $|\EMr|$ is a free left $M$-CW complex which is
contractible.

Dually, we use $\EMl$ to denote the nerve of the left Cayley graph
category $\Gamma_l(M)$. The simplicial set $\EMl$ satisfies all the
obvious dual statements to those above for $\EMr$. In particular $M$
acts freely via right multiplication action on $\EMl$ by simplicial
morphisms, and $|\EMl|$ is a free right $M$-CW complex which is
contractible.

\subsection{Two-sided Cayley graph category} Let
$\overleftrightarrow{\Gamma(M)}$ denote the two-sided Cayley graph
category for $M$, which has

\begin{itemize}
\item Objects: $M \times M$;
\item Arrows: $M \times M \times M$ where
  $(m_L,m,m_R)\colon  (m_L, m m_R) \rightarrow (m_L m, m_R)$; and
\item Composition of arrows: $(n_L,n,n_R) \circ (m_L,m,m_R) = (m_L,mn,n_R)$
where $(m_L m, m_R) = (n_L, n n_R)$. Equivalently this is the same as the composition
$(m_Lm,n,n_R) \circ (m_L,m,nn_R) = (m_L,mn,n_R)$ and corresponds to the path
\[
(m_L,mnn_R) \rightarrow (m_Lm,nn_R) \rightarrow (m_Lmn,n_R).
\]
\end{itemize}
This is in fact the kernel category of the identity map, in the sense
of Rhodes and Tilson \cite{RhodesTilson1989}. There is a natural $M\times M^{op}$ action of the category $\overleftrightarrow{\Gamma(M)}$.

Let $\EMb$ be the nerve of
the category $\overleftrightarrow{\Gamma(M)}$. The simplicial set
$\EMb$ parallels the two-sided geometric bar construction of
J. P. May; see \cite{May1972, May1975}. The $n$-simplies of $\EMb$
may be written using the notation $m(m_1, m_2, ...,m_n)s = m\tau s$
where $\tau=(m_1, m_2, ...,m_n)$ is an $n$-simplex of $BM$.

Here $m(m_1, m_2, ...,m_n)s$ denotes the $n$-tuple of composable
morphisms in the category $\overleftrightarrow{\Gamma(M)}$ where we
start at $(m,m_1m_2\ldots m_ns)$ and follow the path
\[
(m,m_1m_2m_3\ldots m_ns) \rightarrow
(mm_1,m_2m_3\ldots m_ns) \rightarrow \ldots
\]
\[
\ldots (mm_1m_2, m_3\ldots m_ns) \rightarrow
\ldots
(mm_1m_2\ldots m_n, s)
\]
labelled by the edges
\[
(m, m_1, m_2 m_3 \ldots m_ns),
(mm_1, m_2, m_3 \ldots m_ns),
\ldots,
\]
\[
(mm_1m_2\ldots m_{n-2}, m_{n-1}, m_n s),
(mm_1m_2\ldots m_{n-1}, m_n, s)
\]
and finish at $(mm_1m_2\ldots m_n, s)$.
The face maps in the nerve $\EMb$ are given by
\[
d_i( m(m_1, m_2, ...,m_n)s )
=
\begin{cases}
mm_1 (m_2, ...,m_n)s                   &  i=0 \\
m(m_1, m_2, ...,m_i m_{i+1}, ..., m_n)s &  0<i<n \\
m(m_1, m_2, ...,m_{n-1}) m_ns              &  i=n
\end{cases}
\]
and the degeneracy maps are given by
\[
s_i \sigma = m(m_1, \ldots, m_i, 1, m_{i+1}, \ldots, m_n)s
\quad
(0 \leq i \leq n),
\]
where $\sigma = m(m_1, ...,m_n)s$.

Let $|\EMb|$ denote the geometric realisation of $\EMb$. So $|\EMb|$ is
a CW complex with one $n$-cell for every non-degenerate $n$-simplex of
$\EMb$. Observe that $(m_L,m_R)$ and $(m_L',m_R')$ are in the same component
of the two-sided Cayley graph  category $\overleftrightarrow{\Gamma(M)}$ if and
only if $m_L m_R = m_L' m_R'$. Moreover, for each $m \in M$ the vertex
$(1,m)$ is initial in its component. It follows from these
observations that $\pi_0(|\EMb|)\cong M$ as an $M\times M^{op}$-set,
and each component of $|\EMb|$ is contractible.
There is a natural projection from $\overleftrightarrow{\Gamma(M)}$ to
the one-point category $M^{op}$ mapping $(m_L,m,m_R)$ to its middle
component $m$. Applying the nerve functor to this projection gives a
simplicial morphism $\pi\colon  \EMb \rightarrow BM$ given by
\[
m(m_1, m_2, ..., m_n)s \mapsto (m_1, m_2, ..., m_n).
\]
As in the one-sided case, this projection sends $n$-cells to
$n$-cells and induces a map $\pi\colon  |\EMb| \rightarrow |BM|$ between the
corresponding geometric realisations.

The monoid $M$ has a natural two-sided action on
$\EMb_n$ via
\[
x \cdot [m(m_1, m_2, ...,m_n)s] \cdot y = x m(m_1, m_2, ...,m_n)s y.
\]
Under this action $\EMb$ is a free
rigid bi-$M$-simplicial set. Combining these observations with Lemma~\ref{lem_induced} we conclude
that $|\EMb|$ is a free bi-$M$-CW complex such that
$\pi_0(|\EMb|)\cong M$ as an $M\times M^{op}$-set and each component
of $|\EMb|$ is contractible

\section{One-sided classifying spaces and finiteness properties}\label{sec_onesided}
We will define left and right equivariant classifying spaces for a monoid $M$. Two-sided equivariant classifying spaces will be defined in the next section. As we shall see, the examples discussed in Section~\ref{sec_cspaces} will serve as the standard models of such spaces.

We say that a projective $M$-CW complex $X$ is a \emph{(left) equivariant classifying space} for $M$ if it is contractible. A right equivariant classifying space for $M$ will be a left equivariant classifying space for $M^{op}$.  Notice that the augmented cellular chain complex of an equivariant classifying space for $M$ provides a projective resolution of the trivial (left) $\mathbb ZM$-module $\mathbb Z$.

\begin{Example}
The bicyclic monoid is the monoid $B$ with presentation $\langle a,b\mid ab=1\rangle$.  
It is not hard to see that each element of $B$ is uniquely represented by a word of the form $b^i a^j$ where $i,j \geq 0$. Figure~\ref{fi:equiv} shows an equivariant classifying space for $B$.  The $1$-skeleton is the Cayley graph of $B$ and there is a $2$-cell glued in for each path labelled $ab$. 
This example is a special case of far more general results about equivariant classifying spaces of special monoids which will appear in a future paper of the current authors \cite{GraySteinbergInPrep}.  
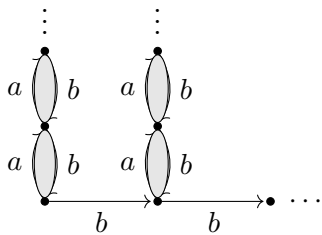
\begin{figure}[htbp]
\begin{center}
\begin{tikzpicture}[shorten >=1pt,scale=1, vertices/.style={draw, fill=black, circle, inner sep=1pt}]
      \node[vertices,label=left:{}] (A) at (0,0) {};
      \node[vertices,label=left:{}] (B) at (0,1) {};
      \node[vertices,label=left:{}] (C) at (0,2) {};
      \node                             at (0,2.5) {$\vdots$};
      \node[vertices,label=left:{}] (D) at (1.5,0) {};
      \node[vertices,label=left:{}] (E) at (1.5,1) {};
      \node[vertices,label=left:{}] (F) at (1.5,2) {};
      \node                             at (1.5,2.5) {$\vdots$};
      \node[vertices,label=left:{}] (G) at (3,0) {};
      \node                             at (3.5,0)  {$\cdots$};
      \path  (A) edge[->, bend left] node [left] {$a$} (B)
             (B) edge[->, bend left] node [left] {$a$} (C);
      \path  (C) edge[->, bend left] node [right] {$b$} (B)
             (B) edge[->, bend left] node [right] {$b$} (A);
      \path  (A) edge[->] node [below] {$b$} (D);
      \path  (D) edge[->, bend left] node [left] {$a$} (E)
             (E) edge[->, bend left] node [left] {$a$} (F);
      \path  (F) edge[->, bend left] node [right] {$b$} (E)
             (E) edge[->, bend left] node [right] {$b$} (D);
      \path  (D) edge[->] node [below] {$b$} (G);
      \draw [black,fill=gray!20] (0,0.5) ellipse (.14cm and .45cm);
      \draw [black,fill=gray!20] (0,1.5) ellipse (.14cm and .45cm);
      \draw [black,fill=gray!20] (1.5,0.5) ellipse (.14cm and .45cm);
      \draw [black,fill=gray!20] (1.5,1.5) ellipse (.14cm and .45cm);
\end{tikzpicture}
\end{center}
\caption{An equivariant classifying space for the bicyclic monoid\label{fi:equiv}}
\end{figure}
\end{Example}

Our first goal is to show that any two equivariant classifying spaces for $M$ are $M$-homotopy equivalent.

\begin{Lemma}\label{l:construct.map}
Let $X$ be an equivariant classifying space for $M$ and let $Y$ be a contractible $M$-space.  Then there exists a continuous $M$-equvariant mapping $f\colon X\to Y$.
\end{Lemma}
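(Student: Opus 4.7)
My plan is to build $f$ inductively on the skeleta of $X$, using Proposition~\ref{p:cell-maps} to reduce the attachment of each $M$-cell $Me_a \times B^n$ to an ordinary extension problem with target the subspace $e_a Y$. So I start by writing $X = \varinjlim X_n$ with $X_0 = \coprod_{a\in A}Me_a$ and $X_n$ obtained from $X_{n-1}$ via the pushout square \eqref{eq:pushout} for some projective $M$-set $P_n = \coprod_{a} Me_a$. Since $X$ is nonempty (being contractible), each $e_a Y$ is nonempty: for any $y\in Y$, the element $e_a y$ lies in $e_a Y$.

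The key observation, which I would establish first, is that for every idempotent $e\in E(M)$ the subspace $eY$ is contractible. Indeed, left multiplication by $e$ defines a continuous retraction $r\colon Y\to eY$ splitting the inclusion $eY\hookrightarrow Y$, and a retract of a contractible space is contractible (compose the contracting homotopy $H\colon Y\times I\to Y$ with $r$ on the left and the inclusion on the right). In particular $eY$ has the property that every continuous map $S^{n-1}\to eY$ extends to $B^n\to eY$, since any map into a contractible space is null-homotopic and a map from $S^{n-1}$ extends over $B^n$ exactly when it is null-homotopic.

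For the base case, for each $a\in A$ I pick $y_a\in e_aY$ and invoke Proposition~\ref{p:cell-maps} to produce the $M$-equivariant continuous map $f_0\colon X_0\to Y$ with $f_0(me_a) = my_a$. For the inductive step, assume $f_{n-1}\colon X_{n-1}\to Y$ has been constructed. The universal property of the pushout \eqref{eq:pushout} reduces the extension problem to producing, for each summand $Me_a\times B^n$ of $P_n\times B^n$, an $M$-equivariant continuous extension of the composite $Me_a\times S^{n-1}\to X_{n-1}\xrightarrow{f_{n-1}} Y$. Proposition~\ref{p:cell-maps} translates this into the problem of extending the corresponding continuous map $S^{n-1}\to e_aY$ to $B^n\to e_aY$, which is exactly what the contractibility of $e_aY$ provides. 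Assembling these extensions gives $f_n\colon X_n\to Y$, and the desired map $f\colon X\to Y$ is obtained by passing to the colimit.

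I do not foresee any serious obstacle here: the whole argument rests on the contractibility of $e_aY$, which in turn is just the standard fact that retracts of contractible spaces are contractible, together with the translation of $M$-equivariance afforded by Proposition~\ref{p:cell-maps}. The only point requiring a little care is that contractibility of $Y$ is being used both to ensure that $S^{n-1}\to e_aY$ is null-homotopic and, in the base case, to ensure that $e_aY$ is nonempty.
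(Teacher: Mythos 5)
Your proof is correct, and it follows the paper's overall strategy: induction over the skeleta $X_n$, with the base case handled exactly as in the paper by choosing points $y_a\in e_aY$ and invoking Proposition~\ref{p:cell-maps}. Where you diverge is the inductive step. The paper applies the $M$-equivariant HELP theorem (Theorem~\ref{t:HELP}) to the weak equivalence $k\colon Y\to Z$ with $Z$ a one-point trivial $M$-space, which packages the extension problem once and for all; you instead unwind that machinery by hand, reducing via the pushout \eqref{eq:pushout} and Proposition~\ref{p:cell-maps} to extending a map $S^{n-1}\to e_aY$ over $B^n$, and then using that $e_aY$ is a retract of the contractible space $Y$, hence contractible, so the extension exists with no obstruction. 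This is essentially what the proof of Theorem~\ref{t:HELP} does internally (via Lemma~\ref{l:relate.equiv}, which shows $e_aY\to e_aZ$ is a weak equivalence, i.e.\ $e_aY$ is weakly contractible), so your route is more self-contained and elementary, at the cost of redoing a special case of an already-established result; the paper's route is shorter given that HELP is in place. Two minor points: in your nonemptiness remark you wrote that $X$ is nonempty where you need $Y$ nonempty (which holds since $Y$ is contractible), and it is worth saying explicitly that the cell-by-cell extensions are compatible and hence glue over the pushout because $P_n$ decomposes as a coproduct $\coprod_a Me_a$ and the bijection of Proposition~\ref{p:cell-maps} is natural with respect to restriction to $Me_a\times S^{n-1}$ — but both are cosmetic, not gaps.
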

\begin{proof}
The proof constructs by induction $M$-equivariant continuous mappings $f_n\colon X_n\to Y$ with $f_n$ extending $f_{n-1}$.  To define $f_0$, observe that $X_0 = \coprod_{a\in A}Me_a$ (without loss of generality) and so, by Proposition~\ref{p:cell-maps}, $\mathbf{Top}_M(X_0,Y)\cong \prod_{a\in A}e_aY\neq \emptyset$ and so we can define $f_0$.  Assume that $f_n\colon X_n\to Y$ has been defined. Let $Z$ be the one-point space with the trivial $M$-action and let $k\colon Y\to Z$ be the unique $M$-equivariant map.  Then $k$ is a weak equivalence.  So by Theorem~\ref{t:HELP} we can construct a commutative diagram
\[\begin{tikzcd}X_n\ar{rr}{i_0}\ar[hook]{dd}[swap]{i} & & X_n\times I\ar{dl}\ar[hook]{dd} & & X_n\ar{ll}[swap]{i_1}\ar[hook]{dd}{i}\ar{dl}{f_n}\\
                              &Z&           &\ar[crossing over]{ll}[swap]{k} Y&   \\
                X_{n+1}\ar{rr}{i_0}\ar{ur}             & & X_{n+1}\times I\ar[dashrightarrow]{ul}[swap]{} & & X_{n+1}\ar{ll}[swap]{i_1}\ar[dashrightarrow]{ul}[swap]{f_{n+1}}\end{tikzcd}\]
with $f_{n+1}$ $M$-equivariant.  Now take $f$ to be the colimit of the $f_n$. \end{proof}

\begin{Thm}\label{t:uniqueness.left}
Let $X,Y$ be equivariant classifying spaces for $M$.  Then $X$ and $Y$ are $M$-homotopy equivalent by a cellular $M$-homotopy equivalence.
\end{Thm}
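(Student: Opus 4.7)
The plan is to combine Lemma~\ref{l:construct.map}, the Whitehead theorem (Corollary~\ref{c:whitehead}), and the cellular approximation theorem (Theorem~\ref{t:cell.approx}) to produce the desired cellular $M$-homotopy equivalence.

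First I would invoke Lemma~\ref{l:construct.map}: since $X$ is a projective $M$-CW complex (being an equivariant classifying space) and $Y$ is contractible as an $M$-space, there exists a continuous $M$-equivariant mapping $f\colon X\to Y$. Next, because $X$ and $Y$ are both contractible, every homotopy group $\pi_q$ of either space is trivial for all $q\geq 0$, so the induced maps $f_\ast\colon \pi_q(X,x)\to \pi_q(Y,f(x))$ are automatically bijections. Hence $f$ is an $M$-equivariant weak equivalence of projective $M$-CW complexes.

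By Corollary~\ref{c:whitehead} (the $M$-equivariant Whitehead theorem), any $M$-equivariant weak equivalence between projective $M$-CW complexes is an $M$-homotopy equivalence; applying this to $f$ gives us that $X\simeq_M Y$.

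Finally, to upgrade this to a \emph{cellular} $M$-homotopy equivalence, I would apply Theorem~\ref{t:cell.approx} (cellular approximation). Since $Y$ is a projective $M$-CW complex, $M$ acts on $Y$ by cellular mappings, so the theorem produces a cellular $M$-equivariant mapping $f'\colon X\to Y$ that is $M$-homotopic to $f$. Because being an $M$-homotopy equivalence is an $M$-homotopy invariant property (any $M$-homotopy inverse of $f$ is also an $M$-homotopy inverse of $f'$), the map $f'$ is the desired cellular $M$-homotopy equivalence. Essentially no obstacle arises here: all the hard work is packaged in Lemma~\ref{l:construct.map}, Theorem~\ref{t:HELP}, and Theorem~\ref{t:cell.approx}, and this proposition is the clean synthesis of those tools specialized to contractible projective $M$-CW complexes.
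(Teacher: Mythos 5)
Your proposal is correct and follows exactly the same route as the paper, which cites Lemma~\ref{l:construct.map} to produce an $M$-equivariant map $f\colon X\to Y$, notes it is automatically a weak equivalence between contractible spaces, and then invokes Corollary~\ref{c:whitehead} and Theorem~\ref{t:cell.approx}. Your write-up merely fills in the (correct) details that the paper leaves implicit.
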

\begin{proof}
By Corollary~\ref{c:whitehead} and Theorem~\ref{t:cell.approx} it suffices to construct an $M$-equivariant continuous mapping $f\colon X\to Y$.  But Lemma~\ref{l:construct.map} does just that.
\end{proof}

Next we now give an elementary proof that contractible free $M$-CW complexes exist and hence there are equivariant classifying spaces for $M$.  A more canonical construction, using simplicial sets, was given in the previous section.

\begin{Lemma}\label{l:improve}
Let $M$ be a monoid.
\begin{enumerate}
  \item If $X_0$ is a non-empty projective (free) $M$-set, then there is a connected projective (free) $M$-graph $X$ with vertex set $X_0$.
  \item If $X$ is a connected projective (free) $M$-CW complex such that $\pi_q(X)=0$ for $0\leq q<n$, then there exists a projective $M$-CW complex $Y$ containing $X$ as a projective $M$-CW subcomplex and such that $Y_n=X_n$ and $\pi_q(Y)=0$ for $0\leq q\leq n$.
  \item If $X$ is a connected projective (free) $M$-CW complex such that $\pi_q(X)$ is trivial for $0\leq q<n$, then there exists a contractible projective (free) $M$-CW complex $Y$ containing $X$ as a projective $M$-CW subcomplex and such that $Y_n=X_n$.
\end{enumerate}
\end{Lemma}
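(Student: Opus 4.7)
The plan is to prove the three parts in sequence, each building on its predecessor, and in each case to attach free $M$-cells (which are projective, and are free when we need them to be).

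For part (1), write $X_0 = \coprod_{a\in A} Me_a$ with $e_a\in E(M)$ and fix some $a_0\in A$. Attach to $X_0$ two families of free $1$-cells. By Proposition~\ref{p:cell-maps} (applied with $e=1$, so $Me = M$), attaching a free $1$-cell $M\times B^1$ amounts to $M$-equivariantly choosing its two endpoints in $X_0$, which in turn is equivalent to choosing two arbitrary points of $X_0$. First, for each $a \in A\setminus\{a_0\}$, attach a free $1$-cell with endpoints $e_{a_0}$ and $e_a$; $M$-equivariance then yields an edge from $me_{a_0}$ to $me_a$ for every $m\in M$. Second, for each $m\in M$, attach a free $1$-cell with endpoints $e_{a_0}$ and $me_{a_0}$. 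In the resulting graph $X$, every vertex $me_{a_0}$ of the orbit $Me_{a_0}$ is joined directly to $e_{a_0}$, so $Me_{a_0}$ is path-connected to $e_{a_0}$; the first family then connects each remaining orbit $Me_a$ to $Me_{a_0}$. Thus $X$ is a connected projective (resp.\ free) $M$-CW complex of dimension $1$ with vertex set $X_0$.

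For part (2), the idea is to attach $M$-equivariant $(n+1)$-cells to kill all of $\pi_n(X)$ without disturbing the $n$-skeleton. Pick a basepoint $x_0 \in X$; this exists since $X$ is connected. For each class $[f]\in \pi_n(X,x_0)$, choose a representative $f\colon S^n \to X$ and attach a free $(n+1)$-cell $M\times B^{n+1}$ via the $M$-equivariant map $M\times S^n \to X$, $(m,s)\mapsto m\cdot f(s)$. Call the result $Y$. Since only cells of dimension $n+1$ are added, $Y$ contains $X$ as a projective (free) $M$-CW subcomplex, $Y_n = X_n$, and the pair $(Y,X)$ is $n$-connected. The long exact sequence of the pair gives $\pi_q(Y)\cong \pi_q(X)=0$ for $q<n$ and a surjection $\pi_n(X,x_0)\twoheadrightarrow \pi_n(Y,x_0)$; each $[f]\in \pi_n(X,x_0)$ maps to zero in $\pi_n(Y,x_0)$ because the attached $(n+1)$-cell provides a nullhomotopy of $f$ in $Y$, so $\pi_n(Y,x_0)=0$. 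The main subtlety in this step is basepoint bookkeeping: the $M$-action generally does not fix $x_0$, but because we attach one cell per \emph{element} of $\pi_n(X,x_0)$ (not per $M$-orbit), it suffices to know that each individual class dies, and we can appeal to the usual CW argument applied to the underlying CW pair.

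For part (3), iterate part (2). Set $Y^{(n-1)} := X$, and given $Y^{(k)}$ a projective (free) $M$-CW complex with $\pi_q(Y^{(k)})=0$ for $q\leq k$ and $(Y^{(k)})_n = X_n$, apply part (2) to produce $Y^{(k+1)}\supseteq Y^{(k)}$ with $\pi_q(Y^{(k+1)})=0$ for $q\leq k+1$ and $(Y^{(k+1)})_{k+1} = (Y^{(k)})_{k+1}$; in particular $(Y^{(k+1)})_n = X_n$. Let $Y := \varinjlim Y^{(k)}$, which is a projective (free) $M$-CW complex with $Y_n = X_n$ containing $X$ as a projective $M$-CW subcomplex. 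Any map $S^q\to Y$ has image in some $Y^{(k)}$ by compactness (and cellular approximation), so for $k\geq q$ it is nullhomotopic in $Y^{(k)}\subseteq Y$; hence $\pi_q(Y)=0$ for all $q$. Since $Y$ is a CW complex with trivial homotopy groups, Whitehead's theorem applied to the map $Y\to\{*\}$ shows that $Y$ is contractible.
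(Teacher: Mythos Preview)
Your proof is correct and follows essentially the same strategy as the paper (attach free $M$-cells to connect the vertices, then to kill $\pi_n$, then iterate and apply Whitehead); the paper uses a set of generators of $\pi_n(X,x_0)$ rather than all elements in part (2), but this is immaterial. One small technical point: for $Y$ to be a genuine CW complex with $Y_n = X_n$ in part (2), your attaching maps $f\colon S^n\to X$ must land in $X_n$, which the paper arranges by invoking that $X_n\hookrightarrow X$ is an $n$-equivalence (equivalently, cellular approximation).
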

\begin{proof}
For the first item, fix $x_0\in X_0$.  The edge set of $X$ will be bijection with $M\times X_0$ with the edge corresponding to $(m,x)$ connecting $mx_0$ to $mx$. Then $X$ is a projective (free) $M$-graph and each vertex $x$ is connected to $x_0$ via the edge $(1,x)$.

For the second item, we show that we can add free $M$-cells of dimension $n+1$ to $X$ to obtain a new projective $M$-CW complex $Y$ with $\pi_n(Y)=0$. If $\pi_n(X)=0$, then take $Y=X$.  So assume that $\pi_n(X)$ is non-trivial.  Fix a base point $x_0\in X_0$ and let $f_a\colon S^n\to X$, $a\in A$, be mappings whose based homotopy classes form a set of generators for $\pi_n(X,x_0)$. As $X_n\to X$ is an $n$-equivalence, we may assume without loss of generality that $f_a\colon S^n\to X_n$. Suppose that $X$ is constructed from pushouts as per \eqref{eq:pushout}.  Note that $M\times A$, where $A$ has the trivial action, is a free $M$-set.   Let us define $Y$ by putting $Y_k=X_k$ for $0\leq k\leq n$, defining $Y_{n+1}$ to be the pushout
\[\begin{tikzcd}(P_{n+1}\times S^n)\coprod (M\times A\times S^n)\ar{r}\ar[d,hook] &X_n\ar[d,hook]\\ (P_{n+1}\times B^{n+1})\coprod (M\times A\times B^{n+1})\ar{r} & Y_{n+1},\end{tikzcd}\] where the top map is the union of the attaching map for $X$ with the mapping $(m,a,x)\mapsto mf_a(x)$ (cf.~Proposition~\ref{p:cell-maps}), and putting $Y_k=X_k\cup Y_{n+1}$ for $k>n+1$.  Then $Y$ is a projective $M$-CW complex containing $X$ as a projective $M$-CW subcomplex and with $Y_n=X_n$.  Moreover, since $X_n=Y_n\to Y$ is an $n$-equivalence, it follows that the based homotopy classes of the $f_a\colon S^n\to Y$ generate $\pi_n(Y,x_0)$.  By construction the $f_a$ can be extended to $B^{n+1}\to Y$ and so their classes are trivial in $\pi_n(Y,x_0)$.  Thus $\pi_n(Y)=0$.  Also, because $X_n=Y_n\to Y$ is an $n$-equivalence, we have that $\pi_q(Y)=0$ for $0\leq q<n$.

The final item follows from Whitehead's theorem, iteration of the second item and that $Y_n\to Y$ is an $n$-equivalence for all $n\geq 0$.
\end{proof}

\begin{Cor}\label{c:exist.class}
Let $M$ be a monoid.  Then there exists a contractible free $M$-CW complex.
\end{Cor}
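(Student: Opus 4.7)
The plan is to bootstrap from the free $M$-set $X_0=M$ by running Lemma~\ref{l:improve} twice to build the desired complex, essentially in the manner of constructing $K(G,1)$ for a group via a free resolution killed at each stage. First, I would take $X_0 = M$, which is a non-empty free $M$-set. By part~(1) of Lemma~\ref{l:improve}, I obtain a connected free $M$-graph $X$ with $X_0$ as its vertex set (for example, by attaching one free orbit of edges for each element of $M$, joining $m\cdot 1 = m$ to $m\cdot x = mx$).

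Since $X$ is a connected projective (in fact free) $M$-CW complex of dimension at most $1$, it satisfies $\pi_q(X) = 0$ for $0 \le q < 1$. I can therefore apply part~(3) of Lemma~\ref{l:improve} with $n=1$ to produce a contractible free $M$-CW complex $Y$ containing $X$ as a free $M$-CW subcomplex with $Y_1 = X$. This $Y$ is the required contractible free $M$-CW complex.

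There is no real obstacle here; all the work has already been absorbed into Lemma~\ref{l:improve}, whose third item internally iterates the second item and invokes Whitehead's theorem (Corollary~\ref{c:whitehead}) to conclude contractibility from the vanishing of all homotopy groups together with the observation that $Y_n \to Y$ is an $n$-equivalence for every $n\geq 0$. The only point worth flagging is that the construction must be carried out in the free (not merely projective) setting, which is preserved at every stage: the graph in part~(1) is free by construction, and the cells attached in part~(2) are indexed by sets of the form $M\times A$ with $M$ acting on the first coordinate, hence free. One could alternatively give a completely canonical proof by invoking the construction of Section~\ref{sec_cspaces}: the nerve $|\EMr|$ of the right Cayley graph category is a free $M$-CW complex, and it is contractible because $\Gamma_r(M)$ has $1$ as an initial object. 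But the bootstrap from Lemma~\ref{l:improve} gives an elementary self-contained argument that does not require the nerve construction.
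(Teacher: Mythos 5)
Your proof is correct and is essentially identical to the paper's: both start from the free $M$-set $X_0=M$, invoke part~(1) of Lemma~\ref{l:improve} to get a connected free $M$-graph, and then apply part~(3) with $n=1$ to obtain a contractible free $M$-CW complex with that graph as $1$-skeleton. The alternative via $|\EMr|$ that you mention is also exactly the remark the paper makes immediately after the corollary.
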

\begin{proof}
Put $X_0=M$.  Then by Lemma~\ref{l:improve} we can find a connected free $M$-graph $X$ with vertex set $X_0$.  Now applying the third item of Lemma~\ref{l:improve} we can find a contractible free $M$-CW complex with $1$-skeleton $X$.
\end{proof}

\begin{example}
It follows from the definitions and results in
Section~\ref{sec_cspaces} that the geometric realisation
  $|\EMr|$ of the nerve of the right Cayley graph category of $M$ is a
  left equivariant classifying space for $M$.
\end{example}

\begin{Cor}\label{c:uniqueness.BM}
If $X,Y$ are equivariant classifying spaces for $M$, then $M\backslash X$ and $M\backslash Y$ are homotopy equivalent.  In particular, $M\backslash X\simeq |BM|$.  Therefore, if $G$ is a group and $X$ is an equivariant classifying space for $G$, then $G\backslash X$ is an Eilenberg-Mac Lane space for $G$.  Conversely, the universal cover of any Eilenberg-Mac Lane space for $G$ is an equivariant classifying space for $G$.
\end{Cor}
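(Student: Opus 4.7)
The plan is to assemble the conclusion from results already proved in the excerpt. For the first assertion, Theorem~\ref{t:uniqueness.left} gives an $M$-homotopy equivalence $X\simeq_M Y$, and then the last statement of Proposition~\ref{p:preservation.hom.equiv} (applied with $A$ the trivial right $M$-set) immediately yields a homotopy equivalence $M\backslash X\simeq M\backslash Y$.

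For the statement $M\backslash X\simeq |BM|$, I would take a specific equivariant classifying space and identify its quotient with $|BM|$. The natural choice is the canonical model $|\EMr|$ from Section~\ref{sec_cspaces}, which is a contractible free (hence projective) $M$-CW complex. The projection functor from the right Cayley graph category $\Gamma_r(M)$ to the one-point category $M^{op}$ induces a simplicial map $\EMr\to BM$ sending $m(m_1,\ldots,m_n)\mapsto (m_1,\ldots,m_n)$. Since the left $M$-action on $\EMr$ is by left multiplication on the leading $m$, two non-degenerate simplices lie in the same $M$-orbit precisely when their tails $(m_1,\ldots,m_n)$ coincide, and each orbit contains a unique representative of the form $1\cdot(m_1,\ldots,m_n)$. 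Hence the induced map factors through a simplicial isomorphism $M\backslash \EMr\to BM$, and therefore $M\backslash |\EMr|\cong |BM|$. Combining this with the first assertion gives $M\backslash X\simeq M\backslash |\EMr|\cong |BM|$.

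For the group case, recall that over a group every projective $M$-set is free, so an equivariant classifying space $X$ for $G$ is just a contractible free $G$-CW complex. Since the action is free and cellular with trivial cell stabilizers, the quotient map $X\to G\backslash X$ is a regular covering with deck group $G$, and $X$ is simply connected (being contractible). Hence $\pi_1(G\backslash X)\cong G$ and $\pi_n(G\backslash X)\cong \pi_n(X)=0$ for $n\geq 2$, so $G\backslash X$ is an Eilenberg--MacLane space for $G$. (Alternatively one can quote $G\backslash X\simeq |BG|$ from the previous paragraph and invoke the classical fact that $|BG|$ is a $K(G,1)$.)

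For the converse, let $Y$ be an Eilenberg--MacLane space for $G$, which we may take to be a CW complex, and let $\til Y$ be its universal cover. The deck group $G\cong\pi_1(Y)$ acts freely and cellularly on $\til Y$, sending $n$-cells to $n$-cells with trivial setwise stabilizers, so $\til Y$ is a rigid free $G$-CW complex; by the equivalence of the two definitions of (projective) $M$-CW complex recalled in Section~\ref{sec_projM}, it is a free $G$-CW complex in the inductive sense. Finally $\pi_n(\til Y)\cong \pi_n(Y)=0$ for $n\geq 2$, and $\til Y$ is simply connected by construction, so $\til Y$ is a weakly contractible CW complex and hence contractible by the classical Whitehead theorem; thus $\til Y$ is an equivariant classifying space for $G$. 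The only step requiring more than quotation is the identification $M\backslash|\EMr|\cong|BM|$, which is routine once one keeps track of the action on simplices, so no serious obstacle arises.
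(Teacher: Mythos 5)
Your proof is correct and follows essentially the same route as the paper's: the first assertion via Theorem~\ref{t:uniqueness.left} together with Proposition~\ref{p:preservation.hom.equiv}, the identification $M\backslash|\EMr|\cong|BM|$ coming from the fact that $\EMr_n$ is a free $M$-set with basis $BM_n$ (which the paper leaves implicit from Section~\ref{sec_cspaces}), and classical covering space theory for the group statements. The extra detail you supply for the orbit computation and the rigid free $G$-CW structure on the universal cover is accurate and fills in exactly what the paper's terse proof quotes.
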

\begin{proof}
The first statement follows from Theorem~\ref{t:uniqueness.left} and Proposition~\ref{p:preservation.hom.equiv}.  The second statement follows from the first as $|\EMr|$ is an equivariant classifying space for $M$.  The group statements then follow from the previous statements and classical covering space theory.
\end{proof}

If $M$ and $N$ are monoids, then $E(M\times N)= E(M)\times E(N)$ and $(M\times N)(e,f)= Me\times Nf$.  It follows that if $P$ is a (finitely generated) projective $M$-set and $Q$ a (finitely generated) projective $N$-set, then $P\times Q$ is a (finitely generated projective) $M\times N$-set.

\begin{Prop}\label{p:direct.prod}
Let $M,N$ be monoids and let $X,Y$ be equivariant classifying spaces for $M,N$, respectively.  Then $X\times Y$ is an $M\times N$-equivariant classifying space, which is of $M\times N$-finite type whenever $X$ is of $M$-finite type and $Y$ is of $N$-finite type.
\end{Prop}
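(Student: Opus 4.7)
The proof breaks into three parts: contractibility, verification of the projective $(M\times N)$-CW structure, and the finite type claim.

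Contractibility is immediate: $X$ and $Y$ are contractible by hypothesis, so $X\times Y$ is contractible via the product of contracting homotopies. For the $(M\times N)$-CW structure, the plan is to combine two ingredients. The first is the standard fact (valid in the category of compactly generated Hausdorff spaces, which is the setting adopted earlier via the geometric realisation functor) that a product of CW complexes is a CW complex whose open cells are products of open cells, and in which $(X\times Y)_k=\bigcup_{n+m\leq k}X_n\times Y_m$. The second is the remark immediately preceding the statement: $E(M\times N)=E(M)\times E(N)$ and $(M\times N)(e,f)=Me\times Nf$, so the product of a projective $M$-set and a projective $N$-set is a projective $(M\times N)$-set, and finitely generated on each side produces finitely generated on the product.

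With $X=\varinjlim X_n$ and $Y=\varinjlim Y_m$ built as in \eqref{eq:pushout}, I would define $Z=X\times Y$ filtered by $Z_k=\bigcup_{n+m\leq k}X_n\times Y_m$ and check that $Z_k$ is obtained from $Z_{k-1}$ as a pushout of $(M\times N)$-spaces whose attached cells are products $(Me\times B^n)\times (Nf\times B^m)$ arising from pairs with $n+m=k$. Since $(Me)\times (Nf)\cong (M\times N)(e,f)$ and $B^n\times B^m\cong B^{k}$ with $\partial(B^n\times B^m)\cong S^{k-1}$, each such piece is a projective $(M\times N)$-cell of dimension $k$, and the disjoint union of these over the relevant pairs gives the requisite projective $(M\times N)$-set $R_k$ in the pushout diagram for $Z$. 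Continuity of the $(M\times N)$-action on $Z$ (with $M\times N$ discrete) follows from continuity of the individual actions together with the compactly generated topology on the product.

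For the finite type claim, note that the $(M\times N)$-cells added at stage $k$ correspond to the $(M\times N)$-set $R_k=\coprod_{n+m=k}P_n\times Q_m$, where $P_n,Q_m$ are the projective $M$- and $N$-sets of cells of $X$ and $Y$. If each $P_n$ and $Q_m$ is finitely generated, then each $P_n\times Q_m$ is finitely generated as an $(M\times N)$-set by the remark cited above, and the coproduct is over a finite index set, so $R_k$ is finitely generated. Hence $X\times Y$ is of $(M\times N)$-finite type.

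The only genuine subtlety, and the step I expect to require the most care, is the verification that the product topology agrees with the CW topology on $X\times Y$ and that the pushout squares literally have the form demanded by the definition of a projective $(M\times N)$-CW complex. This is the familiar point where one must invoke the compactly generated framework to avoid the classical counterexamples to $X\times Y$ being a CW complex in the naive product topology; all other steps are straightforward bookkeeping.
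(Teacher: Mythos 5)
Your proof is correct and follows essentially the same route as the paper: the paper likewise observes that the $k$-cells of $X\times Y$ are attached along $\coprod_{i+j=k}P_i\times Q_j\times B^k$, invokes the remark that products of projective $M$- and $N$-sets are projective $M\times N$-sets, and concludes from contractibility of the product. The point-set subtlety about product versus CW topology that you flag is real but is not addressed in the paper's proof either.
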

\begin{proof}
Assume that $X$ is obtained via attaching projective $M$-cells $P_n\times B^n$ and that $Y$ is obtained by attaching projective $N$-cells $Q_n\times B^n$.  Then the $n$-cells of $X\times Y$ are obtained by adjoining $\coprod_{i=0}^n P_i\times Q_{n-i}\times B^n$ and hence $X\times Y$ is an $M\times N$-projective CW complex which is of $M\times N$-finite type whenever $X$ is of $M$-finite type and $Y$ is of $N$-finite type.  As $X\times Y$ is contractible, we deduce that it is an $M\times N$-equivariant classifying space. 
\end{proof}

\subsection{Monoids of type left-$\F_n$}

A key definition for this paper is the following.  A monoid $M$ is of type \emph{left-$\F_n$} if there is an equivariant classifying space $X$ for $M$ such that $X_n$ is $M$-finite, i.e., such that $M\backslash X$ has finite $n$-skeleton.   We say that $M$ is of type \emph{left-$\F_{\infty}$} if $M$ has an equivariant classifying space $X$ that is of $M$-finite type, i.e., $M\backslash X$ is of finite type. The monoid $M$ is defined to have type \emph{right-$\F_n$} if $M^{op}$ is of type left-$\F_n$ for $0\leq n\leq \infty$. The following proposition contains some basic facts.

\begin{Prop}\label{p:basic.props.fn}
Let $M$ be a monoid.
\begin{enumerate}
\item A group is of type left-$\F_n$ if and only if it is of type $\F_n$ in the usual sense for any $0\leq n\leq \infty$.
\item For $0\leq n\leq \infty$, if $M$ is of type left-$\F_n$, then it is of type left-$\FP_n$.
\item If $M$ is of type left-$\F_{\infty}$, then it is of type left-$\F_n$ for all $n\geq 0$.
\end{enumerate}
\end{Prop}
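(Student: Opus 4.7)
The plan is to treat each of the three items in turn, exploiting the correspondence between equivariant classifying spaces and their cellular chain complexes together with the previously established base-change results.

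For part (1), I would first observe that for a group $G$ one has $E(G)=\{1\}$, so every projective $G$-set is free; consequently a projective $G$-CW complex is the same thing as a free $G$-CW complex, and the number of $G$-cells of dimension $n$ equals $|G\backslash(G\text{-cells})|$. By Corollary~\ref{c:uniqueness.BM} (and classical covering space theory), the assignments $X\mapsto G\backslash X$ and $Y\mapsto \til Y$ (universal cover) set up a correspondence between equivariant classifying spaces for $G$ and Eilenberg--MacLane spaces $K(G,1)$. Under this correspondence, the $n$-skeleton $X_n$ is $G$-finite if and only if $G\backslash X_n=(G\backslash X)_n$ is a finite CW complex, i.e., $G\backslash X$ is a $K(G,1)$ with finite $n$-skeleton. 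This is precisely the classical definition of $\F_n$; the argument works uniformly for $0\leq n\leq \infty$.

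For part (2), let $X$ be an equivariant classifying space for $M$ with $X_n$ being $M$-finite. As recorded in Section~\ref{sec_projM}, the degree $i$ term of the cellular chain complex $C_\ast(X)$ is isomorphic to $\mathbb ZP_i$ as a $\mathbb ZM$-module, where $P_i$ is the projective $M$-set of $i$-cells, and $\mathbb ZP_i$ is then a projective $\mathbb ZM$-module. Since $X$ is contractible, the augmented cellular chain complex
\[
\cdots\to \mathbb ZP_2\to \mathbb ZP_1\to \mathbb ZP_0\to \mathbb Z\to 0
\]
is a projective resolution of the trivial $\mathbb ZM$-module $\mathbb Z$. The hypothesis that $X_n$ is $M$-finite means precisely that $P_i$ is a finitely generated projective $M$-set for $i\leq n$, so each $\mathbb ZP_i$ is a finitely generated projective $\mathbb ZM$-module for $i\leq n$, which is exactly the property left-$\FP_n$. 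For $n=\infty$ the same argument, applied with $X$ of $M$-finite type, gives the conclusion simultaneously in every dimension.

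For part (3), this is essentially immediate from the definitions: if $X$ is an equivariant classifying space of $M$-finite type, then each $P_i$ is a finitely generated projective $M$-set, so for any finite $n$ the skeleton $X_n$ is built from finitely many $M$-cells in each dimension $0,1,\ldots,n$ and is therefore $M$-finite. Hence the same space $X$ witnesses left-$\F_n$ for every $n\geq 0$. The main subtlety throughout is in part (1), where one must verify that the Corollary~\ref{c:uniqueness.BM} correspondence respects the cell-counting, but this reduces to the observation that the quotient map $X\to G\backslash X$ sends a single free $G$-cell $G\times B^n$ to a single $n$-cell of $G\backslash X$; the remaining parts are essentially bookkeeping from the chain complex description already in hand.
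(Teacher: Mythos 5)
Your proof is correct and follows essentially the same route as the paper: part (1) via Corollary~\ref{c:uniqueness.BM} together with Corollary~\ref{c:quotient}, part (2) via the observation that the augmented cellular chain complex of a contractible projective $M$-CW complex is a projective resolution of $\mathbb Z$ with $n$-th term $\mathbb ZP_n$, and part (3) directly from the definitions. The extra detail you supply (freeness of projective $G$-sets for a group $G$, and the cell-counting under the quotient map) is exactly the content the paper delegates to those two corollaries.
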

\begin{proof}
The first item follows from Corollary~\ref{c:uniqueness.BM} and Corollary~\ref{c:quotient}.  The second is immediate using that the augmented cellular chain complex of an equivariant classifying space $X$ gives a projective $\mathbb ZM$-resolution of the trivial $\mathbb ZM$-module since if $X$ is built up from pushouts as per \eqref{eq:pushout}, then the $n^{th}$-chain module is isomorphic to $\mathbb ZP_n$.  The final item is trivial.
\end{proof}

Note that, trivially, if $M$ is a finite monoid then $|\EMr|$ has finitely many
cells in each dimension and thus $M$ is of type left-$\Finfty$.

Sometimes it will be convenient to use the following reformulation of the property left-$\F_n$.

\begin{Prop}\label{p:reform.fn}
Let $M$ be a monoid.  The following are equivalent for $0\leq n<\infty$.
\begin{enumerate}
  \item $M$ is of type left-$\F_n$
  \item There is a connected $M$-finite  projective $M$-CW complex $X$ of dimension at most $n$ with $\pi_q(X)=0$ for $0\leq q<n$.
\end{enumerate}
\end{Prop}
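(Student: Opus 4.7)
The plan is to prove the equivalence by direct application of Lemma~\ref{l:improve}(3) together with the standard fact (cited just before Theorem~\ref{t:cell.approx}) that for any CW complex $Y$, the skeletal inclusion $Y_n \hookrightarrow Y$ is an $n$-equivalence. Both directions turn out to be short; the only genuine ingredient beyond definitions is already packaged in Lemma~\ref{l:improve}(3), which says that a connected projective $M$-CW complex with $\pi_q=0$ for $0\leq q<n$ can be extended by attaching cells of dimension $>n$ to a \emph{contractible} projective $M$-CW complex without altering the $n$-skeleton.

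For the implication (1)$\Rightarrow$(2), I would let $Y$ be an equivariant classifying space for $M$ with $Y_n$ being $M$-finite, which exists by the definition of left-$\F_n$. Set $X = Y_n$. Then $X$ is by construction a projective $M$-CW complex of dimension at most $n$ which is $M$-finite. Since the inclusion $X = Y_n \hookrightarrow Y$ is an $n$-equivalence, and $Y$ is contractible, I get $\pi_q(X)=0$ for $0 \leq q < n$, and in particular (for $n\geq 1$) that $X$ is path-connected. That gives (2).

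For (2)$\Rightarrow$(1), I would apply Lemma~\ref{l:improve}(3) directly to the complex $X$ produced in (2): this yields a contractible projective $M$-CW complex $Y$ containing $X$ as a projective $M$-CW subcomplex with $Y_n = X_n$. Since $X$ has dimension at most $n$ we have $X_n = X$, so $Y_n = X$ is $M$-finite. Hence $Y$ is an equivariant classifying space for $M$ whose $n$-skeleton is $M$-finite, which is precisely the definition of left-$\F_n$.

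There is essentially no difficult step here; the proof reduces to invoking the right fact on each side. The only mild subtlety is bookkeeping in the $n=0$ case, where the hypothesis ``connected and of dimension at most $0$'' forces $X$ to be a single-point projective $M$-set, which one needs to match against the meaning of left-$\F_0$; but the argument via Lemma~\ref{l:improve}(3) and skeletal inclusions being $n$-equivalences works uniformly for all $n\geq 1$, and this is essentially the range of interest.
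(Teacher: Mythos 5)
Your proof is correct and follows essentially the same route as the paper: for (1)$\Rightarrow$(2) take $X=Y_n$ and use that the skeletal inclusion $Y_n\hookrightarrow Y$ is an $n$-equivalence, and for (2)$\Rightarrow$(1) invoke Lemma~\ref{l:improve}(3) to complete $X$ to a contractible projective $M$-CW complex with unchanged $n$-skeleton. The remark about the $n=0$ case is a harmless extra observation not needed for the argument.
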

\begin{proof}
If $Y$ is an equivariant classifying space for $M$ such that $Y_n$ is $M$-finite, then since $Y_n\to Y$ is an $n$-equivalence, we deduce that $X=Y_n$ is as required for the second item.  Conversely, if $X$ is as in the second item, we can construct by Lemma~\ref{l:improve} an equivariant classifying space $Y$ for $M$ with $Y_n=X$.  Thus $M$ is of type left-$\F_n$.
\end{proof}

%
%

Recall that the fundamental group of $|BM|$ is isomorphic to the universal group (or maximal group image, or group completion) $U(M)$ of $M$ i.e., the group with generators $M$ and relations the multiplication table of $M$ (cf.~\cite{GabrielZisman}).

\begin{Cor}\label{c:univ.group}
Let $M$ be a monoid.  If $M$ is of type left-$\F_1$, then $U(M)$ is finitely generated.  If $M$ is of type left-$\F_2$, then $U(M)$ is finitely presented.
\end{Cor}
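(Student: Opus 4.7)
The plan is to reduce both statements to standard facts about fundamental groups of CW complexes applied to the quotient complex $M\backslash X$, where $X$ is an equivariant classifying space for $M$ with appropriate finiteness.

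First I would recall that the fundamental group of $|BM|$ is the universal group $U(M)$ (as stated in the paragraph preceding the corollary). By Corollary~\ref{c:uniqueness.BM}, if $X$ is any equivariant classifying space for $M$, then $M\backslash X \simeq |BM|$, so
\[
\pi_1(M\backslash X)\cong \pi_1(|BM|)\cong U(M).
\]
Thus it suffices to exhibit an equivariant classifying space whose quotient $M\backslash X$ has finite $1$-skeleton (for $\F_1$) or finite $2$-skeleton (for $\F_2$).

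Next, by the definition of left-$\F_n$, if $M$ is of type left-$\F_n$ there exists an equivariant classifying space $X$ with $X_n$ an $M$-finite projective $M$-CW complex. By Corollary~\ref{c:quotient}, $M\backslash X$ is a CW complex; moreover, since the quotient construction commutes with skeleta (each $n$-cell of $X$ projects to an $n$-cell of $M\backslash X$, as follows from the pushout description of the $n$-skeleton in the paragraph before Lemma~\ref{l:adjunction}), we have $(M\backslash X)_n = M\backslash(X_n)$, and this is a finite CW complex by the $M$-finite type half of Corollary~\ref{c:quotient}.

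Finally I would invoke the standard CW-complex facts: the fundamental group of a connected CW complex is determined by its $2$-skeleton, and more precisely, if the $1$-skeleton is finite then $\pi_1$ is finitely generated (a maximal tree in the finite $1$-skeleton gives finitely many edge generators), and if additionally the $2$-skeleton is finite then one obtains a finite presentation (the finitely many $2$-cells contribute the relators). Applying this to $M\backslash X$ yields that $U(M)$ is finitely generated when $M$ is of type left-$\F_1$, and finitely presented when $M$ is of type left-$\F_2$. There is no real obstacle here; the only mild subtlety is checking that $M\backslash$ is compatible with the skeletal filtration and with finiteness, but this is essentially immediate from Corollary~\ref{c:quotient} and the cellular structure of projective $M$-CW complexes.
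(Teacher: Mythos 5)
Your proof is correct and follows essentially the same route as the paper: both use Corollary~\ref{c:uniqueness.BM} to identify $\pi_1(M\backslash X)$ with $\pi_1(|BM|)\cong U(M)$, Corollary~\ref{c:quotient} (together with the definition of left-$\F_n$) to get a finite $1$- or $2$-skeleton on the quotient, and then the standard facts that a connected CW complex with finite $1$-skeleton (resp.\ $2$-skeleton) has finitely generated (resp.\ finitely presented) fundamental group. The extra care you take about the quotient commuting with skeleta is fine but already built into the definition of left-$\F_n$ as stated in the paper.
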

\begin{proof}
By Corollary~\ref{c:uniqueness.BM}, $|BM|$ in the first case is homotopy equivalent to a CW complex with finite $1$-skeleton and in the second case to a CW complex with finite $2$-skeleton by Corollary~\ref{c:quotient}.  Thus $U(M)\cong \pi_1(BM)$ has the desired properties in both cases.
\end{proof}

Recall that an inverse monoid is a monoid $M$ with the property that for every $m \in M$ there is a unique element $m' \in M$ such that $mm' m = m$ and $m' m m' = m'$. For more on inverse monoids, and other basic concepts from semigroup theory we refer the reader to \cite{Howie}.

\begin{Cor}\label{c:BM.eilen}
Let $M$ be a monoid such that $|BM|$ is an Eilenberg-Mac Lane space (e.g., if $M$ is cancellative with a left or right Ore condition or if $M$ is an inverse monoid) and suppose that $0\leq n\leq \infty$.  If $M$ is of type left-$\F_n$, then $U(M)$ is of type $\F_n$.
\end{Cor}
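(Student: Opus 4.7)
The plan is to combine the hypothesis that $|BM|$ is a $K(U(M),1)$ with the uniqueness and quotient results already established for equivariant classifying spaces, producing a small classifying space for $U(M)$ directly.

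First, I would unpack the definition: since $M$ is of type left-$\F_n$, there is an equivariant classifying space $X$ for $M$ (i.e., a contractible projective $M$-CW complex) such that $X_n$ is $M$-finite (with the usual convention that the $n=\infty$ case means $X$ is of $M$-finite type). Passing to the orbit quotient, Corollary~\ref{c:quotient} gives that $M\backslash X$ is a CW complex with finite $n$-skeleton (respectively, of finite type when $n=\infty$).

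Next I would identify the homotopy type of this quotient. By Corollary~\ref{c:uniqueness.BM}, we have $M\backslash X\simeq |BM|$, and the hypothesis tells us that $|BM|$ is a $K(U(M),1)$. Consequently, $M\backslash X$ is itself a $K(U(M),1)$ up to homotopy, and its fundamental group is $U(M)\cong \pi_1(|BM|)$ (see the remark preceding Corollary~\ref{c:univ.group}). Since the class of groups of type $\F_n$ is characterised by the existence of \emph{some} $K(G,1)$ with finite $n$-skeleton (respectively of finite type for $n=\infty$), the existence of $M\backslash X$ with that property immediately yields that $U(M)$ is of type $\F_n$.

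There is essentially no obstacle here beyond bookkeeping: the real content has already been done in Corollary~\ref{c:uniqueness.BM} (which identifies $M\backslash X$ with $|BM|$ up to homotopy) and in Corollary~\ref{c:quotient} (which transports the $M$-finiteness of $X$ to finiteness of $M\backslash X$). The mild point that needs care is handling $n=\infty$ uniformly, but this is immediate from the definition of left-$\F_\infty$ and the second part of Corollary~\ref{c:quotient}. The examples mentioned parenthetically (cancellative monoids with a one-sided Ore condition, inverse monoids) are not needed in the proof itself; they are just sufficient conditions, known from the literature, guaranteeing the Eilenberg--Mac Lane hypothesis on $|BM|$.
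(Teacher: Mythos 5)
Your proposal is correct and follows the same route as the paper: take an equivariant classifying space $X$ with $X_n$ $M$-finite, use Corollary~\ref{c:quotient} to see that $M\backslash X$ has finite $n$-skeleton (or finite type when $n=\infty$), and use Corollary~\ref{c:uniqueness.BM} to identify $M\backslash X$ up to homotopy with $|BM|$, hence with a $K(U(M),1)$. The bookkeeping for $n=\infty$ and the observation that the parenthetical examples play no role in the argument are both handled correctly.
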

\begin{proof}
If $X$ is an equivariant classifying space for $M$, then $M\backslash X$ is homotopy equivalent to $|BM|$ by Corollary~\ref{c:uniqueness.BM}
 and hence is an Eilenberg-Mac Lane space for $U(M)$.  The result now follows from Corollary~\ref{c:quotient}.
\end{proof}

Since, as already mentioned above, D. McDuff \cite{McDuff1979} has shown that every path-connected space has the weak homotopy type of the classifying space of some monoid, not every monoid $|BM|$ is an Eilenberg-Mac Lane space. So not every monoid satisfies the hypotheses of Corollary~\ref{c:BM.eilen}. The fact that if $M$ is cancellative with a left or right Ore condition then $|BM|$ is an Eilenberg-Mac Lane space is well known. If $M$ is an inverse monoid then $|BM|$ may also be shown to be an Eilenberg-Mac Lane space. Both of these results can easily be proved by appealing to Quillen's theorem A, see~\cite[Chapter~4]{WeibelKBook}, and should be considered folklore.

The converse of Corollary~\ref{c:BM.eilen} does not hold. For example,
the free inverse monoid on one generator is not of type
left-$\F_2$ while its maximal group image  $\mathbb Z$
is $\Finfty$ (this proof of the fact that the free inverse monoid on one generator
is not left-$\F_2$ will appear in \cite{GraySteinbergInPrep}).

For groups, being of type $\F_1$ is equivalent to finite generation.  For monoids, the condition of being left-$\F_1$ is considerably weaker.  Recall that if $M$ is a monoid and $A\subseteq M$, then the (right) \emph{Cayley digraph} $\Gamma(M,A)$ of $M$ with respect to $A$ is the graph with vertex set $M$ and with edges in bijection with $M\times A$ where the directed edge (arc) corresponding to $(m,a)$ starts at $m$ and ends at $ma$.  Notice that $\Gamma(M,A)$ is a free $M$-CW graph and is $M$-finite if and only if $A$ is finite.

\begin{Thm}\label{t:f1}
Let $M$ be a monoid.  The following are equivalent.
\begin{enumerate}
  \item $M$ is of type left-$\F_1$.
  \item $M$ is of type left-$\FP_1$.
  \item There is a finite subset $A\subseteq M$ such that $\Gamma(M,A)$ is connected as an undirected graph.
\end{enumerate}
In particular, any finitely generated monoid is of type left-$\F_1$.
\end{Thm}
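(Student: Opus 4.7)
The plan is to prove the implications $(1)\Rightarrow(2)\Rightarrow(3)\Rightarrow(1)$, closing the loop via the standard interpretation of left-$\FP_1$ in terms of the augmentation ideal. The implication $(1)\Rightarrow(2)$ is immediate from Proposition~\ref{p:basic.props.fn}, and the final assertion that any finitely generated monoid is of type left-$\F_1$ then reduces to verifying (3): if $A$ is a finite generating set, then every $m\in M$ can be reached from $1$ by a directed path in $\Gamma(M,A)$, so the undirected graph is connected.

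For $(3)\Rightarrow(1)$, I would observe that if $A$ is finite then the Cayley digraph $\Gamma(M,A)$, regarded as an undirected graph, is an $M$-finite \emph{free} $M$-CW complex of dimension at most $1$ (with $0$-skeleton the free $M$-set $M$ and $1$-cells in bijection with $M\times A$, which is free on $A$). If this graph is connected, then $\pi_0(\Gamma(M,A))=0$, so Proposition~\ref{p:reform.fn} with $n=1$ gives that $M$ is of type left-$\F_1$.

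The main content is $(2)\Rightarrow(3)$, and it passes through the standard identification of left-$\FP_1$ with finite generation of the augmentation ideal. Let $\varepsilon\colon \mathbb ZM\to \mathbb Z$ be the augmentation, with kernel $I$, the augmentation ideal. Using the short exact sequence $0\to I\to \mathbb ZM\to \mathbb Z\to 0$ and the fact that $\mathbb ZM$ is free, one shows in the standard way that $M$ is of type left-$\FP_1$ if and only if $I$ is finitely generated as a left $\mathbb ZM$-module. Since $I$ is spanned as an abelian group by $\{m-1:m\in M\}$, this is equivalent to the existence of a finite subset $A\subseteq M$ such that $\{a-1:a\in A\}$ generates $I$ as a left $\mathbb ZM$-module.

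The last step is to identify this generation condition with connectedness of $\Gamma(M,A)$. Consider the $\mathbb ZM$-linear map
\[
\partial\colon \mathbb Z[M\times A]\longrightarrow \mathbb ZM,\qquad (m,a)\longmapsto ma-m=m(a-1),
\]
which is exactly the cellular boundary map of the free $M$-CW complex $\Gamma(M,A)$. The image of $\partial$ equals the left $\mathbb ZM$-submodule generated by $\{a-1:a\in A\}$, and since $\partial$ lands in $I$ (as each $m(a-1)$ lies in $\ker\varepsilon$), we have $\mathrm{im}(\partial)=I$ if and only if the reduced cellular $0$-th homology $\widetilde H_0(\Gamma(M,A);\mathbb Z)$ vanishes, if and only if $\Gamma(M,A)$ is connected as an undirected graph. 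Combining the two equivalences yields $(2)\Leftrightarrow(3)$, completing the cycle. The only subtlety to handle carefully is the standard derivation that partial $\FP_1$ is equivalent to finite generation of $I$; everything else is bookkeeping with the cellular chain complex of a $1$-dimensional free $M$-CW complex.
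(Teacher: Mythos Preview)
Your proof is correct and follows essentially the same route as the paper: $(1)\Rightarrow(2)$ via Proposition~\ref{p:basic.props.fn}, $(3)\Rightarrow(1)$ via Proposition~\ref{p:reform.fn}, and $(2)\Rightarrow(3)$ via finite generation of the augmentation ideal by elements $a-1$ with $a$ in a finite set $A$. The only cosmetic difference is that for $(2)\Rightarrow(3)$ you package the computation as ``$\mathrm{im}(\partial)=I$ iff $\widetilde H_0(\Gamma(M,A))=0$'' using the cellular chain complex, whereas the paper argues directly that the projection $\eta\colon\mathbb ZM\to\mathbb Z\pi_0(\Gamma(M,A))$ has kernel equal to $I$; these are the same statement, since $\ker\eta=\mathrm{im}(\partial)$ by definition of cellular homology.
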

\begin{proof}
Item (1) implies (2) by Proposition~\ref{p:basic.props.fn}, whereas (2) implies (3) by a result due to Kobayashi~\cite{Kobayashi2007}. For completeness, let us sketch the proof that (2) implies (3).
Let $\varepsilon\colon \mathbb ZM\to \mathbb Z$ be the augmentation map; the ideal $I=\ker \varepsilon$ is called the augmentation ideal.  If $M$ is of type left-$\FP_1$, then $I$ must be finitely generated because the augmentation map gives a partial free resolution.  But $I$ is generated by all elements of the form $m-1$ with $m\in M$. Hence there is a finite subset $A\subseteq M$ such that the elements $a-1$ with $a\in A$ generate $I$.  Consider the Cayley digraph $\Gamma(M,A)$.  Then $M$ acts cellularly on $\Gamma(M,A)$ and hence acts on $\pi_0(\Gamma(M,A))$.  There is a surjective $\mathbb ZM$-module homomorphism $\eta\colon \mathbb ZM\to \mathbb Z\pi_0(\Gamma(M,A))$ mapping $m\in M$ to the connected component of the vertex $m$ of $\Gamma(M,A)$. Moreover, the augmentation $\varepsilon$ factors through $\eta$.  Thus to show that $\Gamma(M,A)$ is connected, it suffices to show that $I=\ker \eta$.  By construction $\ker \eta\subseteq I$.  But if $a\in A$, then $a$ and $1$ are in the same connected component of $\Gamma(M,A)$ and thus $a-1\in \ker \eta$.  Since the $a-1$ with $a\in A$ generate $I$, we deduce that $I\subseteq\ker \eta$ and hence $\Gamma(M,A)$ is connected.

 Finally, (3) implies (1) by Proposition~\ref{p:reform.fn} as $\Gamma(M,A)$ is an $M$-finite connected free $M$-CW complex of dimension at most $1$.
\end{proof}

We next show that a finitely presented monoid is of type left-$\F_2$.  In fact, we shall see later that finitely presented monoids are of type bi-$\F_2$, which implies left-$\F_2$, but the proof of this case is instructive.

\begin{Thm}\label{t:f2}
Let $M$ be a finitely presented monoid.  Then $M$ is of type left-$\F_2$
\end{Thm}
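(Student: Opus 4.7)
The plan is to construct, from a finite presentation $M = \langle A \mid R\rangle$ (with $A$ and $R$ both finite), a connected and simply connected $M$-finite free $M$-CW complex $X$ of dimension at most $2$. Proposition~\ref{p:reform.fn} will then give $M$ of type left-$\F_2$.

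First I would take $X_0 = M$ as a free $M$-set of rank one with generator $1$, and attach one free $M$-$1$-cell for each $a\in A$ via the $M$-equivariant attaching map $\{1\}\times S^0 \to X_0$ sending $(1,-1)\mapsto 1$ and $(1,1)\mapsto a$. This makes $X_1$ into the right Cayley graph $\Gamma(M,A)$, regarded as an undirected $1$-dimensional CW complex. For each relation $(u,v)\in R$ I would then attach one free $M$-$2$-cell via an $M$-equivariant map whose base loop (the image of $\{1\}\times S^1$) traces the edge-path from $1$ labelled by $u$, followed by the reverse of the edge-path labelled by $v$; these endpoints agree because $u=_M v$, so this is a genuine loop in $X_1$. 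By construction $X$ is a $2$-dimensional free $M$-CW complex built from finitely many $M$-cells, hence $M$-finite. Connectedness of $X$ follows from connectedness of $X_1=\Gamma(M,A)$, which in turn is immediate from Theorem~\ref{t:f1}, since finitely presented implies finitely generated and hence of type left-$\F_1$.

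The technical heart, and what I expect to be the main obstacle, is showing that $\pi_1(X,1)=0$. The key lemma is the standard rewriting fact: if $w_1,w_2\in A^*$ are positive words representing the same element of $M$, then the edge-paths in $\Gamma(M,A)$ starting at a common vertex and labelled by $w_1$ and $w_2$ are $X$-homotopic. I would prove this by induction on the length of a derivation $w_1\leftrightarrow w_2$ using $R$: a single rewriting step $xuy\leftrightarrow xvy$ with $(u,v)\in R$ alters the edge-path only in the region bounded by the translate, under the element represented by $x$, of the $(u,v)$-$2$-cell, and this translate provides the required homotopy. From this lemma one deduces that for any loop $\alpha$ in $X_1$ at $1$, expressed as a sequence of oriented edges $e_1^{\epsilon_1}\cdots e_n^{\epsilon_n}$, the class $[\alpha]\in \pi_1(X_1,1)$ lies in the normal closure of the attached $2$-cell boundaries in $X$; equivalently, after fixing a spanning tree $T$ of $\Gamma(M,A)$, every generator $[e]$ of the free group $\pi_1(X_1,1)$ corresponding to an edge $e\notin T$ is killed once the $2$-cells have been attached.

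With $X$ connected, simply connected, $M$-finite, $2$-dimensional, and a free $M$-CW complex, Proposition~\ref{p:reform.fn} delivers that $M$ is of type left-$\F_2$, completing the proof. It is worth flagging that in the group case simple connectedness is transparent because $\Gamma(M,A)$ is the $1$-skeleton of the universal cover of the presentation $2$-complex; for monoids no classical covering space theory is available, which is precisely why the explicit rewriting argument is needed.
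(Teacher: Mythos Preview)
Your proposal is correct and follows essentially the same approach as the paper: both construct the same $M$-finite free $M$-CW $2$-complex with $1$-skeleton the Cayley graph $\Gamma(M,A)$ and one free $M$-$2$-cell per defining relation, and both prove simple connectedness by choosing a spanning tree, reducing to showing that loops $xy^{-1}$ with $x=_M y$ are null-homotopic, and establishing this by induction on the length of a derivation using translates of the attached $2$-cells. The only cosmetic difference is that the paper invokes a directed spanning tree rooted at $1$ explicitly, while you appeal to Theorem~\ref{t:f1} for connectedness and leave the spanning tree choice generic.
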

\begin{proof}
Suppose that $M$ is generated by a finite set $A$ with defining relations $u_1=v_1,\ldots, u_n=v_n$.  Let us construct a $2$-dimensional, $M$-finite, free $M$-CW complex $X$ with $1$-skeleton the Cayley graph $\Gamma(M,A)$ by attaching a free $M$-cell $M\times B^2$ for each relation.  Let $p_i,q_i$ be the paths from $1$ to $m_i$ labelled by $u_i$ and $v_i$, respectively, where $m_i$ is the image of $u_i$ (and $v_i$) in $M$.  Then we glue in a disk $d_i$ with boundary path $p_iq_i^{-1}$ and glue in $M\times B^2$ using Proposition~\ref{p:cell-maps} (so $\{m\}\times B^2$ is sent to $md_i$).  Then $X$ is an $M$-finite connected free $M$-CW complex of dimension at most $2$.  See Figure~\ref{fi:equiv} for this construction for the bicyclic monoid.  By Proposition~\ref{p:reform.fn}, it suffices to prove that $X$ is simply connected.

A digraph is said to be \emph{rooted} if there is a vertex $v$ so that there is a directed path from $v$ to any other vertex.  For instance, $\Gamma(M,A)$ is rooted at $1$.  It is well known that a rooted digraph admits a spanning tree, called a \emph{directed spanning tree}, such that the geodesic from the root to any vertex is directed.  Let $T$ be a directed spanning tree for $\Gamma(M,A)$ rooted at $1$ (this is the same thing as a prefix-closed set of normal forms for $M$ so, for instance, shortlex normal forms would do).  Let $e=m\xrightarrow{\,\,a\,\,} ma$ be a directed edge not belonging to $T$.  Then the corresponding generator of $\pi_1(X,1)$ is of the form $peq^{-1}$ where $p$ and $q$ are directed paths from $1$ to $m$ and $ma$, respectively.  Let $u$ be the label of $p$ and $v$ be the label of $q$.  Then $ua=v$ in $M$.  Thus it suffices to prove that if $x,y\in A^*$ are words which are equal in $M$ to an element $m'$, then the loop $\ell$ labelled $xy^{-1}$ at $1$, corresponding to the pair of parallel paths $1$ to $m'$ labelled by $x$ and $y$, is null homotopic.  By induction on the length of a derivation from $x$ to $y$, we may assume that $x=wu_iw'$ and $y=wv_iw'$ for some $i=1,\ldots, n$.  Let $m_0$ be the image of $w$ in $M$.  Then $m_0d_i$ is a $2$-cell with boundary path the loop at $m_0$ labeled by $u_iv_i^{-1}$.  It follows that $\ell$ is null homotopic.  This completes the proof.
\end{proof}

The converse of Theorem~\ref{t:f2} is not true, e.g., the monoid
$(\mathbb{R},\cdot)$ is of type left-$\F_2$ (by Corollary~\ref{c:right.zero}) but is not even finitely
generated.
It is natural to ask whether there is a nice characterisation,
analogous to Theorem~\ref{t:f1}(3), for left-$\F_2$ in terms of the
right Cayley graph together with the left action of $M$. We would
guess that $M$ is of type left-$\F_2$ if and only if it has a finite
subset $A \subseteq M$ such that $\Gamma(M,A)$ is connected, and
finitely many free $M$-2-cells can be adjoined to make a simply
connected $2$-complex.

It is well known that, for finitely presented groups, the properties
$\F_n$ and $\FP_n$ are equivalent for $3\leq n\leq \infty$.  We now
provide the analogue in our context.  Here we replace finitely
presented by left-$\F_2$.

\begin{Thm}\label{t:hom=top}
Let $M$ be a monoid of type left-$\F_2$.  Then $M$ is of type left-$\F_n$ if and only if $M$ is of type left-$\FP_n$ for $0\leq n\leq \infty$.
\end{Thm}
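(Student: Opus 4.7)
The forward implication (left-$\F_n \Rightarrow$ left-$\FP_n$) is already recorded as Proposition~\ref{p:basic.props.fn}. The plan is to prove the converse, under the standing hypothesis of left-$\F_2$, by induction on $n$, paralleling Wall's classical group-theoretic argument but transplanted to the setting of projective $M$-CW complexes. The base case $n=2$ is the hypothesis, and the case $n=\infty$ will fall out of all the finite cases by taking an ascending union.

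For the inductive step, assume $2 \leq k < \infty$ and that $M$ is of type left-$\F_k$ and left-$\FP_{k+1}$. By Proposition~\ref{p:reform.fn}, fix a connected $M$-finite projective $M$-CW complex $X$ of dimension at most $k$ with $\pi_q(X)=0$ for $0 \leq q < k$. Its cellular chain complex is a partial projective resolution of the trivial $\mathbb{Z}M$-module $\mathbb{Z}$ with finitely generated terms in all degrees $\leq k$. By the generalized Schanuel lemma, which characterises $\FP_n$ through finite generation of syzygies, left-$\FP_{k+1}$ forces $H_k(X) = \ker \partial_k$ to be finitely generated over $\mathbb{Z}M$. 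Because $X$ is $(k-1)$-connected with $k \geq 2$, the Hurewicz homomorphism $h\colon \pi_k(X) \to H_k(X)$ is an isomorphism, and by naturality it is $M$-equivariant; thus $\pi_k(X)$ is a finitely generated $\mathbb{Z}M$-module, generated by the classes of finitely many maps $f_1, \ldots, f_r \colon S^k \to X$. Via Proposition~\ref{p:cell-maps}, each $f_j$ extends uniquely to an $M$-equivariant map $M \times S^k \to X$, and attaching a free $M$-cell $M \times B^{k+1}$ along each yields an $M$-finite projective $M$-CW complex $X'$ of dimension $k+1$ containing $X$ as a projective $M$-CW subcomplex. Standard arguments show $\pi_q(X')=0$ for $q<k$, and that the attached cells kill exactly the $\mathbb{Z}M$-submodule of $\pi_k(X)$ generated by the $[f_j]$, which is all of $\pi_k(X)$; hence $X'$ witnesses left-$\F_{k+1}$ by Proposition~\ref{p:reform.fn}.

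Iterating (with inclusions at each stage) produces a tower $X_2 \hookrightarrow X_3 \hookrightarrow \cdots$ of $M$-finite projective $M$-CW complexes with $X_k$ of dimension $k$ and $(k-1)$-connected. The colimit $X_\infty$ is a projective $M$-CW complex of $M$-finite type, and $\pi_q(X_\infty) = \varinjlim_k \pi_q(X_k) = 0$ for every $q$ (since $\pi_q(X_k)=0$ once $k>q$), so $X_\infty$ is an equivariant classifying space for $M$ by Corollary~\ref{c:whitehead}, establishing left-$\F_\infty$.

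The main obstacle is ensuring that the Hurewicz map is genuinely $M$-equivariant in this setting: the $M$-action on $\pi_k(X)$ must be defined without reference to a fixed basepoint (since monoid elements need not fix one), and must be compatible with the $\mathbb{Z}M$-module structure $M$ induces on $H_k(X)$. Simple-connectedness of $X$ (automatic from $(k-1)$-connectedness with $k \geq 2$) makes basepoint changes in $\pi_k$ canonical, which resolves the basepoint issue, and naturality of Hurewicz takes care of the rest. A secondary technical point is the Schanuel dimension-shift used to pass from $\FP_{k+1}$ to finite generation of $H_k(X)$; this is purely homological and transfers from groups to monoids without modification.
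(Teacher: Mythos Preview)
Your argument is correct and follows essentially the same route as the paper: reformulate left-$\F_k$ via Proposition~\ref{p:reform.fn}, use $\FP_{k+1}$ and a Schanuel-type argument to get finite generation of $H_k(X)$, identify this with $\pi_k(X)$ via Hurewicz, and attach finitely many free $M$-$(k{+}1)$-cells to kill it.

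The one place where you and the paper diverge is in where the ``killing'' argument is run. You argue directly in $\pi_k$: this forces you to put a $\mathbb ZM$-module structure on $\pi_k(X)$ and to verify that the Hurewicz map is $M$-equivariant, which is exactly the basepoint issue you flag at the end. The paper sidesteps this entirely by passing to homology: the map $H_k(X)\to H_k(Y)$ is manifestly $\mathbb ZM$-linear (cellular chains carry the $M$-action), the Hurewicz images of the $[f_j]$ die in $H_k(Y)$ because the $f_j$ extend over $B^{k+1}$, and since these images generate $H_k(X)$ as a $\mathbb ZM$-module one gets $H_k(Y)=0$, whence $\pi_k(Y)=0$ by Hurewicz. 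Both arguments are valid; the paper's version just avoids ever needing to discuss an $M$-action on homotopy groups. Your explicit colimit for the $n=\infty$ case is also slightly more detailed than the paper's treatment, which leaves that step to the phrase ``by induction'' together with the observation that $Y_n=X$ at each stage.
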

\begin{proof}
We prove that if there is a connected $M$-finite  projective $M$-CW complex $X$ of dimension at most $n$ with $\pi_q(X)=0$ for $0\leq q<n$ with $n\geq 2$ and $M$ is of type left-$\FP_{n+1}$, then there is a  connected  $M$-finite projective $M$-CW complex $Y$ of dimension at most $n+1$ with $Y_n=X$ and $\pi_q(Y)=0$ for all $0\leq q<n+1$.    This will imply the theorem by Proposition~\ref{p:reform.fn}, Proposition~\ref{p:basic.props.fn} and induction.

Since $X$ is simply connected, $H_q(X)=0$ for $1\leq q<n$ and $H_n(X)\cong \pi_n(X)$ by the Hurewicz theorem.  Therefore, the augmented cellular chain complex of $X$ gives a partial projective resolution of $\mathbb Z$ of length at most $n$, which is finitely generated in each degree.  Therefore, since $M$ is of type left-$\FP_{n+1}$, it follows that $H_n(X)=\ker d_n\colon C_n(X)\to C_{n-1}(X)$ is finitely generated as a left $\mathbb ZM$-module.  Choose representatives of $f_a\colon S^n\to X$, with $a\in A$, of a finite set of elements of $\pi_n(X)$ that map to a  finite $\mathbb ZM$-module generating set of $H_n(X)$ under the Hurewicz isomorphism.  Then form $Y$ by adjoining $M\times A\times B^{n+1}$ to $X$ via the attaching map $M\times A\times S^n\to X_n$ given by $(m,a,x)\mapsto mf_a(x)$.  Then $Y$ is an $M$-finite projective $M$-CW complex of dimension $n+1$ with $Y_n=X$.  Since the inclusion of $X=Y_n$ into $Y$ is an $n$-equivalence, we deduce that $\pi_q(Y)=0$ for $1\leq q<n$ and that the inclusion $X\to Y$ is surjective on $\pi_n$.  But since the Hurewicz map in degree $n$ is natural and is an isomorphism for both $X$ and $Y$, we deduce that the inclusion $X\to Y$ induces a surjection $H_n(X)\to H_n(Y)$.  But, by construction, the images of the $\mathbb ZM$-module generators of $H_n(X)$ are trivial in $H_n(Y)$ (since they represent trivial elements of $\pi_n(Y)$).  We deduce that $H_n(Y)=0$ and hence, by the Hurewicz theorem, $\pi_n(Y)=0$.  This completes the induction.
\end{proof}

Notice that Theorem~\ref{t:hom=top} implies that $M$ is of type left-$\F_{\infty}$ if and only if $M$ is of type left-$\F_n$ for all $n\geq 0$.

\begin{Prop}\label{p:proj=free}
If $M$ is of type left-$\F_n$ with $n\geq 1$, then $M$ has a free contractible $M$-CW complex $X$ such that $X_n$  is $M$-finite.
\end{Prop}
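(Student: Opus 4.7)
The plan is to build $X$ in two stages: first construct its $n$-skeleton as a free $M$-finite $(n-1)$-connected $M$-CW complex, then extend to a contractible complex using the free version of Lemma~\ref{l:improve}(3). For the first stage I will construct by induction on $k=1,\ldots,n$ a free $M$-finite connected $M$-CW complex $X^{(k)}$ of dimension $\leq k$ with $\pi_q(X^{(k)})=0$ for $0\leq q<k$. The base case $k=1$ is Theorem~\ref{t:f1}: since $M$ is left-$\F_n$ it is left-$\F_1$, so there is a finite $A\subseteq M$ with $\Gamma(M,A)$ connected; set $X^{(1)}=\Gamma(M,A)$.

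For the inductive step $X^{(k)}\rightsquigarrow X^{(k+1)}$ with $k<n$, mimic the proof of Theorem~\ref{t:hom=top}. Since $X^{(k)}$ is $(k-1)$-connected, the Hurewicz theorem gives $\pi_k(X^{(k)})\cong H_k(X^{(k)})$, and the augmented cellular chain complex of $X^{(k)}$ is a partial projective resolution of $\mathbb Z$ of length $k$ by finitely generated free $\ZM$-modules. By Proposition~\ref{p:basic.props.fn}, $M$ is of type left-$\FP_n$, hence left-$\FP_{k+1}$, so the standard criterion for $\FP_{k+1}$ implies $H_k(X^{(k)})=\ker d_k$ is finitely generated over $\ZM$. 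Pick finitely many maps $f_a\colon S^k\to X^{(k)}$, $a\in A'$, whose Hurewicz images generate $H_k(X^{(k)})$, and form $X^{(k+1)}$ by attaching the free $M$-cells $M\times A'\times B^{k+1}$ via the $M$-equivariant map $(m,a,x)\mapsto mf_a(x)$ supplied by Proposition~\ref{p:cell-maps}. Exactly as in Theorem~\ref{t:hom=top}, $X^{(k+1)}$ is then a free $M$-finite connected $M$-CW complex of dimension $\leq k+1$ with $\pi_q=0$ for $0\leq q<k+1$ (the inclusion $X^{(k)}\hookrightarrow X^{(k+1)}$ being a $k$-equivalence, and the newly adjoined boundary cycles filling in a $\ZM$-generating set for $H_k(X^{(k)})$).

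After $n-1$ inductive steps, $X^{(n)}$ is a free $M$-finite $(n-1)$-connected $M$-CW complex of dimension $\leq n$. Apply the free version of Lemma~\ref{l:improve}(3) to obtain a contractible free $M$-CW complex $X\supseteq X^{(n)}$ with $X_n=(X^{(n)})_n=X^{(n)}$; this $X$ has $M$-finite $n$-skeleton and is the required complex. The main obstacle is extracting finite generation of $H_k(X^{(k)})$ from left-$\FP_{k+1}$; this is the homological-to-topological bridge, supplied by the standard ring-theoretic fact that for a ring of type $\FP_{k+1}$ any partial resolution of length $k$ by finitely generated projectives has finitely generated kernel in top degree. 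Everything else is a routine inductive cell-attachment argument, running in parallel with Theorem~\ref{t:hom=top} but with the extra bookkeeping to keep each attached $M$-set of cells free rather than merely projective, which is automatic because we start with the free complex $\Gamma(M,A)$ and adjoin free $M$-cells at each stage.
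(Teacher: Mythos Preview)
Your argument has a genuine gap at the step $k=1\to k=2$. You invoke the Hurewicz theorem to get $\pi_k(X^{(k)})\cong H_k(X^{(k)})$, but for $k=1$ this fails: for a merely connected space the Hurewicz map $\pi_1\to H_1$ is only the abelianisation, not an isomorphism. Attaching free $M$--$2$-cells along loops whose homology classes generate $H_1(X^{(1)})$ kills $H_1$ but need not make the resulting complex simply connected, because the attached loops only normally generate a subgroup of $\pi_1$ with the correct abelianisation. In effect your inductive step at $k=1$ uses only that $M$ is left-$\FP_2$, and then claims to produce an $M$-finite simply connected free $M$-$2$-complex; that would show left-$\FP_2$ implies left-$\F_2$, which is false already for groups by Bestvina--Brady. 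The proof of Theorem~\ref{t:hom=top}, which you cite as a template, explicitly requires $n\geq 2$ precisely to avoid this issue.

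The paper sidesteps this by \emph{using} the hypothesis left-$\F_2$ rather than left-$\FP_2$: since $M$ is left-$\F_n$ with $n\geq 2$, there exists a simply connected $M$-finite \emph{projective} $M$-CW complex of dimension $2$, and the substantive part of the proof is an explicit ad hoc construction converting this into a simply connected $M$-finite \emph{free} $M$-CW complex of dimension $2$ (by enlarging each $Me_a$ in the $0$-skeleton to a free orbit $M\times\{a\}$, adding edges $(m,a)\to(me_a,a)$, and then capping off the resulting loop edges with free $2$-cells). Once you have such a free simply connected $2$-complex, your inductive argument for $k\geq 2$ and the free case of Lemma~\ref{l:improve}(3) are exactly what the paper uses to finish. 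So the fix is to replace your $k=1$ step by this projective-to-free conversion in dimension~$2$.
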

\begin{proof}
This is clearly true for $n=1$ by Theorem~\ref{t:f1}. Note that Lemma~\ref{l:improve} and the  construction in the proof of Theorem~\ref{t:hom=top} show that if $Y$ is a simply connected $M$-finite free $M$-CW complex of dimension at most $2$ and $M$ is of type left-$\FP_n$, then one can build a contractible free $M$-CW complex $X$ with $X_2=Y$ such that $X_n$ is $M$-finite.  Thus it remains to prove that if there is a simply connected $M$-finite projective $M$-CW complex $Y$ of dimension $2$, then there is a simply connected $M$-finite free $M$-CW complex $X$ of dimension $2$.

Note that $Y_0=\coprod_{a\in A}Me_a$ with $A$ a finite set.  Define $X_0 = M\times A$.  Identifying $Me_a$ with $Me_a\times \{a\}$, we may view $Y_0$ as an $M$-subset of $X_0$.  Using this identification, we can define $X_1$ to consists of $Y_1$ (the edges of $Y$) along with some new edges.  We glue in an edge  from $(m,a)$ to $(me_a,a)$ for each $m\in M$ and $a\in A$; that is we glue in a free $M$-cell $M\times A\times B^1$ where the attaching map takes $(m,a,0)$ to $(m,a)$ and $(m,a,1)$ to $(me_a,a)$.   Notice that all vertices of $X_0\setminus Y_0$ are connected to a vertex of $Y_0$ in $X_1$ and so $X_1$ is connected as $Y_1$ was connected.

To define $X_2$, first we keep all the two-cells from $Y_2$.  Notice that if $T$ is a spanning tree for $Y$, then a spanning tree $T'$ for $X$ can be obtained by adding to $T$ all the edges $(m,a)\longrightarrow (me_a,a)$ with $m\notin Me_a$ (all vertices of $X_0\setminus Y_0$ have degree one). Thus  the only edges in $X_1\setminus Y_1$ that do not belong to $T'$ are the loop edges $(m,a)\longrightarrow (me_a,a)$ for $m\in Me_a$ that we have added.  So if we attach $M\times A\times B^2$ to $X_1$ by the attaching map $M\times A\times S^1\to X_1$ mapping $\{m\}\times \{a\}\times S^1$ to the loop edge $(me_a,a)\longrightarrow (me_a,a)$ from $X_1\setminus Y_1$, then we obtain a simply connected free $M$-CW complex $X$ which is $M$-finite.  This completes the proof.
\end{proof}

In light of Proposition~\ref{p:proj=free}, one might wonder why we bother allowing projective $M$-CW complexes rather than just free ones.  The reason is because projective $M$-CW complexes are often easier to construct and, as we are about to see, sometimes it is possible to find an $M$-finite equivariant classifying space for $M$ which is projective when no $M$-finite free equivariant classifying space exists.  This will be relevant when considering geometric dimension.

Let $M$ be a non-trivial monoid with a right zero element $z$.  Then $Mz=\{z\}$ is a one-element set with the trivial action.  Since $z$ is idempotent, it follows that the trivial $M$-set is projective but not free.  Therefore, the one-point space with the trivial $M$-action is an $M$-finite equivariant classifying space for $M$, which is not free.    We will show that if $M$ has a finite number of right zeroes (e.g., if $M$ has a zero element), then there is no finite free resolution of the trivial module which is finitely generated in each degree.  In this case, every free equivariant classifying space for $M$ of $M$-finite type will be infinite dimensional.

A finitely generated projective module $P$ over a ring $R$ is said to be \emph{stably free} if there are finite rank free modules $F,F'$ such that $P\oplus F'\cong F$.  The following lemma is well known, but we include a proof for completeness.

\begin{Lemma}\label{l:stably.free}
Let $P$ be a finitely generated projective (left) module over a ring $R$.  Then $P$ has a finite free resolution, finitely generated in each degree, if and only if $P$ is stably free.
\end{Lemma}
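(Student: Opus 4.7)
The plan is to prove both directions directly, with the nontrivial direction proceeding by induction on the length of the resolution.

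For the easy direction, suppose $P$ is stably free, so there exist finitely generated free $R$-modules $F,F'$ with $P\oplus F'\cong F$. Then the short exact sequence
\[0\longrightarrow F'\longrightarrow F\longrightarrow P\longrightarrow 0\]
(where the first map is the inclusion of the summand and the second is the projection) is itself a finite free resolution of $P$ with finitely generated terms in each degree. So this direction is essentially immediate.

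For the converse, suppose $P$ admits a resolution
\[0\longrightarrow F_n\longrightarrow F_{n-1}\longrightarrow \cdots \longrightarrow F_0\longrightarrow P\longrightarrow 0\]
with each $F_i$ finitely generated and free. I would proceed by induction on $n$. The base case $n=0$ gives $P\cong F_0$, which is trivially stably free. For the inductive step, let $K=\ker(F_0\to P)$. Since $P$ is projective, the short exact sequence $0\to K\to F_0\to P\to 0$ splits, so $K$ is a finitely generated projective $R$-module and $F_0\cong P\oplus K$. Moreover, truncating the resolution at $K$ yields
\[0\longrightarrow F_n\longrightarrow \cdots \longrightarrow F_1\longrightarrow K\longrightarrow 0,\]
a finite free resolution of $K$ of length $n-1$ that is finitely generated in each degree. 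By the inductive hypothesis, $K$ is stably free: pick finitely generated free $G,G'$ with $K\oplus G'\cong G$. Then
\[P\oplus G\;\cong\; P\oplus K\oplus G'\;\cong\; F_0\oplus G',\]
which is finitely generated and free. Hence $P$ is stably free.

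No step here is a serious obstacle; the only subtlety is remembering to use that $P$ is projective so that the exact sequence $0\to K\to F_0\to P\to 0$ splits, which is what keeps $K$ projective (and hence eligible for the inductive hypothesis) and what allows one to combine the stably-free witness for $K$ with the relation $F_0\cong P\oplus K$ to produce a stably-free witness for $P$. Everything else is bookkeeping with finite direct sums of free modules.
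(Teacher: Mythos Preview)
Your proof is correct. The easy direction is identical to the paper's. For the converse, the paper takes a different route: instead of inducting on the length of the resolution, it invokes the generalized Schanuel's lemma directly, comparing the given free resolution $0\to F_n\to\cdots\to F_0\to P\to 0$ with the trivial projective resolution $0\to P\to P\to 0$ (padded with zeros) to obtain $P\oplus F_1\oplus F_3\oplus\cdots\cong F_0\oplus F_2\oplus\cdots$ in one stroke. Your inductive argument is essentially an unwinding of this: each inductive step is the ordinary Schanuel's lemma applied to the kernel, and the accumulated isomorphism at the end is exactly the alternating direct sum that generalized Schanuel produces. Your version is more self-contained since it avoids citing Schanuel's lemma by name, while the paper's is shorter once that lemma is assumed.
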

\begin{proof}
Suppose that $P$ is stably free, say $P\oplus F'\cong F$ with $F,F'$ finite rank free $R$-modules.  Then the exact sequence
\[0\longrightarrow F'\longrightarrow F\longrightarrow P\longrightarrow 0\] provides a finite free resolution of $P$ that is finitely generated in each degree.

Conversely, suppose that \[0\longrightarrow F_n\longrightarrow F_{n-1}\longrightarrow \cdots \longrightarrow F_0\longrightarrow P\longrightarrow 0\] is a free resolution with $\F_i$ finitely generated for all $0\leq i\leq n$. We also have a projective resolution
\[0\longrightarrow P_n\longrightarrow P_{n-1}\longrightarrow \cdots \longrightarrow P_0\longrightarrow P\longrightarrow 0\] with $P_0=P$ and $P_i=0$ for $1\leq i\leq n$ because $P$ is projective.  By the generalized Schanuel's lemma, we have that
\[P_0\oplus F_1\oplus P_2\oplus F_3\oplus \cdots \cong F_0\oplus P_1\oplus F_2\oplus P_3\oplus \cdots\] and hence
\begin{equation*}
P\oplus F_1\oplus F_3\oplus\cdots \cong F_0\oplus F_2\oplus \cdots
\end{equation*}
and so $P$ is stably free.
\end{proof}

So we are interested in showing that the trivial module for $\mathbb ZM$ is not stably free if $M$ is a non-trivial monoid with finitely many right zeroes (and at least one).

Recall that a ring $R$ is said to have the \emph{Invariant Basis Number property} (IBN) if whenever $R^n\cong R^m$ as $R$-modules, one has $m=n$ (where $m,n$ are integers); in this definition it does not matter if one uses left or right modules~\cite{LamBook}.

Our first goal is to show that if $M$ is a monoid with zero $z$, then the contracted monoid ring $\mathbb ZM/\mathbb Zz$ has IBN.  This result is due to Pace Nielsen, whom we thank for allowing us to reproduce it. It is equivalent to show that if $M$ is a monoid and $I$ is a proper ideal of $M$, then $\mathbb ZM/\mathbb ZI$ has IBN.  The proof makes use of the Hattori-Stallings trace (see \cite[Chapter~2]{WeibelKBook}).

 Let $\sim$ be the least equivalence relation on a monoid $M$ such that $mn\sim nm$ for all $m,n\in M$; this relation, often called \emph{conjugacy}, has been studied by a number of authors.

\begin{Lemma}\label{l:conjugate.to.idem}
Let $M$ be a monoid and $e\in M$ an idempotent.  Suppose that $e$ is conjugate to an element of an ideal $I$.  Then $e\in I$.
\end{Lemma}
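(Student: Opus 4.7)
The plan is to unwind the conjugacy chain while keeping track of a power of the current element that lies in $I$, and then use idempotency at the end to strip the exponent. Write $e \sim x$ with $x \in I$, and fix a sequence $e = z_0, z_1, \ldots, z_k = x$ of elements of $M$ such that, for each $0 \le i < k$, there are $a_i, b_i \in M$ with $\{z_i, z_{i+1}\} = \{a_ib_i, b_ia_i\}$ (i.e., consecutive terms differ by a basic swap).

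The key identity is the elementary fact that $(ab)^{n+1} = a(ba)^n b$ for all $n \ge 1$. I will use this to prove by downward induction on $j$ (from $k$ to $0$) the claim:
\[
z_{k-j}^{\,j+1} \in I.
\]
The base case $j = 0$ is just $z_k = x \in I$. For the inductive step, assume $z_{k-j}^{\,j+1} \in I$. Since $z_{k-j-1}$ and $z_{k-j}$ differ by a basic swap, write $\{z_{k-j-1}, z_{k-j}\} = \{ab, ba\}$. In either case, the identity above gives
\[
z_{k-j-1}^{\,j+2} \;=\; a\, z_{k-j}^{\,j+1}\, b \quad\text{or}\quad z_{k-j-1}^{\,j+2} \;=\; b\, z_{k-j}^{\,j+1}\, a,
\]
and either expression lies in $I$ because $I$ is a two-sided ideal and $z_{k-j}^{\,j+1} \in I$ by the inductive hypothesis.

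Setting $j = k$ yields $z_0^{\,k+1} \in I$, i.e., $e^{k+1} \in I$. Since $e$ is idempotent, $e^{k+1} = e$, so $e \in I$, as required. There is no real obstacle here beyond spotting the correct inductive invariant — namely, carrying along an exponent that increases by one with each step of the chain — so that the $n$-fold occurrences of $(ab)$ can be regrouped as $a(ba)^{n-1}b$ and the element of $I$ produced at the previous stage can be absorbed.
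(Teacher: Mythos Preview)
Your proof is correct and follows essentially the same approach as the paper: both arguments use the identity $(ab)^{n+1}=a(ba)^n b$ to propagate a suitable power along the conjugacy chain into $I$, then invoke idempotency to conclude $e\in I$. The paper presents this as a single telescoping chain of equalities (with the swap direction fixed), whereas you organise it as a downward induction with an explicit invariant, but the underlying idea is identical.
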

\begin{proof}
Suppose that $e$ is conjugate to $m\in I$.  Then we can find elements $x_1,\ldots, x_n,y_1,\ldots, y_n\in M$ with $e=x_1y_1$, $y_ix_i=x_{i+1}y_{i+1}$ and $y_nx_n=m$.  Then $e=e^{n+1}=(x_1y_1)^{n+1}=x_1(y_1x_1)^ny_1=x_1(x_2y_2)^ny_1=x_1x_2(y_2x_2)^{n-1}y_2y_1=\cdots= x_1\cdots x_n(y_nx_n)y_ny_{n-1}\cdots y_1\in I$ as $y_nx_n=m\in I$.
\end{proof}

If $R$ is a ring, then $[R,R]$ denotes the additive subgroup generated
by all commutators $ab-ba$ with $a,b\in R$.  The abelian group
$R/[R,R]$ is called the \emph{Hattori-Stallings trace} of $R$; this is
also the $0$-Hochschild homology group of $R$.  Cohn proved that if
$1+[R,R]$ has infinite order in $R/[R,R]$, then $R$ has IBN (see \cite[Exercise~1.5]{LamBook}). The point
is that if $A$ is an $m\times n$ matrix over $R$ and $B$ is an
$n\times m$-matrix over $R$ with $AB=I_m$ and $BA=I_n$, then
$m+[R,R]=T(AB)=T(BA)=n+[R,R]$ where, for a square matrix $C$ over $R$,
we define $T(C)$ to be the class of the sum of the diagonal entries of
$C$ in $R/[R,R]$.   The following proposition is an elaboration of
Pace Nielsen's argument (see \cite{248371}).

\begin{Prop}\label{p:nielsen}
Let $M$ be a monoid and $I$ a (possibly empty) ideal. Let $R=\mathbb ZM/\mathbb ZI$.  Then $R/[R,R]$ is a free abelian group on the conjugacy classes of $M$ that do not intersect $I$.  More precisely, if $T$ is a transversal to the conjugacy classes of $M$ not intersecting $I$, then the elements of the form $m+[R,R]$ with $m\in T$ form a basis for $R/[R,R]$.
\end{Prop}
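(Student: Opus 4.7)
The plan is to reduce to the classical computation of $\mathbb ZM/[\mathbb ZM,\mathbb ZM]$ and then quotient further by the image of $\mathbb ZI$ via the Third Isomorphism Theorem.  First I would verify that $\mathbb ZM/[\mathbb ZM,\mathbb ZM]$ is the free abelian group on the set $M/{\sim}$ of conjugacy classes.  The map $q\colon \mathbb ZM\to \mathbb Z[M/{\sim}]$ extending $m\mapsto [m]$ annihilates every commutator generator $mn-nm$ because $mn\sim nm$ by definition of $\sim$.  Conversely, since $\sim$ is generated by the elementary moves $ab\leftrightarrow ba$, any chain $m=x_0,x_1,\ldots,x_k=m'$ witnessing $m\sim m'$ expresses $m-m' = \sum_{i}(x_i-x_{i+1})$ as a telescoping sum of commutators, so the assignment $[m]\mapsto m+[\mathbb ZM,\mathbb ZM]$ descends to a well-defined two-sided inverse of $q$.

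Next, since $[R,R]$ is the image of $[\mathbb ZM,\mathbb ZM]$ under the quotient $\mathbb ZM\twoheadrightarrow R$, I would write
\[
R/[R,R] \;\cong\; \mathbb ZM/(\mathbb ZI+[\mathbb ZM,\mathbb ZM]) \;\cong\; \mathbb Z[M/{\sim}]/J,
\]
where $J$ is the image of $\mathbb ZI$ under $q$.  Because $\mathbb ZI$ is additively spanned by $I$, the subgroup $J$ is spanned by $\{[i]:i\in I\}$.  These are basis elements of the free abelian group $\mathbb Z[M/{\sim}]$, so $J$ is itself a free abelian subgroup, namely the one on those conjugacy classes $C$ that meet $I$.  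Consequently $\mathbb Z[M/{\sim}]/J$ is free abelian on the conjugacy classes $C$ with $C\cap I=\emptyset$, and choosing a transversal $T$ to these remaining classes yields the asserted basis $\{m+[R,R] : m\in T\}$ of $R/[R,R]$.

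The argument is essentially bookkeeping, and the one place where a little care is needed is the identification of $J$: one must invoke that $q$ sends each $m\in M$ to a single basis element of $\mathbb Z[M/{\sim}]$, so that the $\mathbb Z$-span of $\{[i]:i\in I\}$ coincides with (rather than merely sits inside) the $\mathbb Z$-span of the conjugacy classes meeting $I$.  Once that point is noted, the proof is immediate from the freeness established in the first step.
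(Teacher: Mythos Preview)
Your proof is correct. The organization differs from the paper's: you first establish the classical identification $\mathbb ZM/[\mathbb ZM,\mathbb ZM]\cong \mathbb Z[M/{\sim}]$ and then pass to the quotient by $\mathbb ZI$ via the Third Isomorphism Theorem, whereas the paper works directly with $R$ throughout, building mutually inverse maps between $R/[R,R]$ and the free abelian group on the conjugacy classes disjoint from $I$. Your route is arguably cleaner in that it isolates the $I=\emptyset$ case as a lemma and makes the dependence on $I$ transparent (the image $J$ of $\mathbb ZI$ is visibly spanned by a subset of basis vectors). The paper's direct argument has the small advantage of never needing to name the intermediate object $\mathbb Z[M/{\sim}]$, and it makes explicit the one nontrivial point---that an element $m\in M\setminus I$ whose conjugacy class meets $I$ already lies in $[R,R]$---which in your argument is absorbed into the identification of $J$. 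Both proofs hinge on exactly the same observation, namely that commutators $mn-nm$ suffice to identify conjugate monoid elements.
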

\begin{proof}
We view $R$ as having basis $M\setminus I$ subject to the relations of the multiplication table of $M$ and that the elements of $I$ are $0$.  Let $A$ be the free abelian group on the conjugacy classes of $M$ that do not intersect $I$.  Write $[m]$ for the conjugacy class of $m\in M$.  Define an abelian group homomorphism $f\colon A\to R/[R,R]$ by $f([m]) = m+[R,R]$.  This is well defined because $xy+[R,R]=yx+[R,R]$ for $x,y\in M$ with $xy,yx\notin I$.  To see that $f$ is surjective, note that if $m\in M\setminus I$ with $[m]\cap I\neq \emptyset$, then $m+[R,R]=[R,R]$.  This follows because if $m=x_1y_1$, $y_ix_i=x_{i+1}y_{i+1}$, for $i=1,\ldots, n-1$,  and $y_nx_n\in I$, then $[R,R]=y_nx_n+[R,R]=x_ny_n+[R,R]=\cdots =y_1x_1+[R,R]=m+[R,R]$.

Let us define $g\colon R\to A$ on $m\in M\setminus I$ by
\[g(m) = \begin{cases} [m], & \text{if}\ [m]\cap I=\emptyset\\ 0, & \text{else.}\end{cases}\]
Then if $a,b\in R$ with, say,
\[a=\sum_{m\in M\setminus I} c_mm,\ b=\sum_{n\in M\setminus I} d_nn\]
then we have that
\[ab-ba =\sum_{m,n\in M\setminus I} c_md_n(mn-nm).\]  Since $mn\sim nm$, either both map to $0$ under $g$ or both map to $[mn]=[nm]$.  Therefore, $ab-ba\in \ker g$ and so $g$ induces a homomorphism $g'\colon R/[R,R]\to A$.  Clearly, if $[m]\cap I=\emptyset$, then $gf([m]) = g'(m+[R,R])= g(m)=[m]$.  It follows that $f$ is injective and hence an isomorphism. The result follows.
\end{proof}

As a consequence we deduce the result of Nielsen.

\begin{Cor}\label{c:ibn.contracted}
Let $M$ be a monoid and $I$ a proper ideal (possibly empty).  Then $\mathbb ZM/\mathbb ZI$ has IBN.  In particular, contracted monoid rings have IBN.
\end{Cor}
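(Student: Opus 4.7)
The plan is to combine Proposition~\ref{p:nielsen} with Cohn's criterion (mentioned in the discussion preceding the proposition) that a ring $R$ has IBN whenever $1 + [R,R]$ has infinite order in the Hattori--Stallings trace $R/[R,R]$. Setting $R = \mathbb ZM/\mathbb ZI$, it therefore suffices to show that the class of $1$ in $R/[R,R]$ generates an infinite cyclic subgroup.

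First I would verify that the conjugacy class $[1]$ of the identity does not meet $I$. Since $I$ is a proper ideal, $1 \notin I$, and since $1$ is idempotent, Lemma~\ref{l:conjugate.to.idem} tells us that if $[1] \cap I \neq \emptyset$ then $1 \in I$, a contradiction. Thus $[1] \cap I = \emptyset$, so $1$ represents a conjugacy class appearing in the basis described by Proposition~\ref{p:nielsen}.

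Choosing a transversal $T$ to the conjugacy classes of $M$ not meeting $I$ with $1 \in T$, Proposition~\ref{p:nielsen} gives that the elements $\{m + [R,R] : m \in T\}$ form a $\mathbb Z$-basis of $R/[R,R]$. In particular $1 + [R,R]$ is a basis vector of a free abelian group and hence has infinite order. Applying Cohn's criterion yields that $R = \mathbb ZM/\mathbb ZI$ has IBN. The special case of the contracted monoid ring (where $I = \{0\}$ for a monoid $M$ with a zero element, or equivalently the ideal generated by the zero element in a monoid without one formally adjoined) follows immediately.

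I do not anticipate a real obstacle here: the work has all been done in Proposition~\ref{p:nielsen}, and the only subtlety is making sure $1$ contributes a nontrivial basis vector, which is exactly where Lemma~\ref{l:conjugate.to.idem} and the properness of $I$ enter. The entire argument should take only a few lines.
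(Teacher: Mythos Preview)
Your proposal is correct and follows essentially the same approach as the paper: use Lemma~\ref{l:conjugate.to.idem} to show the conjugacy class of $1$ avoids $I$, invoke Proposition~\ref{p:nielsen} to conclude $1+[R,R]$ has infinite order, and apply Cohn's criterion. The paper's proof is just a more compressed version of what you wrote.
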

\begin{proof}
Put $R=\mathbb ZM/\mathbb ZI$.  If $I$ is a proper ideal, then $1$ is not conjugate to any element of $I$ by Lemma~\ref{l:conjugate.to.idem}.  It follows from Proposition~\ref{p:nielsen} that $1+[R,R]$ has infinite order in $R/[R,R]$ and hence $R$ has IBN.
\end{proof}

\begin{Thm}\label{t:no.finite.free}
Let $M$ be a non-trivial monoid with a finitely many right zeroes (and at least one).  Then the trivial left $\mathbb ZM$-module $\mathbb Z$ is projective but not stably free and hence does not have a finite free resolution that is finitely generated in each degree.
\end{Thm}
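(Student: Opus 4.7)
My plan is to combine Lemma~\ref{l:stably.free} with the Hattori--Stallings trace machinery of Proposition~\ref{p:nielsen}. First I would establish projectivity: pick any right zero $z\in M$. Since $zz=z$, we have $z\in E(M)$ and $Mz=\{z\}$, so $\mathbb ZMz\cong \mathbb Z$ as a left $\mathbb ZM$-module carrying the trivial action. The decomposition $\mathbb ZM=\mathbb ZMz\oplus \mathbb ZM(1-z)$ exhibits $\mathbb Z$ as a direct summand of $\mathbb ZM$, so it is projective. By Lemma~\ref{l:stably.free}, to finish the proof it suffices to show that $\mathbb Z$ is not stably free.

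Suppose for contradiction that $\mathbb Z\oplus(\mathbb ZM)^k\cong(\mathbb ZM)^n$ for some nonnegative integers $k,n$. Applying Proposition~\ref{p:nielsen} with $I=\emptyset$, the abelian group $\mathbb ZM/[\mathbb ZM,\mathbb ZM]$ is free on the classes $m+[\mathbb ZM,\mathbb ZM]$ indexed by a transversal to the conjugacy relation on $M$. Since $\mathbb Z$ is cut out of $\mathbb ZM$ by the idempotent $z$ (right multiplication by $z$ projects $\mathbb ZM$ onto $\mathbb ZMz\cong\mathbb Z$), its Hattori--Stallings rank equals $z+[\mathbb ZM,\mathbb ZM]$, and the standard additivity of traces on direct-sum decompositions gives
\[
(z+[\mathbb ZM,\mathbb ZM])+k\,(1+[\mathbb ZM,\mathbb ZM])=n\,(1+[\mathbb ZM,\mathbb ZM]),
\]
equivalently $z-(n-k)\cdot 1\in[\mathbb ZM,\mathbb ZM]$.

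The heart of the argument is to rule this out by showing that $1$ and $z$ are not conjugate in $M$. The set $R$ of right zeros of $M$ is an ideal: given $w\in R$ and $m\in M$, we have $mw=w\in R$, and $(wm)n=w(mn)=w$ for all $n\in M$, so $wm\in R$. This ideal is proper, because $1\in R$ would force $m=m\cdot 1=1$ for every $m\in M$, contradicting non-triviality. As $1$ is an idempotent and $z\in R$, Lemma~\ref{l:conjugate.to.idem} forbids $1\sim z$: otherwise $1$ would lie in the proper ideal $R$. Hence $z+[\mathbb ZM,\mathbb ZM]$ and $1+[\mathbb ZM,\mathbb ZM]$ are distinct basis elements of the free abelian group appearing in Proposition~\ref{p:nielsen}, so the equation $z-(n-k)\cdot 1=0$ is impossible (comparing coefficients on the basis vector indexed by the conjugacy class of $z$ forces $1=0$). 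This contradiction shows $\mathbb Z$ is not stably free, and Lemma~\ref{l:stably.free} then yields the nonexistence of a finite free resolution finitely generated in each degree.

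The main obstacle is the non-conjugacy step, but this is handled cleanly by Lemma~\ref{l:conjugate.to.idem} once one observes that $R$ is a proper ideal under the non-triviality hypothesis on $M$. Only the existence of at least one right zero is actually used; the finiteness of the set of right zeros does not enter the argument.
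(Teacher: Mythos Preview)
Your argument is correct and genuinely different from the paper's. There is one small slip: to show that $wm$ is a right zero you should compute $x(wm)=(xw)m=wm$ for all $x\in M$; the line ``$(wm)n=w(mn)=w$'' would require $w$ to be a \emph{left} zero. The conclusion that the set $R$ of right zeros is a proper two-sided ideal is nonetheless correct.

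The paper argues differently. It assumes $\mathbb Z\oplus F'\cong F$ with $F,F'$ free of ranks $r,r'$, then applies two base-change functors. First, $V\mapsto zV$ lands in $z\mathbb ZMz\cong\mathbb Z$-modules; since $z\mathbb ZM=\mathbb ZI$ is free abelian of rank $|I|$ (here $I$ is the set of right zeros), one obtains $r|I|=1+r'|I|$. Second, tensoring with $R=\mathbb ZM/\mathbb ZI$ kills $\mathbb Z$ and yields $R^r\cong R^{r'}$, whence $r=r'$ by Corollary~\ref{c:ibn.contracted}. These two equations are incompatible. Your route instead reads the Hattori--Stallings rank directly in $\mathbb ZM/[\mathbb ZM,\mathbb ZM]$ via Proposition~\ref{p:nielsen} (with $I=\emptyset$), using Lemma~\ref{l:conjugate.to.idem} to separate the conjugacy classes of $1$ and $z$.

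Your approach is shorter and, as you correctly observe, proves a strictly stronger statement: the finiteness of the set of right zeros is irrelevant, only non-triviality of $M$ and the existence of a single right zero are needed. The paper's proof genuinely uses finiteness of $|I|$ to make the rank count over $\mathbb Z$ meaningful.
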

\begin{proof}
Let $I$ be the set of right zero elements of $M$ and fix $z\in I$. Observe that $I$ is a proper two-sided ideal.    Note that $z,1-z$ form a complete set of orthogonal idempotents of $\mathbb ZM$ and so $\mathbb ZM\cong \mathbb ZMz\oplus \mathbb ZM(1-z)$ and hence $\mathbb ZMz\cong \mathbb Z$ is projective.  Suppose that $\mathbb Z$ is stably free, that is, $\mathbb Z\oplus F'\cong F$ with $F,F'$ free $\mathbb ZM$-modules of rank $r,r'$, respectively.

There is a exact functor from $\mathbb ZM$-modules to $z\mathbb ZMz$-modules given by $V\mapsto zV$.  Note that $z\mathbb ZMz=\mathbb Zz\cong \mathbb Z$ as a ring.  Also, $z\mathbb ZM= \mathbb ZI$ is a free abelian group (equals $z\mathbb ZMz$-module) of rank $|I|<\infty$.
Therefore, \[\mathbb Z^{|I|r}\cong (\mathbb ZI)^r\cong zF\cong \mathbb
Z\oplus zF'\cong \mathbb Z\oplus \mathbb (ZI)^{r'}\cong \mathbb
Z^{1+|I|r'}\] as $\mathbb Z$-modules and hence $r|I|=1+r'|I|$ as
$\mathbb Z$ has IBN.  But putting $R=\mathbb ZM/\mathbb ZI$ and observing that $\mathbb Z/\mathbb ZI\cdot \mathbb Z=0$, we have
that
\[
R^r\cong R\otimes_{\mathbb ZM} F\cong (R\otimes_{\mathbb ZM}\mathbb
Z) \oplus (R\otimes_{\mathbb ZM}F') \cong R^{r'}
\]
and hence $r=r'$ as $R$ has IBN by Corollary~\ref{c:ibn.contracted}.  This contradiction completes the proof.
\end{proof}

There is, of course, a dual result for left zeroes.  In particular, if $M$ is  non-trivial monoid with a zero element, then $\mathbb Z$ is not stably free as either a left or right $\mathbb ZM$-module.  Thus, if $M$ is a non-trivial monoid with zero, it has no $M$-finite free left or right equivariant classifying space but it has an $M$-finite  projective one.  This justifies considering projective $M$-CW complexes.

If $L$ is a left ideal of $M$ containing an identity $e$, then $L=Me=eMe$.  Note that $\p\colon M\to Me$ given by $\p(m)=me$ is a surjective monoid homomorphism in this case since $\p(1)=e$ and $\p(mn) = mne=mene=\p(m)\p(n)$ as $ne\in Me=eMe$. Also note that the left $M$-set structure on $Me$ given by inflation along $\p$ corresponds to the left $M$-set structure on $Me$ induced by left multiplication because if $m\in M$ and $n\in Me$, then $n=en$ and so $mn=men=\p(m)n$.  Notice that if $f\in E(Me)$, then $f\in E(M)$ and $Mef=Mf$ is projective as both an $Me$-set and an $M$-set.  Thus each (finitely generated) projective $Me$-set is a (finitely generated) projective $M$-set via inflation along $\p$. Note that if $A$ is a left $M$-set, then $eM\otimes_M A\cong eA$ via $em\otimes a\mapsto ema$.

\begin{Prop}\label{p:left.ideal.fn}
Suppose that $M$ and $e\in E(M)$ with $Me=eMe$ and $0\leq n\leq\infty$.  If $Me$ is of type left-$\F_n$, then so is $M$.  The converse holds if $eM$ is a finitely generated projective left $eMe$-set.
\end{Prop}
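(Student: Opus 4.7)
The plan is to use the surjective monoid homomorphism $\varphi\colon M \to Me$ described in the paragraph before the proposition (sending $m$ to $me$) for the forward direction, and the base change machinery of Section~\ref{sec_base} applied to the $(eMe)$-$M$-biset $eM$ for the converse.

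For the forward direction, suppose $X$ is a left equivariant classifying space for $Me$ with $X_n$ being $Me$-finite. I would inflate the action of $Me$ along $\varphi$ to make $X$ into a left $M$-space. The observation preceding the proposition already notes that any (finitely generated) projective $Me$-set inflates to a (finitely generated) projective $M$-set. Applied cell-by-cell, this shows that $X$ is a projective $M$-CW complex whose $n$-skeleton is still built from finitely many $M$-cells; contractibility is intrinsic to the underlying space and is unaffected. Hence $M$ is of type left-$\F_n$.

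For the converse, assume $M$ is of type left-$\F_n$ and $eM$ is a finitely generated projective left $eMe$-set. Pick an equivariant classifying space $Y$ for $M$ with $Y_n$ being $M$-finite, and form the space $eM \otimes_M Y$, which is naturally a left $eMe$-space. Using the isomorphism $eM \otimes_M Y \cong eY$ given by $em \otimes y \mapsto emy$ (recorded in the paragraph before the proposition), together with the base-change result Corollary~\ref{c:base.change.cw}, I would conclude that $eY$ is a projective $eMe$-CW complex. Because tensor product commutes with colimits, $(eY)_n = eM \otimes_M Y_n$, so $M$-finiteness of $Y_n$ together with the finite generation/projectivity of $eM$ as an $eMe$-set (via Proposition~\ref{p:base.change.proj}) give that $(eY)_n$ is $Me$-finite, i.e.\ built from finitely many $Me$-cells of dimension at most $n$.

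The final step is to check that $eY$ is contractible. For this I would observe that $eY$ is a retract of $Y$: the inclusion $i\colon eY \hookrightarrow Y$ is a section of the continuous map $Y \to eY$, $y \mapsto ey$ (well-defined since $e$ is idempotent, so $e \cdot ey = ey$). A retract of a contractible space is contractible, so $eY$ is indeed an equivariant classifying space for $Me = eMe$ with $(eY)_n$ being $Me$-finite. The only mild subtlety, which I would verify explicitly, is that the $Me$-action on $eY$ induced by the isomorphism with $eM \otimes_M Y$ coincides with the evident action of $Me \subseteq M$ on $eY$; this is immediate from $(eme)(em \otimes y) = emem \otimes y \mapsto emem y = eme \cdot emy$. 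There is no serious obstacle here—the proposition is essentially a clean application of inflation in one direction and base change plus the retraction $Y \to eY$ in the other.
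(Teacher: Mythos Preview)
Your proposal is correct and follows essentially the same approach as the paper: inflate along $\varphi$ for the forward direction, and for the converse identify $eM\otimes_M Y$ with $eY$, apply Corollary~\ref{c:base.change.cw} to obtain a projective $eMe$-CW complex with $eMe$-finite $n$-skeleton, and use that $eY$ is a retract of the contractible space $Y$. The only cosmetic difference is that the paper does not spell out the compatibility of the two $Me$-actions on $eY$, which you note explicitly.
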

\begin{proof}
If $X$ is a projective $Me$-CW complex constructed via push\-outs as in \eqref{eq:pushout} (but with $M$ replaced by $Me$), then each $P_n$ is a projective $M$-set via inflation along $\p$ and so $X$ is a  projective $M$-CW complex. Moreover, if $X$ is of $Me$-finite type (respectively, $Me$-finite), then it is of $M$-finite type (respectively, $M$-finite).  Thus if $Me$ is of type left-$\F_n$, then so is $M$.

Suppose that $X$ is an equivariant classifying space for $M$ and $eM$ is a finitely generated projective left $eMe$-set. Then  $eM\otimes_M X\cong eX$.  Now $eX$ is a projective $eMe$-CW complex and if $X_n$ is $M$-finite, then $(eX)_n=eX_n$ is $eMe$-finite by Corollary~\ref{c:base.change.cw}.  Moreover, since $eX$ is a retract of $X$ as a CW complex and $X$ is contractible, it follows that $eX$ is contractible.  Thus $eX$ is an equivariant classifying space for $eMe$.  The result follows.
\end{proof}

Our first corollary is that having a right zero guarantees the property left-$\F_{\infty}$ (which can be viewed as a defect of the one-sided theory).

\begin{Cor}\label{c:right.zero}
If $M$ contains a right zero, then $M$ is of type left-$\F_{\infty}$. Hence any monoid with a zero is both of type left-  and right-$\F_{\infty}$.
\end{Cor}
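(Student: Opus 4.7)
The plan is to reduce directly to the trivial monoid via Proposition~\ref{p:left.ideal.fn}. Suppose $z\in M$ is a right zero, so that $mz=z$ for all $m\in M$. Then $z$ is an idempotent (since $zz=z$), and moreover $Mz = \{z\}$ because every $mz=z$; consequently $zMz = Mz = \{z\}$, so the hypothesis $Me=eMe$ of Proposition~\ref{p:left.ideal.fn} is satisfied with $e=z$. The submonoid $Mz = zMz$ is the trivial one-element monoid.

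The trivial monoid is obviously of type left-$\F_\infty$: the one-point space (with its unique, trivial action) is an equivariant classifying space, and all its skeleta are finite. Applying the forward direction of Proposition~\ref{p:left.ideal.fn} (which, crucially, does not require any hypothesis on $eM$) we conclude that $M$ itself is of type left-$\F_\infty$.

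For the second sentence, suppose $M$ has a zero element $0$; then $0$ is simultaneously a right zero and a left zero. The right zero property gives left-$\F_\infty$ by what we just proved. Applying the same argument to $M^{op}$, in which $0$ is again a right zero, yields that $M^{op}$ is of type left-$\F_\infty$, i.e.\ $M$ is of type right-$\F_\infty$.

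There is no real obstacle; the content is entirely in recognising that a right zero $z$ generates a trivial principal left ideal of the form $Me=eMe$, which is exactly the setup in which Proposition~\ref{p:left.ideal.fn} transfers the property left-$\F_n$ from the idempotent-generated submonoid back up to $M$. The minor thing to double-check is that the proof of Proposition~\ref{p:left.ideal.fn} does indeed yield finiteness at every dimension (so that we get $\F_\infty$ rather than only $\F_n$ for each finite $n$), which is immediate because a single one-point equivariant classifying space for $Mz$ serves simultaneously for all $n$.
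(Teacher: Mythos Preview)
Your proof is correct and follows essentially the same approach as the paper: both identify that a right zero $z$ satisfies $Mz=\{z\}=zMz$, observe that the trivial monoid is of type left-$\F_\infty$, and invoke Proposition~\ref{p:left.ideal.fn}. Your version is slightly more detailed (explicitly checking idempotence, spelling out the $M^{op}$ argument for the second sentence, and noting that a single one-point classifying space works for all $n$), but the substance is identical.
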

\begin{proof}
If $e$ is a right zero, then $Me=\{e\}=eMe$ and $\{e\}$ is of type left-$\F_{\infty}$.  Thus $M$ is of type left-$\F_{\infty}$ by Proposition~\ref{p:left.ideal.fn}.
\end{proof}

Recall that two elements $m$ and $n$  a monoid $M$ are said to be
$\mathscr{L}$-related if and only if they generate the same principal
left ideal, i.e., if $Mm = Mn$. Clearly $\mathscr{L}$ is an equivalence
relation on $M$.

\begin{Cor}\label{c:min.left.ideal}
Suppose that $M$ is a monoid and $e\in E(M)$ with $eM$ a two-sided minimal ideal of $M$ and $0\leq n\leq \infty$. Note that $G_e=eMe$ is the maximal subgroup at $e$.  If $G_e$ is of type-$\F_n$, then $M$ is of type left-$\F_n$.  If $eM$ contains finitely many $\mathscr L$-classes, then the converse holds.
\end{Cor}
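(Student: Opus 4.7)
My plan is to apply Proposition~\ref{p:left.ideal.fn} to the given idempotent $e$, so the first task is to verify its hypothesis $Me = eMe$. Since $eM$ is a two-sided ideal, for every $m\in M$ one has $me\in MeM\subseteq eM$, say $me=em'$; multiplying on the right by $e$ and using $e^2=e$ gives $me=em'e\in eMe$, and the reverse inclusion is trivial, so $Me=eMe$. By the Rees--Sushkevich structure theorem, the minimal two-sided ideal $eM$ containing the idempotent $e$ is a completely simple semigroup, so $eMe$ coincides with the maximal subgroup $G_e$ at $e$.

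The forward direction is then immediate: if $G_e$ is of type $\F_n$ as a group, Proposition~\ref{p:basic.props.fn}(1) identifies this with the monoid $G_e=Me$ being of type left-$\F_n$, and Proposition~\ref{p:left.ideal.fn} yields that $M$ is of type left-$\F_n$.

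For the converse, by Proposition~\ref{p:left.ideal.fn} it suffices to show that under the extra hypothesis $eM$ is a finitely generated projective left $eMe$-set. I would argue that the hypothesis that $eM$ is simultaneously the minimal two-sided ideal and the principal right ideal generated by $e$ forces the Rees matrix representation of the completely simple semigroup $eM$ to have a single $\mathscr{R}$-class, so $eM$ is a \emph{left group}: after normalizing the sandwich matrix, one has $eM\cong G_e\times \Lambda$ with $\Lambda$ indexing the $\mathscr{L}$-classes and the left $G_e$-action given by left multiplication on the first coordinate. To make this orbit description concrete without explicit matrices one verifies that for any $em\in eM$ the orbit $G_e\cdot em$ equals $Mem$ (any $xem\in Mem$ lies in $eM$ because $eM$ is a two-sided ideal, so $xem=exem=(exe)\cdot em \in eMe\cdot em$), and that this orbit is the $\mathscr{L}$-class of $em$ and is acted on freely by $G_e$. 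Thus $eM$ is a free left $G_e$-set of rank $|\Lambda|$, which is finite by hypothesis, hence finitely generated projective. Proposition~\ref{p:left.ideal.fn} then gives that $G_e=eMe$ is of type left-$\F_n$, which by Proposition~\ref{p:basic.props.fn}(1) is the same as being of type $\F_n$ as a group.

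The main obstacle is the (standard but not entirely trivial) identification of $eM$ as a free $G_e$-set whose orbits are its $\mathscr{L}$-classes and, in particular, checking that the $G_e$-action on each $\mathscr{L}$-class has trivial stabiliser; once this structural fact is in hand, the corollary reduces to a direct application of the preceding proposition together with the identification $eMe=G_e$.
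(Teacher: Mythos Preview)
Your proof is correct and follows essentially the same approach as the paper: both reduce to Proposition~\ref{p:left.ideal.fn} after checking $Me=eMe=G_e$, and both establish the converse by showing $eM$ is a free left $G_e$-set whose orbits are its $\mathscr L$-classes (the paper simply cites Green's lemma for this, while you unpack it via the Rees matrix/completely simple structure). One minor terminological slip: a completely simple semigroup with a single $\mathscr R$-class is a \emph{right} group, not a left group, but this does not affect the argument.
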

\begin{proof}
Note that $Me=eMe=G_e$ and so the first statement is immediate from Proposition~\ref{p:left.ideal.fn}.  For the converse, it follows from Green's lemma \cite[Lemma~2.2.1]{Howie} that $eM$ is free left $G_e$-set and that the orbits are the $\mathscr L$-classes of $eM$.  Thus the second statement follows from Proposition~\ref{p:left.ideal.fn}.
\end{proof}

Corollary~\ref{c:min.left.ideal} implies that if $M$ is a monoid with a minimal ideal that is a group $G$, then $M$ is of type left-$\F_n$ if and only if $G$ is of type $\F_n$ and dually for right-$\F_n$.

The following is a slight extension of the fact that a finite index subgroup of a group of type $\F_n$ is also of type $\F_n$; see \cite[Chapter~VIII, Proposition~5.1]{BrownCohomologyBook}.
\begin{Prop}\label{p:finite.index}
Let $M$ be a monoid and $N$ a submonoid such that $M$ is a finitely generated projective left $N$-set.  If $M$ is of type left-$\F_n$, then $N$ is of type left-$\F_n$, as well.
\end{Prop}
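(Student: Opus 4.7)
The plan is to take an equivariant classifying space for $M$ with $M$-finite $n$-skeleton and view it as a projective $N$-CW complex, via restriction of the action along the inclusion $N\hookrightarrow M$. Since restriction does not alter the underlying topological space, contractibility is preserved, and the task reduces to verifying that the projective $M$-CW structure refines to a projective $N$-CW structure whose $n$-skeleton remains $N$-finite.

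The key observation I will use is the following: for any idempotent $e\in E(M)$, the map $r\colon M\to Me$ defined by $r(m)=me$ is an $M$-equivariant, hence $N$-equivariant, retraction onto $Me$. Therefore $Me$ is a retract of $M$ as a left $N$-set. Since by hypothesis $M$ is a finitely generated projective left $N$-set, it follows that $Me$ is also a finitely generated projective left $N$-set. More generally, any (finitely generated) projective left $M$-set, written as a (finite) coproduct $P\cong \coprod_i Me_i$ with $e_i\in E(M)$, is a (finitely generated) projective left $N$-set.

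Now I would invoke the hypothesis that $M$ is of type left-$\F_n$ to obtain an equivariant classifying space $X$ for $M$ with $X_n$ being $M$-finite, built up by pushouts as in \eqref{eq:pushout} from projective $M$-sets $P_k$ that are finitely generated for $k\leq n$. Each attaching map $P_k\times S^{k-1}\to X_{k-1}$ is $M$-equivariant, hence $N$-equivariant. By the observation above, each $P_k$ is a projective left $N$-set, finitely generated for $k\leq n$, so the same pushout diagrams now present $X$ as a projective $N$-CW complex whose $k$-skeleton is $X_k$. In particular $X_n$ is $N$-finite. Combined with the fact that $X$ is still contractible, this exhibits $X$ as an equivariant classifying space for $N$ with $N$-finite $n$-skeleton, establishing that $N$ is of type left-$\F_n$. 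The same argument works verbatim for $n=\infty$, replacing ``$X_n$ is $M$-finite'' by ``$X$ is of $M$-finite type''.

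I do not expect any serious obstacle; the argument is essentially bookkeeping built on the retract observation. The only point deserving care is that the decomposition $Me\cong \coprod_j Nf_j$ (with $f_j\in E(N)$) as an $N$-set is finite precisely because $M$ is \emph{finitely generated} projective over $N$, which is what converts ``$M$-finite'' into ``$N$-finite'' at the level of skeleta.
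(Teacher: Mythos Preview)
Your proof is correct and follows essentially the same approach as the paper: restrict the equivariant classifying space along $N\hookrightarrow M$ and observe that finitely generated projective $M$-sets are finitely generated projective $N$-sets. The only cosmetic difference is that the paper obtains this last fact by noting that a finitely generated free $M$-set is finitely generated projective over $N$ (since $M$ is) and then passing to retracts, whereas you argue directly that each $Me$ is an $N$-equivariant retract of $M$ via right multiplication by $e$; both routes yield the same conclusion.
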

\begin{proof}
Observe that each finitely generated free left $M$-set is a finitely generated projective $N$-set.  Hence each finitely generated projective $M$-set, being a retract of a finitely generated free left $M$-set, is a finitely generated projective $N$-set.  Thus any equivariant classifying space for $M$ is also an equivariant classifying space for $N$.
\end{proof}

An immediate consequence of Proposition~\ref{p:direct.prod} is the following.

\begin{Prop}\label{p:finiteness.dp}
Let $M,N$ be monoids of type left-$\F_n$.  Then $M\times N$ is of type left-$\F_n$.
\end{Prop}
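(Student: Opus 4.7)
The plan is to apply Proposition~\ref{p:direct.prod} directly. Choose equivariant classifying spaces $X$ for $M$ and $Y$ for $N$ that witness left-$\F_n$, meaning $X_n$ is $M$-finite and $Y_n$ is $N$-finite. By Proposition~\ref{p:direct.prod}, the product $X\times Y$ is an $M\times N$-equivariant classifying space, so it only remains to verify the $n$-skeleton $(X\times Y)_n$ is $(M\times N)$-finite.

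For this, I would unwind the cell structure recalled in the proof of Proposition~\ref{p:direct.prod}: if $X$ is built from the projective $M$-cells $P_i\times B^i$ and $Y$ from projective $N$-cells $Q_j\times B^j$, then the $(i+j)$-cells of $X\times Y$ come from $P_i\times Q_j\times B^{i+j}$. Hence $(X\times Y)_n$ is assembled from the sets $P_i\times Q_j$ with $i+j\leq n$. Since $X_n$ being $M$-finite means each $P_i$ with $i\leq n$ is a finitely generated projective $M$-set, and similarly each $Q_j$ with $j\leq n$ is a finitely generated projective $N$-set, I would invoke the observation made immediately before Proposition~\ref{p:direct.prod} that the product of a finitely generated projective $M$-set with a finitely generated projective $N$-set is a finitely generated projective $M\times N$-set. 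Consequently each $P_i\times Q_j$ with $i+j\leq n$ is a finitely generated projective $M\times N$-set, so only finitely many $(M\times N)$-cells appear in $(X\times Y)_n$, showing that $(X\times Y)_n$ is $(M\times N)$-finite.

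There is really no obstacle here; the entire content of the proposition is packaged into Proposition~\ref{p:direct.prod}, and the only thing to check is that the $M\times N$-finiteness of the $n$-skeleton of the product follows from $M$-finiteness of $X_n$ and $N$-finiteness of $Y_n$, which is the purely combinatorial observation above. The proof will be a single short paragraph.
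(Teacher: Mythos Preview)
Your proposal is correct and takes essentially the same approach as the paper, which simply declares the proposition an immediate consequence of Proposition~\ref{p:direct.prod}. Your extra paragraph verifying that $(X\times Y)_n$ is $(M\times N)$-finite when only $X_n$ and $Y_n$ are assumed finite (rather than all of $X$ and $Y$ being of finite type) is a reasonable clarification, since Proposition~\ref{p:direct.prod} only explicitly states the finite-type conclusion.
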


\subsection{Left geometric dimension}

Let us define the \emph{left geometric dimension} of $M$ to be the minimum dimension of a left equivariant classifying space for $M$.  The right geometric dimension is, of course, defined dually.  Clearly, the geometric dimension is an upper bound on the cohomological dimension $\mathop{\mathrm{cd}} M$ of $M$. Recall that the (left) \emph{cohomological dimension} of $M$ is the projective dimension of the trivial module $\mathbb Z$, that is, the shortest length of a projective resolution of $\mathbb Z$. As mentioned in the introduction, for groups of cohomological dimension different than $2$, it is known that geometric dimension coincides with cohomological dimension, but the general case is open.

\begin{Thm}\label{t:geom.dim}
Let $M$ be a monoid.  Then $M$ has an equivariant classifying space of dimension $\max\{\mathop{\mathrm{cd}} M,3\}$.
\end{Thm}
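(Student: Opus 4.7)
The plan is to adapt the classical Eilenberg--Ganea argument to the present $M$-equivariant setting. Set $n=\max\{\mathop{\mathrm{cd}} M,3\}$, so $n\geq 3$. The strategy is to construct a projective $M$-CW complex $Y$ of dimension $n-1$ that is $(n-2)$-connected, identify $\pi_{n-1}(Y)$ as a projective $\mathbb ZM$-module via Hurewicz together with the cohomological dimension hypothesis, and then cap $Y$ off with a carefully chosen collection of $n$-cells to produce a contractible projective $M$-CW complex of dimension exactly $n$.

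First I would fix a contractible free $M$-CW complex $\widetilde Y$ (which exists by Corollary~\ref{c:exist.class}) and put $Y=\widetilde Y_{n-1}$. Since the inclusion $Y\hookrightarrow\widetilde Y$ is an $(n-1)$-equivalence and $\widetilde Y$ is contractible, $Y$ is $(n-2)$-connected. The augmented cellular chain complex of $\widetilde Y$ is a projective $\mathbb ZM$-resolution of the trivial module $\mathbb Z$, and the $\mathbb ZM$-module
\[
K=\ker\bigl(d_{n-1}\colon C_{n-1}(\widetilde Y)\to C_{n-2}(\widetilde Y)\bigr)=H_{n-1}(Y)
\]
is the $n$th kernel in this resolution. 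Since $\mathop{\mathrm{cd}} M\leq n$, a standard consequence of the generalised Schanuel lemma forces $K$ to be projective. As $n-1\geq 2$ and $Y$ is $(n-2)$-connected (hence simply connected), the Hurewicz theorem furnishes an isomorphism of $\mathbb ZM$-modules $\pi_{n-1}(Y)\cong K$.

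The main obstacle is that although $K$ is a projective $\mathbb ZM$-module, it need not be of the form $\mathbb ZQ$ for a projective $M$-set $Q$, which is the form required in order to kill it by attaching a collection of projective $n$-cells realising a basis. I would overcome this by an Eilenberg swindle: pick $K'$ and a free $M$-set $Q_0$ with $K\oplus K'\cong \mathbb ZQ_0$, and attach free $(n-1)$-cells indexed by the free $M$-set $Q_0\times\mathbb N$ to $Y$ via the trivial attaching map based at an orbit of $Y_0$, obtaining $Y'$. Since each added cell contributes zero to the cellular boundary,
\[
H_{n-1}(Y')\cong K\oplus\bigoplus_{\mathbb N}(K\oplus K')\cong \bigoplus_{\mathbb N}(K\oplus K')
\]
as $\mathbb ZM$-modules after rearranging, and this last module is free on the underlying set of $Q_0\times\mathbb N$. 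Applying Hurewicz to $Y'$, I would then lift each element of a $\mathbb ZM$-basis of $\pi_{n-1}(Y')$ to a based map $\phi_a\colon S^{n-1}\to Y'$ and, via Proposition~\ref{p:cell-maps}, attach a free $n$-cell $M\times\{a\}\times B^n$ whose attaching map is the equivariant extension of $\phi_a$; call the result $Z$. By construction the boundary $d_n\colon C_n(Z)\to C_{n-1}(Z)$ is a $\mathbb ZM$-isomorphism onto $\ker d_{n-1}$, whence $H_i(Z)=0$ for all $i>0$; since $n\geq 3$, attaching $n$-cells does not affect $\pi_1(Y')=0$, so $Z$ is simply connected and acyclic and therefore weakly contractible by Hurewicz. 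The $M$-equivariant Whitehead theorem (Corollary~\ref{c:whitehead}) then promotes $Z\to\ast$ to an $M$-homotopy equivalence, so $Z$ is an equivariant classifying space for $M$ of the required dimension $n$.
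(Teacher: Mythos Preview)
Your proposal is correct and follows essentially the same Eilenberg--Ganea argument as the paper: truncate an equivariant classifying space to its $(n-1)$-skeleton, use an Eilenberg swindle to replace the projective kernel $H_{n-1}$ by a free module via wedging on free $(n-1)$-spheres, and then kill $\pi_{n-1}$ by attaching free $n$-cells along representatives of a $\mathbb ZM$-basis under the Hurewicz isomorphism. One small correction: your final appeal to Corollary~\ref{c:whitehead} is misplaced, since that result requires both source and target to be projective $M$-CW complexes, and the one-point space is not one unless $M$ has a right zero (Proposition~\ref{p:coh.dim.zero}); however, the ordinary Whitehead theorem for CW complexes already gives that $Z$ is contractible, which is exactly what the definition of equivariant classifying space requires, and this is what the paper invokes.
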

\begin{proof}
If $M$ has infinite cohomological dimension, then this is just the assertion that $M$ has an equivariant classiyfing space.  So assume that $\mathop{\mathrm{cd}} M<\infty$.  Put $n=\max\{\mathop{\mathrm{cd}} M,3\}$.  Let $Y$ be an equivariant classifying space for $M$.  As the inclusion $Y_{n-1}\to Y$ is an $(n-1)$-equivalence, we deduce that $\pi_q(Y_{n-1})$ is trivial for $0\leq q<n-1$.  Also, as the augmented cellular chain complex of $Y_{n-1}$ provides a partial projective resolution of the trivial module of length $n-1$ and $\mathop{\mathrm{cd}} M\leq n$, it follows that $\ker d_{n-1}=H_{n-1}(Y_{n-1})$ is a projective $\mathbb ZM$-module.  By the Eilenberg swindle, there is a free $\mathbb ZM$-module $F$ such that $H_{n-1}(Y_{n-1})\oplus F\cong F$.  Suppose that $F$ is free on a set $A$.  Fix a basepoint $y_0\in Y_{n-1}$.  We glue a wedge of $(n-1)$-spheres, in bijection with $A$, into $Y_{n-1}$ at $y_0$ as well as freely gluing in its translates.   That is we form a new projective $M$-CW complex $Z$ with $Z_{n-2}=Y_{n-2}$ and where $Z=Z_{n-1}$ consists of the $(n-1)$-cells from $Y_{n-1}$ and $M\times A\times B^{n-1}$ where the attaching map $M\times A\times S^{n-2}$ is given by $(m,a,x)\mapsto my_0$.

Notice that $C_{n-1}(Z)\cong  C_{n-1}(Y_{n-1})\oplus F$ as a $\mathbb ZM$-module and that the boundary map is zero on the $F$-summand since the boundary of each of the new $(n-1)$-cells that we have glued in is a point and $n\geq 3$.  Therefore, $H_{n-1}(Z)=\ker d_{n-1}=H_{n-1}(Y_{n-1})\oplus F\cong F$.  As the inclusion $Y_{n-2}=Z_{n-2}\to Z$ is an $(n-2)$-equivalence, we deduce that $\pi_q(Z)$ is trivial for $0\leq q\leq n-2$.  In particular, $Z$ is simply connected as $n\geq 3$. By the Hurewicz theorem, $\pi_{n-1}(Z,y_0)\cong H_{n-1}(Z)$. Choose mappings $f_a\colon S^{n-1}\to Z$, for $a\in A$, whose images under the Hurewicz mapping from a $\mathbb ZM$-module basis for $H_{n-1}(Z)\cong F$.  Then form $X$ by attaching $M\times A\times B^n$  to $Z=Z_{n-1}$ via the mapping \[M\times A\times S^{n-1}\to Z\] sending $(m,a,x)$ to $mf_a(x)$.

Note that $X$ is an $n$-dimensional CW complex with $X_{n-1}=Z_{n-1}$ and hence the inclusion $Z=X_{n-1}\to X$ is an $(n-1)$-equivalence.  Therefore, $\pi_q(X)=0=H_q(X)$ for $0\leq q\leq n-2$.  Also $\pi_{n-1}(X,y_0)\cong H_{n-1}(X)$ via the Hurewicz isomorphism.  Moreover, as the inclusion $Z=X_{n-1}\to X$ is an $(n-1)$-equivalence, we deduce that the inclusion induces a surjective homomorphism $\pi_{n-1}(Z,y_0)\to \pi_{n-1}(X,y_0)$ and hence a surjective homomorphism $H_{n-1}(Z)\to H_{n-1}(X)$.  As the $\mathbb ZM$-module generators of $H_{n-1}(Z)$ have trivial images in $H_{n-1}(X)$ by construction and the Hurewicz map, we deduce that $H_{n-1}(X)=0$.

Recall that $C_n(X)=H_n(X_n,X_{n-1})$. By standard cellular homology $H_n(X_n,X_{n-1})$ is a free $\mathbb ZM$-module on the images of the generator  of the relative homology of $(B^n,S^{n-1})$ under the characteristic mappings \[h_a\colon (\{1\}\times \{a\}\times B^n,\{1\}\times \{a\}\times S^{n-1}) \to (X_n,X_{n-1})\] and the boundary map $\partial_n \colon H_n(X_n,X_{n-1})\to H_{n-1}(X_{n-1})$ sends  the class corresponding to $a\in A$ to the image of the generator of $S^{n-1}$ under the map on homology induced by the attaching map $f_a\colon S^{n-1}\to X_{n-1}$.  Hence a free basis of $H_n(X_n,X_{n-1})$ is sent by $\partial_n$ bijectively to a free basis for $H_{n-1}(X_{n-1})$ and so $\partial_n$ is an isomorphism.  The long exact sequence for reduced homology and the fact that an $(n-1)$-dimensional CW complex has trivial homology in degree $n$ provides an exact sequence
\[0=H_n(X_{n-1})\longrightarrow H_n(X_n)\longrightarrow H_n(X_n,X_{n-1})\xrightarrow{\partial_n} H_{n-1}(X_{n-1})\] and so $H_n(X)=H_n(X_n)\cong \ker \partial_n=0$.  As $X$ is a simply connected $n$-dimensional CW complex with $H_q(X)=0$ for $0\leq q\leq n$, we deduce that $X$ is contractible by the Hurewicz and Whitehead theorems.  Therefore, $X$ is an $n$-dimensional equivariant classifying space for $M$, completing the proof.
\end{proof}

We end this section by observing that monoids of left cohomological dimenison $0$ are precisely the monoids of left geometric dimension $0$.
The following result generalises \cite[Lemma~1 and Theorem~1]{Guba1998}.

\begin{Prop}\label{p:coh.dim.zero}
Let $M$ be a monoid.  Then the following are equivalent.
\begin{enumerate}
\item $M$ has a right zero element.
\item $M$ has  left cohomological dimension $0$.
\item $M$ has left geometric dimension $0$.
\end{enumerate}
\end{Prop}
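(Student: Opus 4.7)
The implications $(1)\Leftrightarrow(3)$ and $(3)\Rightarrow(2)$ are direct. A $0$-dimensional projective $M$-CW complex is a discrete projective $M$-set, and requiring contractibility cuts it down to a singleton; a singleton is a projective $M$-set precisely when it equals $Mz$ with $|Mz|=1$, i.e.\ when $z$ is a right zero of $M$. The augmented cellular chain complex of such a zero-dimensional equivariant classifying space is then a projective resolution of $\Z$ of length $0$, giving $(3)\Rightarrow(2)$.

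The substantive direction is $(2)\Rightarrow(1)$. Projectivity of $\Z$ over $\ZM$ supplies a section of the augmentation, i.e.\ an element $\xi\in\ZM$ with $\varepsilon(\xi)=1$; since the trivial module has trivial $M$-action, any such section is automatically $\ZM$-linear, so $m\xi=\xi$ for all $m\in M$. Write $\xi=\sum_{t\in T}c_t\,t$ with $T=\supp(\xi)$ finite and $c_t\in\Z\setminus\{0\}$. The plan is to exploit integrality of the $c_t$ to pin a right zero inside $T$. First I would apply a triangle-inequality argument to $m\xi=\xi$: taking absolute values yields $|c_n|\le\sum_{t:\,mt=n}|c_t|$, and summing over $n$ gives $\|\xi\|_1\le\|\xi\|_1$, so equality holds everywhere. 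This simultaneously forces $mT\subseteq T$ (so $T$ is a finite $M$-invariant subset of $M$) and shows that the sign partition $T=T_+\sqcup T_-$ is preserved by every $m$. Since $\varepsilon(\xi)=1>0$ the set $T_+$ is non-empty, and $c_+:=c|_{T_+}$ satisfies the pushforward identity $c_+(n)=\sum_{t\in\phi_m^{-1}(n)} c_+(t)$, where $\phi_m(t):=mt$.

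The main obstacle is promoting the $M$-action on $T_+$ to a \emph{group} action. Let $S\subseteq T_+^{T_+}$ be the image of $M$; this is a finite monoid with minimum two-sided ideal $K$, and I pick an idempotent $\epsilon\in K$. For $n\in\mathrm{Fix}(\epsilon)=\epsilon(T_+)$, the pushforward identity reads $c_+(n)=c_+(n)+\sum_{t\neq n,\,\epsilon(t)=n}c_+(t)$, and strict positivity of $c_+$ forces $\epsilon^{-1}(n)=\{n\}$; combined with $\epsilon(T_+)=\mathrm{Fix}(\epsilon)$ this gives $\epsilon=\mathrm{id}_{T_+}$. Having $\mathrm{id}\in K$ forces $K=S$ and makes every element of $S$ attain the maximal rank $|T_+|$ (since elements of the minimum ideal carry minimum rank), so $S$ is a subsemigroup of $\mathrm{Sym}(T_+)$ containing the identity, hence a finite group $G$. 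This semigroup-theoretic passage from an invariant positive integer measure to a genuine group action is the delicate step; everything before is mechanical and everything after is a direct calculation.

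To finish, let $\psi\colon M\to G$ be the induced monoid homomorphism and apply $\psi_*\colon\ZM\to\ZG$ to $\xi$. Since $g\cdot\psi_*(\xi)=\psi_*(m\xi)=\psi_*(\xi)$ whenever $g=\psi(m)$, the element $\psi_*(\xi)$ lies in the $\Z$-module of $G$-fixed elements of $\ZG$, which is generated by $\sum_{g\in G}g$. Augmentation compatibility gives $1=\varepsilon(\xi)=\varepsilon(\psi_*(\xi))=d\cdot|G|$ for some integer $d$, so $|G|=1$. Therefore $\phi_m=\mathrm{id}_{T_+}$ for every $m\in M$, meaning $mt=t$ for all $m\in M$ and $t\in T_+$; any element of the non-empty set $T_+$ is then a right zero of $M$.
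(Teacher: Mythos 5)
Your proof is correct and follows the same overall route as the paper's proof of $(2)\Rightarrow(1)$: split the augmentation to obtain $\xi\in\ZM$ with $\varepsilon(\xi)=1$ and $m\xi=\xi$, show that $M$ acts by permutations on (part of) the finite support, pass to the finite group image, and use $\varepsilon(\xi)=1$ to force that group to be trivial; the treatment of $(1)\Leftrightarrow(3)\Rightarrow(2)$ is also the same.

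Two remarks. First, the clause ``any such section is automatically $\ZM$-linear'' is false as literally stated: $\xi=1$ satisfies $\varepsilon(\xi)=1$ but $m\cdot 1=m\neq 1$ in general. What you need, and what projectivity actually provides, is a $\ZM$-linear splitting $s$ of $\varepsilon$; its linearity gives $m\,s(1)=s(m\cdot 1)=s(1)$, i.e.\ $m\xi=\xi$. Since that is the only consequence you use, this is a local misstatement rather than a gap. Second, the passage through the minimal ideal of $S$ is more machinery than is needed: once you have the pushforward identity $c_+(n)=\sum_{t\in\phi_m^{-1}(n)}c_+(t)$ with $c_+$ strictly positive on the finite set $T_+$, any $n\in T_+$ outside the image of $\phi_m$ would have empty fibre and hence $c_+(n)=0$; so each $\phi_m$ is already surjective, hence bijective, on $T_+$, and the idempotent/rank argument can be dropped. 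That said, your triangle-inequality and sign analysis is exactly the detail the paper elides when it asserts without argument that ``$M$ permutes $X$'', and your direct computation with the fixed element $d\sum_{g\in G}g$ is an elementary substitute for the paper's appeal to the fact that $\Z$ is projective over $\ZG$ only for trivial finite $G$.
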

\begin{proof}
If $M$ has a right zero $z$, then $Mz=\{z\}$ is a projective $M$-set and hence the one point space is an equivariant classifying space for $M$.  Thus $M$ has left geometric dimension zero.  If $M$ has left geometric dimension zero, then it has left cohomological dimension zero.  If $M$ has left cohomological dimension zero, then $\mathbb Z$ is a projective $\mathbb ZM$-module and so the augmentation mapping $\varepsilon\colon  \mathbb ZM\to \mathbb Z$ splits.  Let $P$ be the image of the splitting, so that $\mathbb ZM=P\oplus Q$.  As $P$ is a retract of $\mathbb ZM$ and each endomorphism of $\mathbb ZM$ is induced by a right multiplication, we have that $\mathbb Z\cong P=\mathbb ZMe$ for some idempotent $e\in \mathbb ZM$ with $\varepsilon(e)=1$.  Then since $me=e$ for all $m\in M$ and $e$ has finite support $X$, we must have that $M$ permutes $X$ under left multiplication.  Let $G$ be the quotient of $M$ that identifies two elements if they act the same on $X$.  Then $G$ is a finite group and $\mathbb Z$ must be a projective $\mathbb ZG$-module.  Therefore, $G$ is trivial.  But this means that if $x\in X$, then $mx=x$ for all $m\in M$ and so $M$ has a right zero.
\end{proof}

We do not know whether left geometric dimension equals left cohomological dimension for monoids of cohomological dimension one or two, although the former is true for groups by the Stallings-Swan theorem.

\section{Bi-equivariant classifying spaces}\label{subsec_bi-equi_CS}
Let $M$ be a monoid.  We now introduce the bilateral notion of a classifying space in order to introduce a stronger property, bi-$\F_n$.  It will turn out that bi-$\F_n$ implies both left-$\F_n$ and right-$\F_n$, but is strictly stronger.  Moreover, bi-$\F_n$ implies bi-$\FP_n$ which is of interest from the point of view of Hochschild cohomology, which is the standard notion of cohomology for rings. Many of the results are similar to the previous section, but the proofs are more complicated.

First recall that $M$ is an $M\times M^{op}$-set via the action $(m_L,m_R)m=m_Lmm_R$.  We say that a projective $M\times M^{op}$-CW complex $X$ is a \emph{bi-equivariant classifying space for $M$} if $\pi_0(X)\cong M$ as an $M\times M^{op}$-set and each component of $X$ is contractible; equivalently, $X$ has an $M\times M^{op}$-equivariant homotopy equivalence to the discrete $M\times M^{op}$-set $M$.

We can augment the cellular chain complex of $X$ via the canonical surjection $\varepsilon\colon  C_0(X)\to H_0(X)\cong \mathbb Z\pi_0(X)\cong \mathbb ZM$.  The fact that each component of $X$ is contractible guarantees that this is a resolution, which will be a projective bimodule resolution of $\mathbb ZM$ and hence suitable for computing Hochschild cohomology.  We begin by establishing the uniqueness up to $M\times M^{op}$-homotopy equivalence of bi-equivariant classifying spaces.

\begin{Lemma}\label{l:construct.map.bi}
Let $X$ be a bi-equivariant classifying space for $M$ and let $Y$ be a locally path connected $M\times M^{op}$-space with contractible connected components.  Suppose that $g\colon \pi_0(X)\to \pi_0(Y)$ is an $M\times M^{op}$-equivariant mapping.  Then there exists a continuous $M\times M^{op}$-equivariant mapping  $f\colon X\to Y$ such that the mapping $f_\ast\colon \pi_0(X)\to \pi_0(Y)$ induced by $f$ is $g$.
\end{Lemma}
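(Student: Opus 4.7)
The plan is to reduce the construction to the $(M\times M^{op})$-equivariant Whitehead theorem (Theorem~\ref{t:whitehead.vers1}) by exhibiting $Y$ as $(M\times M^{op})$-weakly equivalent to its discrete set of components.

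First I would consider the canonical projection $k\colon Y\to \pi_0(Y)$, where the target carries the discrete topology and the $(M\times M^{op})$-action from Proposition~\ref{p:components}. The map $k$ is continuous because $Y$ is locally path connected (so its components are open), and it is $(M\times M^{op})$-equivariant by naturality of $\pi_0$. Moreover, $k$ is an $\infty$-equivalence: on $\pi_0$ it is the identity by construction, and for each $y\in Y$ and each $q\geq 1$ both $\pi_q(Y,y)$ and $\pi_q(\pi_0(Y),k(y))$ vanish, the first because the path component of $y$ is contractible by hypothesis, the second because $\pi_0(Y)$ is discrete.

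Next I would apply Theorem~\ref{t:whitehead.vers1} to the monoid $M\times M^{op}$, to the projective $(M\times M^{op})$-CW complex $X$, and to the $(M\times M^{op})$-equivariant $\infty$-equivalence $k$. This yields that
\[
k_*\colon [X,Y]_{M\times M^{op}}\longrightarrow [X,\pi_0(Y)]_{M\times M^{op}}
\]
is a bijection. Let $\pi_X\colon X\to \pi_0(X)$ be the canonical (continuous, equivariant) projection onto the discrete space of components, and form the $(M\times M^{op})$-equivariant continuous composite $g\circ \pi_X\colon X\to \pi_0(Y)$. By surjectivity of $k_*$ there exists an $(M\times M^{op})$-equivariant continuous map $f\colon X\to Y$ with $k\circ f\simeq_{M\times M^{op}} g\circ \pi_X$.

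Finally, applying the functor $\pi_0$ to this $(M\times M^{op})$-homotopy gives $k_*\circ f_* = g\circ (\pi_X)_*$ as $(M\times M^{op})$-equivariant maps of $(M\times M^{op})$-sets. Under the natural identifications $\pi_0(\pi_0(Y))=\pi_0(Y)$ and $\pi_0(\pi_0(X))=\pi_0(X)$, both $k_*$ and $(\pi_X)_*$ are the identity, so this simplifies to $f_*=g$, as required. The only genuine content beyond bookkeeping is the verification that $k$ is an $\infty$-equivalence, which is immediate from the hypotheses that each component of $Y$ is contractible and that $Y$ is locally path connected; once this is observed, the lemma is a direct application of Whitehead's theorem in the bilateral setting.
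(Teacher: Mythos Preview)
Your proof is correct and takes a somewhat different, more economical route than the paper. The paper constructs $f$ by hand via a skeleton-by-skeleton induction: it first chooses $f_0$ on the $0$-skeleton explicitly (picking points $y_a$ in the right components), and then at each stage applies HELP (Theorem~\ref{t:HELP}) to extend $f_n$ to $f_{n+1}$ over the weak equivalence $k\colon Y\to \pi_0(Y)$, finally observing that $kf\simeq gr$ forces $kf=gr$ because $\pi_0(Y)$ is discrete.

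Your argument instead packages this entire induction into a single invocation of the Whitehead theorem (Theorem~\ref{t:whitehead.vers1}), which is itself proved from HELP. Since $k\colon Y\to\pi_0(Y)$ is an $(M\times M^{op})$-equivariant $\infty$-equivalence and $X$ is a projective $(M\times M^{op})$-CW complex, Whitehead gives surjectivity of $k_*$ on homotopy classes, and you lift $g\circ\pi_X$ directly. This is cleaner and avoids repeating the inductive scaffolding. The paper's approach has the minor advantage of being more explicit about where the map lands on $0$-cells, but your use of the already-proven Whitehead theorem is entirely legitimate and arguably the more natural argument once that machinery is in place. One small remark: in your final paragraph you could strengthen the conclusion by noting, as the paper does, that $k\circ f\simeq_{M\times M^{op}} g\circ\pi_X$ actually forces $k\circ f = g\circ\pi_X$ on the nose (since $\pi_0(Y)$ is discrete), so no further $\pi_0$-bookkeeping is needed.
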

\begin{proof}
Let $r\colon X\to \pi_0(X)$ and $k\colon Y\to \pi_0(Y)$ be the projections to the set of connected components. Then $k$ and $r$ are continuous $M\times M^{op}$-equivariant maps where $\pi_0(X)$ and $\pi_0(Y)$ carry the discrete topology.  Our goal will be to construct an $M\times M^{op}$-equivariant continuous mapping $f\colon X\to Y$ such that the diagram
\[\begin{tikzcd}X\ar{r}{f}\ar{d}[swap]{r} &Y\ar{d}{k}\\ \pi_0(X)\ar{r}[swap]{g}  & \pi_0(Y)\end{tikzcd}\] commutes.
We construct, by induction, $M\times M^{op}$-equivariant continuous mappings $f_n\colon X_n\to Y$ such that
\begin{equation}\label{eq:must.comm}
\begin{tikzcd}X_n\ar{r}{f_n}\ar{d}[swap]{r} &  Y\ar{d}{k}\\  \pi_0(X)\ar{r}[swap]{g} &\pi_0(Y)\end{tikzcd}
\end{equation}
 commutes and $f_n$ extends $f_{n-1}$.

To define $f_0$, observe that $X_0 = \coprod_{a\in A}Me_a\times e'_aM$.  Choose $y_a\in Y$ with $k(y_a)=g(r(e_a,e'_a))$.  Then $k(e_aye'_a)=e_ak(y_a)e'_a=e_ag(r(e_a,e'_a))e'_a=g(r(e_a,e'_a))$ and so replacing $y_a$ by $e_aye'_a$, we may assume without loss of generality that $y_a\in e_aYe'_a$.  Then by  Proposition~\ref{p:cell-maps} there is an $M\times M^{op}$-equivariant mapping $X_0\to Y$ given by $(me_a,e'_am')\mapsto me_ay_ae'_am'$ for $a\in A$ and $m\in M$.  By construction, the diagram~\eqref{eq:must.comm} commutes.

Assume now that $f_n$ has been defined. The map $k\colon Y\to \pi_0(Y)$ is $M\times M^{op}$-equivariant and a weak equivalence (where $\pi_0(Y)$ has the discrete topology) because $Y$ has contractible connected components.  So by Theorem~\ref{t:HELP} we can construct a commutative diagram
\[\begin{tikzcd}X_n\ar{rr}{i_0}\ar[hook]{dd}[swap]{i} & & X_n\times I\ar{dl}[swap]{gr\pi_{X_n}}\ar[hook]{dd} & & X_n\ar{ll}[swap]{i_1}\ar[hook]{dd}{i}\ar{dl}{f_n}\\
                              &\pi_0(Y)&           &\ar[crossing over]{ll}[swap]{k}Y&   \\
                X_{n+1}\ar{rr}{i_0}\ar{ur}{gr}             & & X_{n+1}\times I\ar[dashrightarrow]{ul}[swap]{} & & X_{n+1}\ar{ll}[swap]{i_1}\ar[dashrightarrow]{ul}[swap]{f_{n+1}}\end{tikzcd}\]
where $f_{n+1}$ is $M\times M^{op}$-equivariant and $\pi_{X_n}$ is the projection.  Note that $kf_{n+1}\simeq gr$ and hence, since $\pi_0(Y)$ is a discrete space, we conclude that $kf_{n+1}=gr$.  Now take $f$ to be the colimit of the $f_n$.  This completes the proof.
\end{proof}

\begin{Thm}\label{t:uniqueness.bi}
Let $X,Y$ be bi-equivariant classifying spaces for $M$.  Then $X$ and $Y$ are $M\times M^{op}$-homotopy equivalent by a cellular $M\times M^{op}$-homotopy equivalence.
\end{Thm}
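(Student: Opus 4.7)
The plan is to mimic the proof of Theorem~\ref{t:uniqueness.left}, substituting $M\times M^{op}$ for $M$ throughout and using Lemma~\ref{l:construct.map.bi} in place of Lemma~\ref{l:construct.map}. The key preliminary observation is that the entire development of Section~\ref{sec_projM}, including Theorem~\ref{t:HELP}, Corollary~\ref{c:whitehead}, and Theorem~\ref{t:cell.approx}, goes through verbatim with $M$ replaced by the monoid $M\times M^{op}$; nothing in those arguments uses any special feature of $M$ beyond it being a monoid.

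First, since $\pi_0(X)\cong M\cong \pi_0(Y)$ as $M\times M^{op}$-sets, fix an $M\times M^{op}$-equivariant bijection $g\colon \pi_0(X)\to\pi_0(Y)$ and apply Lemma~\ref{l:construct.map.bi} to obtain an $M\times M^{op}$-equivariant continuous map $f\colon X\to Y$ with $f_\ast = g$ on path components.

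Next, verify that $f$ is an $M\times M^{op}$-equivariant weak equivalence. The induced map $f_\ast\colon\pi_0(X)\to\pi_0(Y)$ is a bijection by construction. For each basepoint $x\in X$, $f$ restricts to a continuous map from the component $C_x$ of $x$ to the component $C_{f(x)}$ of $f(x)$; since both components are contractible by hypothesis, this restriction induces isomorphisms $\pi_n(X,x)\to\pi_n(Y,f(x))$ for all $n\geq 1$. Hence $f$ is a weak equivalence. The $M\times M^{op}$-analogue of Corollary~\ref{c:whitehead} then guarantees that $f$ is an $M\times M^{op}$-homotopy equivalence, and a final appeal to the $M\times M^{op}$-analogue of Theorem~\ref{t:cell.approx} replaces $f$ by an $M\times M^{op}$-homotopic cellular representative, yielding the required cellular $M\times M^{op}$-homotopy equivalence.

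The main obstacle, such as it is, lies in confirming that the one-sided machinery transports to the bilateral setting. This reduces to checking the bi-versions of Proposition~\ref{p:cell-maps} and Lemma~\ref{l:relate.equiv}: the first gives the natural bijection $\mathrm{Top}_{M\times M^{op}}((M\times M^{op})(e_L,e_R)\times B^n, Y)\cong \mathrm{Top}(B^n, e_L Y e_R)$, and the second shows that an $M\times M^{op}$-equivariant $n$-equivalence $k\colon Y\to Z$ restricts to an ordinary $n$-equivalence $e_L Y e_R\to e_L Z e_R$ (via the retractions furnished by multiplication by $(e_L,e_R)$). With these in hand, the proofs of HELP, Whitehead, and cellular approximation pass through without alteration, and the argument above is then routine.
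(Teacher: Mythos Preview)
Your argument is correct and follows the same route as the paper's own proof: construct an equivariant map $f\colon X\to Y$ via Lemma~\ref{l:construct.map.bi} inducing a bijection on $\pi_0$, observe it is a weak equivalence because all components are contractible, and conclude via Corollary~\ref{c:whitehead} and Theorem~\ref{t:cell.approx} applied with the monoid $M\times M^{op}$. Your explicit remarks on why the machinery of Section~\ref{sec_projM} transports to $M\times M^{op}$ are accurate and simply make manifest what the paper leaves implicit.
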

\begin{proof}
As $\pi_0(X)\cong M\cong \pi_0(Y)$ as $M\times M^{op}$-sets, there is an $M\times M^{op}$-equivariant isomorphism $g\colon \pi_0(X)\to \pi_0(Y)$.  Then by Lemma~\ref{l:construct.map.bi}, there is an $M\times M^{op}$-equivariant continuous mapping $f\colon X\to Y$ inducing $g$ on connected components.  It follows that $f$ is a weak equivalence as $X$ and $Y$ both have contractible connected components.
The result now follows  from Corollary~\ref{c:whitehead} and Theorem~\ref{t:cell.approx}.
\end{proof}

Next we prove in an elementary fashion that bi-equivariant classifying spaces for $M$ exist.  A more canonical construction, using simplicial sets, was described earlier.

\begin{Lemma}\label{l:improve.bi}
Let $M$ be a monoid.
\begin{enumerate}
  \item If $X$ is a projective (free) $M\times M^{op}$-CW complex such that $\pi_0(X)\cong M$ and $\pi_q(X,x)=0$ for all $1\leq q<n$ and $x\in X$, then there exists a projective $M\times M^{op}$-CW complex $Y$ containing $X$ as a projective $M\times M^{op}$-CW subcomplex and such that $Y_n=X_n$ and $\pi_q(Y,y)=0$ for all $y\in Y$ and $1\leq q\leq n$.
  \item If $X$ is a  projective (free) $M\times M^{op}$-CW complex such that $\pi_0(X)\cong M$ and $\pi_q(X,x)=0$ for all $1\leq q<n$ and $x\in X$, then there exists a projective (free) $M\times M^{op}$-CW complex $Y$ with contractible connected components containing $X$ as a projective $M\times M^{op}$-CW subcomplex and such that $Y_n=X_n$.
\end{enumerate}
\end{Lemma}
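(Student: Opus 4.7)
My plan is to adapt the strategy of Lemma~\ref{l:improve}(2)--(3) to the bi-equivariant setting, the new difficulty being that $X$ is not connected: under the isomorphism $\pi_0(X)\cong M$ it has one connected component $C_m$ for each $m\in M$. In the one-sided case one fixes a basepoint and kills generators of $\pi_n$ by attaching free $M$-cells; here I must kill $\pi_n$ in every component simultaneously while preserving the projective (free) $M\times M^{op}$-structure.

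For item~(1), if $\pi_n(X,x)=0$ for all $x\in X$ take $Y=X$; otherwise, for each $m\in M$ fix a basepoint $x_m\in C_m$ and choose representatives $f_a\colon S^n\to C_{m_a}$ of a generating set of $\pi_n(C_{m_a},x_{m_a})$ as $a$ ranges over an index set $A_m$, and set $A=\coprod_{m\in M} A_m$. Since $X_n\hookrightarrow X$ is an $n$-equivalence I may assume each $f_a$ takes values in $X_n$. Observing that $M\times M^{op}\times A\times B^{n+1}$ is a disjoint union of free bi-$M$-cells of dimension $n+1$, I construct $Y$ by putting $Y_k=X_k$ for $0\leq k\leq n$, defining $Y_{n+1}$ by the pushout
\[
\begin{tikzcd}
(P_{n+1}\times S^n)\coprod(M\times M^{op}\times A\times S^n)\ar{r}\ar[d,hook] & X_n\ar[d,hook]\\
(P_{n+1}\times B^{n+1})\coprod(M\times M^{op}\times A\times B^{n+1})\ar{r} & Y_{n+1}
\end{tikzcd}
\]
in which the top map combines the original attaching maps from $X$ with the bi-equivariant map $(m_L,m_R,a,s)\mapsto m_L\cdot f_a(s)\cdot m_R$, and then setting $Y_k=X_k\cup Y_{n+1}$ for $k>n+1$. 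Then $Y$ is a projective (respectively, free) $M\times M^{op}$-CW complex containing $X$ as a subcomplex with $Y_n=X_n$; since $X_n\hookrightarrow Y$ is an $n$-equivalence, $\pi_0(Y)\cong\pi_0(X)\cong M$ as bi-$M$-sets and $\pi_q(Y,y)=0$ for $1\leq q<n$ by hypothesis on $X$. To see that $\pi_n(Y,y)=0$ in every component of $Y$ I specialise the new attaching map to $(m_L,m_R)=(1,1)$: this exhibits each $f_a\colon S^n\to C_{m_a}\subseteq Y$ as the attaching map of an $(n+1)$-disk $\{(1,1,a)\}\times B^{n+1}$ in $Y$, killing all chosen generators of $\pi_n(C_{m_a},x_{m_a})$ for every $m_a\in M$.

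Item~(2) then follows by iteration: starting from $X^{(0)}=X$ I successively apply~(1) to obtain $X^{(k+1)}$ extending $X^{(k)}$ with $X^{(k+1)}_{n+k}=X^{(k)}_{n+k}$ and $\pi_q(X^{(k+1)},x)=0$ for $1\leq q\leq n+k$, then set $Y=\varinjlim X^{(k)}$. Then $Y_n=X_n$, each component of $Y$ is a connected CW complex whose positive-degree homotopy groups all vanish, and so each component is contractible by Whitehead's theorem. The main technical obstacle I anticipate is the bookkeeping across the many components: one must produce generators for $\pi_n(C_m,x_m)$ separately for every $m\in M$ and package them into a single bi-equivariant attachment. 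The key observation that resolves this is that the free bi-$M$-set $M\times M^{op}$ contains the identity element $(1,1)$, so attaching via $(m_L,m_R,a,s)\mapsto m_L f_a(s)m_R$ automatically kills the map $f_a$ itself (at $(1,1)$) along with its entire bi-$M$-orbit, which is precisely what is required to kill $\pi_n$ of every component at once.
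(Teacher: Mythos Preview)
Your proposal is correct and is precisely the adaptation of Lemma~\ref{l:improve} that the paper has in mind: the paper's own proof consists of the single sentence ``This is a minor adaptation of the proof of Lemma~\ref{l:improve} that we leave to the reader,'' and your argument supplies exactly those omitted details. The only point worth making explicit, which you use implicitly, is that the surjection $\pi_n(C_{m},x_m)\twoheadrightarrow \pi_n(C_m^Y,x_m)$ (needed to conclude $\pi_n(C_m^Y,x_m)=0$ from the killing of the chosen generators) follows because $Y$ is obtained from $X$ by attaching $(n+1)$-cells.
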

\begin{proof}
  This is a minor adaptation of the proof of Lemma~\ref{l:improve}
  that we leave to the reader.
\end{proof}

\begin{Cor}\label{c:exist.fbi.equi}
Let $M$ be a monoid.  Then there exists a free $M\times M^{op}$-CW complex $X$ with $\pi_0(X)\cong M$ and each connected component of $X$ contractible.
\end{Cor}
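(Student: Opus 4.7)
The plan is to parallel the proof of Corollary~\ref{c:exist.class} but in the bilateral setting. The one-sided proof had two ingredients: Lemma~\ref{l:improve}(1) to build the connected $1$-skeleton, and Lemma~\ref{l:improve}(3) to kill higher homotopy. Lemma~\ref{l:improve.bi} as stated only supplies the analogue of the second ingredient, and it takes as a hypothesis that $\pi_0(X)\cong M$ as an $M\times M^{op}$-set. So the real work is to build, by hand, a free $M\times M^{op}$-CW complex of dimension at most $1$ whose $\pi_0$ is correctly identified with $M$.

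First I would take $X_0=M\times M$, viewed as a free $M\times M^{op}$-set on the generator $(1,1)$ under the action $(n_L,n_R)(x,y)=(n_Lx,yn_R)$. Then, for each $m\in M$, I would glue on a free $M\times M^{op}$-cell of dimension $1$ via the attaching map
\[ (m_L,m_R,0)\longmapsto (m_L,mm_R), \qquad (m_L,m_R,1)\longmapsto (m_Lm,m_R). \]
A short calculation shows this is $M\times M^{op}$-equivariant, so the resulting $1$-complex $X_1$ is a free $M\times M^{op}$-CW complex. (This is essentially the $1$-skeleton of the nerve $\EMb$ of the two-sided Cayley graph category, so one could alternatively invoke Section~\ref{sec_cspaces} directly.)

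Next I would compute $\pi_0(X_1)$ by showing the product map $\mu\colon X_0\to M$, $(x,y)\mapsto xy$, factors through an $M\times M^{op}$-equivariant bijection $\overline\mu\colon \pi_0(X_1)\to M$. Equivariance and the fact that $\mu$ identifies the two endpoints of each attached edge are immediate. For injectivity of $\overline\mu$, given $(x,y)$ with $xy=m$, the cell attached for the element $y\in M$ with parameters $m_L=x$, $m_R=1$ yields an edge from $(x,y)$ to $(xy,1)=(m,1)$, so every component contains a vertex of the form $(m,1)$, and such vertices are distinguished by their $\mu$-image. Surjectivity of $\overline\mu$ is clear.

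Finally, with $\pi_0(X_1)\cong M$ as $M\times M^{op}$-sets established, the hypothesis $\pi_q(X_1,x)=0$ for $1\leq q<1$ is vacuous, so Lemma~\ref{l:improve.bi}(2) applied with $n=1$ supplies a free $M\times M^{op}$-CW complex $Y$ containing $X_1$ as a subcomplex with $Y_1=X_1$ and with each connected component contractible. Since the inclusion $X_1\hookrightarrow Y$ is a bijection on $\pi_0$, we still have $\pi_0(Y)\cong M$ as $M\times M^{op}$-sets, giving the desired complex. The only nontrivial obstacle is the computation of $\pi_0(X_1)$; the equivariance and freeness checks are routine, and the passage from $X_1$ to $Y$ is entirely delegated to Lemma~\ref{l:improve.bi}.
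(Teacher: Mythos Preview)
Your proof is correct and follows essentially the same approach as the paper: build the free $M\times M^{op}$-graph on vertex set $M\times M$ with edge set $M\times M\times M$ where the edge $(m_L,m_R,m)$ joins $(m_L,mm_R)$ to $(m_Lm,m_R)$, verify $\pi_0\cong M$ via the multiplication map, then invoke Lemma~\ref{l:improve.bi}. The only cosmetic difference is that the paper connects each vertex $(m_1,m_2)$ to $(1,m_1m_2)$ rather than to $(m_1m_2,1)$, but this is immaterial.
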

\begin{proof}
By Lemma~\ref{l:improve.bi} it suffices to construct a free $M\times M^{op}$-graph $X$ with $\pi_0(X)\cong M$.  We take $X_0=M\times M$ and we take an edge set in bijection with $M\times M\times M$.  The edge $(m_L,m_R,m)$ will connect $(m_L,mm_R)$ to $(m_Lm,m_R)$.  Then $X$ is a free $M\times M^{op}$-graph.  Notice that if $(m_1,m_2)$ is connected by an edge to $(m_1',m_2')$, then $m_1m_2=m_1'm_2'$.  On the other hand, $(1,m_2,m_1)$ is an edge from $(1,m_1m_2)$ to $(m_1,m_2)$ and hence there is a bijection $\pi_0(X)\to M$ sending the component of $(m_1,m_2)$ to $m_1m_2$ and this mapping is an $M\times M^{op}$-equivariant bijection.
\end{proof}

\begin{example}
  It follows from the definitions and results in
  Section~\ref{sec_cspaces} that the geometric realisation $|\EMb|$ of
    the nerve of the two-sided Cayley graph category of $M$ is a
    bi-equivariant classifying space for $M$.
\end{example}

\begin{Cor}\label{c:class.quot.bi}
If $X$ is a bi-equivariant classifying space for $M$, then $M\backslash X/M\simeq |BM|$.
\end{Cor}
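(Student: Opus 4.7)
The strategy is to reduce to the canonical model $|\EMb|$ of Section~\ref{sec_cspaces} and then to identify its biquotient with $|BM|$ via the simplicial description. For the reduction, Theorem~\ref{t:uniqueness.bi} provides an $M\times M^{op}$-homotopy equivalence $X\simeq_{M\times M^{op}}|\EMb|$, since both spaces are bi-equivariant classifying spaces for $M$. Applying Corollary~\ref{c:one-sided}(4), we obtain an $M$-homotopy equivalence $X/M\simeq_M |\EMb|/M$ of left $M$-spaces, and then Proposition~\ref{p:preservation.hom.equiv} gives $M\backslash(X/M)\simeq M\backslash(|\EMb|/M)$ as topological spaces. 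Thus it suffices to prove $M\backslash|\EMb|/M\cong |BM|$.

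For this, I will use the simplicial description recalled in Section~\ref{sec_cspaces}. The projection functor from the two-sided Cayley graph category to the one-point category $M^{op}$ induces an $M\times M^{op}$-equivariant simplicial morphism $\pi\colon \EMb\to BM$ (where $BM$ carries the trivial action), sending $m(m_1,\ldots,m_n)s$ to $(m_1,\ldots,m_n)$. A direct inspection shows that in each degree $n$ this map exhibits $BM_n$ as the set of $M\times M^{op}$-orbits of $\EMb_n$: since the action is $(x,y)\cdot m(m_1,\ldots,m_n)s=xm(m_1,\ldots,m_n)sy$, two simplices lie in the same orbit iff they share the same middle tuple. Because the face and degeneracy operators only interact with the outermost entries via the middle tuple, $\pi$ factors through a simplicial isomorphism $M\backslash \EMb/M\xrightarrow{\cong} BM$.

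Finally, to pass from simplicial sets to spaces, I will use that geometric realization is a left adjoint and hence preserves colimits; in particular it commutes with quotients by actions of discrete monoids. Therefore $|M\backslash \EMb/M|\cong M\backslash|\EMb|/M$, and combining with the isomorphism of the previous paragraph yields the desired homeomorphism $M\backslash|\EMb|/M\cong|BM|$. The only point requiring a touch of care is the commutation of realization with the biquotient; if one prefers to avoid the abstract left-adjoint argument, this can be verified directly using that $\EMb$ is a free rigid $M\times M^{op}$-simplicial set, so that the non-degenerate simplices are permuted freely and rigidly by $M\times M^{op}$ and the cellular structure descends cleanly to the quotient.
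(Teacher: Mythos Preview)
Your proof is correct and follows essentially the same approach as the paper: reduce to the canonical model $|\EMb|$ via Theorem~\ref{t:uniqueness.bi} and then pass to the biquotient using Proposition~\ref{p:preservation.hom.equiv}. You have supplied more detail than the paper does, in particular spelling out the simplicial identification $M\backslash\EMb/M\cong BM$ and the commutation of geometric realization with the biquotient, both of which the paper simply asserts in the single sentence ``We have $M\backslash |\overleftrightarrow{EM}|/M\cong |BM|$.''
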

\begin{proof}
We have $M\backslash |\overleftrightarrow{EM}|/M\cong |BM|$. The result now follows from Theorem~\ref{t:uniqueness.bi} and Proposition~\ref{p:preservation.hom.equiv}.
\end{proof}

Another important definition for this paper is the following.  A
monoid $M$ is of type \emph{bi-$\F_n$} if there is a bi-equivariant
classifying space $X$ for $M$ such that $X_n$ is
$M\times M^{op}$-finite, i.e., $M\backslash X/M$ has finite
$n$-skeleton.  We say that $M$ is of type \emph{bi-$\F_{\infty}$} if
$M$ has a bi-equivariant classifying space $X$ that is of
$M\times M^{op}$-finite type, i.e., $M\backslash X/M$ is of finite
type.
Clearly by making use of the canonical two-sided classifying space
$|\EMb|$ we can immediately conclude that any finite monoid is of type
bi-$\Finfty$.

%
%

Recall that a monoid $M$ is said to be of type \emph{bi-$\FPn$} if
there is a partial resolution of the $(\mathbb ZM,\mathbb ZM)$-bimodule $\mathbb ZM$
\[
A_n \rightarrow A_{n-1} \rightarrow \cdots \rightarrow A_1 \rightarrow
A_0 \rightarrow \mathbb ZM \rightarrow 0
\]
where $A_0, A_1, \ldots, A_n$ are finitely generated projective
$(\mathbb{Z}M, \mathbb{Z}M)$-bimodules. Monoids of type bi-$\FPn$ were
studied by Kobayashi and Otto in \cite{KobayashiOtto2001}. We note
that this differs from the definition of bi-$\FPn$ considered in
\cite{AlonsoHermiller2003}, which is called \emph{weak bi-$\FPn$} by
Pride in \cite{Pride2006} where it is shown to be equivalent to being
simultaneously of type left- and right-$\FPn$. In this paper by
bi-$\FPn$ we shall always mean bi-$\FPn$ in the above sense of Kobayashi and
Otto. The property bi-$\FPn$ is of interest because of its connections
with the study of Hochschild cohomology
\cite[Chapter~9]{Weibel1994}. Kobayashi investigated bi-$\FPn$ in
\cite{Kobayashi2005, Kobayashi2007, Kobayashi2010} proving, in
particular, that any monoid which admits a presentation by a finite
complete rewriting system is of type bi-$\FPn$. This has applications
for the computation of Hochschild cohomology. We shall recover this
theorem of Kobayashi below 
in Section~\ref{sec_admitting}
as an application of our results on
equivariant discrete Morse theory and collapsing schemes. See also
\cite{Pasku2008} for further related results on bi-$\FPn$.

The following result relates bi-$\F_n$ with bi-$\FP_n$.

\begin{Prop}\label{p:basic.props.bi.fn}
Let $M$ be a monoid.
\begin{enumerate}
\item For $0\leq n\leq \infty$, if $M$ is of type bi-$\F_n$, then it is of type bi-$\FP_n$.
\item If $M$ is of type bi-$\F_{\infty}$, then it is of type bi-$\F_n$ for all $n\geq 0$.
\item If $M$ is of type bi-$\F_n$ for $0\leq n\leq\infty$, then $M$ is of type left-$\F_n$ and type right-$\F_n$.
\item For $0\leq n\leq\infty$, a group is of type bi-$\F_n$ if and only if it is of type $\F_n$.
\end{enumerate}
\end{Prop}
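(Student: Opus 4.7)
Parts (1) and (2) are direct unpackings of the definitions.  For (1), if $X$ is a bi-equivariant classifying space with $X_n$ of $M\times M^{op}$-finite type, I would observe that its augmented cellular chain complex yields a projective $(\ZM,\ZM)$-bimodule resolution of $\ZM$: each chain module $C_q(X)\cong \Z P_q$ is a projective bimodule because $P_q$ is a projective $M\times M^{op}$-set, and is finitely generated for $q\leq n$; exactness in positive degrees comes from the contractibility of the components, together with $H_0(X)\cong \Z\pi_0(X)\cong \ZM$.  Part (2) is immediate from the definitions.

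For (3), the plan is to take a bi-equivariant classifying space $X$ with $X_n$ of $M\times M^{op}$-finite type and show that $X/M$ is a left equivariant classifying space for $M$ with $M$-finite $n$-skeleton, giving left-$\F_n$; the right-$\F_n$ case follows by symmetry via $M\backslash X$.  Corollary~\ref{c:one-sided} already provides projectivity of $X/M$ as a left $M$-CW complex, preservation of dimension, and finite type up to the $n$-skeleton.  For contractibility: $\pi_0(X/M)\cong M/M$ is a single weak orbit (via Proposition~\ref{p:components} and the fact that $1$ is a left divisor of every element of $M$), so $X/M$ is connected; and acyclicity in positive degrees follows by tensoring the bimodule resolution of part~(1) on the right by the trivial module $\Z$: since each $C_q(X)$ is a projective (hence flat) right $\ZM$-module, the complex $C_*(X)\otimes_{\ZM}\Z\cong C_*(X/M)$ remains exact.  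The remaining and hardest step is simple connectedness of $X/M$; here I would reduce, via Theorem~\ref{t:uniqueness.bi} together with item~(4) of Corollary~\ref{c:one-sided}, to the canonical case $X=|\EMb|$, and then use the projection functor $\overleftrightarrow{\Gamma(M)}\to \Gamma_r(M)$, $(m_L,m,m_R)\mapsto (m_L,m)$, whose fibres are exactly the right $M$-orbits on both objects and arrows, to obtain an $M$-equivariant identification of $|\EMb|/M$ with $|\EMr|$, which is contractible by the discussion in Section~\ref{sec_cspaces}.

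For (4), the implication $(\Rightarrow)$ follows from part~(3) combined with Proposition~\ref{p:basic.props.fn}(1).  For $(\Leftarrow)$, given a free $G$-equivariant classifying space $X$ with $X_n$ being $G$-finite, I would apply the base-change construction of Corollary~\ref{c:base.change.cw} along the $(G\times G^{op})$-$G$-biset $G\times G$ of Proposition~\ref{p:easy.tensor} to produce $Y=(G\times G)\otimes_G X$, which is projective as a $G\times G^{op}$-CW complex and of $(G\times G^{op})$-finite type up to dimension~$n$.  Using Proposition~\ref{p:components} together with the $(G\times G^{op})$-equivariant identification of right $G$-orbits on $G\times G$ with $G$ via $(g_1,g_2)\mapsto g_1g_2$, one finds $\pi_0(Y)\cong G$ as a $G\times G^{op}$-set; and because $G\times G$ is \emph{free} as a right $G$-set, the underlying space of $Y$ decomposes non-equivariantly into copies of $X$, so each component is contractible.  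Hence $Y$ is a bi-equivariant classifying space for $G$ of the required finiteness type.

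The main obstacle is the simple connectedness step in (3).  A direct Van Kampen computation of $\pi_1(X/M)$ is awkward because the quotient map $X\to X/M$ is not a covering in general (the right $M$-action is neither free nor properly discontinuous), which is why I route the argument through the uniqueness of bi-equivariant classifying spaces and the canonical model $|\EMb|$, where the identification with $|\EMr|$ is transparent from the projection of Cayley graph categories.
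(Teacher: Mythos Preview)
Your proof is correct and follows essentially the same approach as the paper for all four items. The only difference is a small redundancy in your treatment of item~(3): once you invoke Theorem~\ref{t:uniqueness.bi} and Corollary~\ref{c:one-sided}(4) to reduce to the canonical model and identify $|\EMb|/M\cong |\EMr|$, you obtain directly that $X/M\simeq_M |\EMr|$, which is contractible; the separate verifications of connectedness and acyclicity (via flatness) are therefore unnecessary. The paper takes exactly this shorter route, deducing contractibility of $X/M$ in one stroke from the $M$-homotopy equivalence with $|\EMr|$. Your flatness argument for acyclicity is valid, just superfluous here.
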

\begin{proof}
The first item follows using that the  cellular chain complex of a bi-equivariant classying space $X$ can be augmented, as discussed earlier, to give a bimodule resolution of $\mathbb ZM$ and that if $X$ is built up from pushouts as per \eqref{eq:pushout} (with $M\times M^{op}$ in place of $M$), then the $n^{th}$-chain module is isomorphic to $\mathbb ZP_n$ as a bimodule and hence is projective.  The second item is trivial.

For the third item, one verifies that if $\overleftrightarrow{EM}$ is the two-sided bar construction, then $\overleftarrow{EM}\cong \overleftrightarrow{EM}/M$ where $\overleftarrow{EM}$ is the left bar construction.  Suppose now that $X$ is a bi-equivariant classifying space for $M$ such that $X_n$ is $M\times M^{op}$-finite.  Then $X\simeq_{M\times M^{op}} \overleftrightarrow{EM}$ by Theorem~\ref{t:uniqueness.bi}.  Therefore, $X/M\simeq_M |\overleftrightarrow{EM}|/M=|\overleftarrow{EM}|$ and $X/M$ is a projective $M$-CW complex with $(X/M)_n=X_n/M$ being $M$-finite by Corollary~\ref{c:one-sided}.  Thus if $M$ is of type bi-$\F_n$ for $0\leq n\leq \infty$, then $M$ is of type left-$\F_n$ and dually right-$\F_n$.

If $G$ is a group of type bi-$\F_n$, then it is of type $\F_n$ by the previous item and Proposition~\ref{p:basic.props.fn}. Conversely, suppose that $X$ is a free $G$-CW complex with $G$-finite $n$-skeleton.  Then using the right $G$-set structure on $G\times G$ from Proposition~\ref{p:easy.tensor} we have that $Y=(G\times G)\otimes_G X$ is a projective $G\times G^{op}$-CW complex by Proposition~\ref{c:base.change.cw} such that $Y_n$ is $G\times G^{op}$-finite.  Moreover, $\pi_0(Y)\cong (G\times G)\otimes_G \pi_0(X)$ by Proposition~\ref{p:components}.  But $\pi_0(X)$ is the trivial $G$-set and $(G\times G)\otimes_G 1\cong G$ as a $G\times G^{op}$-set via $(g,h)\otimes 1\mapsto gh$.  Finally, since $G$ is a free right $G$-set of on $G$-generators by Proposition~\ref{p:easy.tensor}, it follows that as a topological space $Y=\coprod_G X$ and hence each component of $Y$ is contractible.  This completes the proof.
\end{proof}


The proof of Proposition~\ref{p:basic.props.bi.fn} establishes the following proposition.

\begin{Prop}\label{p:two.to.one}
If $X$ is a bi-equivariant classifying space for $M$, then $X/M$ is an equivariant classifying space for $M$.
\end{Prop}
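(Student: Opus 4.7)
The plan is to reduce everything to the canonical models $|\overleftrightarrow{EM}|$ and $|\overleftarrow{EM}|$ discussed in Section~\ref{sec_cspaces}, using the base-change machinery of Section~\ref{sec_base} together with the uniqueness theorem for bi-equivariant classifying spaces. First, I would invoke Corollary~\ref{c:one-sided}(1) to conclude that $X/M$ is a projective left $M$-CW complex, which takes care of the cellular/projectivity part of being a left equivariant classifying space. So the remaining task is to show that $X/M$ is contractible.

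To get contractibility, I would compare $X$ to the canonical bi-equivariant classifying space $|\overleftrightarrow{EM}|$. By Theorem~\ref{t:uniqueness.bi}, we have $X\simeq_{M\times M^{op}} |\overleftrightarrow{EM}|$ via a cellular $M\times M^{op}$-homotopy equivalence. Applying Corollary~\ref{c:one-sided}(4) (which says that $M\times M^{op}$-homotopy equivalences descend to $M$-homotopy equivalences after quotienting by the right $M$-action), we get $X/M\simeq_M |\overleftrightarrow{EM}|/M$. Then I would simply identify $|\overleftrightarrow{EM}|/M$ with $|\overleftarrow{EM}|$, as was already used inside the proof of Proposition~\ref{p:basic.props.bi.fn}; this follows because the nerve functor commutes with quotienting by the free right $M$-action on the two-sided Cayley graph category, yielding the left Cayley graph category.

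Finally, from Section~\ref{sec_cspaces} the space $|\overleftarrow{EM}|$ is a (free) left equivariant classifying space for $M$, and in particular is contractible. Since $X/M$ is $M$-homotopy equivalent to $|\overleftarrow{EM}|$, it is in particular homotopy equivalent to a contractible space, hence contractible. Combining this with the first paragraph, $X/M$ is a contractible projective left $M$-CW complex, i.e., a left equivariant classifying space for $M$.

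There is essentially no obstacle here: every ingredient has already been assembled in the paper. The only mild subtlety is making sure to use Corollary~\ref{c:one-sided} in the ``right'' orientation, namely quotienting by the right $M$-action to produce a left $M$-CW complex, and to verify (or invoke) the canonical identification $|\overleftrightarrow{EM}|/M\cong |\overleftarrow{EM}|$. Both points are routine from the explicit descriptions of the nerves in Section~\ref{sec_cspaces}.
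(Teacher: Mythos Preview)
Your proposal is correct and follows essentially the same argument as the paper: the paper's proof of this proposition is literally ``The proof of Proposition~\ref{p:basic.props.bi.fn} establishes the following proposition,'' and you have spelled out precisely that argument (Corollary~\ref{c:one-sided} for projectivity, Theorem~\ref{t:uniqueness.bi} for uniqueness, and the identification of $|\overleftrightarrow{EM}|/M$ with the one-sided bar construction). One minor notational slip, inherited from the paper itself: quotienting $\overleftrightarrow{EM}$ by the \emph{right} $M$-action yields the nerve of the \emph{right} Cayley graph category $\overrightarrow{EM}$ (a left $M$-space), not $\overleftarrow{EM}$; but since both are contractible this does not affect the argument.
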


Sometimes it will be convenient to use the following reformulation of the property bi-$\F_n$.

\begin{Prop}\label{p:reform.bi.fn}
Let $M$ be a monoid.  The following are equivalent for $0\leq n<\infty$.
\begin{enumerate}
  \item $M$ is of type bi-$\F_n$
  \item There is an $M\times M^{op}$-finite  projective $M\times M^{op}$-CW complex $X$ of dimension at most $n$ with $\pi_0(X)\cong M$ and  $\pi_q(X,x)=0$ for $1\leq q<n$ and $x\in X$.
\end{enumerate}
\end{Prop}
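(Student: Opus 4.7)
The plan is to mirror the proof of Proposition~\ref{p:reform.fn}, using the bi-version of the "improve" lemma (Lemma~\ref{l:improve.bi}) in place of Lemma~\ref{l:improve}, and being careful about the $M\times M^{op}$-equivariance of the identifications on $\pi_0$.

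For the forward direction, I would start with a bi-equivariant classifying space $Y$ for $M$ such that $Y_n$ is $M\times M^{op}$-finite (given by the definition of bi-$\F_n$), and simply take $X=Y_n$.  Since $X$ is built from $M\times M^{op}$-cells of dimension at most $n$, it is automatically a projective $M\times M^{op}$-CW complex of dimension at most $n$ that is $M\times M^{op}$-finite.  The standard fact that the inclusion of the $n$-skeleton is an $n$-equivalence (applied to each connected component) gives $\pi_q(X,x)\cong \pi_q(Y,x)=0$ for $1\leq q<n$, and a bijection $\pi_0(X)\to \pi_0(Y)\cong M$.  Because this bijection is induced by the $M\times M^{op}$-equivariant inclusion $X\hookrightarrow Y$, it is an isomorphism of $M\times M^{op}$-sets, giving $\pi_0(X)\cong M$ as required.

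For the converse, starting from an $X$ as in item (2), I would apply Lemma~\ref{l:improve.bi} to produce a projective $M\times M^{op}$-CW complex $Y$ with contractible connected components which contains $X$ as a projective $M\times M^{op}$-CW subcomplex and satisfies $Y_n=X_n=X$ (the last equality uses $\dim X\leq n$).  It then remains to check that $Y$ is a bi-equivariant classifying space, i.e., that $\pi_0(Y)\cong M$ as an $M\times M^{op}$-set.  Since $Y_n\hookrightarrow Y$ is an $n$-equivalence (and $n\geq 0$ guarantees we get at least a bijection on $\pi_0$), the $M\times M^{op}$-equivariant inclusion $X=Y_n\hookrightarrow Y$ induces an $M\times M^{op}$-equivariant bijection $\pi_0(X)\to \pi_0(Y)$, which transports the given isomorphism $\pi_0(X)\cong M$ to $\pi_0(Y)\cong M$.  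Then $Y$ witnesses that $M$ is of type bi-$\F_n$, since $Y_n=X$ is $M\times M^{op}$-finite.

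The main obstacle, such as it is, lies in verifying the equivariance on $\pi_0$ in both directions and in trusting that the construction in Lemma~\ref{l:improve.bi}(2) preserves the component structure (which it does because it only attaches cells of dimension $\geq n+1\geq 1$ inside each existing component, never merging or splitting components).  There are no essential new ideas beyond the one-sided case; the argument is a routine bilateral adaptation once one has Lemma~\ref{l:improve.bi} and Proposition~\ref{p:components} available.
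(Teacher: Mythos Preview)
Your proposal is correct and follows exactly the approach the paper indicates: the paper's own proof is simply ``This is entirely analogous to the proof of Proposition~\ref{p:reform.fn},'' and you have written out precisely that analogy, taking $X=Y_n$ in one direction and invoking Lemma~\ref{l:improve.bi} in the other. Your added remarks about the $M\times M^{op}$-equivariance of the induced maps on $\pi_0$ are a welcome clarification that the paper leaves implicit.
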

\begin{proof}
This is entirely analogous to the proof of Proposition~\ref{p:reform.fn}.
\end{proof}

If $M$ is a monoid and $A\subseteq M$, then the \emph{two-sided Cayley digraph} $\overleftrightarrow{\Gamma(M,A)}$ is the digraph with vertex set $M\times M$ and with edges in  bijection with elements of $M\times M\times A$.  The directed edge $(m_L,m_R,a)$ goes from $(m_L,am_R)$ to $(m_La,m_R)$ and we draw it as
\[(m_L,am_R)\xrightarrow{\,\,a\,\,} (m_La,m_R).\]  Note that $\overleftrightarrow{\Gamma(M,A)}$ is a free $M\times M^{op}$-graph and is $M\times M^{op}$-finite if and only if $A$ is finite.  Also note that if $(m_1,m_2)$ is connected to $(m_1',m_2')$ by an edge, then $m_1m_2=m_1'm_2'$.  Hence multiplication of the coordinates of a vertex induces a surjective $M\times M^{op}$-equivariant mapping $\pi_0(\overleftrightarrow{\Gamma(M,A)})\to M$.  If $A$ is a generating set for $M$, then the mapping is an isomorphism because if $m_1,m_2\in M$ and $u\in A^*$ is a word representing $m_1$, then there is a directed path labelled by $u$ from $(1,m)$ to $(m_1,m_2)$.  Namely, if $u=a_1\cdots a_k$ with $a_i\in A$, then the path labelled by $u$ from $(1,m)$ to $(m_1,m_2)$ is
\begin{equation}\label{eq:two.sided.path}
(1,m)\xrightarrow{\,\,a_1\,\,} (a_1,a_2\cdots a_km_2)\xrightarrow{\,\,a_2\,\,}(a_1a_2,a_3\cdots a_km_2)\xrightarrow{\,\,a_3\,\,}\cdots\xrightarrow{\,\,a_k\,\,}(m_1,m_2).
\end{equation}

A monoid $M$ is said to be \emph{dominated} by a subset $A$ if whenever $f,g\colon M\to N$ are monoid homomorphisms with $f|_A=g|_A$, one has $f=g$.  In other words, the inclusion $\langle A\rangle\hookrightarrow M$ is an epimorphism (in the category theory sense). Of course, a generating set of $M$ dominates $M$.  Note that if $A$ is a subset of an inverse monoid $M$ (e.g., a group), then $A$ dominates $M$ if and only if $A$ generates $M$ as an inverse monoid. Hence $M$ is finitely generated if and only if $M$ is finitely dominated.  Kobayashi gives an example of an infinitely generated monoid that is finitely dominated. See~\cite{Kobayashi2007} for details.

\begin{Thm}\label{t:bi.f1}
The following are equivalent for a monoid $M$.
\begin{enumerate}
  \item $M$ is of type bi-$\F_1$.
  \item $M$ is of type bi-$\FP_1$.
  \item There is a finite subset $A\subseteq M$ such that the natural mapping $\pi_0(\overleftrightarrow{\Gamma(M,A)})\to M$ is an isomorphism.
  \item There is a finite subset $A\subseteq M$ that dominates $M$.
\end{enumerate}
In particular, any finitely generated monoid is of type bi-$\F_1$.
\end{Thm}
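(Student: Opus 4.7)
The plan is to prove the cycle $(1)\Rightarrow (2)\Rightarrow (3)\Rightarrow (1)$ together with $(3)\Leftrightarrow (4)$. The implication $(1)\Rightarrow (2)$ is Proposition~\ref{p:basic.props.bi.fn}(1), while $(3)\Rightarrow (1)$ is a direct application of Proposition~\ref{p:reform.bi.fn}: the graph $X=\overleftrightarrow{\Gamma(M,A)}$ is a $1$-dimensional, $M\times M^{op}$-finite, free $M\times M^{op}$-CW complex whose $\pi_0$ is $M$ by hypothesis. The final sentence of the theorem then follows at once, since any finite generating set of $M$ dominates $M$ trivially and hence $M$ satisfies~(4), whence (1).

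The substantive implication is $(2)\Rightarrow (3)$, for which I would mirror the proof of the corresponding implication in Theorem~\ref{t:f1}. Let $\mu\colon \mathbb{Z}M\otimes \mathbb{Z}M\to \mathbb{Z}M$ be multiplication, a map of $(\mathbb{Z}M,\mathbb{Z}M)$-bimodules; its kernel $I$ is the two-sided analogue of the augmentation ideal. A Schanuel-style comparison of the free rank-one partial resolution $\mathbb{Z}M\otimes \mathbb{Z}M\xrightarrow{\mu}\mathbb{Z}M\to 0$ with any partial projective bimodule resolution witnessing bi-$\FP_1$ shows that $I$ is finitely generated as a bimodule. Since $I$ is always generated as a bimodule by $\{m\otimes 1-1\otimes m : m\in M\}$, some finite $A\subseteq M$ already suffices. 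For this $A$, the augmented cellular chain complex of $\overleftrightarrow{\Gamma(M,A)}$ yields an exact sequence
\[
\bigoplus_{a\in A}\mathbb{Z}M\otimes \mathbb{Z}M \xrightarrow{\partial} \mathbb{Z}M\otimes \mathbb{Z}M \xrightarrow{\eta} \mathbb{Z}\pi_0(\overleftrightarrow{\Gamma(M,A)}) \to 0
\]
of bimodules, where $\partial$ sends the $a$-th generator to $a\otimes 1-1\otimes a$. Thus $\ker\eta = I = \ker\mu$, forcing the canonical $M\times M^{op}$-equivariant surjection $\pi_0(\overleftrightarrow{\Gamma(M,A)})\to M$ to be a bijection.

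For $(3)\Leftrightarrow (4)$, I would identify $\pi_0(\overleftrightarrow{\Gamma(M,A)})$ with the monoid tensor product $M\otimes_{\langle A\rangle} M$: an edge of $\overleftrightarrow{\Gamma(M,A)}$ imposes exactly the relation $(ma,m')\sim (m,am')$ with $a\in A$, and since $A$ generates $\langle A\rangle$ this produces the same equivalence relation as allowing arbitrary $n\in \langle A\rangle$. Under this identification the natural map of (3) becomes the multiplication $M\otimes_{\langle A\rangle}M\to M$, and the classical characterisation of epimorphisms of monoids (a submonoid inclusion $N\hookrightarrow M$ is an epimorphism in the category of monoids iff $M\otimes_N M\to M$ is bijective, equivalently iff $m\otimes 1=1\otimes m$ in $M\otimes_N M$ for every $m\in M$) gives the equivalence of (3) and (4). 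The only genuinely delicate step is the opening Schanuel argument in $(2)\Rightarrow (3)$: one must verify that the abstract finiteness condition built into bi-$\FP_1$ really transfers to the specific bimodule $\ker\mu$, and although this is a routine bimodule-theoretic exercise, it is where the heart of the implication sits.
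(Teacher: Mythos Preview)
Your proof is correct and follows essentially the same route as the paper: the implications $(1)\Rightarrow(2)$, $(3)\Rightarrow(1)$, and $(2)\Rightarrow(3)$ are handled identically (the paper also uses the multiplication map $\mu$, observes $\ker\mu$ is generated by the $m\otimes 1-1\otimes m$, extracts a finite $A$, and compares $\ker\mu$ with the kernel of the map to $\mathbb{Z}\pi_0$). For $(3)\Leftrightarrow(4)$ the paper simply cites Kobayashi together with Isbell's zig-zag lemma rather than unpacking it via $M\otimes_{\langle A\rangle}M$ as you do, but your tensor-product formulation is exactly the content of that lemma, so the arguments are the same in substance.
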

\begin{proof}
The equivalence of (2) and (4) was established by Kobayashi~\cite{Kobayashi2007} using Isbel's zig-zag lemma (actually, the equivalence of (3) and (4) is also direct from Isbel's zig-zag lemma).

Assume that (3) holds.  Then $\overleftrightarrow{\Gamma(M,A)}$ is $M\times M^{op}$-finite and so $M$ is of type bi-$\F_1$ by Proposition~\ref{p:reform.bi.fn}.  Proposition~\ref{p:basic.props.bi.fn} shows that (1) implies (2).

Assume that $M$ is of type bi-$\FP_1$.  Then we have a partial free resolution
\[\mathbb ZM\otimes \mathbb ZM\xrightarrow{\,\,\mu\,\,} \mathbb ZM\longrightarrow 0\] of finite type where $\mu$ is induced by the multiplication in $\mathbb ZM$.  Since $M$ is of type bi-$\FP_1$, $\ker \mu$ is finitely generated.  It is well known that $\ker \mu$ is generated as a $\mathbb ZM\otimes \mathbb ZM^{op}$-module by the elements $m\otimes 1-1\otimes m$ with $m\in M$.  Indeed, if $\sum c_im_i\otimes n_i$ is in $\ker \mu$, then $\sum c_im_in_i=0$ and so
\begin{align*}
\sum c_im_i\otimes n_i &= \sum c_im_i\otimes n_i-\sum c_i(1\otimes m_in_i)\\ &= \sum c_i(m_i\otimes n_i-1\otimes m_in_i)\\ & = \sum c_i(m_i\otimes 1-1\otimes m_i)n_i.
\end{align*}
 Hence there is a finite subset $A\subseteq M$ such that $\ker\mu$ is generated by the elements $a\otimes 1-1\otimes a$ with $a\in A$.  We claim that the natural surjective mapping $\pi_0(\overleftrightarrow{\Gamma(M,A)})\to M$ is an isomorphism.

Identifying $\mathbb Z[M\times M^{op}]$ with $\mathbb ZM\otimes \mathbb ZM^{op}$ as rings and  $\mathbb Z[M\times M]$ with $\mathbb ZM\otimes \mathbb ZM$ as bimodules, we have a bimodule homomorphism \[\lambda\colon \mathbb Z[M\times M]\to \mathbb Z\pi_0(\overleftrightarrow{\Gamma(M,A)})\] sending $(m_L,m_R)$ to its connected component in  $\overleftrightarrow{\Gamma(M,A)}$ and $\mu$ factors as $\lambda$ followed by the natural mapping \[\mathbb Z\pi_0(\overleftrightarrow{\Gamma(M,A)})\to \mathbb ZM.\] Clearly, $\ker \lambda\subseteq \ker \mu$ and so to prove the result it suffices to show that $\ker \mu\subseteq \ker \lambda$. Now $\ker \mu$ is generated by the elements $(1,a)-(a,1)$ with $a\in A$ under our identifications.  But $(1,1,a)$ is an edge from $(1,a)$ to $(a,1)$.  Thus $(1,a)-(a,1)\in \ker \lambda$ for all $a\in A$.  This establishes that (2) implies (3), thereby completing the proof.
\end{proof}

If $G$ is a group, then it follows from Theorem~\ref{t:bi.f1} that $G\cup \{0\}$ is of type bi-$\F_1$ if and only if $G$ is finitely generated.  Indeed, $G\cup \{0\}$ is an inverse monoid and hence finitely dominated if and only if finitely generated.  But $G\cup \{0\}$ is finitely generated if and only if $G$ is finitely generated.
On the other hand, $G\cup \{0\}$ is both of type left- and  right-$\F_{\infty}$ for any group $G$ by Corollary~\ref{c:right.zero}.  Thus bi-$\F_n$ is a much stronger notion.

\begin{Rmk}
It can be shown that if $M$ is a monoid and $M^0$ is the result of adjoining a $0$ to $M$, then if $M^0$ is of type bi-$\F_n$, then $M$ is of type bi-$\F_n$.  The idea is that if $X$ is a bi-equivariant classifying space for $M^0$, then the union $Y$ of components of $X$ corresponding to elements of $M$ is a bi-equivariant classifying space for $M$ and $Y_n$ will be $M\times M^{op}$-finite if $X_n$ is $M^0\times (M^0)^{op}$-finite. More generally, if $T$ is a submonoid of $M$ such that $M \setminus T$ is an ideal, then $M$ being of type bi-$\Fn$ implies $T$ is also of type bi-$\Fn$.
\end{Rmk}

Next we show that finitely presented monoids are of type bi-$\F_2$.  The proof is similar to the proof of Theorem~\ref{t:f2}, which is in fact a consequence.

\begin{Thm}\label{t:bi.f2}
Let $M$ be a finitely presented monoid.  Then $M$ is of type bi-$\F_2$.
\end{Thm}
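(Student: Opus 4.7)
The plan is to adapt the proof of Theorem~\ref{t:f2}, using the two-sided Cayley digraph $\overleftrightarrow{\Gamma(M,A)}$ in place of the one-sided one and attaching free $M\times M^{op}$-cells instead of free $M$-cells. Suppose $M$ is generated by a finite set $A$ subject to defining relations $u_1=v_1,\ldots,u_n=v_n$, and let $m_i\in M$ denote the common image of $u_i$ and $v_i$. First I would build a free $M\times M^{op}$-CW complex $X$ of dimension at most $2$ with $1$-skeleton $\overleftrightarrow{\Gamma(M,A)}$ by attaching, for each relation $u_i=v_i$, a free $M\times M^{op}$-$2$-cell: take $p_i,q_i$ to be the directed paths from $(1,m_i)$ to $(m_i,1)$ labelled by $u_i,v_i$ respectively (as in \eqref{eq:two.sided.path}), glue in a disc $d_i$ with boundary $p_iq_i^{-1}$ based at $(1,m_i)$, and freely extend to $M\times M\times B^2$ via Proposition~\ref{p:cell-maps} by $(m_L,m_R,z)\mapsto m_L\cdot d_i(z)\cdot m_R$. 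The resulting $X$ is $M\times M^{op}$-finite, and by Theorem~\ref{t:bi.f1} combined with the fact that attaching $2$-cells preserves $\pi_0$, we have $\pi_0(X)\cong M$ as $M\times M^{op}$-sets.

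By Proposition~\ref{p:reform.bi.fn}, it now suffices to show that each connected component of $X$ is simply connected. Fix $m\in M$ and let $C$ be the component of $(1,m)$, whose vertex set is $\{(m_L,m_R):m_Lm_R=m\}$ by \eqref{eq:two.sided.path}. Let $T$ be a directed spanning tree of $C\cap\overleftrightarrow{\Gamma(M,A)}$ rooted at $(1,m)$, which exists since \eqref{eq:two.sided.path} shows that $(1,m)$ reaches every vertex of $C$ by a directed path (for instance, $T$ may come from a prefix-closed normal form system). Each generator of $\pi_1(C,(1,m))$ has the form $peq^{-1}$ for a directed edge $e=(m_L,m_R,a)\colon(m_L,am_R)\to(m_La,m_R)$ outside $T$, with $p,q$ the directed tree paths to the endpoints of $e$. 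Appending to both $pe$ and $q$ a common directed path $s$ labelled by a word $w$ representing $m_R$ in $M$ and running from $(m_La,m_R)$ to $(m,1)$ via the pattern of \eqref{eq:two.sided.path}, we have $peq^{-1}\simeq(pes)(qs)^{-1}$, where $pes$ and $qs$ are paths in $C$ from $(1,m)$ to $(m,1)$ whose labels both represent $m$ in $M$. It thus suffices to show: for any words $x,y\in A^*$ both representing $m$, the loop at $(1,m)$ formed by the parallel paths from $(1,m)$ to $(m,1)$ labelled by $x$ and $y$ is null-homotopic in $C$.

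This I would establish by induction on the length of a derivation from $x$ to $y$ using the defining relations; the one-step case is $x=wu_iw'$, $y=wv_iw'$ for some $i$. If $m_L,m_R\in M$ are the images of $w,w'$, the two parallel paths share their $w$-prefix up to $(m_L,m_im_R)$ and their $w'$-suffix from $(m_Lm_i,m_R)$, while the intermediate sub-loop at $(m_L,m_im_R)$ with label $u_iv_i^{-1}$ is precisely the boundary of the $2$-cell $m_L\cdot d_i\cdot m_R$ and is therefore null-homotopic, completing the induction. The step requiring most care is the reduction of a generator $peq^{-1}$ to a loop of the form $xy^{-1}$ with endpoints at $(m,1)$: here one exploits the feature of the two-sided Cayley graph (absent in the one-sided case) that any vertex in $C$ admits a directed path to the common terminal vertex $(m,1)$.
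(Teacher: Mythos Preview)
Your proof is correct and follows essentially the same approach as the paper: both build the free $M\times M^{op}$-CW complex on $\overleftrightarrow{\Gamma(M,A)}$ by gluing in one free $2$-cell per defining relation, invoke Proposition~\ref{p:reform.bi.fn}, pick a directed spanning tree rooted at $(1,m)$ in each component, and kill the tree generators of $\pi_1$ by induction on derivation length.

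The only difference is in the reduction step for a generator $peq^{-1}$. You append a common tail $s$ to reach the terminal vertex $(m,1)$, so that both labels represent $m$ itself; the paper simply observes that if $p$ and $q$ have labels $u$ and $v$, then $ua=v$ already holds in $M$, and works with parallel paths from $(1,m)$ to the intermediate vertex $(m',m'')$. Your extension is valid but unnecessary: the feature you flag as ``requiring most care'' (existence of directed paths to $(m,1)$) is never invoked in the paper's argument, and the induction on derivation length goes through identically for paths ending at any $(m',m'')$ with $m'm''=m$.
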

\begin{proof}
Suppose that $M$ is generated by a finite set $A$ with defining relations $u_1=v_1,\ldots, u_n=v_n$.  We construct an $M\times M^{op}$-finite $2$-dimensional free $M\times M^{op}$-CW complex $X$ with $1$-skeleton the two-sided Cayley graph $\overleftrightarrow{\Gamma(M,A)}$ by attaching an $M\times M^{op}$-cell $M\times M\times B^2$ for each relation.  Suppose that $u_i$ and $v_i$ map to $m_i$ in $M$.  Let $p_i,q_i$ be the paths from $(1,m_i)$ to $(m_i,1)$ labelled by $u_i$ and $v_i$, respectively, cf.~\eqref{eq:two.sided.path}.  Then we glue in a disk $d_i$ with boundary path $p_iq_i^{-1}$ and glue in $M\times M\times B^2$ using Proposition~\ref{p:cell-maps} (so $\{(m_L,m_R)\}\times B^2$ is sent to $m_Ld_im_R$).  Then $X$ is a free $M\times M^{op}$-CW complex of dimension at most $2$ that is $M\times M^{op}$-finite and $\pi_0(X)\cong M$.  By Proposition~\ref{p:reform.fn}, it suffices to prove that each connected component of $X$ is simply connected.

The connected component $X(m)$ of $X$ corresponding to $m\in M$ is a digraph rooted at $(1,m)$ by \eqref{eq:two.sided.path}. Let $T_m$ be a directed spanning tree for $X(m)$ rooted at $(1,m)$.  Let $e=(n_1,an_2)\xrightarrow{\,\,a\,\,} (n_1a,n_2)$ be a directed edge of $X(m)$ not belonging to $T_m$.  Then the corresponding generator of $\pi_1(X(m),(1,m))$ is of the form $peq^{-1}$ where $p$ and $q$ are directed paths from $(1,m)$ to $(n_1,an_2)$ and $(n_1a,n_2)$, respectively.  Let $u$ be the label of $p$ and $v$ be the label of $q$.  Then $ua=v$ in $M$.  Thus it suffices to prove that if $x,y\in A^*$ are words which are equal in $M$ to $m'$ labelling respective paths from $(1,m)$ to $(m',m'')$ with $m'm''=m$, then the corresponding loop $\ell$ labelled $xy^{-1}$ at $(1,m)$ is null homotopic.

 By induction on the length of a derivation from $x$ to $y$, we may assume that $x=wu_iw'$ and $y=wv_iw'$ for some $i=1,\ldots, n$.  Then the  path labelled by $w$ starting at $(1,m)$ ends at $(w,m_iw'm'')$ where we recall that $m_i$ is the image of $u_i,v_i$ in $M$.  Then $wd_iw'm''$ is a $2$-cell bounded by parallel paths from $(w,m_iw'm'')$ to $(wm_i,w'm'')$ labeled by $u_i$ and $v_i$, respectively.  It follows that the paths labelled by $x$ and $y$ from $(1,m)$ to $(m',m'')$ are homotopic relative to endpoints and hence  $\ell$ is null homotopic.  This completes the proof that $X(m)$ is simply connected.
\end{proof}

\begin{remark}
We currently do not know the precise relationship between bi-$\F_2$ and finitely presentability for monoids.
Specifically we have the question:  Is there a finitely generated bi-$\F_2$ monoid that is not finitely presented? Even for inverse monoids this question remains open.
\end{remark}

We next observe that finitely generated free monoids are bi-$\F_{\infty}$.

\begin{Prop}\label{p:free}
Let $A$ be a finite set.  Then the free monoid $A^*$ is of type bi-$\F_{\infty}$.
\end{Prop}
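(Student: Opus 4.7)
The plan is to show that the two-sided Cayley graph $\overleftrightarrow{\Gamma(A^*,A)}$ (introduced just before Theorem~\ref{t:bi.f1}) is itself an $A^*\times (A^*)^{op}$-finite bi-equivariant classifying space for $A^*$, of dimension $1$. Since it is $A^*\times (A^*)^{op}$-finite, it is in particular of $A^*\times (A^*)^{op}$-finite type in every degree, so its existence will witness bi-$\F_\infty$ directly.

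First I would verify that $\overleftrightarrow{\Gamma(A^*,A)}$ is a $1$-dimensional free $A^*\times (A^*)^{op}$-CW complex that is $A^*\times (A^*)^{op}$-finite. The vertex set $A^*\times A^*$ is the free $A^*\times (A^*)^{op}$-set on the single generator $(1,1)$, since $(m_L,m_R)\cdot (1,1)=(m_L,m_R)$ and this action is obviously free. The edge set is in equivariant bijection with $A^*\times A^*\times A$, hence is the free $A^*\times (A^*)^{op}$-set on the finite basis $\{(1,1,a):a\in A\}$. Next, because $A$ generates $A^*$, Theorem~\ref{t:bi.f1} ensures that the natural $A^*\times (A^*)^{op}$-equivariant map $\pi_0(\overleftrightarrow{\Gamma(A^*,A)})\to A^*$ induced by multiplication of coordinates is an isomorphism; in particular the connected components are in bijection with the elements of $A^*$.

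The main step is to verify that each connected component is contractible. Fix $w=a_1\cdots a_k\in A^*$ with $a_i\in A$. Because $A^*$ is free, the only pairs $(u,v)\in A^*\times A^*$ with $uv=w$ are the $k+1$ prefix-suffix factorisations $(a_1\cdots a_i,a_{i+1}\cdots a_k)$, $0\leq i\leq k$. Likewise the only edges whose endpoints lie in this component are the directed edges
\[
(a_1\cdots a_{i-1},a_i\cdots a_k)\xrightarrow{\,\,a_i\,\,}(a_1\cdots a_i,a_{i+1}\cdots a_k),\qquad 1\leq i\leq k,
\]
so the component is a simple directed path of length $k$ and is therefore contractible. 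Combining these three ingredients, $\overleftrightarrow{\Gamma(A^*,A)}$ is a $1$-dimensional, $A^*\times (A^*)^{op}$-finite bi-equivariant classifying space for $A^*$, and so $A^*$ is of type bi-$\F_n$ for every $n\leq\infty$. The only mildly delicate point in this argument is the enumeration of the vertices and edges of a component, and this rests on cancellativity and unique factorisation in $A^*$.
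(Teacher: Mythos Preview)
Your proof is correct and follows essentially the same approach as the paper: both arguments show that $\overleftrightarrow{\Gamma(A^*,A)}$ is itself an $A^*\times (A^*)^{op}$-finite bi-equivariant classifying space by verifying that each connected component is contractible. The paper's proof simply asserts that each component is a tree, whereas you go further and identify each component explicitly as a simple directed path via unique factorisation in $A^*$; this extra detail is fine and arguably clearer. One minor remark: the fact that $\pi_0(\overleftrightarrow{\Gamma(A^*,A)})\cong A^*$ when $A$ generates $A^*$ is established in the discussion immediately preceding Theorem~\ref{t:bi.f1}, not in the theorem itself, so you could cite that passage directly.
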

\begin{proof}
Each connected component of $\overleftrightarrow{\Gamma(M,A)}$ is a tree and hence contractible.  Thus $\overleftrightarrow{\Gamma(M,A)}$ is an $A^*\times (A^*)^{op}$-finite bi-equivariant classifying space for $A^*$.
\end{proof}

Theorem~\ref{t:hom=top} has an analogue for bi-$\F_n$ and bi-$\FP_n$ with essentially the same proof, which we omit.

\begin{Thm}\label{t:hom=top.bi}
Let $M$ be a monoid of type bi-$\F_2$.  Then $M$ is of type bi-$\F_n$ if and only if $M$ is of type bi-$\FP_n$ for $0\leq n\leq \infty$.
\end{Thm}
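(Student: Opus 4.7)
The plan is to follow the same template as Theorem~\ref{t:hom=top}, adapted to the two-sided setting. First, note one direction is free: bi-$\F_n$ implies bi-$\FP_n$ by Proposition~\ref{p:basic.props.bi.fn}(1). For the converse, the strategy is an inductive construction: starting from a bi-$\F_2$ bi-equivariant classifying space skeleton and using bi-$\FP_n$ to attach the next layer of bi-cells.

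More precisely, I would prove the following inductive step and then apply Proposition~\ref{p:reform.bi.fn} together with Proposition~\ref{p:basic.props.bi.fn}(1) and induction: if $X$ is an $M\times M^{op}$-finite projective $M\times M^{op}$-CW complex of dimension at most $n$ (with $n\geq 2$) such that $\pi_0(X)\cong M$ as an $M\times M^{op}$-set and $\pi_q(X,x)=0$ for all $1\leq q<n$ and all $x\in X$, and $M$ is of type bi-$\FP_{n+1}$, then there is an $M\times M^{op}$-finite projective $M\times M^{op}$-CW complex $Y$ of dimension at most $n+1$ with $Y_n=X$, $\pi_0(Y)\cong M$, and $\pi_q(Y,y)=0$ for all $1\leq q\leq n$ and all $y\in Y$. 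The base case $n=2$ is furnished by the bi-$\F_2$ hypothesis.

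For the inductive step, augment the cellular chain complex of $X$ via the canonical map $C_0(X)\to H_0(X)\cong \mathbb ZM$ to obtain a partial projective bimodule resolution of $\mathbb ZM$ of length $n$ (each $C_i(X)$ is a finitely generated projective $(\mathbb ZM,\mathbb ZM)$-bimodule since $X$ is $M\times M^{op}$-finite, using the identification $C_i(X)\cong \mathbb ZP_i$ discussed after \eqref{eq:pushout}). Since $M$ is of type bi-$\FP_{n+1}$, a standard bimodule version of Schanuel's lemma forces $H_n(X)=\ker d_n$ to be finitely generated as an $(\mathbb ZM,\mathbb ZM)$-bimodule. Because each component of $X$ is simply connected, the Hurewicz theorem applied componentwise gives $\pi_n(X_c,x_c)\cong H_n(X_c)$ for every component $X_c$, and these isomorphisms assemble $M\times M^{op}$-equivariantly into $\bigoplus_c \pi_n(X_c,x_c)\cong H_n(X)$, since the $M\times M^{op}$-action permutes components according to its action on $\pi_0(X)\cong M$. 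Choose maps $f_a\colon S^n\to X$, $a\in A$ a finite set, whose Hurewicz images form a finite bimodule generating set for $H_n(X)$, and form $Y$ by attaching $M\times M\times A\times B^{n+1}$ via the attaching map $(m_L,m_R,a,x)\mapsto m_Lf_a(x)m_R$ (cf.~Proposition~\ref{p:cell-maps}). Then $Y$ is an $M\times M^{op}$-finite projective $M\times M^{op}$-CW complex of dimension $n+1$ with $Y_n=X$. The inclusion $X=Y_n\hookrightarrow Y$ is an $n$-equivalence, so $\pi_0(Y)\cong\pi_0(X)\cong M$ and $\pi_q(Y_c,y_c)=0$ for $1\leq q<n$ and each component $Y_c$. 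Naturality of Hurewicz in degree $n$ (valid on each simply connected component of both $X$ and $Y$) plus surjectivity of the inclusion on $\pi_n$ forces $H_n(Y)=0$, hence $\pi_n(Y,y)=0$ by Hurewicz, completing the inductive step.

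The main obstacle, which is essentially bookkeeping rather than conceptual, is tracking the componentwise Hurewicz arguments in the presence of the bimodule action: one must verify that the $M\times M^{op}$-equivariant structure on $H_n(X)$ arising from the chain complex agrees with the one assembled from the componentwise Hurewicz isomorphisms, and that a finite bimodule generating set of $H_n(X)$ does indeed produce, after attaching bimodule cells, a bimodule of boundaries that kills $H_n$. Everything else is parallel to the one-sided proof, replacing free $M$-cells $M\times B^{n+1}$ by free bi-$M$-cells $M\times M\times B^{n+1}$ and $\pi_n(X)$ by $\bigoplus_c\pi_n(X_c)$.
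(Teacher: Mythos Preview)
Your proposal is correct and follows exactly the approach the paper intends: the paper states that Theorem~\ref{t:hom=top.bi} ``has an essentially identical proof'' to Theorem~\ref{t:hom=top} and omits the details, and your argument is precisely that adaptation, replacing the single contractible space by the family of simply connected components indexed by $\pi_0(X)\cong M$ and free $M$-cells by free bi-$M$-cells. Your identification of the only genuinely new bookkeeping---the componentwise Hurewicz isomorphism assembling $M\times M^{op}$-equivariantly---is apt, and the rest is a straightforward transcription of the one-sided proof.
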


Observe that Theorem~\ref{t:hom=top.bi} implies that $M$ is of type bi-$\F_{\infty}$ if and only if $M$ is of type bi-$\F_n$ for all $n\geq 0$.  The analogue of Proposition~\ref{p:proj=free} in our setting again admits a very similar proof that we omit.

\begin{Prop}\label{t:proj=free.bi}
If $M$ is of type bi-$\F_n$ with $n\geq 1$, then $M$ has a free $M\times M^{op}$-CW complex $X$ that is a bi-equivariant classifying space for $M$ where $X_n$  is $M\times M^{op}$-finite.
\end{Prop}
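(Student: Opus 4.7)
The plan is to mimic the proof of Proposition~\ref{p:proj=free} throughout, replacing one-sided $M$-cells $Me\times B^q$ by bi-$M$-cells $Me\times e'M\times B^q$.

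The case $n=1$ is handled directly. By Theorem~\ref{t:bi.f1} there is a finite $A\subseteq M$ for which the two-sided Cayley digraph $\overleftrightarrow{\Gamma(M,A)}$ is a free, $M\times M^{op}$-finite bi-$M$-graph with $\pi_0\cong M$ as an $M\times M^{op}$-set. Iterating Lemma~\ref{l:improve.bi}(2) starting from this graph (which preserves freeness) yields a free bi-equivariant classifying space $X$ with $X_1=\overleftrightarrow{\Gamma(M,A)}$, which is $M\times M^{op}$-finite.

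For $n\geq 2$ we note that $M$ is a fortiori of type bi-$\F_2$ and, by Proposition~\ref{p:basic.props.bi.fn}, of type bi-$\FP_n$. The constructions inside Lemma~\ref{l:improve.bi} and the proof of Theorem~\ref{t:hom=top.bi} show that given any $M\times M^{op}$-finite \emph{free} bi-$M$-CW complex $Y$ of dimension at most $2$ satisfying $\pi_0(Y)\cong M$ and having simply connected components, one can inductively build a free bi-equivariant classifying space $X$ with $X_2=Y$ and $X_n$ still $M\times M^{op}$-finite; at each step the obstruction $\pi_q$ is, via the Hurewicz isomorphism and bi-$\FP_n$, a finitely generated $\mathbb ZM\otimes \mathbb ZM^{op}$-module, and one attaches finitely many free bi-$M$-cells of the next dimension to kill it.

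Thus the heart of the proof, and the main technical obstacle, is the bi-analogue of the final step of Proposition~\ref{p:proj=free}: starting from an $M\times M^{op}$-finite \emph{projective} bi-$M$-CW complex $Z$ of dimension at most $2$ with $\pi_0(Z)\cong M$ and simply connected components (available via bi-$\F_2$ and Proposition~\ref{p:reform.bi.fn}), produce a free such $Y$. Writing $Z_0=\coprod_{a\in A}Me_a\times e'_aM$ with $A$ finite and $e_a,e'_a\in E(M)$, set $Y_0=M\times A\times M$, a free $M\times M^{op}$-set containing $Z_0$ via $(me_a,e'_am')\mapsto (me_a,a,e'_am')$. Form $Y_1$ by keeping the edges of $Z_1$ and attaching, for each $a\in A$, one free bi-cell $M\times A\times M\times B^1$ whose attaching map sends $(m_L,a,m_R,0)\mapsto (m_L,a,m_R)$ and $(m_L,a,m_R,1)\mapsto (m_Le_a,a,e'_am_R)$; this joins every new vertex to a vertex of $Z_0$ and preserves $\pi_0\cong M$. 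Form $Y_2$ by keeping the $2$-cells of $Z$ and attaching, for each $a\in A$, a free bi-cell $M\times A\times M\times B^2$ whose attaching circle at $(m_L,a,m_R)$ traverses the new edge at $(m_Le_a,a,e'_am_R)$, which is genuinely a loop since $m_Le_a\in Me_a$ and $e'_am_R\in e'_aM$. A component-by-component spanning-tree analysis, parallel to the one carried out in Proposition~\ref{p:proj=free} but now applied inside each connected component of $Y$, shows that the only new edges outside a chosen spanning forest are precisely these loops, each of which has been made null-homotopic; hence each component of $Y$ has the same fundamental group as the corresponding component of $Z$, namely the trivial group, completing the argument.
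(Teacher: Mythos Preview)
Your proposal follows exactly the approach the paper indicates: the paper states that Proposition~\ref{t:proj=free.bi} ``admits a very similar proof'' to Proposition~\ref{p:proj=free} and omits the details, and your write-up supplies precisely that analogue. Your reduction to the two-dimensional case via Theorem~\ref{t:bi.f1}, Lemma~\ref{l:improve.bi}, and the mechanism behind Theorem~\ref{t:hom=top.bi} is correct and matches that outline.

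There is, however, a genuine gap in your construction of the free complex $Y$ from the projective complex $Z$. You form $Y_1$ by ``keeping the edges of $Z_1$'' and $Y_2$ by ``keeping the $2$-cells of $Z$'', but those inherited cells are \emph{projective} bi-$M$-cells (of shape $Me\times e'M\times B^k$), not free ones; hence $Y$ as described is not a free $M\times M^{op}$-CW complex. (The identical oversight appears verbatim in the paper's own proof of the one-sided Proposition~\ref{p:proj=free}, so you have faithfully reproduced the intended argument, gap and all.) The remedy is to free up the cells of $Z$ in dimensions $1$ and $2$ as well: replace each projective bi-cell $Me_b\times f_bM\times B^k$ of $Z$ by a free bi-cell $M\times M\times B^k$, extending the attaching map along the retraction $(m_L,m_R)\mapsto (m_Le_b,f_bm_R)$. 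The duplicated $2$-cells share boundaries with existing ones and do not affect $\pi_1$; the duplicated $1$-cells create bigons, which one kills by attaching one further finite family of free bi-$2$-cells per original $1$-cell index. After this, all cell-sets are free, $\pi_0$ is unchanged, and your component-by-component spanning-tree argument goes through as written.
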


Proposition~\ref{p:finiteness.dp} also has a two-sided analogue.

\begin{Prop}
If $M,N$ are of type bi-$\F_n$, then $M\times N$ is of type bi-$\F_n$.
\end{Prop}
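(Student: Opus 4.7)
The approach will be to mirror Proposition~\ref{p:direct.prod}, using products of bi-equivariant classifying spaces. The key observation is the natural isomorphism of monoids $(M\times N)\times (M\times N)^{op}\cong (M\times M^{op})\times (N\times N^{op})$, under which an $(M\times N)\times (M\times N)^{op}$-set is the same as an $(M\times M^{op})\times (N\times N^{op})$-set, with the action of $((m_L,n_L),(m_R,n_R))$ corresponding to that of $((m_L,m_R),(n_L,n_R))$.

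Let $X$ be a bi-equivariant classifying space for $M$ and $Y$ a bi-equivariant classifying space for $N$. Using the remark preceding Proposition~\ref{p:direct.prod} that $E(M\times M^{op}) = E(M)\times E(M^{op})$ and $(e,e')M\times M^{op}\cong eM\times M e'$, together with the analogous identifications for $N\times N^{op}$ and $(M\times N)\times (M\times N)^{op}$, one checks that if $X$ is built from projective $M\times M^{op}$-cells $P_n\times B^n$ and $Y$ is built from projective $N\times N^{op}$-cells $Q_n\times B^n$, then $X\times Y$ is built from projective $(M\times N)\times (M\times N)^{op}$-cells of the form $\coprod_{i=0}^n P_i\times Q_{n-i}\times B^n$. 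Hence $X\times Y$ is a projective $(M\times N)\times (M\times N)^{op}$-CW complex, and moreover if $X$ is $M\times M^{op}$-finite through dimension $n$ and $Y$ is $N\times N^{op}$-finite through dimension $n$, then $X\times Y$ is $(M\times N)\times (M\times N)^{op}$-finite through dimension $n$.

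It remains to verify the two defining conditions of a bi-equivariant classifying space. First, each connected component of $X\times Y$ is a product of a connected component of $X$ with a connected component of $Y$, both contractible by hypothesis, hence contractible. Second, using Proposition~\ref{p:components} (or its evident version for products), $\pi_0(X\times Y)\cong \pi_0(X)\times \pi_0(Y)\cong M\times N$, and under the isomorphism of acting monoids above this is an isomorphism of $(M\times N)\times (M\times N)^{op}$-sets because the left and right actions factor componentwise.

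The argument is essentially routine once the bookkeeping between the two presentations of the acting monoid is set up; the only place requiring a little care is checking that the product of a projective $M\times M^{op}$-cell and a projective $N\times N^{op}$-cell really is a projective $(M\times N)\times (M\times N)^{op}$-cell, which follows from the identification of idempotent-generated $M\times M^{op}$-sets and $N\times N^{op}$-sets with the corresponding $(M\times N)\times (M\times N)^{op}$-sets via $(e_1,e_1')\times (e_2,e_2')\leftrightarrow ((e_1,e_2),(e_1',e_2'))$.
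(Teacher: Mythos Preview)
Your proof is correct and follows exactly the approach the paper intends: the paper states this proposition without proof, merely noting that it is the two-sided analogue of Proposition~\ref{p:finiteness.dp} (which in turn is immediate from Proposition~\ref{p:direct.prod}). You have supplied precisely the details one needs to carry the one-sided argument over---the identification $(M\times N)\times (M\times N)^{op}\cong (M\times M^{op})\times (N\times N^{op})$, the verification that $\pi_0(X\times Y)\cong M\times N$ as bisets, and the contractibility of components---so if anything your write-up is more complete than the paper's.
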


Let us turn to some inheritance properties for bi-$\F_n$. If $I$ is an ideal of $M$ containing an identity $e$, then $e$ is a central idempotent and $MeM=Me=eM=eMe$.  Indeed, $em=(em)e=e(me)=me$ as $em,me\in I$.
If $f,f'\in E(M)$, then $fe,f'e\in E(eMe)$ and $e(Mf\times f'M)e =eMefe\times f'eMe$ as an $eMe$-$eMe$-biset and hence is finitely generated projective. Thus if $P$ is a (finitely generated) projective $M\times M^{op}$-set, then $ePe$ is a (finitely generated) projective $eMe\times eMe^{op}$-set.

\begin{Prop}\label{p:central.idem}
Let $M$ be a monoid and $0\leq n\leq \infty$ and $e\in E(M)$ be a central idempotent.  If $M$ is of type bi-$\F_n$, then so is $eMe$.
\end{Prop}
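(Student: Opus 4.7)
My approach is via a base-change argument. Since $e$ is central in $M$, the element $(e,e)$ is a central idempotent of $M\times M^{op}$; let $A = (e,e)(M\times M^{op})\subseteq M\times M^{op}$, which as a set equals $eM\times Me$. It is an $eMe\times eMe^{op}$-$M\times M^{op}$-biset with both actions given by multiplication in $M\times M^{op}$, and using $Me=eM=eMe$ (from centrality) one checks that $A$ is free of rank one as a left $eMe\times eMe^{op}$-set on the basis element $(e,e)$: indeed each element of $A$ can be written uniquely as $(em_L,em_R) = (em_L,em_R)\cdot(e,e)$ with $(em_L,em_R)\in eMe\times eMe^{op}$.

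Let $X$ be a bi-equivariant classifying space for $M$ with $X_n$ being $M\times M^{op}$-finite, and set $Y = A\otimes_{M\times M^{op}} X$. By Corollary~\ref{c:base.change.cw}, $Y$ is a projective $eMe\times eMe^{op}$-CW complex and $Y_n$ is $eMe\times eMe^{op}$-finite (since $A$ is finitely generated as an $eMe\times eMe^{op}$-set). Using Proposition~\ref{p:components} together with $\pi_0(X)\cong M$ as $M\times M^{op}$-sets, we obtain
\[
\pi_0(Y)\cong A\otimes_{M\times M^{op}}\pi_0(X)\cong A\otimes_{M\times M^{op}} M \cong (e,e)M = eMe
\]
as $eMe\times eMe^{op}$-sets.

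The remaining content, and the main obstacle, is to verify that each connected component of $Y$ is contractible. For this I would identify $Y$ with the subspace $(e,e)X\subseteq X$ via the $eMe\times eMe^{op}$-equivariant homeomorphism $(e,e)(m_L,m_R)\otimes x \longmapsto (e,e)(m_L,m_R)\cdot x$, whose inverse is $y\mapsto (e,e)\otimes y$. Under this identification the map $r\colon X\to Y$ defined by $r(x)=(e,e)\cdot x$ is a continuous $eMe\times eMe^{op}$-equivariant retraction onto $Y$. Any connected component $C$ of $Y$ is then a retract of the component $C'$ of $X$ containing it (via $r|_{C'}$, which is well-defined and continuous), and since $C'$ is contractible by hypothesis, so is $C$. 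This shows that $Y$ is a bi-equivariant classifying space for $eMe$ whose $n$-skeleton is $eMe\times eMe^{op}$-finite, so $eMe$ is of type bi-$\F_n$.
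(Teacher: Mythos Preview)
Your proof is correct and follows essentially the same route as the paper's. The paper works directly with the subspace $eXe\subseteq X$, checks by hand (via the observation preceding the proposition) that each $eP_ke$ is a finitely generated projective $eMe\times eMe^{op}$-set, and then invokes the identification $eXe\cong (eM\times Me)\otimes_{M\times M^{op}}X$ to compute $\pi_0$; you instead define $Y$ as the tensor product, appeal to Corollary~\ref{c:base.change.cw} (using your observation that $A$ is free of rank one as a left $eMe\times eMe^{op}$-set), and only afterwards identify $Y$ with $(e,e)X$ for the retract argument---but the substance is the same, including the contractibility-of-components step via retraction onto $eX(m)e$.
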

\begin{proof}
Let $X$ be a bi-equivariant classifying space of $M$ such that $X_n$ is $M\times M^{op}$-finite.  Suppose that $X$ is obtained via pushouts as per \eqref{eq:pushout} (but with $M\times M^{op}$ in place of $M$).  Then each $eP_ke$ is a projective $eMe\times eMe^{op}$-set and is finitely generated whenever $P_k$ was finitely generated by the observation preceding the proposition.  Thus $eXe$ is a projective $eMe\times eMe^{op}$-CW complex and $(eXe)_n=eX_ne$ is $eMe\times eMe^{op}$-finite.  Also, since $eXe\cong (eM\times Me)\otimes_{M\times M^{op}} X$, we deduce that $\pi_0(eXe)\cong e\pi_0(X)e\cong eMe$ by Proposition~\ref{p:components}.  If $X(m)$ is the component of $X$ corresponding to $m\in eMe$, then $eX(m)e$ is the component of $eXe$ corresponding to $m$ in $eXe$ and is a retract of $X(m)$.  But $X(m)$ is contractible and hence $eX(m)e$ is contractible.  This shows that $eXe$ is a bi-equivariant classifying space for $eMe$, completing the proof.
\end{proof}

Two monoids $M$ and $N$ are \emph{Morita equivalent} if the categories of left $M$-sets and left $N$-sets are equivalent.  It is known that this is the case if and only if there is an idempotent $e\in E(M)$ such that $xy=1$ for some $x,y\in M$ with $ey=y$ and $eMe\cong N$~\cite{Knauer1971}.  It follows easily that if $M$ and $N$ are Morita equivalent, then so are $M^{op}$ and $N^{op}$. Note that if $e$ is as above, then the functor $A\mapsto eA\cong eM\otimes_M A$ from $M$-sets to $N$-sets (identifying $N$ with $eMe$) is an equivalence of categories with inverse $B\mapsto Me\otimes_N B$.  This uses that $Me\otimes_{eMe}eM\cong M$ as $M\times M^{op}$-sets via the multiplication map (the inverse bijection takes $m\in M$ to $mxe\otimes y$)  and $eM\otimes_M Me\cong eMe$  as $eMe\times (eMe)^{op}$-sets (via the multiplication with inverse $eme\mapsto em\otimes e$).  It follows that if $P$ is a (finitely generated) projective $M$-set, then $eP$ is a (finitely generated) projective $N$-set (as being projective is categorical and a projective is finitely generated if and only if it is a coproduct of finitely many indecomposable projectives, which is also categorical). In particular, $eM$ is a finitely generated projective $N$-set.

\begin{Prop}\label{p:Morita.equiv}
Let $M$ and $N$ be Morita equivalent monoids and $0\leq n\leq \infty$.
\begin{enumerate}
  \item $M$ is of type left-$\F_n$ if and only if $N$ is of type left-$\F_n$.
  \item $M$ is of type right-$\F_n$ if and only if $N$ is of type right-$\F_n$.
  \item $M$ is of type bi-$\F_n$ if and only if $N$ is of type bi-$\F_n$.
\end{enumerate}
\end{Prop}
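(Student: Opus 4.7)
The plan is to use the functors $A\mapsto eA$ (from left $M$-sets to left $N$-sets) and $A\mapsto eAe$ (from $M$-$M$-bisets to $N$-$N$-bisets) to carry classifying spaces from $M$ to $N=eMe$, exploiting the symmetry of Morita equivalence to obtain the converses for free. In each of the three cases the one-point retract argument used in Proposition~\ref{p:central.idem} together with the base-change results of Section~\ref{sec_base} will do essentially all of the work.

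For the left-$\F_n$ case, suppose $X$ is an equivariant classifying space for $M$ with $X_n$ being $M$-finite. Since $eM$ is a finitely generated projective $N$-$M$-biset, as observed in the paragraph preceding the proposition, Corollary~\ref{c:base.change.cw} shows that $eM\otimes_M X\cong eX$ is a projective $N$-CW complex with $(eX)_n=eX_n$ being $N$-finite. Multiplication by $e$ provides a continuous retraction $X\to eX$, so $eX$ inherits contractibility from $X$ and is thus an equivariant classifying space for $N$, giving that $N$ is of type left-$\F_n$. The same argument with the roles of $M$ and $N$ interchanged (via the inverse biset $Me$) yields the converse, and the right-$\F_n$ statement follows by passing to $M^{op}$ and $N^{op}$, which are themselves Morita equivalent.

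For the bi-$\F_n$ case, given a bi-equivariant classifying space $X$ for $M$ with $X_n$ being $M\times M^{op}$-finite, the analogous candidate for $N$ is $eXe$. Identifying $eXe\cong (eM\times Me)\otimes_{M\times M^{op}} X$, the base-change formalism shows that $eXe$ is a projective $N\times N^{op}$-CW complex with $(eXe)_n$ being $N\times N^{op}$-finite; Proposition~\ref{p:components} together with the same tensor identification gives $\pi_0(eXe)\cong e\pi_0(X)e=eMe=N$ as $N\times N^{op}$-sets; and for each $m\in N$ the component of $eXe$ corresponding to $m$ is $eX(m)e$, a retract of the contractible component $X(m)\subseteq X$ via the left- and right-$e$-actions. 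Thus $eXe$ is a bi-equivariant classifying space for $N$, so $N$ is of type bi-$\F_n$.

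The main obstacle is the bi-$\F_n$ case, specifically showing that $eP_k e$ is a finitely generated projective $N\times N^{op}$-set for each of the projective $M\times M^{op}$-sets $P_k$ used to build $X$, when $e$ is a general (non-central) Morita idempotent. One can either reduce to the case $P_k=Mf_1\times f_2 M$ with $f_1,f_2\in E(M)$ and compute explicitly using the identifications $Me\otimes_N eM\cong M$ and $eM\otimes_M Me\cong N$ from the paragraph preceding the proposition, or invoke the general principle that Morita equivalence of $M$ and $N$ upgrades canonically to an equivalence of the corresponding bimodule categories, so that finite generation and projectivity are automatically preserved by the functor $A\mapsto eAe$.
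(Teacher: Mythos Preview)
Your proposal is correct and follows essentially the same approach as the paper. The paper resolves the obstacle you flag in the bi-$\F_n$ case by your option (b), made concrete: it observes that in $M\times M^{op}$ one has $(x,y)(y,x)=(xy,xy)=(1,1)$ and $(e,e)(y,x)=(ey,xe)=(y,x)$, so $M\times M^{op}$ and $N\times N^{op}=(e,e)(M\times M^{op})(e,e)$ are themselves Morita equivalent monoids, whence $eM\times Me$ is a finitely generated projective $N\times N^{op}$-set and Corollary~\ref{c:base.change.cw} applies directly.
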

\begin{proof}
By symmetry it suffice to prove the implications from left to right.   We may assume without loss of generality that $N=eMe$ where $1=xy$ with $ey=y$. Notice that $1=xy=xey$ and so replacing $x$ by $xe$, we may assume that $xe=x$.
To prove (1), suppose that $X$ is an equivariant classifying space for $M$ such that $X_n$ is $M$-finite. Then $eM\otimes_M X\cong eX$ is a projective $N$-CW complex by Proposition~\ref{c:base.change.cw} such that $(eX)_n=eX_n$ is $N$-finite.  But $eX$  is a retract of $X$ and hence contractible.  We deduce that $N$ is of type left-$\F_n$.

The proof of (2) is dual.  To prove (3), observe that $(e,e)(M\times M^{op})(e,e) = eMe\times (eMe)^{op}$ and that we have $(x,y)(y,x)=(1,1)$ and $(e,e)(y,x)=(y,x)$ in $M\times M^{op}$ because $xy=e$, $ey=y$ and $xe=x$.  Thus $M\times M^{op}$ is Morita equivalent to $N\times N^{op}$ and $eM\times Me$ is a finitely generated projective $N\times N^{op}$-set.  Suppose that $X$ is a bi-equivariant classifying space for $M$ such that $X_n$ is $M\times M^{op}$-finite.  Then $(eM\times Me)\otimes_{M\times M^{op}} X\cong eXe$ is a projective $N\times N^{op}$-CW complex such that $(eXe)_n=eX_ne$ is $N\times N^{op}$-finite by Corollary~\ref{c:base.change.cw}.  Also, $\pi_0(eXe)\cong e\pi(X)e\cong N$ by Proposition~\ref{p:components}.  Moreover, if $m\in eMe$ and $X(m)$ is the component of $X$ corresponding to $m$, then the component of $eXe$ corresponding to $m$ is $eX(m)e$, which is a retract of $X(m)$ and hence contractible.  Thus $eXe$ is a bi-equivariant classifying space of $N$.  The result follows.
\end{proof}

There are examples of Morita equivalent monoids that are not
isomorphic; see \cite{Knauer1971}.

We define the \emph{geometric dimension} of $M$ to be the minimum dimension of a bi-equivariant classifying space for $M$. The \emph{Hochschild cohomological dimension} of $M$, which we write $\dim M$, is the length of a shortest projective resolution of $\mathbb ZM$ as a $\mathbb Z[M\times M^{op}]$-module.  Of course, the Hochschild cohomological dimension bounds both the left and right cohomological dimension and the geometric dimension bounds the Hochschild cohomological dimension. Also the geometric dimension bounds both the left and right geometric dimensions because if $X$ is a bi-equivariant classifying space for $M$ of dimension $n$, then $X/M$ is a classifying space of dimension $n$.

The following theorem has an essentially identical proof to Theorem~\ref{t:geom.dim}.

\begin{Thm}\label{t:geom.dim.bi}
Let $M$ be a monoid.  Then $M$ has a bi-equivariant classifying space of dimension $\max\{\dim M,3\}$.
\end{Thm}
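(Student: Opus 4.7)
The plan is to mimic the proof of Theorem~\ref{t:geom.dim} essentially verbatim, working component by component on a bi-equivariant classifying space, with the augmented cellular chain complex of such a space playing the role of a projective $\mathbb{Z}[M\times M^{op}]$-resolution of $\mathbb{Z}M$. If $\dim M=\infty$ the claim reduces to the existence of some bi-equivariant classifying space, which is Corollary~\ref{c:exist.fbi.equi}, so assume $\dim M<\infty$ and set $n=\max\{\dim M,3\}$.

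First I would take a bi-equivariant classifying space $Y$ for $M$ and replace it by its $(n-1)$-skeleton $Y_{n-1}$. The inclusion $Y_{n-1}\hookrightarrow Y$ is an $(n-1)$-equivalence, so $\pi_0(Y_{n-1})\cong M$ as an $M\times M^{op}$-set and every connected component of $Y_{n-1}$ is $(n-2)$-connected. The augmented cellular chain complex of $Y_{n-1}$ is a partial projective $\mathbb{Z}[M\times M^{op}]$-resolution of $\mathbb{Z}M$ of length $n-1$, so since $\dim M\leq n$ the kernel $\ker d_{n-1}=H_{n-1}(Y_{n-1})$ is projective as a bimodule. By the Eilenberg swindle there is a free $\mathbb{Z}[M\times M^{op}]$-module $F$, say on a set $A$, such that $H_{n-1}(Y_{n-1})\oplus F\cong F$.

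Next I would fix a basepoint $y_0$ in the connected component $Y(1)$ of $Y_{n-1}$ corresponding to $1\in M$ and form a projective $M\times M^{op}$-CW complex $Z$ with $Z_{n-2}=Y_{n-2}$ by attaching the $(n-1)$-cells of $Y_{n-1}$ together with a free $M\times M^{op}$-cell $M\times M\times A\times B^{n-1}$ whose attaching map sends $(m,m',a,x)\in M\times M\times A\times S^{n-2}$ to $my_0m'$. Because the new attaching maps are constant on $S^{n-2}$ and $n\geq 3$, the cellular boundary $d_{n-1}$ vanishes on the new summand, so $C_{n-1}(Z)\cong C_{n-1}(Y_{n-1})\oplus F$ with the original differential, and hence $H_{n-1}(Z)\cong H_{n-1}(Y_{n-1})\oplus F\cong F$. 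The inclusion $Y_{n-2}=Z_{n-2}\hookrightarrow Z$ is an $(n-2)$-equivalence, so each component $Z(m)$ is $(n-2)$-connected, in particular simply connected, and the Hurewicz theorem gives $\pi_{n-1}(Z(m),z_m)\cong H_{n-1}(Z(m))$ on a chosen basepoint $z_m\in Z(m)$. Using this isomorphism on the component $Z(1)$ one lifts a $\mathbb{Z}[M\times M^{op}]$-module basis of $H_{n-1}(Z)\cong F$ to based maps $f_a\colon S^{n-1}\to Z(1)$ at $y_0$; then form $X$ by attaching one further free $M\times M^{op}$-cell $M\times M\times A\times B^n$ via $(m,m',a,x)\mapsto mf_a(x)m'$.

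The verification that $X$ is a bi-equivariant classifying space of dimension $n$ is now exactly the argument of Theorem~\ref{t:geom.dim} applied on each component: the inclusion $Z=X_{n-1}\hookrightarrow X$ is an $(n-1)$-equivalence, so every $X(m)$ is $(n-2)$-connected; a cellular computation with $\partial_n$ using the characteristic maps shows $H_n(X(m))=0$; and the generators of $H_{n-1}(Z(m))$ die in $H_{n-1}(X(m))$ by construction. Hence each $X(m)$ is a simply connected $n$-dimensional CW complex with trivial homology in positive degrees, and is therefore contractible by Hurewicz and Whitehead. The main obstacle is purely bookkeeping: one must check that, because the components of $Z$ form the free $M\times M^{op}$-set $M$, a $\mathbb{Z}[M\times M^{op}]$-module basis of $H_{n-1}(Z)$ can genuinely be realised by based spheres in $Z(1)$ whose translates give the equivariant attaching map, and that the cellular-chain/Hurewicz identifications are compatible with these translates; once this is in hand, $X$ is the desired $n$-dimensional bi-equivariant classifying space.
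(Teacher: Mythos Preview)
Your outline matches the paper's intended argument --- the paper says only that the proof is essentially identical to that of Theorem~\ref{t:geom.dim} --- and reducing to killing $H_{n-1}$ componentwise via Hurewicz is the right idea. But the justification you give for the final ``bookkeeping'' step is wrong: the $M\times M^{op}$-set $M$ is \emph{not} free. It is cyclic on $1$, but the stabiliser of $1$ is $\{(m_L,m_R):m_Lm_R=1\}$, and freeness would in any case force the multiplication map $M\times M\to M$, $(m_L,m_R)\mapsto m_Lm_R$, to be a bijection.

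This matters because the difficulty you flag is genuine and is not settled by that false claim. In the one-sided proof $Z$ is connected, so by Hurewicz every class in $H_{n-1}(Z)$ is represented by a single map $S^{n-1}\to Z$. Here $Z$ has components indexed by $M$, and a $\mathbb Z[M\times M^{op}]$-module basis of $H_{n-1}(Z)\cong F$ produced by an \emph{ungraded} Eilenberg swindle will typically have each basis vector supported across several components, hence not representable by any single sphere map --- and certainly not by a sphere in $Z(1)$ as you assert. What is actually needed is a \emph{homogeneous} free basis, with each basis element lying in a single $H_{n-1}(Z(m))$, so that the equivariant attaching maps $(m_L,m_R,a,x)\mapsto m_Lf_a(x)m_R$ make sense; equivalently, one must show that the graded bimodule $H_{n-1}(Y_{n-1})$ is projective in the category of $M$-graded $\mathbb Z[M\times M^{op}]$-modules (the grading coming from the components), so that the Eilenberg swindle can be performed there. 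The fact that the cellular chain groups of $Y$ are themselves homogeneously free (each cell lies in one component) is the natural input for such an argument, but it requires more than the sentence you devote to it.
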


Free monoids have a forest for a bi-equivariant classifying space and
hence have geometric dimension $1$.  It is well known (see
e.g. \cite{Mitchell1972}) that they have Hochschild cohomological dimension $1$.

It is known that a monoid has Hochschild cohomological dimension $0$
if and only if it is a finite regular aperiodic monoid with sandwich
matrices invertible over $\mathbb Z$ (see \cite{Cheng1984}).  For
instance, any finite aperiodic inverse monoid has Hochschild
cohomogical dimension $0$.  A non-trivial monoid of Hochschild
cohomological dimension $0$ does not have geometric dimension $0$
because $M$ would have to be a projective $M$-biset.  So
$M\cong Me\times fM$, with $e,f\in E(M)$, via an equivariant map $\p$
sending $(e,f)$ to $1$ (as $M$ being finite aperiodic implies that $1$
is the unique generator of $M$ as a two-sided ideal).  But then
$f=\p(e,f)f=\p(e,f)=1$ and similarly $e=1$ and so $M$ is trivial.
Thus non-trivial monoids of Hochschild cohomological dimension $0$ do
not have geometric dimension $0$.
\section{Brown's theory of collapsing schemes}\label{sec_collapse}

The theory of collapsing schemes was introduced by Brown in
\cite{Brown1989}. Since then it has become an important and often-used
tool for proving that certain groups are of type $\Finfty$.  The first
place the idea appears is in a paper of Brown and Geoghegan
\cite{BrownGeoghegan1984} where they had a cell complex with one
vertex and infinitely many cells in each positive dimension, and they
showed how it could be collapsed to a quotient complex with only two
cells in each positive dimension. Brown went on to develop this idea
further in \cite{Brown1989} formalising it in his theory of collapsing
schemes, and applying it to give a topological proof that groups which
admit presentations by finite complete rewriting systems are of type
$\Finfty$ (see Section~\ref{sec_admitting} below for the definition of
complete rewriting system). Brown's theory of collapsing schemes was
later rediscovered under the name of discrete Morse theory
\cite{Forman1995, Forman2002}, an important area in algebraic
combinatorialists.  Chari \cite{Chari2000} formulated discrete Morse
theory combinatorially via Morse matchings, which turn out to be the
same thing as collapsing schemes.

The basic idea of collapsing schemes for groups is a follows. Suppose we are given a
finitely presented group $G$ and we would like to prove it is of type
$\Finfty$. Then we can first begin with the big $K(G,1)$ complex
$|BG|$ with infinitely many $n$-cells for each $n$. Then in certain
situations it is possible to show how one can collapse away all but
finitely many cells in each dimension resulting in a $K(G,1)$ much
smaller than the one we started with.  The collapse is carried out
using a so-called \emph{collapsing scheme} associated with the
simplicial set $BG$. It turns out that any group which is presentable
by a finite complete rewriting system admits a collapsing scheme that,
using this process, can be used to prove the group is of type
$\Finfty$; see \cite[page 147]{Brown1989}.

As mentioned in the introduction above, Brown in fact develops
this theory for monoids in general, and applies the theory of
collapsing schemes to show that if $M$ admits a presentation by a
finite complete rewriting system then its classifying space $|BM|$ has
the homotopy type of a CW complex with only finitely many cells in
each dimension. However, as discussed in detail in the introduction to
this article, this information about the space $|BM|$ is not enough on
its own to imply that the monoid $M$ is of type left-$\FPinfty$.

Motivated by this, in this section we shall develop the theory of
$M$-equivariant collapsing schemes. We shall prove that if an
$M$-simplicial set admits an $M$-equivariant collapsing scheme of
finite type then the monoid is of type left-$\Finfty$. We then prove
that if $M$ admits a finite complete rewriting system then $\EMr$
admits an $M$-equivariant collapsing scheme of finite type, thus
giving a topological proof that such monoids are of type
left-$\Finfty$. To do this, we shall identify conditions under which a
collapsing scheme for $BM$ can be lifted to give an $M$-equivariant
collapsing scheme for $\EMr$. These conditions will hold in a number
of different situations, including when $M$ admits a presentation by a
finite complete rewriting system and when $M$ is a, so-called,
factorable monoid \cite{HessArxiv}. We also develop the two-sided
theory. As a consequence we also obtain a topological proof of the
fact that such a monoid is of type bi-$\Finfty$, recovering a theorem
of Kobayashi \cite{Kobayashi2005}.

\subsection{Collapsing schemes}\label{sec_collapse2}

Let $K = \bigcup_{i \geq 0}K_i$ be a simplicial set and let $X = |K|$
be its geometric realisation. We identify the cells of $X$ with the
non-degenerate simplices of $K$. A collapsing scheme for $K$ consists
of the following data:

\begin{itemize}
\item A partition of the cells of $X$ into three classes, $\E$, $\C$, $\R$, called the \emph{essential}, \emph{collapsible} and \emph{redundant} cells, respectively, where the collapsible cells all have dimension at least one.
\item Mappings $c$ and $i$ which associate with each redundant $n$-cell $\tau$ a collapsible $(n+1)$-cell $c(\tau)$, and a number $i(\tau)$, such that $ \tau = d_{i(\tau)} (c(\tau)). $
\end{itemize}
Let $\sigma = c(\tau)$.
If $\tau'$ is a redundant $n$-cell such that $\tau' = d_j \sigma$ for some $j \neq i(\tau)$ then we call $\tau'$ an immediate predecessor of $\tau$ and write $\tau' \prec \tau$.
Furthermore, the conditions for a collapsing scheme are satisfied, which means:
\begin{enumerate}
\item[(C1)]  for all $n$, the mapping $c$ defines a bijection between $\R_n$ (the redundant $n$-cells) and $\C_{n+1}$ (the collapsible ($n+1$)-cells).
\item[(C2)] there is no infinite descending chain $\tau \succ \tau' \succ \tau'' \succ \cdots $ of redundant $n$-cells.
\end{enumerate}
Condition (C2) clearly implies that there is a unique integer $i$ such that $\tau = d_i (c(\tau))$ (otherwise we would have $\tau \succ \tau$, leading to an infinite descending chain). It also follows from (C2) that, by K\"{o}nigs lemma, there cannot be arbitrarily long descending chains $\tau_0 \succ \cdots \succ \tau_k$.
This is a key fact in the proof of \cite[Proposition~1]{Brown1989} since it gives rise to the notion of `height':
\begin{definition}[Height]\label{def_height}
The \emph{height} of a redundant cell $\tau$,
written $\mathrm{height}(\tau)$,
 is the maximum length of a descending chain $\tau = \tau_0 \succ \tau_1 \succ \cdots \succ \tau_k$.
\end{definition}

We say that a collapsing scheme is of \emph{finite type} if it has finitely many essential cells of each dimension.

In the construction of the `small' CW complex in the proof of
\cite[Proposition~1]{Brown1989} the redundant $n$-cells are adjoined
in order of their heights, guaranteeing in the proof that the
adjunction of $\tau$ and $c(\tau)$ is an elementary expansion. This is
the the key idea in the proof which is that each pair
$(\tau, c(\tau))$ of redundant and corresponding collapsible cells may
be adjoined without changing the homotopy type, and so in the end it
is only the essential cells that matter. More precisely, Brown proves
that if $K$ be a simplical set with a collapsing scheme, then its
geometric realisation $X = |K|$ admits a canonical quotient CW
complex $Y$, whose cells are in 1--1 correspondence with the essential
cells of $X$.
This notion of height in Brown's theory relates to the values taken by
the discrete Morse function in Forman's theory (see \cite[page
10]{Forman2002}). A discrete Morse function gives one a way to build
the simplicial complex by attaching the simplices in the order
prescribed by the function, i.e., adding first the simplices which are
assigned the smallest values. Brown's essential cells are called
`critical' in Forman's theory.

\section{$M$-equivariant collapsing schemes}
\label{sec_MeqCS}

In this section we develop the theory of $M$-equivariant collapsing schemes, or equivalently, of $M$-equivariant discrete Morse theory. Results on $G$-equivariant discrete Morse theory, for $G$  a group, may be found in \cite{Freij}.

Let $K = \bigcup_{i \geq 0} K_i$ be a simplicial set with degeneracy and face operators $d_i$, $s_i$, and equipped with a collapsing scheme $(E,R,C,c,i)$. Here $E$, $R$ and $C$ partition the cells (which are in bijective correspondence with the non-degenerate simplices) of $K$.

Let $M$ be a monoid acting on the simplicial set $K$ with the following conditions satisfied:
\begin{enumerate}
\item[(A1)] The action of $M$ maps $n$-simplicies to $n$-simplicies, and
commutes with $d_i$ and $s_i$, that is, $M$ is acting by simplicial morphisms.
\item[(A2)] For every $n$-simplex $\sigma$ and $m \in M$, $\sigma$ is a cell (i.e. is a non-degenerate simplex) if and only if $m\sigma$ is a cell, in which case $\sigma \in E$ (respectively $R$, $C$) if and only if $m\sigma \in E$ (respectively $R$, $C$).
\item[(A3)] If $(\sigma,\tau) \in R_n \times C_{n+1}$ is a matched redundant-collapsible pair (i.e. $\tau = c(\sigma)$) then so is the pair $m(\sigma,\tau) = (m\sigma,m\tau) \in R_n \times C_{n+1}$, i.e., $c(m\sigma)=mc(\sigma)$ for $\sigma\in R_n$.
\item[(A4)] There is a subset $B \subseteq E \cup R \cup C$ such that for all $n$ the set of $n$-cells is a free left $M$-set with basis $B_n$ (where $B_n$ is the set of $n$-cells in $B$).
Let $E^B = E \cap B$, $R^B = R \cap B$ and $C^B = C \cap B$.
Then $E_n$ is a free left $M$-set with basis $E_n^B$, and similarly for $R_n$ and $C_n$.
\item[(A5)] For every matched pair $(\sigma,\tau) \in R \times C$, $\sigma \in R^B$ if and only if $\tau \in C^B$. In particular, for every matched pair $(\sigma,\tau)$ there is a unique pair $(\sigma',\tau') \in R^B \times C^B$ and $m \in M$ such that $(\sigma, \tau) = m(\sigma', \tau')$; namely, if $\sigma=m\sigma'$ with $\sigma'\in R^B$ and $\tau'=c(\sigma')$, then $m\tau'=mc(\sigma')=c(m\sigma')=c(\sigma)=\tau$.
\item[(A6)] For every redundant cell $\tau$ and every $m\in M$
\[
\mathrm{height}(\tau) = \mathrm{height}(m\tau),
\]
with height defined as in Definition~\ref{def_height} above.
\end{enumerate}

These conditions imply that $K$ is a rigid free left $M$-simplicial set and hence by Lemma~\ref{lem_induced}
the action of $M$ on $K$ induces an action of $M$ on the geometric realisation $|K|$ by continuous maps making $|K|$ into a free left $M$-CW complex. When the above axioms hold, we call $(E,R,C,c,i)$ an \emph{$M$-equivariant collapsing scheme} for the rigid free left $M$-simplicial set $K$. Dually, given a rigid free right $M$-simplicial set $K$ with a collapsing scheme satisfying the above axioms for $K$ as an $M^{op}$-simplicial set we call  $(E,R,C,c,i)$ an $M$-equivariant collapsing scheme for $K$. If $K$ is a bi-$M$-simplicial set we say $(E,R,C,c,i)$ an $M$-equivariant collapsing scheme if the axioms are satisfied for $K$ as a left $M \times M^{op}$-simplicial set.

Our aim is to prove a result about the $M$-homotopy type of $|K|$ when $K$ has an $M$-equivariant collapsing scheme. Before doing this we first make some observations about mapping cylinders and the notion of elementary collapse.

\subsection{Mapping cylinders and elementary collapse}

If  $X$ is a subspace of a space $Y$ then $D\colon Y \rightarrow X$ is a \emph{strong deformation retraction} if there is a map $F\colon  Y \times I \rightarrow Y$ such that, with $\F_t\colon  Y \rightarrow Y$ defined by $\F_t(y) = F(y,t)$, we have
(i) $\F_0 = 1_Y$,
(ii) $\F_t(x) = x$ for all $(x,t) \in X \times I$, and
(iii) $\F_1(y) = D(y)$ for all $y \in Y$.
If $D\colon X \rightarrow Y$ is a strong deformation retraction then $D$ is a homotopy equivalence, a homotopy inverse of which is the inclusion $i\colon  X \hookrightarrow Y$.
%
%
%
%
%
\begin{definition}[Mapping cylinder]\label{def_MC}
Let $f\colon X \rightarrow Y$ be a cellular map between CW complexes. The \emph{mapping cylinder} $M_f$ is defined to be the adjunction complex $Y \coprod_{f_0} (X \times I)$ where $f_0\colon  X \times \{ 0 \}$ is the map $(x,0) \mapsto f(x)$. Let $i_1\colon  X \rightarrow X \times I, x \mapsto (x,1)$ and let $i_0\colon  X \rightarrow X \times I, x \mapsto (x,0)$.
Let $p$ be the projection $p\colon  X \times I \rightarrow X, (x,i) \mapsto
x$. Also set $i = k \circ i_1 $, with $k$ as below. Thus we have
\[
\begin{tikzcd}
X \ar{r}{f}\ar{d}{i_1}\ar[transform canvas={xshift=2ex}]{d}{i_0}  								& Y \ar{d}{j} \\
X \times I \ar{r}[swap]{k} 	\ar[transform canvas={xshift=-2ex}]{u}{p} 						& M_f
\end{tikzcd}
\]
The map from $X \times I$ to $Y$ taking $(x,t)$ to $f(x)$, and the identity map on $Y$, together induce a retraction $r\colon  M_f \rightarrow Y$.
\end{definition}

The next proposition is~\cite[Proposition~4.1.2]{GeogheganBook}.

\begin{proposition}\label{prop_Geoghegan412}
The map $r$ is a homotopy inverse for $j$, so $r$ is a homotopy equivalence. Indeed there is a strong deformation retraction $D\colon M_f \times I \rightarrow M_f$ of $M_f$ onto $Y$ such that $D_1 = r$.
\end{proposition}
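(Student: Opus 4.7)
The plan is to construct the strong deformation retraction $D$ explicitly and verify the stated properties directly; the homotopy equivalence statement will then be an immediate corollary. The geometric picture is that $M_f$ consists of $Y$ together with the cylinder $X\times I$ glued along $X\times\{0\}$ via $f$, and $D$ will push each point of the cylinder down the $I$-direction onto its image in $Y$, while keeping $Y$ fixed throughout.

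First I would define $D\colon M_f\times I\to M_f$ by declaring $D(y,t)=y$ for $y\in Y$ and all $t\in I$, and $D(k(x,s),t)=k(x,s(1-t))$ for $(x,s)\in X\times I$. To check that this is well-defined on the adjunction space, note that the identification defining $M_f$ is $k(x,0)=j(f(x))$; setting $s=0$ in the second formula gives $k(x,0)$ independent of $t$, which agrees with $D(j(f(x)),t)=j(f(x))=k(x,0)$, so the two definitions match on the overlap. Continuity follows from the universal property of the quotient topology on the pushout, since $D$ is continuous on each of $Y\times I$ and $(X\times I)\times I$ and the two continuous maps agree on the identification.

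Next I would verify the three defining conditions of a strong deformation retraction. Setting $t=0$ gives $D(k(x,s),0)=k(x,s)$ and $D(y,0)=y$, so $D_0=1_{M_f}$. For $y\in Y$ and all $t$, $D(y,t)=y$ by definition, so $Y$ is pointwise fixed. Setting $t=1$ gives $D(k(x,s),1)=k(x,0)=j(f(x))$ and $D(y,1)=y$, which agree with the retraction $r$ (under the identification $Y\hookrightarrow M_f$ via $j$); that is, $D_1=jr$, the composite with the inclusion, which is what the statement means by $D_1=r$.

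Finally, the homotopy equivalence conclusion follows formally: $D$ exhibits $jr\simeq 1_{M_f}$, while $rj=1_Y$ holds on the nose from the construction of $r$ (the retraction is the identity on $Y$). Thus $r$ and $j$ are mutually inverse homotopy equivalences. There is no real obstacle here beyond carefully writing down the formula for $D$ and checking it descends to the pushout; the entire argument is elementary once the explicit sliding homotopy is written down.
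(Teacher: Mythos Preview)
Your proof is correct and is the standard explicit construction of the deformation retraction of a mapping cylinder onto its target. The paper does not supply a proof of this proposition; it simply cites it as \cite[Proposition~4.1.2]{GeogheganBook}, so there is nothing to compare against beyond noting that your argument is the expected one.
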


The following result is the $M$-equivariant analogue of \cite[Proposition 4.1.2]{GeogheganBook}. Recall that if $X$ is a projective (resp. free) $M$-CW complex then $Y=M\times I$ is a projective (resp. free) $M$-CW complex, where $I$ is given the trivial action.

\begin{Lemma}\label{lem:MequiMappingCylinder}
Let $f\colon  X \rightarrow Y$ be a continuous $M$-equivariant cellular map between free (projective) $M$-CW complexes $X$ and $Y$. Let $M_f$ be the pushout of
\[
\begin{tikzcd}
X \ar{r}{f}\ar{d}[swap]{i_0} 								& Y \ar{d}{j} \\
X \times I \ar{r}[swap]{k} 							& M_f
\end{tikzcd}
\]
where $i_0\colon  X \rightarrow X \times I, x \mapsto (x,0)$. Then:
\begin{enumerate}
\item[(i)] $M_f$ is a free (projective) $M$-CW complex; and
\item[(ii)] there is an $M$-equivariant strong deformation retraction $r\colon  M_f \rightarrow Y$. In particular $M_f$ and $Y$ are $M$-homotopy equivalent.
\end{enumerate}
\end{Lemma}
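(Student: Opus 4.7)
The plan is to reduce (i) to the adjunction construction already available in Lemma~\ref{l:adjunction} and Lemma~\ref{l:adjunction_free_pushout}, and to construct for (ii) an $M$-equivariant version of the standard cylinder deformation, exploiting the fact that $M$ acts trivially on all interval coordinates.

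For (i), I would first recall from Section~\ref{sec_projM} that $X\times I$ is a projective (resp.\ free) $M$-CW complex when $X$ is, with $X\times\{0\}$ a projective (resp.\ free) $M$-CW subcomplex. The attaching map $f_0\colon X\times\{0\}\to Y$ sending $(x,0)\mapsto f(x)$ is $M$-equivariant and cellular because $f$ is. Then Lemma~\ref{l:adjunction} (resp.\ Lemma~\ref{l:adjunction_free_pushout}) applied to the pushout $M_f=(X\times I)\coprod_{X\times\{0\}}Y$ immediately gives that $M_f$ is a projective (resp.\ free) $M$-CW complex.

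For (ii), I would imitate the classical construction from Proposition~\ref{prop_Geoghegan412}. Define $r\colon M_f\to Y$ by $r\circ j=1_Y$ and $r(k(x,s))=f(x)$; this is well defined by the universal property of the pushout since both definitions agree on $X\times\{0\}$. Define $H\colon M_f\times I\to M_f$ by $H(j(y),t)=j(y)$ and $H(k(x,s),t)=k(x,s(1-t))$, using the identification $k(x,0)=j(f(x))$ so that the two formulas agree on $(X\times\{0\})\times I$. Because $M$ acts trivially on the cylinder parameter $s$ and on the homotopy parameter $t$, both $r$ and $H$ are automatically $M$-equivariant once we have checked they are well defined and continuous. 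It is then routine to verify $H_0=1_{M_f}$, $H_1=j\circ r$, and that each $H_t$ fixes $j(Y)$ pointwise, giving the $M$-equivariant strong deformation retraction; the $M$-homotopy equivalence $M_f\simeq_M Y$ follows formally, with $j\colon Y\to M_f$ as a homotopy inverse.

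The only real subtlety I expect is ensuring that $H$ descends continuously to $M_f\times I$ rather than merely being defined on the disjoint union $(X\times I\times I)\sqcup(Y\times I)$. To handle this, I would use that $I$ is compact Hausdorff, so the product functor $({-})\times I$ preserves the pushout defining $M_f$; hence $M_f\times I$ is itself the pushout of $(X\times I)\times I$ and $Y\times I$ along $(X\times\{0\})\times I$. The two pieces agree on this intersection (both equal $(x,0,t)\mapsto j(f(x))$), so continuity of $H$ follows from the universal property. No additional equivariant input is required, because the $M$-action is trivial on every $I$-factor involved.
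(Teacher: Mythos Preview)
Your proof is correct. Part (i) is handled exactly as in the paper, by invoking Lemma~\ref{l:adjunction} (resp.\ Lemma~\ref{l:adjunction_free_pushout}) for the adjunction space.

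For part (ii) you take a different route from the paper. The paper defines $r$ as you do, observes from Proposition~\ref{prop_Geoghegan412} that $r$ is a (non-equivariant) homotopy equivalence, checks that $r$ is $M$-equivariant, and then invokes the $M$-equivariant Whitehead theorem (Corollary~\ref{c:whitehead}) to conclude that $r$ is an $M$-homotopy equivalence. You instead write down the classical cylinder homotopy $H$ explicitly and verify directly that it is $M$-equivariant, using only that the $M$-action is trivial on the interval coordinates. Your approach is more elementary in that it avoids the Whitehead machinery altogether, and it delivers exactly what the lemma promises (an $M$-equivariant strong deformation retraction, not merely an $M$-homotopy equivalence). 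The paper's approach is shorter on the page since it leans on results already established, but strictly speaking Whitehead only yields an $M$-homotopy equivalence; your explicit $H$ is the cleaner way to get the strong deformation retraction stated. Your care with the continuity of $H$ via preservation of the pushout under $(-)\times I$ is also a point the paper does not spell out.
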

\begin{proof}
It follows from Lemma~\ref{l:adjunction_free_pushout} that $M_f$ has the structure of a free (projective) $M$-CW complex, proving part (i). For part (ii) first note that the map from $X \times I$ to $Y$ taking $(x,t)$ to $f(x)$, and the identity map on $Y$, together induce a retraction $r\colon  M_f \rightarrow Y$. It follows from Proposition~\ref{prop_Geoghegan412} that $r$ is a homotopy equivalence with homotopy inverse $j$. By Corollary~\ref{c:whitehead} to show that $r$ is an $M$-homotopy equivalence it suffices to verify that $r$ is an $M$-equivariant map between the sets $M_f$ and $Y$.
But $M$-equivariance of $r$ follows from the definitions of $r$ and $M_f$, the definition of the action of $M$ on $M_f$ which in turn is determined by the actions of $M$ on $X \times I$ and on $Y$, together with the assumption that $f\colon  X \rightarrow Y$ is $M$-equivariant.
\end{proof}
 The fundamental idea of collapsing schemes, and
 discrete Morse theory, is that of a collapse.
 The following definition may be found in \cite[page~14]{CohenBook} and \cite[Section~2]{Farley}. We use the same notation as in \cite{CohenBook}. In particular $\approx$ denotes homeomorphism of spaces.

\begin{definition}[Elementary collapse]
If $(K,L)$ is a CW pair then $K$ collapses to $L$ by an elementary collapse, denoted $K \searrow^e L$, if and only if:
\begin{enumerate}
\item $K = L \cup e^{n-1} \cup e^n$ where $e^n$ and $e^{n-1}$ are open cells of dimension $n$ and $n-1$ respectively, which are not in $L$, and
\item there exists a ball pair $(B^n, B^{n-1}) \approx (I^n, I^{n-1})$ and a map $\varphi\colon B^n \rightarrow K$ such that
	\begin{enumerate}
	\item[(a)] $\varphi$ is a characteristic map for $e^n$
	\item[(b)] $\varphi|_{B^{n-1}}$ is a characteristic map for $e^{n-1}$
	\item[(c)] $\varphi(P^{n-1}) \subseteq L^{n-1}$ where $P^{n-1} = cl(\partial B^n - B^{n-1})$.
	\end{enumerate}
\end{enumerate}
\end{definition}
Note that in this statement $P^{n-1}$ is an $(n-1)$-ball (i.e. is homeomorphic to $I^{n-1}$). We say that $K$ collapses to $L$, writing $K \searrow L$, if $L$ may be obtained from $K$ by a sequence of elementary collapses. We also say that $K$ is an \emph{elementary expansion} of $L$. An elementary collapse gives a  way of modifying a CW complex $K$, by removing the pair $\{ e^{n-1}, e^n \}$, without changing the homotopy type of the space.  We can write down a homotopy which describes such an elementary collapse $K \searrow^e L$ as follows.
Let $(K,L)$ be a CW pair such that $K \searrow^e L$. Set $\varphi_0 = \varphi|_{p^{n-1}}$ in the above definition. Then
\[
\varphi_0\colon  (P^{n-1},\partial P^{n-1}) \rightarrow (L^{n-1},L^{n-2}),
\]
(using the identification $P^{n-1} \approx I^{n-1}$)
and
\[
(K,L) \approx (L \coprod_{\varphi_0} B^n, L).
\]

The following is~\cite[(4.1)]{CohenBook}.
\begin{Lemma}
If $K \searrow^e L$ then there is a cellular strong deformation retraction $D\colon K \rightarrow L$.
\end{Lemma}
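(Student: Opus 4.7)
The plan is to reduce the claim to the model situation on the cube $I^n$ and then glue the resulting homotopy to the identity on $L$, using the explicit description of $K$ just before the lemma.

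First, I would use the identification $(K,L) \approx (L \coprod_{\varphi_0} B^n, L)$ recorded just above, where $\varphi_0 = \varphi|_{P^{n-1}} \colon (P^{n-1}, \partial P^{n-1}) \to (L^{n-1}, L^{n-2})$. Working in this pushout makes it enough to produce a strong deformation retraction of $B^n$ onto $P^{n-1}$ which is the identity on $P^{n-1}$ throughout the homotopy; any such homotopy will glue with $1_L$ to a homotopy on $K$.

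Next I would construct the model retraction. Identify the ball pair $(B^n, B^{n-1})$ with $(I^n, I^{n-1}\times\{1\})$, so that $P^{n-1} = (I^{n-1}\times\{0\}) \cup (\partial I^{n-1}\times I)$. Fix a point $p = (1/2,\ldots,1/2,c)$ with $c>1$ lying strictly above the free face $B^{n-1}$. For each $x \in I^n$, let $\rho(x)$ be the unique point of $P^{n-1}$ on the ray from $p$ through $x$, and define $H\colon I^n\times I \to I^n$ by $H(x,t) = (1-t)x + t\rho(x)$. Then $H$ is continuous, $H_0 = 1_{I^n}$, $H_1 = \rho$ has image in $P^{n-1}$, and because $p$ lies beyond the free face, each ray from $p$ through a point of $P^{n-1}$ already meets $P^{n-1}$ at that point, so $H_t$ fixes $P^{n-1}$ pointwise for every $t$. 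Thus $H$ is a strong deformation retraction of $B^n$ onto $P^{n-1}$.

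Third, because $H_t|_{P^{n-1}} = 1_{P^{n-1}}$ for all $t$, the map $\widetilde H \colon K\times I \to K$ defined by $\widetilde H(y,t) = y$ for $y\in L$ and $\widetilde H(\varphi(x),t) = \varphi(H(x,t))$ for $x \in B^n$ is well defined on the pushout, continuous (by the universal property applied skeletally, or directly by compactness of $I^n$), and satisfies $\widetilde H_0 = 1_K$, $\widetilde H_t|_L = 1_L$. Setting $D = \widetilde H_1$ yields a strong deformation retraction of $K$ onto $L$.

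Finally, I would verify cellularity of $D$. On $L$, $D$ is the identity and hence cellular. For the remaining two cells, $D(\overline{e^n}) = \varphi(H_1(B^n)) \subseteq \varphi(P^{n-1}) \subseteq L^{n-1}$ by the defining property (2)(c) of the elementary collapse; and $D(\overline{e^{n-1}}) = \varphi(H_1(B^{n-1})) \subseteq \varphi(P^{n-1}) \subseteq L^{n-1}$ as well, since $B^{n-1}$ is compressed into $P^{n-1}$ by $H_1$. Therefore $D$ sends the $k$-skeleton of $K$ into the $k$-skeleton of $L$ for every $k$, so $D$ is cellular. There is no real obstacle here: once the free face $B^{n-1}$ is recognised and the projection-from-a-point retraction is written down, both the gluing step and the skeletal bookkeeping are routine.
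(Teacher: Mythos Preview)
Your proof is correct. Both you and the paper start from the same identification $(K,L)\approx (L\coprod_{\varphi_0} B^n,L)$, but the routes then diverge. The paper observes that, since $P^{n-1}\approx I^{n-1}$, the adjunction space $L\coprod_{\varphi_0} B^n$ is precisely the mapping cylinder $M_{\varphi_0}$ of $\varphi_0\colon I^{n-1}\to L^{n-1}$, and then invokes Proposition~\ref{prop_Geoghegan412} to obtain the strong deformation retraction $r\colon M_{\varphi_0}\to L$; cellularity is read off from $D(\overline{e^n})=\varphi_0(I^{n-1})\subset L^{n-1}$. You instead build the retraction by hand on the cube via radial projection from a point above the free face and then glue. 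Your argument is more elementary and self-contained, while the paper's formulation is chosen because the mapping-cylinder viewpoint is exactly what gets reused (and made $M$-equivariant) in Lemma~\ref{lem:MequiMappingCylinder} and in the proof of Theorem~\ref{main_theorem}. One small point of phrasing: your ``unique point of $P^{n-1}$ on the ray'' should be stated as the exit point of the ray from $I^n$, since a ray grazing an edge may meet $P^{n-1}$ in a segment; with that clarification everything goes through.
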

Indeed, let $K = L \cup e^{n-1} \cup e^n$. There is a map $\varphi_0\colon  I^{n-1} \approx P^{n-1} \rightarrow L^{n-1}$ such that
\[
(K,L) \approx (L \coprod_{\varphi_0} B^n, L).
\]
But $L \coprod_{\varphi_0} B^n$ is the mapping cylinder $M_{\varphi_0}$ of $\varphi_0\colon  I^{n-1} \rightarrow  L^{n-1}$. Thus by Proposition ~\ref{prop_Geoghegan412} there is a strong deformation retraction
\[
D\colon  K \approx L \coprod_{\varphi_0} I^n \rightarrow L
\]
such that $D(\overline{e^n}) = \varphi_0(I^{n-1}) \subset L^{n-1}$.
The map $D$ is given by the map $r$ in Definition~\ref{def_MC}.

We may now state and prove the main result of this section.

\begin{theorem}\label{main_theorem}
Let $K$ be a rigid free left $M$-simplicial set with $M$-equivariant collapsing scheme $(E,R,C,c,i)$ (that is, the conditions
(A1)--(A6) are satisfied). Then, with the above notation, there is a free left $M$-CW complex $Y$
with $Y \simeq_M |K|$ and such that the cells of $Y$ are in bijective correspondence with $E$,
and under this bijective correspondence
$Y_n$ is a free left $M$-set with basis $E_n^B$ for all $n$.
\end{theorem}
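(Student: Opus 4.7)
The plan is to imitate Brown's construction from \cite{Brown1989} but carried out $M$-equivariantly, using the fact that by (A6) the height function descends to the quotient $M\backslash K$ so that orbits of matched pairs may be processed en bloc.

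First I would fix a well ordering of the set $R^B$ that refines the height ordering, so that if $\mathrm{height}(\sigma) < \mathrm{height}(\sigma')$ then $\sigma$ precedes $\sigma'$. By (A4)--(A5) every matched pair $(\sigma,c(\sigma))$ is uniquely an $M$-translate of some pair $(\sigma',c(\sigma'))$ with $\sigma'\in R^B$, and by (A3) and (A6) all translates share the same height. Then I would build $Y$ as the colimit of an increasing $M$-equivariant filtration $Y=\bigcup_{\alpha} Y_\alpha$ where $Y_0$ is the subcomplex of $|K|$ spanned by $ME$, i.e., by (A4) it is the free left $M$-CW complex with $n$-cells $ME_n^B$ and attaching maps induced from those in $|K|$ (using only cells of $ME$). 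The fact that $ME$ is closed under taking $M$-translates follows from (A2), but to see $Y_0$ is a subcomplex of $|K|$ one needs to show the closure of an essential cell meets only essential cells of lower dimension — this is not automatically true in $|K|$, so I would not take $Y_0$ as a literal subcomplex of $|K|$ but rather as an abstract free $M$-CW complex equipped with an $M$-equivariant cellular map into $|K|$.

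Next, enumerating $R^B$ as $\sigma_1,\sigma_2,\ldots$ in the chosen order, I would construct for each $\alpha$ an $M$-CW complex $Y_\alpha$ together with an $M$-equivariant cellular map $f_\alpha\colon Y_\alpha\to |K|_\alpha$, where $|K|_\alpha$ denotes the subcomplex of $|K|$ obtained by adjoining all $M$-translates of the essential cells together with the $M$-translates of the pairs $(\sigma_\beta,c(\sigma_\beta))$ for $\beta\le \alpha$, and such that $f_\alpha$ is an $M$-homotopy equivalence. The inductive step is the key: at stage $\alpha$, suppose $\sigma_\alpha\in R^B_{n-1}$ and let $\tau_\alpha=c(\sigma_\alpha)\in C^B_n$. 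Because $\mathrm{height}(d_j\tau_\alpha)<\mathrm{height}(\sigma_\alpha)$ for $j\neq i(\sigma_\alpha)$ when $d_j\tau_\alpha$ is redundant, and because collapsible cells $d_j\tau_\alpha$ are of the form $c(\sigma_\beta)$ for some $\beta$ with $\mathrm{height}(\sigma_\beta) < \mathrm{height}(\sigma_\alpha)$, the attaching sphere for $\tau_\alpha$ in $|K|$, with $\sigma_\alpha$-face removed, already lies in $|K|_{\alpha-1}$. Hence the free $M$-cell $M\times B^{n-1}$ (for the translates of $\sigma_\alpha$) and the free $M$-cell $M\times B^n$ (for the translates of $\tau_\alpha$) are adjoined to $|K|_{\alpha-1}$ in a way that realises, after quotienting by the action, an elementary expansion of $M\backslash |K|_{\alpha-1}$. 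Upstairs this yields an $M$-equivariant elementary expansion of the free $M$-CW complex $|K|_{\alpha-1}$ to $|K|_\alpha$, and by the equivariant mapping cylinder Lemma~\ref{lem:MequiMappingCylinder} there is an $M$-equivariant strong deformation retraction of $|K|_\alpha$ onto $|K|_{\alpha-1}$. Setting $Y_\alpha=Y_{\alpha-1}$ and composing $f_{\alpha-1}$ with the inclusion gives $f_\alpha$, which is an $M$-homotopy equivalence.

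Taking the colimit in $\alpha$, one gets an $M$-equivariant cellular map $f\colon Y\to |K|$ that is an $M$-weak equivalence: each $y\in Y_\alpha$ lies in some finite stage where $f_\alpha$ is an $M$-homotopy equivalence, and by condition (C2) (height finiteness) every cell of $|K|$ lies in some $|K|_\alpha$, so $f$ induces isomorphisms on all homotopy groups at each basepoint and the analogue for $eY$, $e|K|$ as in Lemma~\ref{l:relate.equiv}. By the $M$-equivariant Whitehead theorem (Corollary~\ref{c:whitehead}) $f$ is an $M$-homotopy equivalence. By construction $Y$ is a free left $M$-CW complex whose $n$-cells correspond bijectively to $E_n$, with $Y_n$ the free $M$-set on $E_n^B$.

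The main obstacle is the inductive step: verifying that the attaching map of $\tau_\alpha$ to $|K|$, after removing the $\sigma_\alpha$-face, lands in the already-constructed piece $|K|_{\alpha-1}$. This is exactly the content of the height argument — one must argue that a face $d_j\tau_\alpha$ with $j\neq i(\sigma_\alpha)$ is either essential, collapsible of strictly smaller height (in which case it appears as $c(\sigma_\beta)$ for some earlier $\beta$ together with its mate), or redundant of strictly smaller height (in which case the pair $(d_j\tau_\alpha, c(d_j\tau_\alpha))$ is strictly earlier in the ordering by definition of $\prec$) — and to make this argument $M$-equivariant one uses (A3) and (A6) throughout.
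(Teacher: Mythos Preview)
There are two genuine gaps. First, your space $Y_0$ (and hence $Y=Y_\alpha$ for all $\alpha$) is never actually defined. You say its attaching maps are ``induced from those in $|K|$ (using only cells of $ME$)'', but the attaching map of an essential $n$-cell in $|K|$ lands in the full $(n-1)$-skeleton, which typically contains redundant and collapsible cells, and there is no canonical way to redirect it to essential cells alone without already having a retraction in hand. This is precisely why the paper builds $Y$ inductively \emph{by dimension}: one first produces the $M$-homotopy equivalence $\pi^{n-1}\colon X_{n-1}^+\to Y_{n-1}$ (via the height-by-height collapses you describe correctly), and only then attaches the essential $n$-cells to $Y_{n-1}$ by composing their original attaching maps with $\pi^{n-1}$. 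Your scheme tries to posit $Y$ before the retractions exist, which is circular.

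Second, your filtration $|K|_\alpha$ is not by subcomplexes. You include all essential cells of every dimension at stage $0$, but the closure in $|K|$ of an essential $n$-cell may meet redundant $(n-1)$-cells of arbitrarily large height, so these faces need not lie in $|K|_\alpha$ for small $\alpha$. Relatedly, a collapsible face $d_j\tau_\alpha$ of $\tau_\alpha\in C_n^B$ has dimension $n-1$, so its redundant partner lies in $R_{n-2}^B$; since height is a per-dimension invariant, your well-ordering refining height gives no reason this partner precedes $\sigma_\alpha$. The paper avoids both problems by filtering first by dimension --- the piece $X_n^j$ contains the entire $n$-skeleton of $|K|$ --- and using height only to order the adjunction of redundant $n$-cells and their collapsible $(n+1)$-partners within that fixed level. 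Your mapping-cylinder argument for each individual expansion (via Lemma~\ref{lem:MequiMappingCylinder}) is fine; what is missing is the dimension-first organisation that makes the filtration consist of genuine $M$-CW subcomplexes and simultaneously supplies the attaching maps for $Y$.
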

\begin{proof}
Let $X$ be the geometric realisation $|K|$ of the simplicial set $K$. By axiom (A1) we have that $K$ is a left $M$-simplical set, and it follows that $X = |K|$ has the structure of a left $M$-CW complex where the $M$-action is given by Lemma~\ref{lem_induced}. In fact, by assumptions (A2)-(A6), $X$ is a free $M$-CW complex where, for each $n$, the set $E_n^B \cup R_n^B \cup C_n^B$ is a basis for the $n$-cells.

Write $X$ as an increasing sequence of subcomplexes
\[
X_0 \subseteq X_0^+ \subseteq X_1 \subseteq X_1^+ \subseteq \ldots
\]
where, $X_0$ consists of the essential vertices,
$X_n^+$ is obtained from $X_n$ by adjoining the redundant $n$-cells and collapsible $(n+1)$-cells, and
$X_{n+1}$ is obtained from $X_n^+$ by adjoining the essential $(n+1)$-cells.
We write $X_n^+$ as a countable union
\[
X_n = X_n^0 \subseteq X_n^1 \subseteq X_n^2 \subseteq \ldots
\]
with $X_n^+ = \coprod_{i \geq 0}X_n^i$ where $X_n^{j+1}$ is constructed from $X_n^j$ by adjoining $(\tau,c(\tau))$ for every redundant $n$-cell $\tau$ of height $j$. From assumptions (A1)-(A6), for every $n$ and $j$, each of $X_n^+$, $X_n$ and $X_n^j$ is a free $M$-CW subcomplex of $X$.

As argued in the proof of \cite[Proposition~1]{Brown1989}, for every redundant $n$-cell $\tau$ of height $j$ the adjunction of $(\tau, c(\tau))$ is an elementary expansion. In this way $X_n^{j+1}$ can be obtained from $X_n^{j}$ by a countable sequence of simultaneous elementary expansions. The same idea, together with Lemma~\ref{lem:MequiMappingCylinder}, can be used to obtain an $M$-homotopy equivalence between $X_n^{j}$ and $X_n^{j+1}$. The details are as follows.

Recall that $X_n^{j+1}$ is obtained from $X_n^j$ by adjoining $(\tau,c(\tau))$ for every redundant $n$-cell $\tau$ of height $j$. It follows from the axioms (A1)-(A6) that this set of pairs $(\tau,c(\tau))$ is a free $M$-set with basis $\{ (\tau, c(\tau)) \in R_n^B \times C_{n+1}^B: \mathrm{height}(\tau)=j \}$. Let $(\tau, c(\tau)) \in R_n^B \times C_{n+1}^B$ with $\mathrm{height}(\tau)=j$, and let $m \in M$. From the assumptions (A1)-(A6) it follows that
\[
m \cdot (\tau,c(\tau)) = (m\tau,c(m\tau)),
\quad
\mbox{and}
\quad
\mathrm{height}(m\tau) = \mathrm{height}(\tau) = j.
\]
The pair
\[
(X_n^{j+1}, X_n^{j+1} - \{ m\tau,c(m\tau) \})
\]
satisfies the conditions of an elementary collapse. Indeed (as argued in the proof of \cite[Proposition~1]{Brown1989}) every face of $c(m\tau)$ other than $m\tau$ is either (i) a redundant cell of height $<j$, (ii) is essential (so has height $0$), or (iii) is collapsible or degenerate. It follows that the adjunction of $m\tau$ and $c(m\tau)$ is an elementary expansion. This is true for every pair $(m \tau, c(m \tau))$ where $(\tau, c(\tau)) \in R_n^B \times C_{n+1}^B$ with $\mathrm{height}(\tau)=j$ and $m \in M$. Now $X_n^{j+1}$ is obtained from $X_n^j$ by adjoining all such pairs $(m \tau, c(m \tau))$.  Let $A = \{ (\tau, c(\tau)) \in R_n^B \times C_{n+1}^B: \mathrm{height}(\tau)=j\}$ and let $M \times A$ denote the free left $M$-set with basis $\{ (1,a): a \in A \}$ and action $m(n,a) = (mn,a)$. Then $(M \times A) \times I^{n+1}$ is a disjoint union of the free $M$-cells $(M \times \{a\}) \times I^{n+1}
$ with $a \in A$.
The characteristic maps for the collapsible $n+1$ cells of height $j$ combine to give an
$M$-equivariant map $\varphi\colon  (M \times A) \times I^{n+1} \rightarrow X_n^{j+1}$ such that
\begin{enumerate}
\item[(E1)] $\varphi$ restricted to the $(m,a) \times I^{n+1}$, $(m \in M, a \in A)$, gives characteristic maps for each of the  collapsible $n+1$ cells of height $j$;
\item[(E2)] $\varphi$ restricted to the $(m,a) \times I^{n}$, $(m \in M, a \in A)$, gives characteristic maps for each $\tau \in R_n$ such that $c(\tau)$ is a collapsible $n+1$ cell of height $j$;
\item[(E3)]  $\varphi((M \times A) \times P^n) \subseteq (X_n^{j+1})^{\leq n}$ (where $(X_n^{j+1})^{\leq n}$ is the subcomplex of $X_n^{j+1}$ of cells of dimension $\leq n$) where
$P^n = cl(\partial I^{n+1} - I^n)$.
\end{enumerate}
Set $\varphi_0 = \varphi|_{(M \times A) \times P^n}$. Then
\[
\varphi_0\colon  (M \times A) \times P^n \rightarrow (X_n^{j+1})^{\leq n}
\]
is a continuous $M$-equivariant cellular map between free $M$-CW complexes $(M \times A) \times P^n$ and $(X_n^{j+1})^{\leq n}$. It follows that $X_n^{j+1}$ is $M$-equivariantly isomorphic to
\begin{equation}\label{eq_MC}
(X_n^{j+1} - \{ (\tau, c(\tau)) \in R_n \times C_{n+1}\colon\mathrm{height}(\tau)=j\})
\coprod_{\varphi_0}
((M \times A) \times I^{n+1}).
\end{equation}
But since $P^n = cl(\partial I^{n+1} - I^n)  \approx I^n$ we conclude that \eqref{eq_MC} is just the mapping cylinder of $\varphi_0$. Thus we can apply Lemma~\ref{lem:MequiMappingCylinder} to obtain a strong deformation retraction $r_j\colon  X_n^{j+1} \rightarrow  X_n^j$ which is also an $M$-homotopy equivalence.  It follows that there is a retraction $r_n\colon  X_n^+ \rightarrow X_n$ which is an $M$-equivariant homotopy equivalence.

We build the space $Y$ such that $|K| \simeq_M Y$ inductively. First set $Y_0 = X_0$. Now suppose that we have an $M$-homotopy equivalence $\pi^{n-1}\colon  X_{n-1}^+ \rightarrow Y_{n-1}$ is given.  Define $Y_n$ to be the the $M$-CW complex $Y_{n-1} \cup (M \times E_n^B)$ where $(M \times E_n^B)$ is a collection of free $M$-cells indexed by $E_n^B$. These free $M$-cells are attached to $Y_n$ by composing with $\pi^{n-1}$ the attaching maps for the essential $n$-cells of $X$. This makes sense because $X_{n-1}^+$ contains the $(n-1)$-skeleton of $X$. Extend $\pi^{n-1}$ to an $M$-homotopy equivalence $\widehat{\pi^{n-1}}\colon X_n \rightarrow Y_n$ in the obvious way. This is possible since $X_{n}$ is obtained from $X_{n-1}^+$ by adjoining the essential $n$-cells. Composing $r_n$ with $\widehat{\pi^{n-1}}$ then gives an $M$-homotopy equivalence $X_n^+ \rightarrow Y_n$. Passing to the union gives the $M$-homotopy equivalence $X \simeq_M Y$ stated in the theorem.
\end{proof}

There is an obvious dual result for right simplicial sets which follows from the above result by replacing $M$ by $M^{op}$. We also have the two-sided version.

\begin{theorem}\label{main_theorem_bi}
Let $K$ be a rigid free bi-$M$-simplicial set with $M$-equivariant collapsing scheme $(E,R,C,c,i)$. Then, with the above notation, there is a free bi-$M$-CW complex $Y$
with $Y \simeq_M |K|$ and such that the cells of $Y$ are in bijective correspondence with $E$,
and under this bijective correspondence
$Y_n$ is a free bi-$M$-set with basis $E_n^B$ for all $n$.
\end{theorem}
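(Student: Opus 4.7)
The plan is to reduce Theorem~\ref{main_theorem_bi} to Theorem~\ref{main_theorem} by the standard device of identifying bi-$M$-objects with left $M\times M^{op}$-objects. As noted in the excerpt just before the statement, a rigid bi-$M$-simplicial set is the same datum as a rigid left $M\times M^{op}$-simplicial set, and by the authors' explicit convention a bi-$M$-equivariant collapsing scheme $(E,R,C,c,i)$ on $K$ is defined to mean that the axioms (A1)--(A6) hold when $K$ is regarded as a left $M\times M^{op}$-simplicial set. Under this identification, free bi-$M$-sets correspond to free left $M\times M^{op}$-sets, free bi-$M$-CW complexes correspond to free left $M\times M^{op}$-CW complexes, and a bi-$M$-homotopy equivalence is the same as an $M\times M^{op}$-homotopy equivalence.

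First I would verify the routine compatibility statements needed for the translation: that the $M\times M^{op}$-action on $|K|$ obtained from Lemma~\ref{lem_induced} (applied with $M\times M^{op}$ in place of $M$) coincides with the bi-$M$-action that makes $|K|$ into a free bi-$M$-CW complex, and that the basis set $B$ postulated in (A4) serves simultaneously as a free $M\times M^{op}$-basis for the $n$-cells. These are immediate from the definitions but should be stated to make the reduction clean.

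Next I would simply invoke Theorem~\ref{main_theorem} verbatim with $M$ replaced by $M\times M^{op}$. This yields a free left $M\times M^{op}$-CW complex $Y$ together with an $M\times M^{op}$-homotopy equivalence $Y\simeq_{M\times M^{op}}|K|$ such that the cells of $Y$ are in bijection with $E$ and, for each $n$, the set of $n$-cells of $Y$ is free as an $M\times M^{op}$-set with basis $E_n^B$. Translating back through the identification, $Y$ is a free bi-$M$-CW complex, the equivalence is a bi-$M$-homotopy equivalence, and the cell structure is exactly as claimed in the theorem.

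There is no real obstacle: the only thing one might worry about is whether the ingredients used inside the proof of Theorem~\ref{main_theorem} (in particular the adjunction-space Lemma~\ref{l:adjunction_free_pushout} and the equivariant mapping cylinder Lemma~\ref{lem:MequiMappingCylinder}) apply with $M\times M^{op}$ in place of $M$; but these lemmas are proved for an arbitrary monoid, and $M\times M^{op}$ is a monoid, so they apply without change. Hence the reduction is complete and no independent argument is required.
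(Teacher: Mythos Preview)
Your proposal is correct and takes essentially the same approach as the paper: the paper's proof is the single sentence ``This follows by applying Theorem~\ref{main_theorem} to the rigid free left $M \times M^{op}$-simplicial set $K$.'' Your additional remarks about compatibility of the identifications and the applicability of the auxiliary lemmas to an arbitrary monoid are sound but more than the paper bothers to spell out.
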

\begin{proof}
This follows by applying Theorem~\ref{main_theorem} to the rigid free left $M \times M^{op}$-simplicial set $K$.
\end{proof}

\section{Guarded collapsing schemes}
\label{sec_guarded}

In this section we introduce the idea of a \emph{left \gu collapsing scheme}. We shall prove that whenever $BM$ admits a left \gu collapsing scheme then $\EMr$ will admit an $M$-equivariant collapsing scheme whose $M$-orbits of cells are in bijective correspondence with the essential cells of the given collapsing scheme for $BM$. Applying Theorem~\ref{main_theorem} it will then follow that when $BM$ admits a left guarded collapsing scheme of finite type then the monoid $M$ is of type left-$\Finfty$. Analogous results will hold for right \gu and right-$\Finfty$, and (two-sided) \gu and bi-$\Finfty$. In later sections we shall give some examples of monoids which admit \gu collapsing schemes of finite type, including monoids with finite complete presentations (rewriting systems), and factorable monoids in the sense of \cite{HessArxiv}.

\begin{definition}[Guarded collapsing schemes]\label{def_guarded}
Let $K = \bigcup_{i \geq 0} K_i$ be a simplicial set and let $X$ be its geometric realisation. We identify the cells of $X$ with the non-degenerate simplices of $K$, and suppose that $K$ admits a collapsing scheme $(E,C,R,c,i)$.
We say that this collapsing scheme is
\begin{itemize}
\item \emph{left \gu} if for every redundant $n$-cell $\tau$ we have $i(\tau) \neq 0$;
\item \emph{right \gu} if for every redundant $n$-cell $\tau$ we have $i(\tau) \neq n+1$;
\item \emph{\gu} if it is both left and right \gud.
\end{itemize}
\end{definition}

In other words, the collapsing scheme is \gu provided the function $i$ never takes either of its two possible extreme allowable values. The aim of this section is to prove the following result.

\begin{theorem}\label{thm_guarded}
Let $M$ be a monoid and suppose that $BM$ admits a collapsing scheme $(E,C,R,c,i)$.
\begin{enumerate}[(a)]
\item If $(E,C,R,c,i)$ is left \gu, then there is an $M$-equivariant collapsing scheme $(\E,\R,\C,\cc,\ii)$ for the free left $M$-simplicial set $\EMr$ such that, for each $n$, the set of essential $n$-cells $\E_n$ is a free left $M$-set of rank $|E_n|$.
\item If $(E,C,R,c,i)$ is right \gu, then there is an $M$-equivariant collapsing scheme $(\E,\R,\C,\cc,\ii)$ for the free right $M$-simplicial set $\EMl$ such that, for each $n$, the set of essential $n$-cells $\E_n$ is a free right $M$-set of rank $|E_n|$.
\item If $(E,C,R,c,i)$ is \gu, then there is an $M$-equivariant collapsing scheme $(\E,\R,\C,\cc,\ii)$ for the free bi-$M$-simplicial set $\EMb$ such that, for each $n$, the set of essential $n$-cells $\E_n$ is a free $M\times M^{op}$-set of rank $|E_n|$.
\end{enumerate}
\end{theorem}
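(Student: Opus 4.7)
The plan for part (a) is to pull the given collapsing scheme back along the projection $\pi\colon\EMr\to BM$, exploiting that $\EMr_n$ is a free left $M$-set with basis the set $BM_n$ of $n$-simplices of $BM$. I would take $B=BM$ and declare an $n$-cell $m\tau$ of $\EMr$ (with $m\in M$, $\tau\in BM_n$) to be essential, redundant, or collapsible according to the class of $\tau$ under the given scheme. For a redundant $n$-cell $m\tau$, set $\cc(m\tau)=m\cdot c(\tau)$ and $\ii(m\tau)=i(\tau)$. A direct computation from the face-map formulas shows that for a basic $(n+1)$-simplex $\sigma=(n_1,\ldots,n_{n+1})$ one has $d_j(m\cdot\sigma)=m\cdot d_j(\sigma)$ for every $j\ge1$, whereas $d_0$ shifts the basepoint: $d_0(m\cdot\sigma)=(mn_1)\cdot d_0(\sigma)$. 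The left-guardedness hypothesis $i(\tau)\ne0$ thus yields
\[
d_{\ii(m\tau)}(\cc(m\tau))=d_{i(\tau)}(m\cdot c(\tau))=m\cdot d_{i(\tau)}(c(\tau))=m\tau,
\]
so the matching is well defined. Axioms (A1)--(A5) and the bijectivity condition (C1) follow at once from the construction together with the fact that $c$ is a bijection $R_n\to C_{n+1}$.

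The main work is verifying the height-invariance axiom (A6) and the finite descending chain condition (C2). To this end I would establish the stronger statement that $\mathrm{height}_{\EMr}(m\tau)=\mathrm{height}_{BM}(\tau)$ for every redundant cell $\tau$ of $BM$ and every $m\in M$. The immediate predecessors of $m\tau$ in $\EMr$ are, by the above formulas, exactly the redundant cells among $m\cdot d_j(c(\tau))$ for $j\ge1$ with $j\ne i(\tau)$ together with $mn_1\cdot d_0(c(\tau))$ (the last being permitted precisely because $i(\tau)\ne0$). In every case $\pi$ carries such a predecessor to an immediate predecessor of $\tau$ in $BM$; conversely, each immediate predecessor $\tau'\prec_{BM}\tau$ has a unique lift to an immediate predecessor of $m\tau$ in $\EMr$, determined by the unique $j$ with $\tau'=d_j(c(\tau))$ and by the corresponding basepoint-shift (either none, or by $n_1$ when $j=0$). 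A pair of inductions, one on each height, then yields the claimed equality, from which (A6) follows immediately and (C2) is automatic since $BM$ admits no infinite descending chains. The rank count $|\E_n^B|=|E_n|$ is then clear by construction.

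For part (b) I would apply part (a) to the opposite monoid $M^{op}$, noting that $\EMl$ for $M$ is the nerve of the right Cayley graph category of $M^{op}$ and that right-guardedness for $M$ corresponds to left-guardedness for $M^{op}$. For part (c), the identical construction works with $\cc(m\tau s)=m\cdot c(\tau)\cdot s$ and $\ii(m\tau s)=i(\tau)$ in $\EMb$; the face map $d_j$ on a basic $(n+1)$-simplex of $\EMb$ preserves both basepoints exactly when $0<j<n+1$, with $d_0$ shifting the left basepoint and $d_{n+1}$ the right basepoint, so the two-sided guardedness hypothesis is precisely what is needed for $d_{\ii(m\tau s)}(\cc(m\tau s))=m\tau s$. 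The remainder of the argument (heights, (C2), the axiom verification) transplants verbatim after replacing $M$ by $M\times M^{op}$. The hard part throughout is controlling the basepoint-shifting caused by $d_0$ (and also by $d_{n+1}$ in the two-sided case): guardedness handles the matched face, while the height analysis handles all the unmatched ones via the single observation that $\pi$ is order-preserving on the predecessor relation and that every $BM$-step lifts uniquely to an $\EMr$-step.
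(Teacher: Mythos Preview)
Your proposal is correct and follows essentially the same approach as the paper: lift the scheme along $\pi$ by setting $\cc(m\tau)=m\,c(\tau)$ and $\ii(m\tau)=i(\tau)$, use left-guardedness exactly once to get $d_{\ii(m\tau)}(\cc(m\tau))=m\tau$, and control heights by comparing descending chains in $\EMr$ with their $\pi$-images in $BM$.  The paper packages the height and (C2) arguments in terms of a bipartite digraph homomorphism $\pi\colon\Gamma(\EMr)\to\Gamma(BM)$ together with a path-lifting lemma, whereas you argue directly by chain-projection and chain-lifting; these are the same argument in different clothing.

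Two small remarks.  First, your claim that each $\tau'\prec_{BM}\tau$ has a \emph{unique} lift, determined by ``the unique $j$ with $\tau'=d_j(c(\tau))$'', is not quite right: the index $j$ need not be unique, and distinct indices (one of which is $0$) can produce lifts with different basepoints.  Fortunately only \emph{existence} of a lift is needed to bound $\mathrm{height}_{BM}(\tau)\le\mathrm{height}_{\EMr}(m\tau)$, so this does not damage the argument.  Second, your ordering ``height equality, hence (A6), hence (C2)'' is slightly circular, since heights in $\EMr$ are only well defined once (C2) is known; the clean order is to first observe that $\pi$ carries descending chains to descending chains (giving (C2) directly from (C2) for $BM$), and only then compare heights.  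The paper does it in this order.  With those two cosmetic fixes your proof goes through, and parts (b) and (c) are handled as you indicate.
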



\begin{corollary}
\label{cor_main}
Let $M$ be a monoid and let $|BM|$ be its classifying space.
\begin{enumerate}[(a)]
\item If $BM$ admits a left \gu collapsing scheme of finite type then $M$ is of type left $\Finfty$ (and therefore also is of type left-$\FPinfty$).
\item If $BM$ admits a right \gu collapsing scheme of finite type then $M$ is of type right-$\Finfty$ (and therefore also is of type right-$\FPinfty$).
\item If $BM$ admits a \gu collapsing scheme of finite type then $M$ is of type bi-$\Finfty$ (and therefore also is of type bi-$\FPinfty$).
\end{enumerate}
\end{corollary}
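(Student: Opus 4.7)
The plan is to deduce this corollary by combining Theorem~\ref{thm_guarded} with Theorem~\ref{main_theorem} (and its two-sided analogue Theorem~\ref{main_theorem_bi}), and then using the contractibility properties of the canonical classifying complexes $|\EMr|$, $|\EMl|$, $|\EMb|$ established in Section~\ref{sec_cspaces}.

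For part (a), starting from a left guarded collapsing scheme for $BM$ with $|E_n| < \infty$ for each $n$, Theorem~\ref{thm_guarded}(a) produces an $M$-equivariant collapsing scheme on the rigid free left $M$-simplicial set $\EMr$ whose essential $n$-cells form a free left $M$-set of rank $|E_n|$. Applying Theorem~\ref{main_theorem} then yields a free left $M$-CW complex $Y$ with $Y \simeq_M |\EMr|$ and with $Y_n$ free on a finite basis $\E_n^B$. Since an $M$-homotopy equivalence forgets to an ordinary homotopy equivalence, and $|\EMr|$ is contractible (as the nerve of a category with an initial object), $Y$ is contractible. Hence $Y$ is a left equivariant classifying space for $M$ of $M$-finite type, witnessing that $M$ is of type left-$\Finfty$; the implication to left-$\FPinfty$ is then Proposition~\ref{p:basic.props.fn}.

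Parts (b) and (c) go through by the same pattern. Part (b) is just part (a) applied to $M^{op}$, using Theorem~\ref{thm_guarded}(b) together with the contractibility of $|\EMl|$. For (c), I would apply Theorem~\ref{thm_guarded}(c) and Theorem~\ref{main_theorem_bi} to produce a free bi-$M$-CW complex $Y$ with $Y \simeq_{M \times M^{op}} |\EMb|$ and $Y_n$ a free $M \times M^{op}$-set on a finite basis of cardinality $|E_n|$. The bi-equivariant homotopy equivalence induces an $M \times M^{op}$-equivariant bijection $\pi_0(Y) \cong \pi_0(|\EMb|) \cong M$ together with an ordinary homotopy equivalence on corresponding components, so every component of $Y$ is contractible because every component of $|\EMb|$ is. Thus $Y$ is a bi-equivariant classifying space of $M \times M^{op}$-finite type, giving $M$ of type bi-$\Finfty$, and the bi-$\FPinfty$ consequence follows from Proposition~\ref{p:basic.props.bi.fn}.

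There is no substantive obstacle here: all the real work has already been done in Theorem~\ref{thm_guarded} (where the guardedness condition is used to ensure the lifted matching on the Cayley complex is well defined and $M$-equivariant) and in Theorems~\ref{main_theorem} and~\ref{main_theorem_bi} (where the homotopy type of the collapsed complex is identified). The corollary is essentially a packaging step, and the only thing to verify carefully is that the bi-equivariant classifying space hypotheses (correct $\pi_0$ as an $M \times M^{op}$-set, and contractibility of each component) are preserved under the $M \times M^{op}$-homotopy equivalence provided by Theorem~\ref{main_theorem_bi}.
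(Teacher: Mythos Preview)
Your proposal is correct and takes essentially the same approach as the paper. The paper's own proof is a one-line citation of Theorem~\ref{main_theorem}, its dual, Theorem~\ref{main_theorem_bi}, and Theorem~\ref{thm_guarded}; you have simply unpacked this citation, and your verification that the $M\times M^{op}$-homotopy equivalence preserves the bi-equivariant classifying space conditions (correct $\pi_0$ and contractible components) is exactly the small check one would want to make explicit.
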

\begin{proof}
This follows directly from Theorem~\ref{main_theorem} and its dual, and Theorems~\ref{main_theorem_bi} and \ref{thm_guarded}.
\end{proof}

We shall give some examples of monoids to which this corollary applies in the next section.

The rest of this section will be devoted to the proof of Theorem~\ref{thm_guarded}. Clearly part (b) of the theorem is dual to part (a). The proofs of parts (a) and (c) are very similar, the only difference being that the stronger \gu condiion is needed for (c), while only left \gu is needed for (a). We will begin by giving full details of the proof of Theorem~\ref{thm_guarded}(a). Then afterwards we will explain the few modifications in the proof necessary to obtain the two-sided proof for (c), in particular highlighting the place where the \gu condition is needed.

\subsection{Proof of Theorem~\ref{thm_guarded}(a)}

Let $(E,C,R,c,i)$ be a left \gu collapsing scheme for $BM$. We can now `lift' this collapsing scheme to a collapsing scheme $(\E,\R,\C,\cc,\ii)$ for the simplicial set $\EMr$ in the following natural way. First observe that
\begin{eqnarray*}
&                 & m(m_1, m_2, ...,m_n) \ \mbox{is an $n$-cell of $EM$} \\
& \Leftrightarrow & m_i \neq 1 \ \mbox{for all $1 \leq i \leq n$} \\
& \Leftrightarrow & (m_1, m_2, ...,m_n) \ \mbox{is an $n$-cell of $BM$.}
\end{eqnarray*}

Define an $n$-cell $m(m_1, m_2, ...,m_n)$ of $\EMr$ to be essential (respectively redundant, collapsible respectively) if and only if $(m_1, m_2, ...,m_n)$ is essential (respectively redundant, collapsible respectively) in the collapsing scheme  $(E,R,C,c,i)$. This defines the partition $(\E,\R,\C)$ of the $n$-cells of $\EMr$ for each $n$. We call these sets the essential, redundant and collapsible cells, respectively, of $\EMr$. For the mappings $\cc$ and $\ii$, given $m \tau \in \EMr$ where $\tau = (m_1, ..., m_n)$ is a redundant $n$-cell of $BM$ we define
\begin{eqnarray}
\ii (m \tau) & = & i(\tau) \\
\cc (m \tau) & = & m (c(\tau)).
\end{eqnarray}
We claim that $(\E,\R,\C,\cc,\ii)$ is an $M$-equivariant collapsing scheme for the free left $M$-simplicial set $\EMr$ such that, for each $n$, the set of essential $n$-cells $\E_n$ is a free left $M$-set of rank $|E_n|$. Once proved this will complete the proof of Theorem~\ref{thm_guarded}(a).

We begin by proving that $(\E,\R,\C,\cc,\ii)$ is a collapsing scheme for $\EMr$, and then we will verify that all of the conditions (A1) to (A6) are satisfied.

\subsubsection{Proving that $(\E,\R,\C,\cc,\ii)$ is a collapsing scheme}

\begin{proposition}
\label{prop_CS}
With the above definitions,
$(\E,\R,\C,\cc,\ii)$ is a collapsing scheme for the simplicial set $\EMr$.
\end{proposition}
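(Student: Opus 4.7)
The plan is to verify the two conditions (C1) and (C2) for a collapsing scheme, together with the well-definedness of $\cc$ and $\ii$, exploiting the fact that the $n$-cells of $\EMr$ form a free left $M$-set with basis the $n$-cells of $BM$ (so each cell has a unique expression $m\tau$ with $\tau \in BM_n$ non-degenerate). Under this unique expression, the partition $\E_n \sqcup \R_n \sqcup \C_n$ is defined orbit-by-orbit, and the definitions $\cc(m\tau) = m \cdot c(\tau)$ and $\ii(m\tau) = i(\tau)$ are unambiguous.

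The first substantive check is that $m\tau = d_{\ii(m\tau)}(\cc(m\tau))$, i.e., $d_{i(\tau)}(m \cdot c(\tau)) = m\tau$. For face indices $j \geq 1$, the face maps in $\EMr$ commute with left $M$-translation in the obvious way, so $d_j(m\sigma) = m \cdot d_j(\sigma)$ for any $(n+1)$-simplex $\sigma$ of $BM$, and hence $d_{i(\tau)}(m \cdot c(\tau)) = m \cdot d_{i(\tau)}(c(\tau)) = m \cdot \tau$. The face $d_0$ is the exception: writing $c(\tau) = (n_1, \ldots, n_{n+1})$, one has $d_0(m \cdot c(\tau)) = mn_1 \cdot (n_2, \ldots, n_{n+1})$, which in general does \emph{not} equal $m\tau$. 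This is precisely the place where the left-guarded hypothesis $i(\tau) \neq 0$ is invoked, and it is the main technical point of the proof.

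Once $\cc$ and $\ii$ are correctly defined, condition (C1) is immediate: the bijection $c : R_n \to C_{n+1}$ extends $M$-equivariantly to a bijection $\R_n \to \C_{n+1}$ because both are free $M$-sets on the corresponding bases. For (C2), I will show that the immediate-predecessor relation in $\EMr$ projects onto the immediate-predecessor relation in $BM$. Suppose $\tau' \prec m\tau$ in $\EMr$, so $\tau' = d_j(m \cdot c(\tau))$ for some $j \neq i(\tau)$, and $\tau'$ is redundant. If $j \geq 1$, then $\tau' = m \cdot d_j(c(\tau))$, and non-degeneracy of $\tau'$ forces $d_j(c(\tau))$ to be a redundant cell of $BM$ satisfying $d_j(c(\tau)) \prec \tau$. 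If $j = 0$, then $\tau' = (mn_1) \cdot d_0(c(\tau))$, and again $d_0(c(\tau)) \prec \tau$ in $BM$ (since $0 \neq i(\tau)$ by left-guardedness, so the index $0$ is a legitimate "other face" index). Hence any infinite descending chain in $\EMr$ would project to an infinite descending chain in $BM$, contradicting condition (C2) of the original collapsing scheme.

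The main obstacle is the face-index $j=0$ case, both for the verification that $d_{\ii(m\tau)}(\cc(m\tau)) = m\tau$ and for the descending-chain argument; in both places the left-guarded hypothesis is what rescues the argument from the asymmetric behaviour of $d_0$ in $\EMr$. Everything else is bookkeeping built on the freeness of the $M$-action on $\EMr_n$ and the functoriality of the nerve construction.
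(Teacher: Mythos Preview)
Your proof is correct and follows essentially the same strategy as the paper: verify $d_{\ii(m\tau)}(\cc(m\tau)) = m\tau$ using the left-guarded hypothesis to avoid the anomalous $d_0$, check that $\cc$ is a bijection by $M$-equivariance and freeness, and then show that any $\prec$-chain in $\EMr$ projects under $\pi$ to a $\prec$-chain in $BM$.

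The only difference is packaging. You argue (C2) directly with the $\prec$ relation, handling the cases $j \geq 1$ and $j = 0$ separately and observing that in both cases $\pi(\tau') \prec \tau$ in $BM$. The paper instead introduces bipartite digraphs $\Gamma^{(n)}(BM)$ and $\Gamma^{(n)}(\EMr)$ encoding the up-arcs $\tau \to c(\tau)$ and down-arcs $c(\tau) \to \tau'$, proves that $\pi$ is a digraph homomorphism, and concludes by the trivial observation that digraph homomorphisms preserve infinite paths. The content is identical, since a $\prec$-chain of length $k$ is exactly an alternating up/down path of length $2k$ in the digraph. The paper's extra formalism is not wasted, however: the digraph framework is reused to establish a path-lifting lemma needed to verify axiom (A6), namely that $\mathrm{height}_{\EMr}(m\tau) = \mathrm{height}_{BM}(\tau)$. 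Your direct argument gives only the inequality $\mathrm{height}_{\EMr}(m\tau) \leq \mathrm{height}_{BM}(\tau)$; to get equality you would need to lift chains as well as project them, which is where the digraph picture becomes convenient.
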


We have already observed that $(\E,\R,\C)$ partitions the cells of $\EMr$. To complete the proof of the proposition we need to verify that the conditions (C1) and (C2) in the definition of collapsing scheme are both satisfied. For part of the proof we will find it useful to recast the ideas in terms of certain bipartite digraphs.
The idea of viewing collapsing schemes in terms of matchings in bipartite graphs is a natural one and has been used in the literature; see for example \cite{Chari2000}.
Let us now introduce the terminology and basic observations about digraphs that we shall need.

A \emph{directed graph} $D$ consists of: a set of edges $ED$, a set of vertices $VD$ and functions $\alpha$ and $\beta$ from $ED$ to $VD$. For $e \in E$ we call $\alpha(e)$ and $\beta(e)$ the initial and terminal vertices of the directed edge $e$. A \emph{directed path} of length $n$ in $D$ is a sequence of edges $e_1 e_2 \ldots e_n$ such that $\beta(e_i) = \alpha(e_{i+1})$ for each directed edge. Note that edges in paths are allowed to repeat, and vertices can also repeat in the sense that $\beta(e_i) = \beta(e_j)$ is possible for distinct $i$ and $j$ (in graph theory literature what we call a path here is often called a walk). By an \emph{infinite directed path} we mean a path $(e_i)_{i \in \mathbb{N}}$. Note that an infinite directed path need not contain infinitely many distinct edges. For example, if a digraph contains a directed circuit $e_1 e_2 e_3$ then $e_1 e_2 e_3 e_1 e_2 e_3 \ldots$ would be an infinite directed path. A \emph{bipartite digraph} $D$ with bipartition $VD_1 \cup VD_2$ has vertex set  $VD = VD_1 \cup VD_2$ where $VD_1$ and $VD_2$ are disjoint, such that for every $e \in ED$ we have either $\alpha(e) \in VD_1$ and $\beta(e) \in VD_2$, or $\alpha(e) \in VD_2$ and $\beta(e) \in VD_1$ (i.e., there are no directed edges between vertices in the same part of the bipartition). A \emph{homomorphism} $\varphi\colon  D \rightarrow D'$ between digraphs is a map $\varphi\colon  (VD \cup ED) \rightarrow (VD' \cup ED')$ which maps vertices to vertices, edges to edges, and satisfies $\alpha (\varphi(e)) = \varphi(\alpha(e))$ and $\beta (\varphi(e)) = \varphi(\beta(e))$. If $p = e_1 e_2 \ldots e_n$ is a path of length $n$ in $D$ and $\varphi\colon  D \rightarrow D'$ is a digraph homomorphism then we define $\varphi(p) = \varphi(e_1) \varphi(e_2) \ldots \varphi(e_n)$ which is a path of length $n$ in $D'$. Note that in general a homomorphism is allowed to map two distinct vertices (resp. edges) of $D$ to the same vertex (resp. edge) of $D'$. Since digraph homomorphisms map paths to paths, we have the following basic observation.

\begin{Lemma}
\label{lem_easy}
Let $\varphi\colon D \rightarrow D'$ be a homomorphism of directed graphs. If $D$ has an infinite directed path than $D'$ has an infinite directed path.
\end{Lemma}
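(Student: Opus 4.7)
The plan is essentially a one-line verification that falls straight out of the preceding definitions, so the main task is just to record carefully why the homomorphism condition passes from finite paths to infinite ones. First I would suppose that $D$ contains an infinite directed path $p = e_1 e_2 e_3 \cdots$ with $\beta(e_i) = \alpha(e_{i+1})$ for all $i \geq 1$. I would then set $e'_i = \varphi(e_i)$ for each $i \in \mathbb{N}$, which is an edge of $D'$ by the definition of a homomorphism.

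Next I would verify the consecutive compatibility condition: since $\varphi$ commutes with $\alpha$ and $\beta$, we have
\[
\beta(e'_i) = \beta(\varphi(e_i)) = \varphi(\beta(e_i)) = \varphi(\alpha(e_{i+1})) = \alpha(\varphi(e_{i+1})) = \alpha(e'_{i+1})
\]
for every $i \geq 1$. Hence the sequence $e'_1 e'_2 e'_3 \cdots$ is an infinite directed path in $D'$, as required.

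There is no real obstacle here: the lemma is a direct consequence of the fact (already noted in the paragraph preceding the statement) that digraph homomorphisms send paths of length $n$ to paths of length $n$, applied levelwise and assembled into an infinite sequence. The only thing worth emphasising in the write-up is that we do not need distinctness of the edges $e_i$ (an infinite directed path may traverse a directed circuit repeatedly), and correspondingly the $e'_i$ need not be distinct either — this is harmless since the definition of an infinite directed path given earlier in the excerpt does not require injectivity.
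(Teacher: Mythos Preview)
Your proof is correct and is precisely the direct verification the paper has in mind: the paper does not give a proof at all, treating the lemma as an immediate consequence of the observation that digraph homomorphisms send paths to paths. Your careful unpacking of the compatibility condition $\beta(e'_i)=\alpha(e'_{i+1})$ is exactly what underlies that remark.
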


For each $n \in \mathbb{N}$ let $\Gamma^{(n)}(BM)$ be the directed bipartite graph defined as follows. The vertex set $V\Gamma^{(n)}(BM) = \mathcal{C}_n \cup \mathcal{C}_{n+1}$ where $\mathcal{C}_i$ denotes the set of $i$-cells $BM$. The directed edges $EV$ of $V$ are are of two types:

\

\noindent (DE1) A directed edge $\tau \longrightarrow d_j(\tau)$ (with initial vertex $\tau$ and terminal vertex $d_j(\tau)$) for every collapsible $\tau \in \mathcal{C}_{n+1}$ and $j \in \{0, \ldots, n+1\}$ such that $d_j(\tau)$ is a redundant $n$-cell (i.e., is a redundant non-degenerate $n$-simplex) and either $c(d_j(\tau))\neq \tau$ or $c(d_j(\tau))=\tau$ but $j\neq i(d_j(\tau))$;

\

\noindent (DE2) A directed edge $\sigma \longrightarrow c(\sigma)$ (with initial vertex $\sigma$ and terminal vertex $c(\sigma)$) for every redundant $\sigma \in \mathcal{C}_n$.

\

We sometimes refer to these two types of directed arcs as the ``down-arcs'' and the ``up-arcs'' (respectively) in the bipartite graph. Note that condition (C2) in the collapsing scheme definition is equivalent to saying that the digraph $D(n,n+1)$ does not contain any infinite directed path, and in particular does not contain any directed cycles. Let $\Gamma^{(n)}(\EMr)$ be the corresponding directed bipartite graph defined in the same way using $(\E,\R,\C,\cc,\ii)$, with vertex set the $n$ and $n+1$ cells of $\EMr$ and directed edges determined by the maps $\cc$ and $\ii$. To simplify notation, let us fix $n \in \mathbb{N}$ and set $\Gamma(BM) = \Gamma^{(n)}(BM)$ and $\Gamma(EM) = \Gamma^{(n)}(EM)$.

\begin{Lemma}
\label{lem_digraphhom}
Let $\pi\colon V\Gamma(\EMr) \cup E\Gamma(\EMr) \rightarrow V\Gamma(BM) \cup E\Gamma(BM)$ be defined on vertices by:
\[
m(m_1,\ldots,m_k) \mapsto (m_1,\ldots,m_k) \ (k \in \{n,n+1\})
\]
and defined on edges (DE1) and (DE2) by $\pi(x \rightarrow y) = \pi(x) \rightarrow \pi(y)$. Then $\pi$ is a digraph homomorphism.
\end{Lemma}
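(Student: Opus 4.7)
The plan is to verify directly the two requirements to be a digraph homomorphism: that $\pi$ sends vertices to vertices (immediate from its definition on $n$- and $(n+1)$-simplices) and sends edges to edges respecting incidences, with the incidence condition $\alpha(\pi(e))=\pi(\alpha(e))$ and $\beta(\pi(e))=\pi(\beta(e))$ being built into the definition $\pi(x\to y)=\pi(x)\to\pi(y)$. The only nontrivial point is therefore to confirm that for every edge $e$ of $\Gamma(\EMr)$, the pair $\pi(\alpha(e))\to\pi(\beta(e))$ is actually an edge of $\Gamma(BM)$.

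First I would record the fundamental compatibility between $\pi$ and the face maps: for $m\tau=m(m_1,\ldots,m_k)$ a simplex of $\EMr$, one has $\pi(d_j(m\tau))=d_j\tau$ for every $0\le j\le k$. For $j\ge 1$ this is immediate since $d_j$ leaves the $m$-prefix unchanged, and for $j=0$ one computes $d_0(m\tau)=mm_1(m_2,\ldots,m_k)$, whose image under $\pi$ is $(m_2,\ldots,m_k)=d_0\tau$; the absorption of $m_1$ into the prefix is invisible to $\pi$. Note also that $d_j(m\tau)$ is non-degenerate if and only if $d_j\tau$ is, so cell-hood is preserved as well.

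Next I would dispatch the two edge types. For a (DE2) edge $\sigma'\to\cc(\sigma')$, write $\sigma'=m\sigma$ with $\sigma$ redundant in $BM$, so that by construction $\pi(\sigma')=\sigma$ is redundant in $BM$ and $\pi(\cc(\sigma'))=\pi(m\cdot c(\sigma))=c(\sigma)$; hence $\pi$ sends this to the (DE2) edge $\sigma\to c(\sigma)$ of $\Gamma(BM)$. For a (DE1) edge $m\tau\to d_j(m\tau)$, the cell $m\tau$ is collapsible in $\EMr$ precisely when $\tau$ is collapsible in $BM$, and $d_j(m\tau)$ is redundant precisely when $d_j\tau$ is. It remains to verify the side condition on $c$ and $i$. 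Writing $d_j(m\tau)=m'(d_j\tau)$ where $m'$ equals $m$ for $j\ge 1$ and $mm_1$ for $j=0$, we have $\cc(d_j(m\tau))=m'\cdot c(d_j\tau)$ and $\ii(d_j(m\tau))=i(d_j\tau)$ by construction. If $c(d_j\tau)\neq\tau$ in $BM$, then projecting via $\pi$ gives $c(d_j\tau)\neq \tau$, the first alternative of the side condition for (DE1); otherwise $c(d_j\tau)=\tau$, and the hypothesis on the $\EMr$ edge forces $j\neq\ii(d_j(m\tau))=i(d_j\tau)$, which is the second alternative for (DE1) in $\Gamma(BM)$.

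I do not anticipate a real obstacle here; the only delicate point to guard against is the peculiar behaviour of $d_0$ in $\EMr$, and this turns out to be completely harmless once one observes that $\pi$ forgets the prefix. The argument is essentially bookkeeping verifying that the definitions of $\E,\R,\C,\cc,\ii$ were engineered so that $\pi$ is a homomorphism by construction.
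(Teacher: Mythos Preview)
Your proposal is correct and follows essentially the same route as the paper: split into the two edge types (DE1) and (DE2), use that $\pi$ is a simplicial morphism so $\pi\circ d_j=d_j\circ\pi$, and then verify the side condition for (DE1) edges. One small point deserves care: in the case $c(d_j\tau)=\tau$ you assert that the $\EMr$ edge hypothesis \emph{forces} $j\neq\ii(d_j(m\tau))$, but for $j=0$ you have $\cc(d_0(m\tau))=mm_1\tau$, which need not equal $m\tau$, so the first alternative of the $\EMr$ side condition could hold and you cannot directly extract $j\neq\ii$. This is harmless, however, because the ambient left-guarded hypothesis gives $i(d_0\tau)\neq 0$ outright, so the (DE1) side condition in $\Gamma(BM)$ holds automatically when $j=0$; the paper's own argument glosses over the same point (it implicitly uses $d_j(m\tau)=m\,d_j\tau$, which also fails at $j=0$).
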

%
%
%
\begin{proof}
We need to prove, for each directed edge $x \rightarrow y$ from $E\Gamma(\EMr)$, that $\pi(x) \rightarrow \pi(y)$ is a directed edge in $E\Gamma(BM)$. There are two cases that need to be considered depending on arc type. The two arc types depend on whether the arc is going downwards from level $n+1$ to level $n$ (arc type 1) or
upwards from level $n$ to level $n+1$ (arc type 2).

\

\noindent \textbf{Case: Arc type 1.} Let $m(m_1,\ldots,m_{n+1})$ be a collapsible $n+1$ cell in $\EMr$ and let $j \in \{0, \ldots, n+1\}$ and suppose that
\[
m(m_1,\ldots,m_{n+1}) \longrightarrow d_j(m(m_1,\ldots,m_{n+1}))
\]
is an arc in $\Gamma(\EMr)$. This means that $d_j(m(m_1,\ldots,m_{n+1})) $ is a redundant $n$-cell in $\EMr$ and  if $\kappa(d_j(m(m_1,\ldots, m_{n+1})))= m(m_1,\ldots, m_{n+1})$, then $j \neq \iota(d_j(m(m_1,\ldots,m_{n+1})))$. Note that $j=0$ or $j=n+1$ is possible here.
We claim that
\[
\pi(m(m_1,\ldots,m_{n+1})) \longrightarrow \pi(d_j(m(m_1,\ldots,m_{n+1})))
\]
in $\Gamma(BM)$. Indeed, we saw above in Section~\ref{sec_cspaces} that the projection $\pi\colon  \EMr \rightarrow BM$ is a simplicial morphism with
\[
\pi(d_j(m(m_1,\ldots,m_{n+1}))) = d_j(m_1,\ldots,m_{n+1}) = d_j(\pi(m(m_1,\ldots,m_{n+1}))).
\]
Therefore
\begin{align*}
& \phantom{= \;} \; \pi(m(m_1,\ldots,m_{n+1})) \\
& =
(m_1,\ldots,m_{n+1})
\rightarrow
d_j(m_1,\ldots,m_{n+1}) \\
& =
\pi(d_j(m(m_1,\ldots,m_{n+1}))),
\end{align*}
in $\Gamma(BM)$, since if $c(d_j(m_1,\ldots, m_{n+1}))=(m_1,\ldots, m_{n+1})$, then
\[\kappa(d_j(m(m_1,\ldots, m_{n+1})))=\kappa(md_j(m_1,\ldots, m_{n+1})) = mc(d_j(m_1,\ldots, m_{n+1})) = m(m_1,\ldots, m_{n+1})\] and hence by definition of $\iota$ we have
\[
j \neq \iota(d_j(m(m_1,\ldots,m_{n+1}))) = i(d_j(m_1,\ldots,m_{n+1})).
\]

\

\noindent \textbf{Case: Arc type 2.} These are the arcs that go up from level $n$ to level $n+1$. A typical such arc arises as follows. Let $m\sigma \in \mathcal{C}_n$ be a redundant cell in $\EMr$ where $\sigma$ is a redundant cell in $BM$ and $m \in M$. Then $\kappa(m\sigma) \in \mathcal{C}_{n+1}$ is collapsible and
\[
d_{\iota(m\sigma)}(\kappa(m\sigma)) = m\sigma.
\]
A typical type 2 arc has the form
\[
m\sigma \longrightarrow \kappa(m\sigma) = m(c(\sigma)),
\]
by definition of $\kappa$. Applying $\pi$ to this arc gives
\[
\pi(m\sigma) = \sigma \longrightarrow c(\sigma) = \pi(m(c(\sigma))),
\]
which is a type 2 arc in $\Gamma(BM)$ completing the proof of the lemma.
\end{proof}

%

\begin{proof}[Proof of Proposition~\ref{prop_CS}.]
We must check that $(\E,\R,\C,\cc,\ii)$ satisfies the two collapsing scheme conditions (C1) and (C2).

\

\noindent \textbf{Verifying collapsing scheme condition (C1).} We must prove that the map $\cc$ defines a bijection from the redundant $n$-cells to the collapsible $n+1$-cells.

The map is injective since $\cc(m\tau) = \cc(m'\sigma)$ implies $m c(\tau) = m' c(\sigma)$ so $m=m'$ and $\tau$ = $\sigma$ since $c$ is injective by assumption. Also, given an arbitrary collapsible $n+1$ cell $m \sigma$ there exists $\sigma = c(\tau)$ where $\tau$ is a redundant $n$-cell an so $m \sigma = \cc(m \tau)$.

Moreover, for every redundant $n$-cell $m\tau$, we have
\begin{eqnarray*}
d_{\ii (m\tau)} (\cc (m\tau)) & = & d_{i(\tau)} (mc(\tau)) \ \mbox{(by definition)} \\
& = & md_{i(\tau)} (c(\tau)) \ \mbox{(since $i(\tau) \neq 0$)} \\
& = & m\tau,
\end{eqnarray*}
and this concludes the proof that collapsing scheme condition (C1) holds.

Note that it is in the second line of the above sequence of equations that we appeal to our assumption that $(E,C,R,c,i)$ is left \gu (which implies $i(\tau) \neq 0$). In fact, this will be the only place in the proof of Theorem~\ref{thm_guarded}(a) that the left \gu assumption is used.

\

\noindent \textbf{Verifying collapsing scheme condition (C2).} To see that collapsing scheme condition (C2) holds let $\Gamma(BM)$ and $\Gamma(\EMr)$ be the level ($n$, $n+1$) bipartite digraphs of $BM$ and $\EMr$, respectively, defined above.
By Lemma~\ref{lem_digraphhom} the mapping $\pi$ defines a digraph homomorphism from $\Gamma(\EMr)$ to $\Gamma(BM)$.
If $\Gamma(\EMr)$ contained an infinite directed path then by Lemma~\ref{lem_easy} the image of this path would be an infinite directed path in $\Gamma(BM)$ which is impossible since $(E,R,C,c,i)$ is a collapsing scheme. Therefore $\Gamma(\EMr)$ contains no infinite path and thus $(\E,\R,\C,\cc,\ii)$ satisfies condition (C2).

This completes the proof of Proposition~\ref{prop_CS}.
\end{proof}

\begin{remark}\label{rem_downarcs}
It follows from Proposition~\ref{prop_CS} that every down arc in $\Gamma(\EMr)$ (that is, every arc of type (DE1)) is of the form $\tau \rightarrow d_j(\tau)$ where $\tau \in \C_{n+1}$, $d_j(\tau) \in \R_n$, and $\kappa(d_j(\tau)) \neq \tau$.   
\end{remark}

\subsubsection{Proving $(\E,\R,\C,\cc,\ii)$ is a left $M$-equivariant collapsing scheme}

To prove that $(\E,\R,\C,\cc,\ii)$ is a left $M$-equivariant collapsing scheme for $\EMr$ we need to verify that conditions (A1)--(A6) hold. In Section~\ref{sec_cspaces} we proved that $\EMr$ is a rigid free left $M$-simplicial set. In addition to this, from the definitions an $n$-cell $m(m_1, m_2, ...,m_n)$ of $\EMr$ essential (redundant, collapsible respectively) if and only if $(m_1, m_2, ...,m_n)$ is essential (redundant, collapsible respectively) in the collapsing scheme  $(E,R,C,c,i)$ of $BM$. These facts prove that (A1) and (A2) both hold:
\begin{enumerate}
\item[(A1)] The action of $M$ on $\EMr$ maps $n$-simplicies to $n$-simplicies, and
commutes with $d_i$ and $s_i$, that is, $M$ is acting by simplicial morphisms on $\EMr$.
\item[(A2)] For every $n$-simplex $\sigma$ and $m \in M$, $\sigma$ is a cell (i.e. is a non-degenerate simplex) if and only if $m\sigma$ is a cell, in which case $\sigma \in \E$ (respectively $\R$, $\C$) if and only if $m\sigma \in \E$ (respectively $\R$, $\C$).
\end{enumerate}

\

\noindent The next axiom we need to verify is:

\

\begin{enumerate}
\item[(A3)] If $(\sigma,\tau) \in \R_n \times \C_{n+1}$ is a matched redundant-collapsible pair (i.e. $\tau = c(\sigma)$) then so is the pair $m(\sigma,\tau) = (m\sigma,m\tau) \in \R_n \times \C_{n+1}$.
\end{enumerate}

\

\noindent We shall prove a little more than this. We consider the bipartite digraph $\Gamma(\EMr)$ between levels $n$ and $n+1$ defined above. We want to prove that $M$ acts on this digraph, that is, that arcs are sent to arcs under the action.  We note that the action of $M$ on the vertices preserves the bipartition. As above, there are two types of directed arcs that need to be considered, the up-arcs and the down-arcs.

First consider the down-arcs. My Remark~\ref{rem_downarcs}, a typical down-arc has the form
\[
m\sigma \xrightarrow{d_j} d_j(m\sigma)
\]
where $\kappa(d_j(m\sigma)) \neq m\sigma$. 
%
%
%
Let $n \in M$ be arbitrary. Then we have
\[
nm\sigma \xrightarrow{d_j} d_j(nm\sigma) = nd_j(m\sigma),
\]
by definition of $d_j$. This is a down-arc because if $\kappa(d_j(nm\sigma))=nm\sigma$, then from $nmd_j(\sigma)=d_j(nm\sigma)$ we deduce that $nmc(d_j(\sigma))=nm\sigma$ and so $c(d_j(\sigma))=\sigma$, whence $\kappa(d_j(m\sigma))=\kappa(md_j(\sigma)) = mc(d_j(\sigma))=m\sigma$, which is a contradiction.  
%

Now consider up-arcs. A typical up-arc has the form $m\sigma \rightarrow \kappa(m\sigma)$. Let $n \in M$. Then
\[
nm\sigma \rightarrow \kappa(nm\sigma) = nm(c(\sigma)) = n\kappa(m\sigma),
\]
which is an up-arc as required.

This covers all types of arcs in $\Gamma(\EMr)$ and we conclude that $M$ acts on $\Gamma(\EMr)$ by digraph endomorphisms. This fact together with (A2) then implies property (A3), since the up-arcs in this bipartite graph are precisely the matched redundant-collapsible pairs. Next consider


\

\begin{enumerate}
\item[(A4)] There is a subset $\B \subseteq \E \cup \R \cup \C$ such that for all $n$ the set of $n$-cells is a free left $M$-set with basis $\B_n$ (the $n$-cells in $\B$). Let $\E^\B = \E \cap \B$, $\R^\B = \R \cap \B$ and $\C^\B = \C \cap \B$.
Then $\E_n$ is a free left $M$-set with basis $\E_n^\B$, and similarly for $\R_n$ and $\C_n$.
\end{enumerate}

\

\noindent We saw in Section~\ref{sec_cspaces} that the set of $n$-cells of $\EMr$ is a free left $M$-set with basis the set of $n$-cells
\[
\B = \{
(m_1,\ldots, m_n)\colon\quad m_i \neq 1 \ \mbox{for all} \ i
\}
\]
of $BM$. The last clause of (A4) then follows from (A2). Now we shall prove

\

\begin{enumerate}
\item[(A5)] For every matched pair $(\sigma,\tau) \in \R \times \C$, $\sigma \in \R^\B$ if and only if $\tau \in \C^\B$. In particular, for every matched pair $(\sigma,\tau)$ there is a unique pair $(\sigma',\tau') \in \R^\B \times \C^\B$ and $m \in M$ such that $(\sigma, \tau) = m(\sigma', \tau')$.
\end{enumerate}

\

\noindent The matched pairs are the up-arcs in the graph $\Gamma(\EMr)$. A typical up-arc has the form
\[
m(m_1,\ldots, m_n) \rightarrow
\kappa(m(m_1,\ldots, m_n) =
m c(m_1, \ldots, m_n).
\]
So this pair is
\[
m \cdot (
(m_1,\ldots, m_n) \rightarrow
c(m_1, \ldots, m_n)
)
\]
where
\[
(m_1,\ldots, m_n) \rightarrow
c(m_1, \ldots, m_n)
\]
is a uniquely determined matched basis pair. Also, if $(\sigma,\tau)$ is a matched pair then $\sigma = (m_1,\ldots, m_n)$ belongs to the basis if and only if $\kappa(m_1, \ldots, m_n) = c(m_1, \ldots, m_n)$ belongs to the basis, completing the proof of (A5). Finally we turn our attention to showing axiom

\

\begin{enumerate}
\item[(A6)] For every redundant cell $\tau$ and every $m\in M$
\[
\mathrm{height}(\tau) = \mathrm{height}(m\tau)
\]
where height is taken with respect to the collapsing scheme $(\E,\R,\C,\cc,\ii)$.
\end{enumerate}

\

The following lemma will be useful.

\begin{Lemma}[Path lifting property]\label{lem:PathLift}
Define a mapping $\pi\colon V\Gamma(\EMr) \cup E\Gamma(\EMr) \rightarrow V\Gamma(BM) \cup E\Gamma(BM)$ defined on vertices by:
\[
m(m_1,\ldots,m_k) \mapsto (m_1,\ldots,m_k) \ (k \in \{n,n+1\})
\]
and defined on edges (DE1) and (DE2) by $\pi(x \rightarrow y) = \pi(x) \rightarrow \pi(y)$.
Let $\mu$ be a redundant $n$-cell in $\EMr$. Then for each path $p$ in $\Gamma(BM)$, with initial vertex $\pi(\mu)$ there is a path $\hat{p}$ in $\Gamma(\EMr)$, with initial vertex $\mu$, such that $\pi(\hat{p}) = p$.
%
%
\end{Lemma}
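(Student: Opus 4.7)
The plan is to prove the path lifting property by induction on the length of the path $p$ in $\Gamma(BM)$, lifting one arc at a time. The base case is trivial: a length-zero path at $\pi(\mu)$ lifts to the length-zero path at $\mu$. For the inductive step, suppose we have already constructed a lift $\hat{p}_0$ of the initial segment $p_0$ of $p$ ending at some vertex $\nu$ in $\Gamma(\EMr)$ with $\pi(\nu) = v$, and let $e\colon v\to w$ be the next arc of $p$. We must construct an arc $\hat{e}\colon \nu\to \hat{w}$ in $\Gamma(\EMr)$ with $\pi(\hat{e})=e$. Since the $n$- and $(n+1)$-cells of $\EMr$ form a free left $M$-set whose basis is identified (via $\pi$) with the cells of $BM$ (by the discussion of axiom (A4)), we may write $\nu = mv$ uniquely for some $m\in M$.

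Next I would split into the two arc-types. For an up-arc $e\colon v\to c(v)$ (where $v\in R$), I would simply take $\hat{e}\colon \nu\to \kappa(\nu) = m\cdot c(v)$, which is an up-arc in $\Gamma(\EMr)$ by definition of $\kappa$, and clearly projects to $e$. For a down-arc $e\colon v\to d_j(v)$ with $v\in C_{n+1}$ and $d_j(v)\in R_n$, I would take $\hat{e}\colon \nu\to d_j(\nu) = m\cdot d_j(v)$. By axiom (A2), $d_j(\nu)$ is indeed a redundant $n$-cell of $\EMr$; by Remark~\ref{rem_downarcs}, it remains to verify that either $\kappa(d_j(\nu))\neq \nu$ or $j\neq \iota(d_j(\nu))$.

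The verification of the down-arc case is the only nontrivial step and is where I expect the main bookkeeping to occur. Compute $\kappa(d_j(\nu)) = \kappa(m\cdot d_j(v)) = m\cdot c(d_j(v))$ using (A3), and $\iota(d_j(\nu)) = i(d_j(v))$ by definition of $\iota$. Now since $e\colon v\to d_j(v)$ is an arc of $\Gamma(BM)$, we have either $c(d_j(v))\neq v$ or $c(d_j(v))=v$ but $j\neq i(d_j(v))$. In the first subcase, freeness of the action of $M$ on the cells (axiom (A4) applied in $\EMr$) yields $m\cdot c(d_j(v))\neq m\cdot v$, so $\kappa(d_j(\nu))\neq \nu$. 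In the second subcase, $\kappa(d_j(\nu)) = \nu$ but $\iota(d_j(\nu)) = i(d_j(v))\neq j$. In either case the requirement of Remark~\ref{rem_downarcs} is met, so $\hat{e}$ is a valid down-arc in $\Gamma(\EMr)$ projecting to $e$.

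Concatenating $\hat{p}_0$ with $\hat{e}$ yields the required lift of $p_0 e$, completing the induction. The principal point to watch, as noted above, is simply making sure that the projected arc's conditions (either $c(d_j(v))\neq v$, or equality with $j\neq i(d_j(v))$) translate correctly to the analogous conditions in $\EMr$ under the free $M$-action; this is immediate once one combines (A2), (A3), (A4), and the definitions of $\kappa$ and $\iota$.
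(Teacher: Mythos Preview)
Your overall approach---induction on path length, lifting one arc at a time with a case split on arc type---is correct and is essentially the route the paper takes. There is one technical slip in the down-arc case: you write $d_j(\nu)=m\cdot d_j(v)$, implicitly identifying $d_j(v)$ with the basis element $1\cdot d_j(v)$ of $\EMr$, but this identification fails for $j=0$, since $d_0\bigl(1(m_1,\ldots,m_{n+1})\bigr)=m_1(m_2,\ldots,m_{n+1})$ is not a basis element. Consequently your formula $\kappa(d_j(\nu))=m\cdot c(d_j(v))$ has the wrong $M$-coefficient when $j=0$. Fortunately this does not actually matter: in your first subcase ($c(d_j(v))\neq v$) the freeness argument works for \emph{any} coefficient, since the basis elements already differ; and your second subcase is vacuous, because the uniqueness of $i$ noted after condition~(C2) forces $j=i(d_j(v))$ whenever $c(d_j(v))=v$ (this is exactly Remark~\ref{rem_downarcs} applied to $\Gamma(BM)$).

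The paper avoids this coefficient bookkeeping by a slightly different device: since paths alternate up and down arcs, it treats the down-arc case knowing that $\nu=\kappa(x)$ for the previous lifted vertex $x$, and argues that $\kappa(d_j(\nu))=\nu$ would force $d_j(\nu)=x$ by injectivity of $\kappa$, yielding a contradiction after applying $\pi$. Both routes are fine; yours handles each arc uniformly while the paper's exploits the bipartite structure to sidestep the $j=0$ subtlety.
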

\begin{proof}
We shall establish two claims, from which the lemma will be obvious by induction on path length.  First we claim that if $y=m\tau$ is a redundant $n$-cell of $\EMr$ (with $m \in M$ and $\tau$ and $n$-cell of $BM$) and $\sigma\in V\Gamma(BM)$ is a vertex such that there is a directed edge $\tau=\pi(y)\to \sigma$, then there is a vertex $z \in V\Gamma(\EMr)$ such that $y \rightarrow z$ is a directed edge in $E\Gamma(\EMr)$, and $\pi(y \rightarrow z) = \pi(y) \rightarrow \sigma$.  Indeed, set $z = \cc(y)$. Then $y \rightarrow z$ is a directed edge in $E\Gamma(\EMr)$ by definition and $\pi(z) = \pi(\cc(y)) = \pi(\cc(m\tau)) = \pi(m(c(\tau))) = c(\tau) = \sigma$.



Next we claim that if $x \rightarrow y$ is an up-arc of $E\Gamma(\EMr)$ as in (DE2) and $\sigma$ is a vertex in $V\Gamma(BM)$ such that $\pi(y) \rightarrow \sigma$ is a directed edge in $E\Gamma(BM)$, then there exists a vertex $z \in V\Gamma(\EMr)$ such that $y \rightarrow z$ is a directed edge in $E\Gamma(\EMr)$, and $\pi(y \rightarrow z) = \pi(y) \rightarrow \sigma$. Write $x = m\tau$ where $m \in M$ and $\tau$ is a redundant $n$-cell of $BM$. Then $x \rightarrow y$ is equal to $m\tau \longrightarrow \cc(m\tau) = m c(\tau)$. In $\Gamma(BM)$ we have the path $\pi(x) \rightarrow \pi(y) \rightarrow \sigma$ which equals $\tau \rightarrow c(\tau) \rightarrow \sigma$. Therefore $c(\tau) \rightarrow \sigma$ is an arc in $\Gamma(BM)$ of type 1. Therefore, by Remark~\ref{rem_downarcs} applied to the graph $\Gamma(BM)$, it follows that $\sigma$ is a redundant $n$-cell with $\sigma =  d_j(c(\tau))$ for some $j \in \{0, \ldots, n+1\}$ and 
$\sigma \neq \tau$ (using that $c$ is a bijection). Set $z = d_j(y)$. We need to show that
$\pi(z) = \sigma$ and that
$y \rightarrow z$ is a directed edge in $\Gamma(\EMr)$.

For the first claim, since $\pi$ is a simplicial morphism we have
\[
\pi(z) = \pi(d_j(y)) = d_j(\pi(y)) = d_j(c(\tau)) = \sigma.
\]

For the second claim, to verify that $y \rightarrow z$ is a directed edge in $\Gamma(\EMr)$ we just need to show that $z$ is a redundant cell and $y \neq \cc(z)$.  
From the definitions it follows that under the mapping $\pi$ any vertex in the preimage of a redundant cell is a redundant cell. Thus, since $\sigma$ is redundant and $\pi(z) = \sigma$ it follows that $z$ is redundant. Finally, if $y=\cc(z)$, then $z=x$ because $\cc$ is injective.
Therefore, $\sigma =\pi(z)=\pi(x)=\tau$, which is a contradiction. 
%
%
%
%
%
%
%
%
%
%
%
%
%
%
%
%

We can now construct the path $\hat{p}$ by induction on the length of $p$, where the inductive step uses the first claim if the lift of the previous portion ends at a redundant cell and uses the second claim if it ends at a collapsible cell.
\end{proof}

Axiom (A6) is then easily seen to be a consequence of the following lemma.

\begin{Lemma}
Let $\tau$ be a redundant cell in the collapsing scheme $(\E,\R,\C,\cc,\ii)$. Write $\tau = m \sigma$ where $\sigma$ is a redundant cell in $BM$. Let $\mathrm{height}_{\EMr}(m \sigma)$ denote the height of $m \sigma$ with respect to the collapsing scheme $(\E,\R,\C,\cc,\ii)$, and let $\mathrm{height}_{BM}(\sigma)$ denote the height of $\sigma$ with respect to the collapsing scheme $(E,R,C,c,i)$. Then $\mathrm{height}_{\EMr}(m \sigma) = \mathrm{height}_{BM}(\sigma)$.
\end{Lemma}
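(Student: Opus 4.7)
The plan is to recast the height function in terms of directed path lengths in the bipartite digraphs $\Gamma(BM)$ and $\Gamma(\EMr)$ introduced in the previous proofs, and then deduce the equality from the fact that $\pi$ is a digraph homomorphism (Lemma~\ref{lem_digraphhom}) which admits path lifting (Lemma~\ref{lem:PathLift}).

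First I would observe that the relation $\succ$ on redundant cells is encoded by the bipartite graph: $\tau \succ \tau'$ is equivalent to the existence of a length-two directed path $\tau \to c(\tau) \to \tau'$ in $\Gamma$, consisting of the up-arc out of $\tau$ followed by a down-arc to the redundant cell $\tau' \neq \tau$. Indeed, if $\tau' = d_j(c(\tau))$ with $j \neq i(\tau)$, then the uniqueness of the face index $i(\tau)$ forces $\tau' \neq \tau$, and then injectivity of $c$ gives $c(\tau') \neq c(\tau)$, so the required down-arc is present by our definition of $\Gamma$. Iterating this observation, a descending chain $\tau = \tau_0 \succ \tau_1 \succ \cdots \succ \tau_k$ corresponds precisely to a directed path $\tau_0 \to c(\tau_0) \to \tau_1 \to c(\tau_1) \to \cdots \to c(\tau_{k-1}) \to \tau_k$ of length $2k$ in $\Gamma$, and conversely any directed path starting at a redundant cell decomposes uniquely in this way, since the bipartite structure forces the path to alternate between the redundant and collapsible layers. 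Hence the height of a redundant cell is exactly half the length of the longest directed path in $\Gamma$ starting at that cell.

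With this reformulation in hand, the proof becomes immediate in both directions. For the inequality $\mathrm{height}_{\EMr}(m\sigma) \leq \mathrm{height}_{BM}(\sigma)$, any directed path in $\Gamma(\EMr)$ beginning at $m\sigma$ maps, via the digraph homomorphism $\pi$ of Lemma~\ref{lem_digraphhom}, to a directed path of the same length in $\Gamma(BM)$ beginning at $\pi(m\sigma) = \sigma$, which corresponds by the previous paragraph to a descending chain of the same length in $BM$. For the reverse inequality $\mathrm{height}_{\EMr}(m\sigma) \geq \mathrm{height}_{BM}(\sigma)$, the path lifting property of Lemma~\ref{lem:PathLift} lifts any directed path in $\Gamma(BM)$ beginning at $\sigma$ to a directed path of the same length in $\Gamma(\EMr)$ beginning at $m\sigma$, which corresponds to a descending chain of the same length in $\EMr$. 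Combining the two inequalities yields the claim.

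The main conceptual work has already been accomplished in the construction of the bipartite graphs and the verification of Lemmas~\ref{lem_digraphhom} and~\ref{lem:PathLift}; I do not anticipate any genuine obstacle here. The only item requiring care is the careful translation of the $\prec$-relation into length-two paths in $\Gamma$, and in particular the verification that $\tau' \neq \tau$ is automatic from the definition of $\prec$ so that the relevant down-arc really does exist in $\Gamma$.
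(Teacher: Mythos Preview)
Your proposal is correct and follows essentially the same approach as the paper: both convert descending chains to directed paths in the bipartite digraphs, then use the digraph homomorphism $\pi$ for the inequality $\mathrm{height}_{\EMr}(m\sigma)\leq\mathrm{height}_{BM}(\sigma)$ and the path lifting property for the reverse inequality. You are somewhat more explicit than the paper in spelling out the correspondence between $\succ$-chains and length-two paths (and in checking that the required down-arc exists), but the structure of the argument is identical.
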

\begin{proof}
Let
\[
m\sigma = \tau = \tau_0 \succ \tau_1 \succ \cdots \succ \tau_k
\]
be a descending chain of redundant $n$-cells from $\R$. It follows that there is a directed path
\[
p = \tau_0 \rightarrow \cc(\tau_0) \rightarrow \cdots \rightarrow \cc(\tau_{k-1}) \rightarrow \tau_k
\]
in $\Gamma(\EMr)$. Since $\pi$ is a digraph homomorphism it follows that $\pi(p)$ is a directed path in $\Gamma(BM)$ and hence
\[
\sigma = \pi(\tau_0) \succ \pi(\tau_1) \succ \cdots \succ \pi(\tau_k)
\]
is a descending chain of redundant $n$-cells in $R$. This proves that $\mathrm{height}_{\EMr}(m \sigma) \leq \mathrm{height}_{BM}(\sigma)$.

For the converse, let
\[
\sigma = \sigma_0 \succ \sigma_1 \succ \cdots \succ \sigma_k
\]
be a descending chain of redundant $n$-cells from $R$. Then there is a directed path
\[
q = \sigma_0 \rightarrow c(\sigma_0) \rightarrow \cdots \rightarrow c(\sigma_{k-1}) \rightarrow \sigma_k
\]
in $\Gamma(BM)$. By Lemma~\ref{lem:PathLift} we can lift $q$ to a path $\hat{q}$ in $\Gamma(\EMr)$ with initial vertex $m\sigma$ and such that $\pi(\hat{q}) = q$. But then the redundant cells in the path $\hat{q}$ form a decending chain of length $k$ starting at $m \sigma$, proving that $\mathrm{height}_{\EMr}(m \sigma) \geq \mathrm{height}_{BM}(\sigma)$.
\end{proof}

This completes the proof of Theorem~\ref{thm_guarded}(a) and its dual Theorem~\ref{thm_guarded}(b).

\subsection{Proof of Theorem~\ref{thm_guarded}(c)} We shall explain how the above proof of Theorem~\ref{thm_guarded}(c) is modified to prove the two-sided analogue Theorem~\ref{thm_guarded}(c).

Let $(E,C,R,c,i)$ be a \gu collapsing scheme. Define an $n$-cell $m(m_1, m_2, ...,m_n)u$ of $\EMb$ to be essential (respectively redundant, collapsible respectively) if and only if $(m_1, m_2, ...,m_n)$ is essential (respectively redundant, collapsible respectively) in the collapsing scheme  $(E,R,C,c,i)$. This defines $(\E,\R,\C)$.

For the mappings $\cc$ and $\ii$, given $m \tau u \in \EMb$ where $\tau = (m_1, ..., m_n)$ is an $n$-cell of $BM$ we define
\begin{eqnarray}
\ii (m \tau u) & = & i(\tau) \\
\cc (m \tau u ) & = & m (c(\tau)) u.
\end{eqnarray}
With these definitions we claim that $(\E,\R,\C,\cc,\ii)$ is an $M$-equivariant collapsing scheme for the free bi-$M$-simplicial set $\EMb$ such that for each $n$ the set of essential $n$-cells $\E_n$ is a free bi-$M$-set of rank $|E_n|$.

\subsubsection{Proving that $(\E,\R,\C,\cc,\ii)$ is a collapsing scheme}

\begin{proposition}
\label{prop_CS_bi}
With the above definitions,
$(\E,\R,\C,\cc,\ii)$ is a collapsing scheme for the simplicial set $\EMb$.
\end{proposition}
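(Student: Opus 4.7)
The proof will follow the same strategy as Proposition~\ref{prop_CS}, with the key change being that both extreme values of $\ii$ now need to be avoided rather than just the value $0$. The partition of cells into essential, redundant, and collapsible classes is inherited from $(E,R,C)$ via $\pi\colon \EMb\to BM$, $m(m_1,\ldots,m_n)u\mapsto (m_1,\ldots,m_n)$, and the sets $\E,\R,\C$ do partition the cells of $\EMb$ by construction, so it remains to check conditions (C1) and (C2).

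For (C1), I would first observe that $\cc$ is injective on $\R_n$: if $\cc(m\tau u)=\cc(m'\sigma u')$, then $mc(\tau)u=m'c(\sigma)u'$, and since the middle entries of an $(n+1)$-tuple in $\EMb$ are uniquely determined, the injectivity of $c$ forces $\tau=\sigma$, whence $m=m'$ and $u=u'$. Surjectivity follows since every collapsible $(n+1)$-cell of $\EMb$ has the form $m\sigma u$ with $\sigma\in C_{n+1}$, and writing $\sigma=c(\tau)$ gives $\cc(m\tau u)=m\sigma u$. The crucial computation is
\[ d_{\ii(m\tau u)}(\cc(m\tau u)) = d_{i(\tau)}(mc(\tau)u) = m\,d_{i(\tau)}(c(\tau))\,u = m\tau u, \]
where the middle equality uses the explicit formula for the face maps of $\EMb$ together with the assumption that $0<i(\tau)<n+1$, that is, that $(E,R,C,c,i)$ is \gud. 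This is precisely where the two-sided guarded hypothesis is essential: if $i(\tau)=0$ the face $d_0$ would absorb $c(\tau)_1$ into the left multiplier $m$, and if $i(\tau)=n+1$ the face $d_{n+1}$ would absorb $c(\tau)_{n+1}$ into the right multiplier $u$, in either case failing to return $m\tau u$.

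For (C2), I would form the bipartite digraph $\Gamma^{(n)}(\EMb)$ between the $n$-cells and $(n+1)$-cells of $\EMb$ exactly as before (down-arcs $\tau\to d_j(\tau)$ for collapsible $\tau$ and appropriate $j$, up-arcs $\sigma\to\cc(\sigma)$ for redundant $\sigma$), and analogously $\Gamma^{(n)}(BM)$. The map $\pi$ induces a digraph homomorphism $\Gamma^{(n)}(\EMb)\to \Gamma^{(n)}(BM)$: up-arcs $m\sigma u\to m c(\sigma) u$ project to up-arcs $\sigma\to c(\sigma)$, and down-arcs $m\tau u\to d_j(m\tau u)$ project to $\tau\to d_j(\tau)$ (using that $\pi$ commutes with face maps, and that the guard on $j$ transfers from $\EMb$ to $BM$ by the same argument as in Lemma~\ref{lem_digraphhom}). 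Then Lemma~\ref{lem_easy} combined with the fact that $(E,R,C,c,i)$ is a collapsing scheme (so $\Gamma^{(n)}(BM)$ has no infinite directed path) rules out any infinite descending chain in $\R_n$, establishing (C2).

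The main obstacle is essentially bookkeeping: verifying that the down-arc condition $\kappa(d_j(m\tau u))\neq m\tau u$ lifts correctly through $\pi$ so that $\pi$ is genuinely a digraph homomorphism on arcs of type (DE1). This is a direct adaptation of the corresponding verification in the proof of Lemma~\ref{lem_digraphhom}, using that $\kappa$ commutes with the two-sided $M\times M^{op}$-action in the same way $\cc$ did with the left action, so I anticipate no real difficulty beyond careful transcription.
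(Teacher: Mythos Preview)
Your proposal is correct and follows essentially the same approach as the paper's proof: both verify (C1) via the computation $d_{\ii(m\tau u)}(\cc(m\tau u))=m\,d_{i(\tau)}(c(\tau))\,u=m\tau u$, appealing to the two-sided guarded condition $0<i(\tau)<n+1$ precisely as you do, and both establish (C2) by constructing the digraph homomorphism $\pi\colon\Gamma(\EMb)\to\Gamma(BM)$ and invoking Lemma~\ref{lem_easy}. Your treatment is in fact slightly more explicit than the paper's, which simply says the proof is analogous to Proposition~\ref{prop_CS} and records only the key (C1) computation.
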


The proof is analogous to the proof of Proposition~\ref{prop_CS}. As in the proof of that proposition, we have a digraph homomorphism $\pi\colon  V\Gamma(\EMb) \cup E\Gamma(\EMb) \rightarrow V\Gamma(BM) \rightarrow E\Gamma(BM)$ given by
\begin{align*}
& m(m_1, \ldots, m_k)n \rightarrow (m_1, \ldots, m_k), \\
& \pi(x \rightarrow y) = \pi(x) \rightarrow \pi(y).
\end{align*}
To verify collapsing scheme condition (C1) it suffices to observe that for every redundant $n$-cell $m\tau$ we have
\begin{eqnarray*}
d_{\ii (m\tau n)} (\cc (m\tau n)) & = & d_{i(\tau)} (mc(\tau)n) \ \mbox{(by definition)} \\
& = & md_{i(\tau)} (c(\tau))n \ \mbox{(since $i(\tau) \neq 0$ and $i(\tau) \neq n+1$)} \\
& = & m\tau n.
\end{eqnarray*}
Here the second line appeals to the fact that the original collapsing scheme is \gud.
Collapsing scheme condition (C2) holds again by applying Lemma~\ref{lem_easy} and the fact that $\pi$ is a digraph homomorphism.

\subsubsection{Proving $(\E,\R,\C,\cc,\ii)$ is an $M$-equivariant collapsing scheme}

We have already seen in Section~\ref{sec_cspaces} that $\EMb$ is a bi-$M$-simplicial set. We need to show that $(\E,\R,\C,\cc,\ii)$ is an $M$-equivariant collapsing scheme for $\EMb$. By definition, for this we need to verify that axioms (A1)-(A6) are satisfied for $\EMb$ as a left $M \times M^{op}$-simplicial set.

In Section~\ref{sec_cspaces} we saw that $\EMb$ is a rigid free left $M \times M^{op}$-simplicial set. Together with the definition of the action of $M \times M^{op}$ on $\EMb$ and the definition of $\E$, $\R$ and $\C$, axioms (A1) and (A2) both then follow. Axioms (A3)-(A6) are then proved exactly as above just with $M \times M^{op}$ in place of $M$ in the proof. This then completes the proof of Theorem~\ref{thm_guarded}(c).

\section{Monoids admitting guarded collapsing schemes}
\label{sec_admitting}

In this section we give examples of classes of monoids to which the above theory of equivariant collapsing schemes applies. In particular, this will allow us to use the theory developed above to give a topological proof of the fact that monoids which admit finite complete presentations are of type bi-$\Finfty$.

%
%

Let $M$ be a monoid defined by a finite presentation $\langle A \mid R \rangle$ with generators $A$ and defining relations $R \subseteq A^* \times A^*$. Thus, $M$ is isomorphic to $A^* /  \ThCon{R}$ where $ \ThCon{R}$ is the smallest congruence on $A^*$ containing $R$. We view $\langle A \mid R \rangle$ as a string rewriting system, writing $l \rightarrow r$ for the pair $(l,r) \in R$. We define a binary relation $\rightarrow$ on $A^*$, called a {\it
single-step reduction}, in the following way:
\[
u \rightarrow v \
\Leftrightarrow \ u \equiv w_1l w_2 \ {\rm and} \ v \equiv w_1r w_2
\]
for some $(l,r) \in R $ and $w_1,w_2\in X^* $.
A word is called \emph{irreducible} if no single-step reduction rule may be applied to it.
The
transitive and reflexive closure of $\ssr{R} $ is denoted by
$\RhCon{R}$.

This rewriting system is called \emph{noethereian} if there
are no infinite descending chains
\[
w_1\ssr{R}w_2\ssr{R}w_3\ssr{R}\cdots
\ssr{R}w_n\ssr{R}\cdots.
\]
It is called {\it confluent} if
whenever we have $u\RhCon{R}v$ and $u\RhCon{R}v'$ there is a word
$w\in X^*$ such that $v\RhCon{R}w$ and $v'\RhCon{R}w$. If $R$ is
simultaneously noetherian and confluent we say that $R$ is {\it
complete}. The presentation $\langle A \mid R \rangle$ is called complete if the rewriting system $R$ is complete.

It is well-known (see for example \cite{BookAndOtto}) that if $\langle A \mid R \rangle$ is a finite complete presentation then each  $\ThCon{R}$-class of $A^*$ contains exactly one irreducible element. So the set of irreducible elements give a set of normal forms for the elements of the monoid $M$. In particular, if a monoid admits a presentation by a finite compete rewriting system then the word problem for the monoid is decidable.

%
%
%
%

In \cite[page 145]{Brown1989} a method is given for constructing a collapsing scheme on $BM$ for any monoid $M$ that is given by a finite complete rewriting system. It is easily observed from \cite[page 145]{Brown1989}
that in the given collapsing scheme $(E,R,C,c,i)$ the function $i$ never takes either of its two extreme allowable values, that is, the collapsing scheme for $BM$ given in \cite{Brown1989} is \gu in the sense of Definition~\ref{def_guarded}. Also, as Brown observes (see \cite[page 147]{Brown1989}), it follows easily from his definition that there are only finitely many essential cells in each dimension. Thus we have:

\begin{proposition}
\label{prop_FCRS}
Let $M$ be a monoid. If $M$ admits a presentation by a finite complete rewriting system then $BM$ admits a \gu collapsing scheme of finite type.
\end{proposition}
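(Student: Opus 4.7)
The plan is to invoke Brown's original construction of a collapsing scheme on $BM$ directly from \cite[Section~4]{Brown1989}, and then simply read off the two desired properties from the definition. Recall that, given a finite complete presentation $\langle A\mid R\rangle$ for $M$, Brown represents every element of $M$ by its unique $R$-irreducible normal form, so an $n$-cell $\tau=(m_1,\ldots,m_n)$ of $BM$ (with each $m_i\neq 1$) is recorded as a tuple of non-empty irreducible words $u_1,\ldots,u_n\in A^*$. The classification into essential, redundant, and collapsible cells is determined by scanning the concatenation $u_1u_2\cdots u_n$ for the leftmost occurrence of a left-hand side of $R$ that straddles some boundary between $u_i$ and $u_{i+1}$; this position determines both $c$ and $i$.

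For \emph{finite type}, I would recall that in Brown's scheme the essential $n$-cells biject with what Anick, Squier, and Kobayashi call $n$-chains of the rewriting system: sequences of overlapping left-hand sides of rules in $R$. Since $R$ is finite and each left-hand side has bounded length, the number of such chains in each fixed dimension is finite (one can even give the crude bound $|R|^n L^{n-1}$ where $L$ is the maximum length of a left-hand side). Thus $(E,R,C,c,i)$ has only finitely many essential cells in each dimension, as already noted without detailed proof by Brown~\cite[page~147]{Brown1989}.

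For \emph{guardedness}, I would inspect the definition of $c$ and $i$ in Brown's construction. A redundant cell $\tau=(u_1,\ldots,u_n)$ is precisely one in which the first straddling occurrence of a left-hand side lies across some boundary between $u_i$ and $u_{i+1}$ with $1\leq i\leq n-1$, \emph{or} one in which some $u_j$ itself already contains a left-hand side that can be split off at one end; in every case Brown defines $c(\tau)$ by splitting a single factor $u_j$ into two non-empty irreducible pieces $u_j',u_j''$ at the detected position, yielding the $(n+1)$-cell $(u_1,\ldots,u_{j-1},u_j',u_j'',u_{j+1},\ldots,u_n)$, and the index $i(\tau)$ recovering $\tau$ as a face of $c(\tau)$ is exactly $j$, which lies strictly between $0$ and $n+1$. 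Hence the scheme is guarded in the sense of Definition~\ref{def_guarded}.

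The main obstacle is purely expository rather than mathematical: one must parse Brown's original notation carefully enough to be confident that the splitting index genuinely lies in $\{1,\ldots,n\}$ for every redundant cell, so that both strict inequalities $i(\tau)\neq 0$ and $i(\tau)\neq n+1$ hold simultaneously. Once this is confirmed from the construction, the proposition follows immediately by combining the two verifications.
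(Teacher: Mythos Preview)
Your proposal is correct and takes essentially the same approach as the paper: the paper's argument amounts to citing Brown's construction \cite[page~145]{Brown1989}, observing directly from the definition that the index $i(\tau)$ never takes its extreme values (guardedness), and citing Brown's observation \cite[page~147]{Brown1989} that there are finitely many essential cells in each dimension (finite type). One small inaccuracy in your exposition: since the $u_j$ are already irreducible normal forms, the clause ``or one in which some $u_j$ itself already contains a left-hand side'' cannot occur, so you can drop that case---but this does not affect the conclusion that $i(\tau)\in\{1,\ldots,n\}$.
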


It follows that the theory of $M$-equivariant collapsing schemes developed in the previous section applies to monoids with complete presentations, giving:

\begin{corollary}\label{corol_FCRSFn}
Let $M$ be a monoid that admits a presentation by a finite complete rewriting system. Then $M$ is of type left-$\F_\infty$, right-$\F_\infty$ and  bi-$\F_\infty$.
\end{corollary}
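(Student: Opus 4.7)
The plan is essentially to chain together two results already established in the paper. By Proposition~\ref{prop_FCRS}, if $M$ is presented by a finite complete rewriting system, then $BM$ admits a guarded collapsing scheme of finite type (in particular, it is simultaneously left guarded and right guarded, and it has only finitely many essential cells in each dimension). So the hypothesis of Corollary~\ref{cor_main} is satisfied in each of its three forms.

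Applying Corollary~\ref{cor_main}(a), (b), (c) in turn then immediately gives that $M$ is of type left-$\F_\infty$, right-$\F_\infty$, and bi-$\F_\infty$ respectively. To spell out the chain underlying Corollary~\ref{cor_main}: Theorem~\ref{thm_guarded}(a), (b), (c) lift the guarded collapsing scheme on $BM$ to $M$-equivariant collapsing schemes on $\EMr$, $\EMl$, and $\EMb$, with each set of essential $n$-cells a free $M$-, $M^{op}$-, or $M\times M^{op}$-set of rank $|E_n|<\infty$. Then Theorem~\ref{main_theorem} and its right-sided analogue, together with Theorem~\ref{main_theorem_bi}, produce $M$-homotopy equivalent quotients $Y$ which are free left-$M$-, right-$M$-, and bi-$M$-CW complexes of $M$-finite type (respectively $M\times M^{op}$-finite type). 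Since $|\EMr|$, $|\EMl|$, and each component of $|\EMb|$ are contractible, these $Y$'s are left-, right-, and bi-equivariant classifying spaces of the appropriate finite type, which is precisely the definition of left-$\F_\infty$, right-$\F_\infty$, and bi-$\F_\infty$.

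There is no real obstacle here: the entire technical content of the statement — namely constructing the finite-type equivariant classifying spaces from a finite complete rewriting system — has already been packaged into Proposition~\ref{prop_FCRS} and Corollary~\ref{cor_main}. The corollary is simply the combined conclusion of these two results, and the proof amounts to citing them.
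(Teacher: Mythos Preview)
Your proof is correct and follows exactly the same approach as the paper: cite Proposition~\ref{prop_FCRS} to get a guarded collapsing scheme of finite type on $BM$, then invoke Corollary~\ref{cor_main}. The additional paragraph unpacking the chain through Theorems~\ref{thm_guarded}, \ref{main_theorem}, and \ref{main_theorem_bi} is accurate but more detail than the paper itself gives.
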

\begin{proof}
By Proposition~\ref{prop_FCRS} the simplicial set $BM$ admits a \gu collapsing scheme of finite type.
Then the result follows from Corollary~\ref{cor_main}.
\end{proof}

We obtain the following result of Kobayashi as a special case.

\begin{corollary}[\!\!\cite{Kobayashi2005}]
Let $M$ be a monoid that admits a presentation by a finite complete rewriting system. Then $M$ is of type bi-$\FP_\infty$.
\end{corollary}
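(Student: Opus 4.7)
The plan is to derive this immediately from what has just been established. By the preceding Corollary~\ref{corol_FCRSFn}, any monoid $M$ admitting a presentation by a finite complete rewriting system is of type bi-$\F_\infty$. The only remaining step is to pass from the topological finiteness property bi-$\F_\infty$ to the homological finiteness property bi-$\FP_\infty$, and this implication has already been recorded in Proposition~\ref{p:basic.props.bi.fn}(1), which says that bi-$\F_n$ implies bi-$\FP_n$ for all $0 \leq n \leq \infty$. Chaining these two facts gives the corollary.

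To spell out the logic behind Proposition~\ref{p:basic.props.bi.fn}(1) in this context: Corollary~\ref{corol_FCRSFn} produces a bi-equivariant classifying space $X$ for $M$ (constructed as the output of the $M$-equivariant collapsing scheme applied to $|\EMb|$) whose skeleta are $M \times M^{op}$-finite in each dimension. The augmented cellular chain complex of $X$, where the augmentation is the canonical surjection $C_0(X) \to H_0(X) \cong \mathbb{Z}\pi_0(X) \cong \mathbb{Z}M$, is a resolution of the $(\mathbb{Z}M,\mathbb{Z}M)$-bimodule $\mathbb{Z}M$. Since $X$ is built via pushouts as in \eqref{eq:pushout} from projective $M \times M^{op}$-sets $P_n$, the $n^{\text{th}}$ chain module is isomorphic to $\mathbb{Z}P_n$ as a bimodule and hence is a finitely generated projective $\mathbb{Z}[M \times M^{op}]$-module for every $n$. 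This is exactly a resolution witnessing bi-$\FP_\infty$.

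There is no real obstacle here: the heavy lifting was done in constructing the $M$-equivariant collapsing scheme on $\EMb$ lifted from the \gu collapsing scheme on $BM$ provided by Brown's construction, and in verifying that its essential cells give rise to an $M \times M^{op}$-finite bi-equivariant classifying space. The passage from bi-$\F_\infty$ to bi-$\FP_\infty$ is purely formal and uses only that the cellular chain complex of a projective bi-$M$-CW complex is a complex of projective bimodules of the expected ranks. In short, the proof will be a one-line chain of implications citing Corollary~\ref{corol_FCRSFn} and Proposition~\ref{p:basic.props.bi.fn}.
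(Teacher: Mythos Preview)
Your proposal is correct and matches the paper's proof exactly: the paper simply cites Proposition~\ref{p:basic.props.bi.fn} and Corollary~\ref{corol_FCRSFn}. Your additional explanation of the mechanics behind Proposition~\ref{p:basic.props.bi.fn}(1) is accurate but more detailed than necessary for this one-line deduction.
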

\begin{proof}
Follows from Proposition~\ref{p:basic.props.bi.fn} and Corollary~\ref{corol_FCRSFn}.
\end{proof}

More recently the class of, so-called, factorable monoids was introduced in work of Hess and Ozornova \cite{HessArxiv}. Since it is quite technical we shall not give the definition of factorable monoid here, we refer the reader to \cite{HessArxiv} to the definition, and we shall use the same notation as there. In their work they show that a number of interesting classes of monoids admit factorability structures. In some cases (e.g. Garside groups) the rewriting systems associated with factorability structures are finite and complete, and so in these cases the monoids are bi-$\Finfty$. On the other hand, in \cite[Appendix]{HessArxiv} they give an example of a factorable monoid where the associated rewriting system is not noetherian and thus not complete (although it is not discussed there whether the monoid admits a presentation by some other finite complete presentation). So, there are some examples where factorability structures may be seen to exist, even when the given presentation is not complete. In \cite[Section~9]{HessArxiv} the authors construct a matching on the reduced, inhomogeneous bar complex of a factorable monoid. As they say in their paper, the matching they construct is very similar to the construction used by Brown giving a collapsing scheme for monoids defined by finite complete rewriting systems \cite[page 145]{Brown1989}. Details of the matching they construct for factorable monoids may be found on pages 27 and 28 of \cite{HessArxiv}. It is immediate from the definition of the mapping $\mu$ on page 28 that their construction defines a \gu collapsing scheme for the simplicial set $BM$ where $M$ is a any factorable monoid $(M, \mathcal{E}, \eta)$. If the generating set $\mathcal{E}$ for the monoid is finite, then the number of essential cells in each dimension is finite, and so we have a guarded collapsing scheme of finite type, giving:

\begin{corollary}
\label{prop_factorable}
Let $M$ be a monoid. If $M$ admits a factorability structure $(M, \mathcal{E}, \eta)$ with finite generating set $\mathcal{E}$ then $BM$ admits a \gu collapsing scheme of finite type. In particular $M$ is of type left-$\Finfty$, right-$\Finfty$ and bi-$\Finfty$.
\end{corollary}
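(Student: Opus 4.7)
The plan is to import the matching on the reduced inhomogeneous bar complex constructed by Hess and Ozornova in Section~9 of \cite{HessArxiv} and verify that, under our terminology, it is precisely a \gu collapsing scheme on $BM$ of finite type. Once this is established, the ``in particular'' clause is an immediate invocation of Corollary~\ref{cor_main}. So the whole task reduces to (i) translating their matching into the data $(E,R,C,c,i)$ of Section~\ref{sec_collapse2} and (ii) checking the two extreme-index conditions $i(\tau)\neq 0$ and $i(\tau)\neq n+1$ of Definition~\ref{def_guarded}, plus finite type.

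First I would recall that a non-degenerate $n$-simplex of $BM$ is an $n$-tuple $(m_1,\ldots,m_n)$ with each $m_j\neq 1$, and that Hess--Ozornova, given a factorability structure $(M,\mathcal E,\eta)$, partition these tuples into three sets (matched up/matched down/unmatched) and define a pairing $\mu$ between the matched tuples in consecutive dimensions. Unmatched tuples will be declared essential, matched-down tuples redundant, matched-up tuples collapsible, with $c$ the pairing $\mu$ and $i(\tau)$ the unique face index realising the matching. That conditions (C1) and (C2) of a collapsing scheme hold is built into their construction: (C1) is the very statement that $\mu$ is a bijection between matched redundant $n$-tuples and matched collapsible $(n+1)$-tuples, and (C2) follows from the well-foundedness they establish on the redundant cells (they exhibit a strict decrease with respect to the order they define on tuples; this plays the role of their Morse function).

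The key observation is then to read off from the explicit formula for $\mu$ on p.~28 of \cite{HessArxiv} that the matched face index $i(\tau)$ associated with a redundant tuple $\tau=(m_1,\ldots,m_n)$ is always an \emph{interior} index $1\le i(\tau)\le n$: the matching splits some single letter $m_j$ via $\eta$ into two generators and inserts the second in position $j+1$, so the face undoing the matching is never $d_0$ (which would delete the leftmost letter) nor $d_{n+1}$ (which would delete the rightmost letter). This is precisely the \gu condition of Definition~\ref{def_guarded}. Finite type is immediate: the essential $n$-cells are exactly the tuples of ``normal form'' type with respect to $\eta$, and since $\mathcal E$ is finite there are only finitely many such tuples of each length $n$ (in fact the bound $|\mathcal E|^n$ suffices).

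The main obstacle, if there is one, is purely bookkeeping: one must be careful about the conventions matching Hess--Ozornova's indexing (they work with the reduced bar complex, which is precisely what $BM$ realises after discarding degenerate simplices) so that ``matched up'' versus ``matched down'' corresponds correctly to $\C_{n+1}$ versus $\R_n$ and that the face index $i(\tau)$ they implicitly use is the same one our formula $\tau=d_{i(\tau)}(c(\tau))$ demands. Once this dictionary is fixed, everything is routine, and we conclude by Corollary~\ref{cor_main} that $M$ is simultaneously of type left-$\Finfty$, right-$\Finfty$, and bi-$\Finfty$.
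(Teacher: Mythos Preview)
Your proposal is correct and follows essentially the same route as the paper: the paper's argument (given in the paragraph preceding the corollary rather than in a formal proof environment) is precisely to import the Hess--Ozornova matching from \cite[Section~9]{HessArxiv}, observe from the explicit formula for $\mu$ on p.~28 that the resulting collapsing scheme is guarded, note that finiteness of $\mathcal E$ forces finite type, and then invoke Corollary~\ref{cor_main}. Your write-up is somewhat more explicit about the dictionary between their matching and the data $(E,R,C,c,i)$, but the substance is identical.
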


%

\end{document}